\newcommand{\R}{\mathbb{R}}
\newcommand{\W}{\mathscr{W}}
\DeclareMathOperator{\Poisson}{Poisson}
\DeclareMathOperator{\Uniform}{Uniform}
\DeclareMathOperator{\Bernoulli}{Bernoulli}
\DeclareMathOperator{\Var}{Var}
\DeclareMathOperator{\Binomial}{Binomial}
\DeclareMathOperator{\Multinomial}{Multinomial}
\DeclareMathOperator*{\argmax}{argmax}
\DeclareMathOperator*{\dTV}{d_{TV}}
\newtheorem{theorem}{Theorem}
\newtheorem{lemma}{Lemma}
\newtheorem{corollary}{Corollary}
\newtheorem{definition}{Definition}
\newtheorem{proposition}{Proposition}
\theoremstyle{definition}
\title{Locally sharp goodness-of-fit testing in sup norm for high-dimensional counts}
\author{Subhodh Kotekal\footnote{Department of Statistics, University of Chicago, United States of America} \and Julien Chhor\footnote{Department of Mathematics and Statistics, Toulouse School of Economics, France} \and Chao Gao\(^*\)}
\date{}
\begin{document}
    \maketitle
    \begin{abstract}
        We consider testing the goodness-of-fit of a distribution against alternatives separated in sup norm. We study the twin settings of Poisson-generated count data with a large number of categories and high-dimensional multinomials. In previous studies of different separation metrics, it has been found that the local minimax separation rate exhibits substantial heterogeneity and is a complicated function of the null distribution; the rate-optimal test requires careful tailoring to the null. In the setting of sup norm, this remains the case and we establish that the local minimax separation rate is determined by the finer decay behavior of the category rates. The upper bound is obtained by a test involving the sample maximum, and the lower bound argument involves reducing the original heteroskedastic null to an auxiliary homoskedastic null determined by the decay of the rates. Further, in a particular asymptotic setup, the sharp constants are identified.
    \end{abstract}
   
    \section{Introduction}
    A canonical problem with a rich history in statistics is goodness-of-fit testing in the context of count data collected across a number of categories. Classically, the problem has been studied in an asymptotic setup with a growing sample size and a fixed number of categories. Pearson's celebrated \(\chi^2\)-statistic is one standard approach in the classical setting. Spurred by technological advancement, it is both practically relevant and theoretically interesting to consider the now typical situation in which the number of categories may be very large. Many categories may exhibit small, if not zero, observed counts. 
    
    In part, we consider count data \(X = (X_1,...,X_p)\) for \(p\) categories following the data generating process  
    \begin{equation}\label{model:poisson}
        X \sim \bigotimes_{j=1}^{p} \Poisson(\lambda_j),
    \end{equation}
    where \(\lambda = (\lambda_1,...,\lambda_p) \in [0, \infty)^p\) denotes the rates for the categories. We adopt a minimax perspective and investigate testing goodness-of-fit in \(\sup\) norm with respect to some reference rates \(\mu = (\mu_1,...,\mu_p)\). It is assumed without loss of generality \(\mu_1 \geq \mu_2 \geq ... \geq \mu_p > 0\). For \(\varepsilon > 0\), define the space of alternatives \(\Lambda(\mu, \varepsilon) := \left\{ \lambda\in [0, \infty)^p : ||\mu - \lambda||_\infty \geq \varepsilon \right\}\). Formally, the goodness-of-fit testing problem is that of deciding between the hypotheses
    \begin{align}
        H_0 &: \lambda = \mu, \label{problem:poisson0}\\
        H_1 &: \lambda \in \Lambda(\mu, \varepsilon) \label{problem:poisson1}. 
    \end{align}
    The minimax testing risk is \(=\mathcal{R}_{\mathcal{P}}(\varepsilon, \mu) = \inf_{\varphi}\left\{ P_{\mu}\left\{\varphi = 1\right\} + \sup_{\lambda \in \Lambda(\mu, \varepsilon)} P_{\lambda}\left\{\varphi = 0\right\} \right\}\) where the infimum runs over all tests (i.e. binary valued measurable functions with the data \(X\) as input). We are concerned with characterizing the fundamental testing limit.   

    \begin{definition}
        For any \(\eta \in (0, 1)\) and $\mu \in [0,\infty)^p$, the local minimax separation rate for (\ref{problem:poisson0})-(\ref{problem:poisson1}) is 
        \begin{align*}
            \varepsilon^*_{\mathcal{P}}(\mu) = \varepsilon^*_{\mathcal{P}}(\mu,\eta) = \inf \left\{\varepsilon>0 : \mathcal{R}_{\mathcal{P}}(\varepsilon, \mu) \leq \eta\right\}.
        \end{align*}
    \end{definition}
    \noindent The minimax separation rate characterizes, up to universal constants, the difficulty of the testing problem. Notably, the rate is said to be \textit{local} as it depends on the choice of null \(\mu\). It will be seen that the problem is harder for some choices of \(\mu\) and easier for other choices. Establishing the tight dependency of $\varepsilon^*_{\mathcal{P}}(\mu,\eta)$ on $\mu$ up to multiplicative constants depending only on $\eta$ will be a major focus of this paper.

    We also study the related multinomial version of the problem, namely, we consider data 
    \begin{align}\label{model:multinomial}
        X \sim \Multinomial(n, q),
    \end{align}
    where \(q \in \Delta_p := \left\{\pi \in [0, 1]^p : \sum_{j=1}^{p}\pi(j) = 1 \right\}\) denotes the vector of probabilities corresponding to the categories. As in the Poisson setting, we are interested in testing the goodness-of-fit in sup norm with respect to some reference distribution \(q_0 \in \Delta_p\). Without loss of generality, assume \(q_0(1) \geq q_0(2) \geq ... \geq q_0(p) \geq 0\). For \(\varepsilon > 0\), define the space \(\Pi(q_0, \varepsilon) := \left\{q \in \Delta_p : ||q - q_0||_\infty \geq \varepsilon\right\}\) and consider the problem
    \begin{align}
        H_0 &: q = q_0, \label{problem:multinomial0}\\
        H_1 &: q \in \Pi(q_0, \varepsilon). \label{problem:multinomial1}
    \end{align}
    The minimax testing risk and the minimax separation rate can be defined analogously to the definitions given in the Poisson setting. Define \(\mathcal{R}_{\mathcal{M}}(\varepsilon, n, q_0) = \inf_{\varphi}\left\{ P_{q_0}\left\{\varphi = 1\right\} + \sup_{q \in \Pi(q_0, \varepsilon)} P_{q}\left\{\varphi = 0\right\} \right\}\). 

    \begin{definition}
        For any \(\eta \in (0, 1)\), $q_0 \in \Delta_p$, and $n\in\mathbb N$, the local minimax separation rate for (\ref{problem:multinomial0})-(\ref{problem:multinomial1})  is 
        \begin{align*}
            \varepsilon^*_{\mathcal{M}}(q_0) = \varepsilon^*_{\mathcal{M}}(n,q_0,\eta) = \inf \left\{\varepsilon>0: \mathcal{R}_{\mathcal{M}}(\varepsilon, n, q_0) \leq \eta\right\}.
        \end{align*}
    \end{definition}

    Again, emphasis will be placed on characterizing how the local rate $\varepsilon^*_{\mathcal{M}}(n,q_0,\eta)$ tightly depends on $q_0$. It is well known that the multinomial model (\ref{model:multinomial}) is strongly connected to the corresponding Poisson model (\ref{model:poisson}) with rates \(\lambda_j = nq(j)\). The connection can be established through the standard and well-known Poissonization trick \cite{balakrishnan_hypothesis_2019,chhor_sharp_2022,canonne_topics_2022}. However, one cannot immediately derive \(\varepsilon^*_{\mathcal{M}}(q_0)\) from \(\varepsilon^*_{\mathcal{P}}(nq_0)\) due to the shape constraint \(q \in \Delta_p\) in \(\Pi(q_0, \varepsilon)\). This extra geometric structure can be exploited to detect signals of smaller magnitudes, whereas the category rates in the Poisson model (\ref{model:poisson}) need not be related to one another in any way. Hence, \(\varepsilon^*_{\mathcal{M}}\) is worthy of study separate from \(\varepsilon_{\mathcal{P}}^*\).  

    \subsection{Related work}    
    In the multinomial setting, the regime \(n \to \infty\) with \(p = O(1)\) was historically the regime of focus, and the most popular approaches to goodness-of-fit testing were Pearson's \(\chi^2\) test and the likelihood ratio test; extensive theory has been developed as the asymptotic environment is classical. In the high-dimensional multinomial setting, Pearson's test is not rate optimal for detecting \(\ell_2\) separated alternatives. However, a \(\chi^2\)-test without normalization can be shown to achieve the optimal rate \(\sqrt{||q_0^{-\max}||_2/n} + n^{-1}\) where \(q_0^{-\max} \in [0, 1]^{p-1}\) is the vector obtained by removing the largest coordinate of \(q_0\). The argument follows a standard line of reasoning. Perhaps the first non-standard result is Paninski's \cite{paninski_coincidence-based_2008}. Paninski showed that the minimax rate in \(\ell_1\) testing (i.e. total variation) for the multinomial model in the case of uniform null \(q_0 = \left(p^{-1},...,p^{-1}\right)\) is \(1 \land \frac{p^{1/4}}{n}\). Notably, \(\ell_1\) separated alternatives can be successfully detected even when the sample size \(n\) is polynomially smaller than the number of categories. The rate-optimal test relies on the well-known ``birthday paradox'' \cite{paninski_coincidence-based_2008}.
    
    The literature often distinguishes between \textit{global} separation rates---corresponding to the most difficult null distribution within a class---and \textit{local} separation rates. 
    In the multinomial setting, the uniform null is the hardest null distribution for $\ell_1$-separated alternatives, and is associated with a separation rate of order $1 \land \frac{p^{1/4}}{\sqrt{n}}$~\cite{valiant_automatic_2017,chhor_sharp_2022}. 
    In contrast, the easiest null distributions in $\ell_1$ separation are the Dirac distributions of the form $q_0(j) = \mathbbm{1}_{\{j = j_0\}}$ for some $j_0 \in [p]$, whose separation rate is as fast as $\frac{1}{n}$. 
    Consequently, local results can substantially improve global ones and often lead to a much more refined characterization of each null distribution's intrinsic difficulty. 
    In comparison to global results~\cite{hoeffding1965asymptotically, fienberg1979use, batu2000testing, kipnis2023minimax, waggoner2015lp, chan2014optimal}, local results have gained significant interest in recent years~\cite{diakonikolas_new_2016, valiant_automatic_2017, diakonikolas2017near,balakrishnan_hypothesis_2019,chhor_goodness--fit_2023, chhor_sharp_2022,berrett2020locally, dubois2021goodness,lam2022local, blais2019distribution} - see also~\cite{balakrishnan_hypothesis_2018} for an excellent survey.
    
    The paper's main focus will be on deriving local rates, which are generally more involved to obtain than global ones. The case of uniform null is theoretically convenient since the data are homoskedastic under the null; the counts \(\sum_{i=1}^{n} \mathbbm{1}_{\{Y_i  = j\}} \sim \Binomial\left(n, p^{-1}\right)\) all share the same variance. With a different choice of \(q_0 \in \Delta_p\), the data become heteroskedastic under the null, introducing substantial difficulty in establishing minimax rates. Difficulties with heteroskedasticity appears to be a common theme beyond multinomials; indeed, most of the literature on minimax testing in Gaussian models studies the homoscedastic case (for example \cite{ingster_nonparametric_2003,donoho_higher_2004,collier_minimax_2017,spokoiny_adaptive_1996,ingster_multichannel_2003,ingster_detection_2010}). The work addressing the heteroskedastic case is much more limited \cite{laurent_non_2012,chhor2024sparse,hall_innovated_2010}, as is the situation in non-Gaussian heteroskedastic models \cite{ingster_nonparametric_2007,collier_estimating_2018,blanchard2024tight,blanchard2024correlated}.
    
    The first comprehensive local result for \(\ell_1\) testing in the multinomial setting is the seminal result of Valiant and Valiant \cite{valiant_automatic_2017} (see also \cite{balakrishnan_hypothesis_2018,balakrishnan_hypothesis_2019,diakonikolas_new_2016,chhor_sharp_2022}). Their results can be adapted to the Poisson context; following the presentation of \cite{chhor_sharp_2022}, the minimax separation rate they established is
    \begin{equation*}
        \varepsilon^*_{\ell_1}(\mu) \asymp 1 + \sqrt{\left(\sum_{j\leq I} \mu_j^{2/3}\right)^{3/2}} + \sum_{j>I} \mu_j.
    \end{equation*}
    where \(I = \min\left\{ J : \sum_{j > J} \mu_j^2 \leq c\right\}\) for a small constant \(c > 0\). Here, and throughout our discussion elsewhere, we treat \(\eta\) as fixed and, when the context is clear, freely absorb it into the notation \(\asymp\) since our focus in this is on the characterization of the local minimax rate's dependence on the null parameter \(\mu\). Following the terminology of \cite{chhor_sharp_2022}, the index \(I\) can be interpreted as the boundary between the ``bulk'' and ``tail'' portions of \(\{\mu_j\}_{j=1}^{p}\). These monikers are appropriate, since \(\mu_j \leq \sqrt{c}\) for \(j > I\) meaning that the typical value of \(X_j\) is small (i.e. zero or one). A rate-optimal test is a combination of two separate tests for the bulk and tail. A weighted \(\chi^2\) statistic with a specific weight tailored to the \(\ell_1\) norm is used for testing the bulk and a linear statistic is used for testing the tail. 
    
    Chhor and Carpentier \cite{chhor_sharp_2022} generalized this result and characterized the minimax testing rate in \(\ell_t\) norm for all \(t \in [1, 2]\), establishing
    \begin{equation*}
        \varepsilon^*_{\ell_t}(\mu) \asymp 1 + \sqrt{\left(\sum_{j \leq I} \mu_{j}^{\frac{2t}{4-t}}\right)^{\frac{4-t}{2t}}} + \left(\sum_{j > I} \mu_j\right)^{\frac{2-t}{t}}.      
    \end{equation*}
    The bulk and tail contributions depend delicately on the value of \(t\), and a weighted \(\chi^2\) statistic with tailored weights combined with a test designed to detect extreme perturbations was shown to be rate optimal. 

    As Chhor and Carpentier note in their discussion, the case \(t > 2\) is open as the situation appears intrinsically different due to the geometry of the \(\ell_t\) norm placing more emphasis on large perturbations. The geometric effect can be appreciated when comparing \(\ell_1\) to \(\ell_2\). Observe that \(1 + \sqrt{\sum_{j \leq I} \mu_j^2} \asymp 1 + ||\mu||_2\), which is to say that the bulk of \(\mu\) completely determines the \(\ell_2\) separation rate. In contrast, the \(\ell_1\) separation rate involves a nontrivial contribution from the tail; the \(\ell_1\) norm highlights smaller perturbations relatively more than the \(\ell_2\) norm. Moving to the \(\ell_\infty\) norm, the more pronounced emphasis on large perturbations requires a finer understanding of the bulk. As will be established, it will turn out that the decay rate is critical.

    \subsection{Main contributions}
    Our main contributions are the sharp characterizations of the local minimax separation rates in the goodness-of-fit testing problems (\ref{problem:poisson0})-(\ref{problem:poisson1}) and (\ref{problem:multinomial0})-(\ref{problem:multinomial1}). To state the results, define the function \(\Gamma : [0, \infty) \to \R\) with 
    \begin{equation}\label{eqn:Gamma}
        \Gamma(x) = 
        \begin{cases}
            \sqrt{x} &\textit{if } x \leq 1, \\
            \frac{x}{\log(ex)} &\textit{if } x > 1.
        \end{cases}
    \end{equation}
    Results for the two problems are discussed in turn. 
    
    \subsubsection{Testing goodness-of-fit in a Poisson model}\label{section:main_contribution_Poisson}
    Recall it is assumed without loss of generality in the Poisson model (\ref{model:poisson}) that \(\mu_1 \geq \dots \geq \mu_p > 0\). The minimax separation rate for the problem (\ref{problem:poisson0})-(\ref{problem:poisson1}) in the Poisson model (\ref{model:poisson}) will be shown to be
    \begin{equation}\label{rate:poisson}
        \varepsilon_{\mathcal{P}}^*(\mu) \asymp 1 + \max_{1 \leq j \leq p} \mu_j \Gamma\left(\frac{\log(ej)}{\mu_j}\right).
    \end{equation}
    Instead of separate contributions from a ``bulk" and a ``tail" in the rate as identified by \cite{chhor_sharp_2022} in the case of \(\ell_t\) for \(t \in [1, 2]\), the decay rate of the \(\{\mu_{j}\}_{j=1}^{p}\) determines the minimax rate. The involvement of \(\Gamma\) is a direct consequence of the tail of the Poisson distribution and will be commented upon in Section \ref{section:poisson_upper_bound}. Roughly speaking, the \(\Gamma(x) = \sqrt{x}\) behavior for small \(x\) corresponds to the subgaussian part of the tail and the \(\Gamma(x) = \frac{x}{\log(ex)}\) behavior for large \(x\) corresponds to the subpoissonian part. From (\ref{rate:poisson}), the \(j\)th category exhibits a Gaussian contribution when \(\mu_j\) is sufficiently large, which matches the intuition that the \(\Poisson(\lambda)\) distribution is well approximated by \(N(\lambda, \lambda)\) when the rate \(\lambda\) is large. This type of phenomenon has been noted in other Poisson testing problems \cite{arias-castro_sparse_2015,donoho_higher_2022}.

    For example, in the case of large rates \(\mu_j \gtrsim \log p\), the subgaussian part of the tail is always in force and (\ref{rate:poisson}) becomes \(\varepsilon^*(\mu) \asymp \max_{1 \leq j \leq p} \sqrt{\mu_j \log(ej)}\). With large rates, the tail of the \(\Poisson(\mu_j)\) distribution behaves like the tail of the \(N(\mu_j, \mu_j)\) distribution. Indeed, this intuition for the above display is confirmed when recalling the well-known fact \cite{van_handel_spectral_2017,talagrand_upper_2021} that the maximum of the collection of independent \(\left\{N(0, \mu_j)\right\}_{j=1}^{p}\) random variables has the same order as \(\max_{1 \leq j \leq p} \sqrt{\mu_j \log(ej)}\) with high probability. Furthermore, this rate was found to be the minimax separation rate for \(\ell_\infty\) testing in a heteroskedastic Gaussian model \cite{chhor2024sparse}.
    
    As another example, in the case of constant order rates \(\mu_j \asymp 1\), it follows from (\ref{rate:poisson}) that \(\varepsilon^*(\mu) \asymp \frac{\log p}{\log\log p}\). The subpoissonian part of the tail is always in force and determines the rate. Indeed, recall the well-known fact that the maximum of \(p\) independent and identically distributed \(\Poisson(1)\) random variables is almost surely asymptotically equivalent to \(\frac{\log p}{\log\log p}\) as \(p \to \infty\) \cite{rosalsky_remark_1983,rosalsky_acknowledgement_1984,chow_probability_1997}.

    \subsubsection{Testing goodness-of-fit in a multinomial model}\label{section:main_contribution_multinomial}
    We also obtain the minimax separation rate for testing a multinomial distribution (\ref{problem:multinomial0})-(\ref{problem:multinomial1}). Recall it is assumed without loss of generality \(q_0(1) \geq q_0(2) \geq ... \geq q_0(p)\). Furthermore, denote \(q_0^{\max} := \max_{1 \leq j \leq p} q_0(j)\) and \(q_0^{-\max} = (q_0(2),...,q_0(p)) \in [0, 1]^{p-1}\). The minimax rate will be shown to be
    \begin{equation}\label{rate:multinomial}
        \varepsilon_{\mathcal{M}}^*(q_0, n) \asymp \frac{1}{n} + \sqrt{\frac{q_0^{\max}(1-q_0^{\max})}{n}} + \max_{j} q_0^{-\max}(j) \Gamma\left(\frac{\log(ej)}{nq_0^{-\max}(j)}\right). 
    \end{equation}
    It is clear (\ref{rate:multinomial}) is not simply given (after scaling by \(n^{-1}\)) by the Poisson rate (\ref{rate:poisson}) with \(\lambda_j = nq_0(j)\). Despite the tight connection between Poissons and multinomials, the rates are not in correspondence due to the shape constraint \(q \in \Delta_p\) in \(\Pi\); no such shape constraint affects \(\Lambda\). The impact of the simplex geometry is most evident in the extremal case \(q_0 = (1, 0,...,0) \in \Delta_p\), in which case (\ref{rate:multinomial}) is of order \(\frac{1}{n}\) whereas (\ref{rate:poisson}) is of order (after rescaling) \(\frac{1}{\sqrt{n}}\). The simplex's influence on fundamental testing limits has been noted elsewhere \cite{chhor_sharp_2022,bhattacharya_sparse_2024}. 
    
    \subsection{Notation}\label{section:notation}
    The following notation will be used throughout the paper. For \(p \in \mathbb{N}\), let \([p] := \{1,...,p\}\). For \(a, b \in \R\), denote \(a \vee b := \max\{a, b\}\) and \(a \wedge b = \min\{a , b\}\). For any $x \in \R$, define $x_+ = x \lor 0$. Denote \(a \lesssim b\) to mean there exists a universal constant \(C > 0\) such that \(a \leq C b\). The expression \(a \gtrsim b\) means \(b \lesssim a\). Further, \(a \asymp b\) means \(a \lesssim b\) and \(b \lesssim a\). When discussing asymptotics, given real-valued functions \(f\) and \(g\), we say \(f \sim g\) as \(x \to \infty\) if \(\lim_{x \to \infty} \frac{f(x)}{g(x)} = 1\). The same notation is used when taking asymptotics differently, e.g. \(x \to 0\) or along natural numbers; how we pass to the limit is typically clear from the context and thus not explicitly stated. The symbol \(\langle \cdot, \cdot \rangle\) denotes the usual inner product in Euclidean space. For \(v \in \R^p\), we write \(||v||_\infty := \max_{1 \leq j \leq p} |v_j|\). The total variation distance between two probability measures \(P\) and \(Q\) on a measurable space \((\mathcal{X}, \mathcal{A})\) is defined as \(\dTV(P, Q) := \sup_{A \in \mathcal{A}} |P(A) - Q(A)|\). The product measure on the product space is denoted as \(P \otimes Q\). If \(Q\) is absolutely continuous with respect to \(P\), the \(\chi^2\) divergence is defined as \(\chi^2(Q \,||\, P) := \int_{\mathcal{X}} \left(\frac{dQ}{dP} - 1\right)^2 \, dP\).

    \section{Minimax testing rates in the Poisson model}\label{section:poisson}
    In this section, we study the problem (\ref{problem:poisson0})-(\ref{problem:poisson1}) in the Poisson model (\ref{model:poisson}). Define the function \(h : [-1, \infty) \to \R\) with \(h(-1) = 1\) and \(h(x) = (1+x)\log(1+x) - x\). Let \(h^{-1}\) denote the inverse of the function \(h\) restricted to \([0, \infty)\).

    \subsection{Upper bound}\label{section:poisson_upper_bound}
    A natural idea to detect signals separated in \(\ell_\infty\) norm is to examine the maximal deviation from the null. Define the test 
    \begin{equation}\label{test:max}
        \varphi = \mathbbm{1}\left\{||X - \mu||_\infty > \max_{1 \leq j \leq p} \mu_j h^{-1}\left(\frac{\log(C' j^2)}{\mu_j}\right)\right\}.
    \end{equation}
    Here, \(C'\) is a constant to be set to achieve a desired level of testing error. The following theoretical guarantee is available.

    \begin{theorem}\label{thm:upper_bound}
        If \(\eta \in (0, 1)\), there exists \(C',C_\eta > 0\) depending only on \(\eta\) such that
        \begin{equation*}
            P_{\mu}\left\{ \varphi = 1 \right\} + \sup_{\lambda \in \Lambda(\mu, C_\eta \psi)} P_{\lambda}\left\{\varphi = 0\right\} \leq \eta
        \end{equation*}
        where \(\varphi\) is the test given in (\ref{test:max}) and \(\psi = 1 + \max_{1 \leq j \leq p} \mu_j h^{-1}\left(\frac{\log(ej)}{\mu_j}\right).\)
    \end{theorem}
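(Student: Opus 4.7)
The plan is to use sharp Poisson concentration expressed through the rate function $h$, pairing a union bound for Type I error with a reverse-triangle argument and a short case analysis for Type II error.

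For Type I, recall the Chernoff--Cram\'er inequalities for $Y \sim \Poisson(\mu)$: for $t \geq 0$,
\begin{equation*}
P(Y - \mu \geq t) \leq \exp\bigl(-\mu h(t/\mu)\bigr) \quad\text{and}\quad P(\mu - Y \geq t) \leq \exp\bigl(-\mu h(-t/\mu)\bigr).
\end{equation*}
By the very definition of $\tau_j := \mu_j h^{-1}\bigl(\log(C'j^2)/\mu_j\bigr)$, these give $P_\mu(|X_j - \mu_j| \geq \tau_j) \leq 2/(C'j^2)$. Since $\{\varphi = 1\}$ forces $|X_j - \mu_j| > \tau_j$ for some $j$, a union bound yields $P_\mu(\varphi = 1) \leq 2\sum_{j \geq 1} (C'j^2)^{-1} = \pi^2/(3C') \leq \eta/2$ once $C' = C'(\eta)$ is taken sufficiently large.

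For Type II, first observe that the rejection threshold $\tau := \max_j \tau_j$ satisfies $\tau \leq K\psi$ for some $K = K(\eta)$. Indeed, $\log(C'j^2) \leq K_0 \log(ej)$ with $K_0 = K_0(C')$ for all $j \geq 1$, and from the asymptotics $h^{-1}(y) \asymp \sqrt{y}$ for $y \leq 1$ and $h^{-1}(y) \asymp y/\log(ey)$ for $y \geq 1$, one checks that $h^{-1}(K_0 y) \lesssim h^{-1}(y)$ uniformly on $[0, \infty)$, with constant depending on $K_0$. Now fix $\lambda \in \Lambda(\mu, C_\eta\psi)$ and pick $j^*$ with $a := |\lambda_{j^*} - \mu_{j^*}| \geq C_\eta\psi$. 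The event $\{\varphi = 0\}$ forces $|X_{j^*} - \mu_{j^*}| \leq \tau$, whence the reverse triangle inequality gives $|X_{j^*} - \lambda_{j^*}| \geq a - \tau \geq a/2$ provided $C_\eta \geq 2K$. Applying the appropriate one-sided Poisson bound (lower tail if $\lambda_{j^*} > \mu_{j^*}$, upper tail otherwise), combined with the Bernstein-form inequality $\mu h(\pm t/\mu) \gtrsim t^2/(\mu + t)$ on the relevant range, yields
\begin{equation*}
P_\lambda(\varphi = 0) \leq 2\exp\!\left(-c\,\frac{a^2}{\mu_{j^*} + a}\right)
\end{equation*}
for a universal $c > 0$.

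A short dichotomy finishes the argument. If $a \geq \mu_{j^*}$, the exponent is of order $a \geq C_\eta\psi \geq C_\eta$, so $C_\eta \gtrsim \log(2/\eta)$ suffices. Otherwise $a < \mu_{j^*}$, and the inequality $a \geq C_\eta \psi_{j^*} = C_\eta\mu_{j^*} h^{-1}(\log(ej^*)/\mu_{j^*})$ forces $h^{-1}(\log(ej^*)/\mu_{j^*}) \leq 1/C_\eta$; taking $C_\eta$ large places $\log(ej^*)/\mu_{j^*}$ deep in the subgaussian regime, where $\psi_{j^*} \asymp \sqrt{\mu_{j^*} \log(ej^*)}$, so $a^2/\mu_{j^*} \gtrsim C_\eta^2 \log(ej^*) \geq C_\eta^2$. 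Thus a sufficiently large $C_\eta = C_\eta(\eta)$ drives the Type II error below $\eta/2$. The main obstacle is precisely this uniform transition between the subgaussian and subpoissonian portions of the Poisson tail; phrasing the threshold in terms of $h^{-1}$ makes the passage automatic, whereas working with explicit asymptotics would demand separate treatment for each rate regime.
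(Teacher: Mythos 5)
Your proof is correct. The skeleton matches the paper's: union bound plus Bennett's inequality for the Type I error (identical, including the choice of per-coordinate thresholds and the convergent series in $C'$), then fixing the coordinate $j^*$ with $|\lambda_{j^*}-\mu_{j^*}| \geq C_\eta\psi$, comparing the rejection threshold to a fraction of $C_\eta\psi$, and bounding the probability that $X_{j^*}$ stays within the threshold of $\mu_{j^*}$. Where you diverge is the tool for the Type II error: you invoke exponential Poisson concentration under the alternative (one-sided Bennett in Bernstein form, with the variance proxy $\lambda_{j^*} \leq \mu_{j^*}+a$ absorbed into the denominator $\mu_{j^*}+a$), followed by a dichotomy on $a \gtrless \mu_{j^*}$ that pushes the second case into the subgaussian regime of $h^{-1}$. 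The paper instead uses plain Chebyshev, which suffices because only a fixed error level $\eta/2$ is required rather than a decaying one: the bound $\frac{4}{C_\eta\psi} + \frac{4\mu_{j^*}}{C_\eta^2\psi^2}$ is made small using $\psi \geq 1$ and the single observation $h^{-1}(x)\gtrsim\sqrt{x}$ (hence $\psi^2 \gtrsim \mu_{j^*}$), which replaces your case analysis entirely. Your route buys exponentially small Type II error in $C_\eta$ (useful for the sharp-constant asymptotics elsewhere in the paper), at the cost of the extra bookkeeping in the threshold comparison $h^{-1}(K_0 y)\lesssim_{K_0} h^{-1}(y)$ and the regime split; both of those steps are carried out correctly, so there is no gap.
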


    By Lemma \ref{lemma:h_inverse}, the functions \(h^{-1}\) and \(\Gamma\) are equivalent up to universal constants and so the test \(\varphi\) achieves the rate (\ref{rate:poisson}). The form \(\max_{1 \leq j \leq p} \mu_j h^{-1}\left(\frac{\log(ej)}{\mu_j}\right)\) has a straightforward explanation, and follows from understanding the data's behavior under the null. The function \(h\) appears in the tail of the Poisson distribution; Bennett's inequality (Lemma \ref{lemma:Bennett}) gives the exponential inequality \(P\left\{\left|\Poisson(\rho) - \rho\right| \geq \rho u\right\} \leq 2\exp\left(-\rho h(u)\right)\) for \(\rho, u > 0\). Under the null, union bound gives 
    \begin{align*}
        P_\mu\left\{\varphi = 1\right\} \leq \sum_{j=1}^{p} P_\mu\left\{|X_j - \mu_j| \geq u_j\right\} \leq \sum_{j=1}^{p} 2\exp\left(-\mu_jh\left(h^{-1}\left(\frac{\log(C'j^2)}{\mu_j}\right)\right)\right) = \sum_{j=1}^{p} \frac{2}{C'j^2},
    \end{align*}
    where \(u_{j} = \mu_jh^{-1}\left(\frac{\log(C'j^2)}{\mu_j}\right)\). Since \(\sum_{j=1}^{\infty} \frac{1}{j^2} < \infty\), the constant \(C'\) can be picked to ensure the Type I error is smaller than the desired level. This calculation essentially shows that under the null 
    \begin{equation*}
        ||X - \mu||_{\infty} \lesssim \max_{1 \leq j \leq p} \mu_j \Gamma\left(\frac{\log(ej)}{\mu_j}\right),
    \end{equation*}
    with high probability. Consequently, it is intuitive that a signal with an \(\ell_\infty\) norm of larger order is detectable. In actuality, the signal should have magnitude of at least \(1 + \max_{1 \leq j \leq p} \mu_j \Gamma\left(\frac{\log(ej)}{\mu_j}\right)\). At least constant order signal is necessary even in a simple one-dimensional testing problem, namely \(H_0: Y \sim \Poisson(\rho)\) versus \(H_1 : Y \sim \Poisson(\rho + \delta)\). To see why, consider that if \(\rho\) is very small and \(\delta\) is also small, then \(Y = 0\) with high probability under both the null and alternative; this is a consequence of an intrinsic feature of the Poisson distribution.

    \subsection{Lower bound}\label{section:poisson_lower_bound}
    The rate (\ref{rate:poisson}) has two pieces which we prove separately. It is straightforward to show the constant part of (\ref{rate:poisson}) by a two-point construction. The lower bound argument proceeds by examining the testing problem
    \begin{align}
        H_0 &: \lambda = \mu,\label{eq:poisson_trivial_H0}\\
        H_1 &: \lambda = \mu',\label{eq:poisson_trivial_H1}
    \end{align}
    where \(\mu' = (\mu_1 + c, \mu_2,...,\mu_p)\). Note the separation \(||\mu' - \mu||_\infty = c\) is of constant order. The total variation distance between the distributions \(P_\mu\) and \(P_{\mu'}\) can be explicitly bounded, and it turns out the two hypotheses cannot be separated provided \(c\) is sufficiently small.

    \begin{proposition}\label{prop:constant_lower_bound}
        If \(\eta \in (0, 1)\), there exists \(c_\eta > 0\) depending only on \(\eta\) such that \(\mathcal{R}_{\mathcal{P}}(c_\eta, \mu) \geq \eta\). 
    \end{proposition}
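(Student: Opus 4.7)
The plan is to apply Le Cam's two-point method with the alternative $\mu' = (\mu_1 + c, \mu_2, \ldots, \mu_p)$ highlighted in the problem statement, for a constant $c > 0$ to be chosen depending only on $\eta$. Since $\|\mu' - \mu\|_\infty = c$, we have $\mu' \in \Lambda(\mu, c)$, and passing from the composite alternative to this single one yields the standard Neyman--Pearson reduction
\begin{equation*}
\mathcal{R}_{\mathcal{P}}(c, \mu) \;\geq\; \inf_{\varphi} \left\{ P_{\mu}(\varphi = 1) + P_{\mu'}(\varphi = 0) \right\} \;=\; 1 - \dTV(P_\mu, P_{\mu'}).
\end{equation*}
It therefore suffices to produce a $c = c_\eta > 0$ such that $\dTV(P_\mu, P_{\mu'}) \leq 1 - \eta$ uniformly over every admissible $\mu$.

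Since $P_\mu$ and $P_{\mu'}$ are product measures that agree in coordinates $2,\ldots,p$, independence reduces the task to bounding $\dTV\bigl(\Poisson(\mu_1), \Poisson(\mu_1 + c)\bigr)$ uniformly in $\mu_1 > 0$. The plan is to invoke the infinite divisibility of the Poisson law: let $Z \sim \Poisson(\mu_1)$ and $W \sim \Poisson(c)$ be independent, so that $Z + W \sim \Poisson(\mu_1 + c)$. The pair $(Z, Z + W)$ is then a coupling of the two target marginals, and $\{Z \neq Z + W\} = \{W \geq 1\}$, so the coupling inequality gives
\begin{equation*}
\dTV\bigl(\Poisson(\mu_1), \Poisson(\mu_1 + c)\bigr) \;\leq\; P(W \geq 1) \;=\; 1 - e^{-c},
\end{equation*}
which is uniform in $\mu_1$. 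Chaining this with the previous display yields $\mathcal{R}_{\mathcal{P}}(c, \mu) \geq e^{-c}$ for every admissible $\mu$, so taking $c_\eta = \log(1/\eta)$ (or any smaller positive constant) completes the proof.

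I do not expect any genuine obstacle: the argument is a clean two-point Le Cam reduction. The one subtle point worth flagging is that a direct $\chi^2$-divergence calculation gives $\chi^2(P_{\mu'} \,\|\, P_\mu) = e^{c^2/\mu_1} - 1$, which blows up as $\mu_1 \to 0$; this is why one should work with total variation via the Poisson convolution coupling rather than the $\chi^2$ route, as the former produces a bound that is uniform over all admissible null parameters $\mu$ and in particular handles arbitrarily small $\mu_1$.
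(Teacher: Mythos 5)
Your proof is correct, and it follows the same two-point skeleton as the paper: the identical alternative $\mu' = (\mu_1 + c, \mu_2, \ldots, \mu_p)$, the Neyman--Pearson reduction $\mathcal{R}_{\mathcal{P}}(c,\mu) \geq 1 - \dTV(P_\mu, P_{\mu'})$, and the reduction to a single-coordinate total variation bound via the product structure. The only difference is how that one-dimensional bound is obtained: the paper invokes its Lemma on Poisson total variation, which goes through a Hellinger computation and gives $\dTV(\Poisson(\mu_1), \Poisson(\mu_1+c)) \leq \sqrt{c}$ (hence $c_\eta = (1-\eta)^2$), whereas you use infinite divisibility and the coupling inequality, writing $\Poisson(\mu_1+c)$ as $Z + W$ with $W \sim \Poisson(c)$ independent of $Z \sim \Poisson(\mu_1)$, which gives the slightly sharper and arguably more elementary bound $1 - e^{-c}$ (hence $c_\eta = \log(1/\eta)$). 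Both bounds are uniform in $\mu_1$, which is the essential point; your closing remark about why a $\chi^2$ computation would fail as $\mu_1 \to 0$ is accurate and explains why both the paper and you stay with total variation here.
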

            
    It remains to prove the non-constant part of the lower bound.  Note we can assume it is greater than constant order else there is nothing to prove. 
    \begin{theorem}\label{thm:lower_bound}
        Suppose 
        \begin{equation}\label{eqn:psi_large}
            \max_{1 \leq j \leq p} \mu_j h^{-1}\left(\frac{\log(ej)}{\mu_j}\right) \geq 1. 
        \end{equation}
        If \(\eta \in (0, 1)\), then there exists \(c_\eta > 0\) depending only on \(\eta\) such that 
        \begin{equation*}
            \mathcal{R}_{\mathcal{P}}\left(c_\eta \max_{1 \leq j \leq p} \mu_j h^{-1}\left(\frac{\log(ej)}{\mu_j}\right), \mu \right) \geq \eta.
        \end{equation*} 
    \end{theorem}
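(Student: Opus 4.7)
The plan is a Le Cam two-point argument using a prior uniform over single-coordinate perturbations, combined with a reduction to an auxiliary homoskedastic problem. Let $j^* \in \argmax_{1 \leq j \leq p} \mu_j h^{-1}(\log(ej)/\mu_j)$, write $\mu_0 := \mu_{j^*}$ and $p_0 := j^*$, and choose the target separation $\varepsilon := c_\eta \mu_0 h^{-1}(\log(ep_0)/\mu_0)$ for a small constant $c_\eta > 0$. Take $\pi$ to be the uniform prior on $\{\mu + \varepsilon e_j : j \in [p_0]\}$, which lies entirely in $\Lambda(\mu, \varepsilon)$ since each atom satisfies $\|\mu + \varepsilon e_j - \mu\|_\infty = \varepsilon$, and let $\bar P = p_0^{-1}\sum_{j=1}^{p_0} P_{\mu + \varepsilon e_j}$ denote the corresponding mixture. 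The standard Le Cam inequality $\mathcal{R}_{\mathcal{P}}(\varepsilon, \mu) \geq 1 - \dTV(P_\mu, \bar P)$ reduces the task to proving $\dTV(P_\mu, \bar P) \leq 1 - \eta$.

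The main reduction exploits the Poisson convolution identity to replace the heteroskedastic null by a homoskedastic one. Since $\mu_i \geq \mu_0$ for every $i \leq p_0$, one has $\Poisson(\mu_i) = \Poisson(\mu_0) \ast \Poisson(\mu_i - \mu_0)$; concretely, drawing $Y_1, \ldots, Y_{p_0}$ iid from $\Poisson(\mu_0)$ together with independent $Z_i \sim \Poisson(\mu_i - \mu_0)$ for $i \leq p_0$ and $X_i \sim \Poisson(\mu_i)$ for $i > p_0$, the vector $X = (Y_1 + Z_1, \ldots, Y_{p_0} + Z_{p_0}, X_{p_0+1}, \ldots, X_p)$ is distributed as $P_\mu$, and replacing $Y_j$ by a draw from $\Poisson(\mu_0 + \varepsilon)$ yields $X \sim P_{\mu + \varepsilon e_j}$ for any $j \leq p_0$. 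The data processing inequality applied to this Markov kernel from $Y$ to $X$ gives $\dTV(P_\mu, \bar P) \leq \dTV(Q_{\mu_0}, \bar Q)$, where $Q_{\mu_0} = \Poisson(\mu_0)^{\otimes p_0}$ and $\bar Q$ is the corresponding mixture of homoskedastic single-coordinate perturbations. The heteroskedastic lower bound thereby reduces to an analogous bound in the homoskedastic Poisson model at rate $\mu_0$ across $p_0$ coordinates.

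In the auxiliary problem, the density ratio is $S(y) = p_0^{-1}\sum_{i=1}^{p_0} r(y_i)$ with $r(k) = (1 + \varepsilon/\mu_0)^k e^{-\varepsilon}$, and the Ingster--Suslina identity yields the exact formula $\chi^2(\bar Q \,\|\, Q_{\mu_0}) = (e^{\varepsilon^2/\mu_0} - 1)/p_0$. In the Gaussian regime $\log(ep_0)/\mu_0 \leq 1$, the bound $\dTV \leq \tfrac{1}{2}\sqrt{\chi^2}$ already suffices: it gives $\dTV \leq 1 - \eta$ whenever $\varepsilon \leq c_\eta\sqrt{\mu_0 \log(ep_0)}$, which matches $c_\eta \mu_0 h^{-1}(\log(ep_0)/\mu_0)$ since $h^{-1}(y) \asymp \sqrt{y}$ for $y \leq 1$ by Lemma \ref{lemma:h_inverse}. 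In the sub-Poissonian regime $\log(ep_0)/\mu_0 > 1$, however, the raw chi-squared blows up exponentially because $r$ is heavy-tailed, so a truncation argument is required. The strategy is to set $T := \mu_0 (1 + 2 h^{-1}(C_\eta \log(ep_0)/\mu_0))$; because $h$ is convex with $h(0) = 0$, one has $h(2x) \geq 2 h(x)$, so Bennett's inequality ensures that $\Pr(\max_i Y_i > T) \leq \eta/4$ under both the null and every atom of the prior, allowing the tail event to be absorbed into the TV budget via the triangle inequality.

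The main obstacle will be a sharp control of the truncated TV on the bulk event $\{\max_i Y_i \leq T\}$ in this sub-Poissonian regime. A direct second-moment bound on the truncated ratio $r_T(k) := r(k)\mathbf{1}_{k \leq T}$ gives the exact representation $E[r_T(Y)^2] = e^{\varepsilon^2/\mu_0}\, P_{\alpha}(W \leq T)$ with $\alpha = (\mu_0 + \varepsilon)^2/\mu_0$ and $W \sim \Poisson(\alpha)$, so the task reduces to reconciling the exponential growth of $e^{\varepsilon^2/\mu_0}$ against the sharp Poisson lower-tail decay of $P_\alpha(W \leq T)$ at the level $T \ll \alpha$. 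Carrying this out will require carefully exploiting the defining identity $h(h^{-1}(y)) = y$ together with fine asymptotics of $h$ and $h^{-1}$ in order to show that, for $c_\eta$ small enough, $E|S - 1|$ is bounded away from $2$ on the bulk event; once this technical core is in place, the triangle decomposition delivers $\dTV(Q_{\mu_0}, \bar Q) \leq 1 - \eta$ and the claimed lower bound follows.
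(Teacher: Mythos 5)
Your proposal follows essentially the same route as the paper's proof: a uniform single-coordinate-bump prior over the first $j^*$ coordinates, reduction of the heteroskedastic null to a homoskedastic one via the Poisson convolution identity and data processing (Propositions \ref{prop:poisson_flattening}--\ref{prop:general_flattening}), and a truncated second-moment argument whose key quantity is exactly $e^{\varepsilon^2/\mu_0}\,P\left\{\Poisson\left((\mu_0+\varepsilon)^2/\mu_0\right)\le T\right\}$ as in Lemma \ref{lemma:conditional_second_moment}. The ``technical core'' you defer is precisely what the paper carries out via the Bennett cancellation (Lemma \ref{lemma:bennett_cancellation}) and the case analysis in Proposition \ref{prop:chisquare_bound}, and it does go through for $c_\eta$ sufficiently small, so your plan matches the paper's argument.
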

    A standard lower bound argument following the minimax testing literature would involve the second moment method. A prior \(\pi\) supported on \(\Lambda\left(\mu, c\max_{1 \leq j \leq p} \mu_j h^{-1}\left(\frac{\log(ej)}{\mu_j}\right)\right)\) would be constructed and the testing risk would be lower bounded by \(1-\dTV(P_\pi, P_\mu)\) where \(P_\pi = \int P_\lambda \, \pi(d\lambda)\) is the mixture induced by \(\pi\). Since \(\dTV(P_\pi, P_\mu) \leq \frac{1}{2}\sqrt{\chi^2(P_\pi || P_\mu)}\), it would suffice to bound the \(\chi^2\) divergence, i.e. the second moment of the likelihood ratio \(\frac{dP_\pi}{dP_\mu}\) under the null. 

    It turns out such an argument would only deliver the subgaussian part \(\max_{1 \leq j \leq p} \sqrt{\mu_j \log(ej)}\) of the lower bound, completely missing the subpoissonian regime. The conditional second moment method is needed to get the subpoissonian part. In the usual unconditional approach, a problematic small probability event can cause the \(\chi^2\) divergence to blow up even though the total variation distance between \(P_\mu\) and \(P_\pi\) is small. As its name suggests, the conditional second moment method involves conditioning on a high probability event to exclude the problematic part of the probability space. In the literature, this truncation strategy is typically used only to pin down the sharp constant as the usual second moment method typically delivers the rate. Notably, this is not the case here. 

    Additionally, a key observation behind our construction of a prior \(\pi\) is that the heteroskedastic problem can be reduced to a homoskedastic problem. To elaborate, let 
    \begin{equation}\label{def:jstar}
        j^* = \arg\max_{1 \leq j \leq p} \mu_j h^{-1}\left(\frac{\log(ej)}{\mu_j}\right). 
    \end{equation}
    Suppose we were faced with data \((Y_1,...,Y_{j^*}) \sim \bigotimes_{j=1}^{j^*} \Poisson(\lambda_j)\) and the testing problem 
    \begin{align}
        H_0 &: \lambda_j = \mu_{j^*} \text{ for all } 1 \leq j \leq j^*, \label{problem:homo0}\\
        H_1 &: \max_{1 \leq j \leq j^*} |\lambda_j - \mu_{j^*}| \geq \varepsilon. \label{problem:homo1}
    \end{align}
    Then our target lower bound would be \(\max_{1 \leq j \leq j^*} \mu_{j^*} h^{-1}\left(\frac{\log(ej)}{\mu_{j^*}}\right) = \mu_{j^*} h^{-1}\left(\frac{\log(ej^*)}{\mu_{j^*}}\right)\), where the equality follows from \(h^{-1}\) being an increasing function. Namely, the homoskedastic problem (\ref{problem:homo0})-(\ref{problem:homo1}) has the same putative rate as our original heteroskedastic problem (\ref{problem:poisson0})-(\ref{problem:poisson1}). 
    
    This correspondence suggests using a prior in the lower bound argument which could essentially be applied to both problems. The following construction implements this intuition. To lower bound the testing risk in (\ref{problem:poisson0})-(\ref{problem:poisson1}), we consider the Bayes testing problem \(H_0: \lambda = \mu\) versus \(H_1 : \lambda \sim \pi\) where \(\pi\) is the prior in which a draw \(\lambda \sim \pi\) is obtained by drawing \(J \sim \Uniform(\left\{1,...,j^*\right\})\) and setting 
    \begin{equation}
        \lambda_j = 
        \begin{cases}
            \mu_j + c\psi &\textit{if } j = J, \\
            \mu_j &\text{otherwise},
        \end{cases}\label{eq_def_prior_poisson}
    \end{equation}
    for \(1 \leq j \leq p\), where 
    \begin{equation}\label{def:Poisson_psi}
        \psi = \mu_{j^*} h^{-1}\left(\frac{\log(Cj^*)}{\mu_{j^*}}\right).
    \end{equation}
    Here, \(C \geq e\) and \(c \geq 0\) are constants to be set. Notably, \(\pi\) places the perturbation only in one of the first \(j^*\) coordinates, just as one would stipulate in a prior construction applicable to the problem (\ref{problem:homo0})-(\ref{problem:homo1}). This prior \(\pi\) is used to prove Theorem \ref{thm:lower_bound}.

    \subsection{Interpretation of the subgaussian and subpoissonian regimes}\label{subsec:intuition_SG_SP_regimes}
    
    We provide here an interpretation of the subgaussian and subpoissonian regimes in the Poisson model. 

    \textit{Subgaussian regime.}
    We recall that this regime is defined by the condition $\mu_{j^*} \geq C \log(ej^*)$ for some sufficiently large constant $C>0$. 
    This condition implies a noteworthy property of the data in this regime. 
    Specifically, with high probability under the null hypothesis $H_0$, all coordinates $j \in [j^*]$ should be observed at least once (meaning $X_j \geq 1$ for all $j \in [j^*]$). 
    To demonstrate this, the probability under $H_0$ that at least one coordinate in $[j^*]$ is unobserved can be bounded as follows
    \begin{align*}
        P_{\mu}\left\{\exists j \in [j^*]: X_j = 0\right\} \leq \sum_{j= 1}^{j^*} P_\mu\left\{X_j = 0\right\} = \sum_{j= 1}^{j^*} e^{-\mu_j} \leq j^* e^{-\mu_{j^*}} \leq \exp(-\mu_{j^*} \!+ \log(j^*)) \leq {(ej^*)}^{-C+1},
    \end{align*}
    which can be made arbitrarily small provided $C$ is sufficiently large. 
    This property offers partial insight as to why the normal approximation is valid in the subgaussian regime; the Poisson distribution most severely deviates from a normal distribution for large values, due to its subexponential tail, and for small values due to the restriction of being nonnegative. The result above demonstrates that, in the subgaussian regime, the nonnegativity constraint never plays a role, as all coordinates are observed with high probability.

    \textit{Subpoissonian regime.} 
    In contrast, the subpoissonian regime is characterized by the condition $\mu_{j^*} \leq c\log(ej^*)$. 
    For some sufficiently small $c>0$, the data exhibits the opposite behavior: with high probability under the null hypothesis $H_0$, at least one coordinate $j \in [j^*]$ is unobserved (i.e. $X_j = 0$ for some $j \in [j^*]$), as Lemma~\ref{lemma:intuition_subpoissonian_regime} below demonstrates. 
    In the subpoissonian regime, the nonnegativity constraint is activated, leading to subpoissonian rather than subgaussian concentration of the data.
    \begin{lemma}\label{lemma:intuition_subpoissonian_regime}
    For any constant $c'>0$, there exists a small enough constant $c>0$ such that the following holds.
        Letting $j^* = \argmax_{1 \leq j \leq p}\mu_{j} h^{-1}\left(\frac{\log(ej)}{\mu_j}\right)$, assume $\mu_{j^*} \leq c\log(ej^*)$ and assume the constant rate does not dominate, that is $\frac{\log(j^*)}{\log(\frac{e\log(j^*)}{\mu_{j^*}})} \geq 1$. Then
        \begin{align*}
            P_{\mu}\left\{\forall j \in [j^*]: X_j \geq 1\right\} \leq c'.
        \end{align*}
            \end{lemma}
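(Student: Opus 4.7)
The starting point is independence: under $P_\mu$, $P_\mu\{\forall j\in[j^*]:X_j\geq 1\}=\prod_{j=1}^{j^*}(1-e^{-\mu_j})\leq\exp\bigl(-\sum_{j=1}^{j^*}e^{-\mu_j}\bigr)$ via $1-x\leq e^{-x}$. The task therefore reduces to showing $\sum_{j=1}^{j^*}e^{-\mu_j}\gtrsim\log(1/c')$ when $c$ is taken small enough, which I would attack using the argmax structure of $j^*$.

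For each $j\leq j^*$, the argmax condition gives $\mu_j h^{-1}(\log(ej)/\mu_j)\leq \psi^*:=\mu_{j^*}h^{-1}(\log(ej^*)/\mu_{j^*})$. A direct differentiation using $(h^{-1})'(y)=1/\log(1+h^{-1}(y))$ shows that $\mu\mapsto \mu h^{-1}(L/\mu)$ is strictly increasing in $\mu$, so this inequality translates to $\mu_j\leq M_j$ where $M_j$ is defined implicitly by $M_j h^{-1}(\log(ej)/M_j)=\psi^*$. Invoking Lemma~\ref{lemma:h_inverse} (the equivalence $h^{-1}\asymp \Gamma$), in the subpoissonian regime one obtains $M_j\asymp\log(ej)(ej)^{-1/\psi^*}$ up to universal constants. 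On the range $j\in[\lceil j^*/2\rceil,j^*]$, both $\log(ej)/\log(ej^*)$ and $(j^*/j)^{1/\psi^*}$ are bounded by universal constants (the latter using the hypothesis $\psi^*\gtrsim 1$), so $M_j\leq C_0\mu_{j^*}\leq C_0 c\log(ej^*)$. Hence $e^{-\mu_j}\geq (ej^*)^{-C_0 c}$ for these $j$, which yields
\[
\sum_{j=1}^{j^*}e^{-\mu_j}\;\gtrsim\;(j^*)^{1-C_0 c}.
\]

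To close the argument I would translate the ``constant rate does not dominate'' hypothesis $\log(j^*)/\log(e\log(j^*)/\mu_{j^*})\geq 1$ into the equivalent form $j^*\mu_{j^*}\geq e\log(j^*)$; combined with $\mu_{j^*}\leq c\log(ej^*)$, this forces $j^*\gtrsim 1/c$, so that $j^*\to\infty$ as $c\to 0$ (the degenerate case $j^*=1$, where $\log(j^*)=0$, is handled separately by $P_\mu\{X_1\geq 1\}\leq\mu_1\leq c$). Taking $c$ small enough that $C_0 c\leq 1/2$ then gives $\sum_{j=1}^{j^*}e^{-\mu_j}\gtrsim c^{-1/2}$, and consequently $P_\mu\{\forall j\in[j^*]:X_j\geq 1\}\leq \exp(-c_1/\sqrt{c})\leq c'$ for $c$ small enough depending only on $c'$.

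The main technical obstacle is verifying the bound $M_j\leq C_0\mu_{j^*}$ uniformly over $j\in[\lceil j^*/2\rceil,j^*]$ with a universal constant $C_0$. This requires (i) checking that $M_j$ remains in the subpoissonian branch of $\Gamma$ throughout the range, which is precisely where the assumption $\mu_{j^*}\leq c\log(ej^*)$ together with smallness of $c$ is used, and (ii) carefully quantifying the $j$-dependence of $M_j$ given that $\log(ej)$ only varies by at most $\log 2$ on the range, which is where the hypothesis $\psi^*\gtrsim 1$ (controlling $(j^*/j)^{1/\psi^*}\leq 2^{1/\psi^*}\leq 2$) becomes indispensable.
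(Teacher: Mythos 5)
Your skeleton is the same as the paper's: bound the probability by $\exp\bigl(-\sum_{j\leq j^*}e^{-\mu_j}\bigr)$, use the argmax property together with monotonicity of $\mu\mapsto\mu h^{-1}(L/\mu)$ to force $\mu_j$ to be small for $j$ near $j^*$, and then note that $j^*\to\infty$ as $c\to 0$ so the sum diverges. However, the key quantitative step is not justified as written. From Lemma~\ref{lemma:h_inverse} you only know $c_1\,y/\log(ey)\leq h^{-1}(y)\leq c_2\,y/\log(ey)$ with unspecified universal constants $c_1<c_2$. Solving $M_j h^{-1}(\log(ej)/M_j)=\psi^*$ with these bounds gives
\[
e\log(ej)\,(ej)^{-c_2/\psi^*}\;\lesssim\; M_j\;\lesssim\; e\log(ej)\,(ej)^{-c_1/\psi^*},
\]
i.e.\ the unknown constants land \emph{in the exponent}, not as multiplicative factors. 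So the claim ``$M_j\asymp\log(ej)(ej)^{-1/\psi^*}$ up to universal constants'' is false as an equivalence up to constants, and the comparison $M_j\leq C_0\mu_{j^*}$ on $j\in[\lceil j^*/2\rceil,j^*]$ does not follow: the mismatch contributes a factor of order $(ej^*)^{(c_2-c_1)/\psi^*}=\exp\bigl((c_2-c_1)\log(ej^*)/\psi^*\bigr)$, and since $\log(ej^*)/\psi^*\asymp\log\bigl(\log(ej^*)/\mu_{j^*}\bigr)\gtrsim\log(1/c)$ in the subpoissonian regime, this factor blows up like $(1/c)^{c_2-c_1}$ as $c\to0$. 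Consequently the asserted bound $e^{-\mu_j}\geq(ej^*)^{-C_0c}$, which drives your final estimate $\sum_j e^{-\mu_j}\gtrsim(j^*)^{1-C_0c}$, is not established.

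The gap is repairable, because the underlying fact is true: writing the defining equation exactly as $M_j\,h(\psi^*/M_j)=\log(ej)$ and using a monotonicity/mean-value argument on $M\mapsto M h(\psi^*/M)$ (together with $\psi^*\gtrsim1$, which your hypothesis does give) one can show $M_j\leq C_0\mu_{j^*}$ on the top range; alternatively, using only the one-sided bound above you still get $\mu_j\leq M_j\lesssim c^{\kappa}\log(ej^*)$ for some universal $\kappa\in(0,1)$, which suffices since then $\sum_j e^{-\mu_j}\gtrsim (j^*)^{1-O(c^{\kappa})}\to\infty$. The paper avoids this issue entirely: it fixes the single constant $c_0=\tfrac12 h^{-1}(2)$, uses monotonicity of $t\mapsto t\,h^{-1}(1/t)$ to conclude $\mu_j\leq\tfrac12\log(ej)$ for every $j$ with $c_0\log(ej)\geq\psi$ (after checking $\psi\leq\tfrac{c_0}{2}\log(ej^*)$ for $c$ small), and then lower-bounds the sum by $\sum_{j\geq\lceil\sqrt{ej^*}\rceil}(ej)^{-1/2}\gtrsim\sqrt{j^*}$. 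You should either adopt that cleaner route or rework your estimate for $M_j$ without passing through the $\asymp$ form of $h^{-1}$.
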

    Lemma~\ref{lemma:intuition_subpoissonian_regime} is proved in Appendix~\ref{subsec:intuition_subpoissonian_regime}.

    \subsection{Asymptotic constants}\label{section:poisson_sharp_constant}
    It is possible to pin down the sharp constants in a certain asymptotic setup. Throughout this subsection, we consider asymptotics as \(p \to \infty\). Concretely, we consider sequences of testing problems (\ref{problem:poisson0})-(\ref{problem:poisson1}) indexed by \(p\). Following \cite{arias-castro_sparse_2015}, we consider all sequences \(\left\{\mu^{(p)}\right\}_{p=1}^{\infty}\) in which \(\mu^{(p)} \in [1, \infty)^p\) with \(\mu_1^{(p)} \geq ... \geq \mu_p^{(p)}\). For notational ease, the superscript will be dropped when the context is clear. 
    
    For any sequence \((\alpha_p)_{p = 1}^{\infty}\) with \(\lim_{p \to \infty} \alpha_p = \infty\), denote 
    \begin{equation}
        j^* = \arg\max_{1 \leq j \leq p} \mu_{j} h^{-1}\left( \frac{\log(ej\alpha_p\log^2(ej))}{\mu_j}\right).\label{eq_def_jstar_poisson_sharp_constant}
    \end{equation}
    Set 
    \begin{equation}
        \epsilon = \xi \cdot \max_{1 \leq j \leq p} \mu_{j} h^{-1}\left(\frac{\log(ej\alpha_p\log^2(ej))}{\mu_j}\right)\label{eq_def_eps_poisson_sharp_constant}
    \end{equation}
    where \(\xi > 0\) is a fixed constant which does not change with \(p\).
    
    Note the appearance of the \(\alpha_p\log^2(ej)\) terms marks a difference from what is seen in (\ref{rate:poisson}). 
    As the focus is only on sharp first-order asymptotics, the reader should conceptualize this term as a slowly diverging term that does not affect first-order asymptotics (as stated in Lemma~\ref{lem:removing_logs} below) but is a technical necessity to ensure the existence of a consistent test. In other words, it is analogous to the value \(C'\) set for the significance level of the test (\ref{test:max}); the term \(\alpha_p\log^2(ej)\) diverges in order to require that the testing risk vanishes asymptotically (i.e. consistency). Additionally, the term \(\log^2(ej)\) is not fundamental; rather a sequence \(b_j = o(j)\) such that \(\sum_{j=1}^{\infty} \frac{1}{jb_j} < \infty\) could be used to obtain essentially the same result. 
    The lemma below, applied with $u_{p,j} = \log(\alpha_p \log^2(ej))/\log(ej)$ for all $p,j \in \mathbb N$ ensures that the factor $\alpha_p$ and the extra logarithmic factors do not affect the constant in the rate.
    \begin{lemma}\label{lem:removing_logs}
    Assume that $j^* \to \infty$ and let $(u_{p,j})_{p,j \in \mathbb N}$ be a positive sequence such that $u_{p,j^*} = o(1) $ as $p,j^* \to \infty$. 
Then it holds that 
\begin{align*}
    \max_{1 \leq j \leq p} \mu_j \, h^{-1}\! \left(\frac{\log\big(ej\big)(1 + u_{p,j})}{\mu_j}\right) = (1+o(1)) \max_{1 \leq j \leq p} \mu_j \, h^{-1}\! \left(\frac{\log(ej)}{\mu_j}\right).
\end{align*}
\end{lemma}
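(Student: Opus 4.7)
The inequality $\geq$ is immediate: since $h^{-1}$ is strictly increasing and $u_{p,j}>0$, the $j$-th term on the left-hand side dominates the $j$-th term on the right-hand side, and taking maxima preserves this. The entire content of the lemma therefore rests on the reverse inequality, which I would reduce to a single clean pointwise fact:
\begin{equation*}
h^{-1}\!\left(y(1+u)\right) \leq (1+u)\, h^{-1}(y) \quad \text{for all } y>0 \text{ and } u \geq 0. \qquad (\star)
\end{equation*}

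To prove $(\star)$, I would use the identities $h'(x) = \log(1+x)$ and, by the inverse-function rule, $(h^{-1})'(y) = 1/\log(1+h^{-1}(y))$. Since $h$ is convex and increasing on $[0,\infty)$, its inverse $h^{-1}$ is concave, so the tangent-line bound gives
\begin{equation*}
h^{-1}(y(1+u)) - h^{-1}(y) \;\leq\; (h^{-1})'(y)\cdot yu \;=\; \frac{yu}{\log(1+h^{-1}(y))}.
\end{equation*}
Writing $x = h^{-1}(y)$, so that $y = (1+x)\log(1+x) - x$, the inequality $(\star)$ reduces to checking $y \leq x\log(1+x)$, which further simplifies to the elementary bound $\log(1+x) \leq x$. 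Establishing $(\star)$ is what I expect to be the main (and really only) technical obstacle; everything else in the proof is bookkeeping.

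With $(\star)$ in hand, the proof concludes in one line: applying it at $y = \log(ej^*)/\mu_{j^*}$ and $u = u_{p,j^*}$, then multiplying by $\mu_{j^*}$, yields
\begin{equation*}
\mu_{j^*}\, h^{-1}\!\left(\frac{\log(ej^*)(1+u_{p,j^*})}{\mu_{j^*}}\right) \;\leq\; (1+u_{p,j^*})\, \mu_{j^*}\, h^{-1}\!\left(\frac{\log(ej^*)}{\mu_{j^*}}\right) \;\leq\; (1+o(1))\max_{1 \leq j \leq p}\mu_j\, h^{-1}\!\left(\frac{\log(ej)}{\mu_j}\right),
\end{equation*}
where the last step uses $u_{p,j^*} = o(1)$. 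Because $j^*$ is, by its definition in~(\ref{eq_def_jstar_poisson_sharp_constant}), a maximizer of the expression on the left-hand side of the lemma's display, the leftmost quantity above is exactly the maximum on the left of the lemma. Combined with the trivial $\geq$ direction, this gives the asserted equality up to a $(1+o(1))$ factor.
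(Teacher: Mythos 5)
Your proposal is correct, and it follows the same skeleton as the paper's proof (the $\geq$ direction is trivial by monotonicity; the $\leq$ direction is handled by bounding the perturbed expression at the maximizer $j^*$ and using $u_{p,j^*}=o(1)$), but the way you handle the crux is genuinely different. The paper simply observes that $\log(ej^*)(1+u_{p,j^*})=(1+o(1))\log(ej^*)$ and then invokes, somewhat loosely, that ``$h$ has polynomial growth'' to conclude $h^{-1}$ of a $(1+o(1))$-perturbed argument is $(1+o(1))$ times the unperturbed value. You instead prove the clean quantitative inequality $h^{-1}(y(1+u))\leq(1+u)\,h^{-1}(y)$ for all $y>0$, $u\geq 0$, via concavity of $h^{-1}$ (inverse of a convex increasing function), the tangent-line bound with $(h^{-1})'(y)=1/\log(1+h^{-1}(y))$, and the elementary reduction $h(x)\leq x\log(1+x)$, i.e.\ $\log(1+x)\leq x$; I checked these steps and they are all valid. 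Your route buys an explicit, non-asymptotic bound that makes the $(1+o(1))$ factor exactly $(1+u_{p,j^*})$, which is arguably tighter and more self-contained than the paper's regular-variation-style remark. One shared caveat: both your argument and the paper's conclude by identifying the left-hand maximum with its value at $j^*$, which requires $j^*$ (as defined in~(\ref{eq_def_jstar_poisson_sharp_constant})) to maximize the perturbed expression; this holds for the intended choice $u_{p,j}=\log(\alpha_p\log^2(ej))/\log(ej)$, so you are no worse off than the paper on this point.
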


\begin{proof}[Proof of Lemma~\ref{lem:removing_logs}]
    
Since the function $h$ is increasing over $[0,\infty)$ and $u_{p,j}>0$ for any $j,p \in \mathbb N$, we have
\begin{align*}
    \max_{1 \leq j \leq p} \mu_j \, h^{-1}\! \left(\frac{\log\big(ej\big)(1 + u_{p,j})}{\mu_j}\right) \geq \max_{1 \leq j \leq p} \mu_j \, h^{-1}\! \left(\frac{\log(ej)}{\mu_j}\right).
\end{align*}
We also have $\log\big(ej^*)(1 + u_{p,j^*}) = (1+o(1))\log\big(e{j^*})$. 
Noting that $h$ has polynomial growth, we obtain 
\begin{align*}
    \mu_{j^*} \, h^{-1}\! \left(\frac{\log\big(e{j^*}\big)(1 + u_{p,j^*})}{\mu_{j^*}}\right) 
    &= (1+o(1))
    \mu_{j^*} \, h^{-1}\! \left(\frac{\log\big(e{j^*}\big)}{\mu_{j^*}}\right) \leq (1+o(1)) \max_{1 \leq j \leq p} \mu_j \, h^{-1}\! \left(\frac{\log(ej)}{\mu_j}\right).
\end{align*}
\end{proof}

The sharp asymptotic constant results in the Poisson model are collected in the theorem below.
   \begin{theorem}\label{thm:poisson_sharp_constant}
        Suppose \(\mu_1 \geq ... \geq \mu_p \geq 1\) and \(\frac{\log j^*}{(\log \alpha_p)(\log\log j^*)} \to \infty\).
        \begin{enumerate}[label=(\roman*)]
            \item Suppose \(\xi > 1\). If \(\frac{\log j^*}{\mu_{j^*}} \to 0\) or \(\frac{\log j^*}{\mu_{j^*}} \to \infty\), then \(\mathcal{R}_{\mathcal{P}}(\epsilon, \mu) \to 0\). 
               
            \item Suppose \(\xi < 1\). If \(\frac{\log j^*}{\mu_{j^*}} \to 0\) or \(\frac{\log j^*}{\mu_{j^*}} \to \infty\), then \(\mathcal{R}_{\mathcal{P}}(\epsilon, \mu) \to 1\).
        \end{enumerate} 
    \end{theorem}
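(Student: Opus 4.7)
The theorem has an upper bound part (i) and a lower bound part (ii), each covering both the subgaussian ($\log j^*/\mu_{j^*} \to 0$) and subpoissonian ($\log j^*/\mu_{j^*} \to \infty$) regimes. Throughout, I would use Lemma~\ref{lem:removing_logs} with $u_{p,j} = \log(\alpha_p\log^2(ej))/\log(ej)$, whose hypothesis is ensured by $\log j^*/((\log\alpha_p)(\log\log j^*)) \to \infty$, so that the nuisance factor $\alpha_p\log^2(ej)$ inside $h^{-1}$ does not alter the leading constant.

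For part (i), the plan is to sharpen the test (\ref{test:max}) by replacing $\log(C'j^2)$ with $\log(ej\alpha_p\log^2(ej))$: reject when
\[
\max_{1\le j\le p} \Bigl(|X_j - \mu_j| - \mu_j h^{-1}(\log(ej\alpha_p\log^2(ej))/\mu_j)\Bigr) > 0.
\]
The Type I error is controlled via Bennett's inequality (Lemma~\ref{lemma:Bennett}) and a union bound, exactly as in the discussion after Theorem~\ref{thm:upper_bound}, yielding a bound of order $\sum_j (ej\alpha_p\log^2(ej))^{-1} = O(1/\alpha_p) \to 0$. For the Type II error, pick the coordinate $J$ at which $|\lambda_J-\mu_J|\ge \epsilon$. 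By the defining property of $j^*$ in (\ref{eq_def_jstar_poisson_sharp_constant}), $\epsilon \ge \xi\cdot \mu_J h^{-1}(\log(eJ\alpha_p\log^2(eJ))/\mu_J)$. Applying Bennett again to $|X_J-\lambda_J|$ shows this fluctuation is negligible against the margin $(\xi-1)$ times the threshold, in both regimes; consequently $|X_J-\mu_J|$ exceeds its threshold with probability tending to $1$.

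For part (ii), I would use the prior $\pi$ defined in (\ref{eq_def_prior_poisson}), where $J$ is uniform on $[j^*]$ and the perturbation is $c\psi$ with $c=\xi$ and $\psi$ as in (\ref{eq_def_eps_poisson_sharp_constant}). A direct computation of the unconditional $\chi^2$-divergence using $\mathbb{E}_\mu[(1+\delta/\mu_J)^X e^{-\delta}] = 1$ yields
\[
\chi^2(P_\pi\,\|\,P_\mu) = \frac{1}{(j^*)^2}\sum_{J=1}^{j^*}\bigl(e^{(\xi\psi)^2/\mu_J}-1\bigr) \;\le\; \frac{1}{j^*}\bigl(e^{(\xi\psi)^2/\mu_{j^*}}-1\bigr),
\]
but in the subgaussian regime $(\xi\psi)^2/\mu_{j^*}\sim 2\xi^2\log(ej^*)$ by Lemma~\ref{lemma:h_inverse}, so the plain second moment only delivers the non-sharp constant $\xi<1/\sqrt{2}$ and does not reach $\xi=1$. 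To recover the sharp constant I would invoke the \emph{conditional} (truncated) second moment method highlighted in Section~\ref{section:poisson_lower_bound}: define a high-probability event $A$ under $P_\mu$ by truncating each $X_j$ at a carefully chosen level $M_j$ (roughly matching the high-probability envelope of the $X_j$'s under the null), and bound $\mathrm{d}_{TV}(P_\pi, P_\mu)$ above by $P_\mu(A^c) + \sqrt{\chi^2(P_\pi^{A}\,\|\,P_\mu^{A})}/2$ where $P_\pi^A, P_\mu^A$ are the restrictions to $A$. The truncation makes the likelihood ratio $(1+\delta/\mu_J)^{X_J}e^{-\delta}$ bounded on $A$, replacing the explosive moment generating function integral with a truncated one whose exponent only reaches up to $(\xi\psi)^2/\mu_{j^*} - \log j^*$ effectively, so that $\xi<1$ is sufficient in the subgaussian case. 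In the subpoissonian regime the same truncation approach is used, but with $M_j$ set near the classical Poisson maximum scale $\log(ej)/\log(\log(ej)/\mu_j)$ so that the dominant $h^{-1}(y)\asymp y/\log y$ behavior produces the sharp constant.

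The main obstacle I anticipate is calibrating the truncation thresholds $M_j$ so that $P_\mu(A^c)\to 0$ while simultaneously the truncated second moment satisfies $\chi^2(P_\pi^A\,\|\,P_\mu^A)\to 0$ whenever $\xi<1$, and doing so uniformly across the subgaussian and subpoissonian regimes. Concretely, the expansion of $(1+\delta/\mu_J)^{X_J}$ for large $X_J$ behaves very differently in the two regimes—quadratic/Gaussian-like when $\mu_J\gg \log(ej)$ and genuinely Poissonian otherwise—so separate truncation calibrations (with thresholds scaling as $\mu_J + \sqrt{2\mu_J\log(ej^*)}$ versus as $\log(ej^*)/\log(\log(ej^*)/\mu_J)$) are likely needed, and the two computations must be matched with the extra margin afforded by $\xi<1$ via Lemma~\ref{lem:removing_logs} to absorb the slow-varying $\alpha_p\log^2(ej)$ correction cleanly.
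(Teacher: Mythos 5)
Part (i) of your proposal is essentially the paper's argument: the same test with the $\alpha_p\log^2(ej)$-inflated threshold, Type I error by union bound plus Bennett, Type II error by bounding the fluctuation around $\lambda_J$ against the margin $(1-1/\xi)\epsilon$ (the paper uses Chebyshev and the splitting $\lambda_{j'}\le \mu_{j'}+|\lambda_{j'}-\mu_{j'}|$, together with Lemma~\ref{lemma:epsilon_diverges}; your Bennett variant works the same way). Part (ii) also starts with the same prior and correctly identifies that the plain second moment only reaches $\xi<1/\sqrt{2}$, so a conditional second moment is needed.

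However, in part (ii) there is a genuine gap in how you set up the truncation, and it is precisely the point the paper's flattening step (Proposition~\ref{prop:asymptotic_flattening}, built on Proposition~\ref{prop:general_flattening}) is designed to handle. You propose to condition on an event $A$ obtained by truncating each $X_j$ at a level $M_j$ ``roughly matching the high-probability envelope of the $X_j$'s under the null.'' For the bound $\dTV(P_\pi,P_\mu)\le \dTV(\tilde P_\pi,\tilde P_\mu)+2P_\mu(A^c)+2P_\pi(A^c)$ to be useful, $A$ must have probability tending to one under the \emph{mixture} as well, not only under the null. But the perturbation size $\epsilon=\xi\psi$ is calibrated at the worst coordinate $j^*$, while the null envelope $\mu_J h^{-1}(\log(eJ\alpha_p\log^2(eJ))/\mu_J)$ at other coordinates $J\le j^*$ can be much smaller than $\xi\psi$; under an alternative perturbing such a coordinate, $X_J\sim\Poisson(\mu_J+\xi\psi)$ exceeds your envelope-based threshold with probability tending to one, so $P_\pi(A^c)$ does not vanish and the conditional argument collapses. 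The paper avoids this by first reducing (via infinite divisibility and data processing) to the homoskedastic null $\Poisson(\mu_{j^*})^{\otimes j^*}$ and conditioning on the single event $\{\max_{j\le j^*}V_j-\mu_{j^*}\le\psi\}$, which has vanishing complement probability under the null (by the calibration of $\psi$) and under every alternative in the mixture (because $\xi<1$ leaves the margin $(1-\xi)\psi$); a common additive excess $\psi$ would be the analogous fix if one insisted on working without flattening. In addition, your accounting of the gain from truncation (``exponent only reaches up to $(\xi\psi)^2/\mu_{j^*}-\log j^*$'') is not the correct mechanism: the actual cancellation comes from Bennett's lower tail applied to $\Poisson((\mu_{j^*}+\epsilon)^2/\mu_{j^*})$ at level $\mu_{j^*}+\psi$ (Lemma~\ref{lemma:bennett_cancellation}), giving in the subgaussian regime a net exponent $(2\xi^2-(1-2\xi)^2-1)\log j^*=-2(1-\xi)^2\log j^*$ and in the subpoissonian regime the function $g(x)=-2h(x)+2(1+x)\log(1+\xi x)-2\xi x$ whose negativity for large $x$ hinges on $\log(1/\xi)+\xi>1$; this case analysis (Proposition~\ref{prop:asymptotic_poisson_mgf}) is the technical heart of the proof and is absent from your sketch.
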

    As one might expect from the Poisson tail, there are essentially two asymptotic regimes, a Gaussian regime and a Poisson regime. The Gaussian regime is in force when the rates are large, that is, when \(\frac{\log j^*}{\mu_{j^*}} \to 0\). In this regime, \(\epsilon \sim \xi \sqrt{2 \mu_{j^*} \log j^*}\), and the constant \(\sqrt{2}\) is natural when we recall the sharp constant in the classical result which asserts the maximum of \(n\) i.i.d. \(N(0, \sigma^2)\) random variables is asymptotically equivalent to \(\sqrt{2\sigma^2\log n}\). Likewise, the Poisson regime is in force when the rates are small, that is, when \(\frac{\log j^*}{\mu_{j^*}} \to \infty\). Here, we have \(\epsilon \sim \xi \frac{\log j^*}{\log(\frac{\log j^*}{\mu_{j^*}})}\), which is intuitive since the maximum of \(n\) i.i.d. \(\Poisson(1)\) random variables is asymptotically equivalent to \(\frac{\log n}{\log\log n}\). Theorem \ref{thm:poisson_sharp_constant} points out that \(\log j^*\) is the boundary between the two regimes. This type of boundary has been noted in a cruder form in \cite{arias-castro_sparse_2015,donoho_higher_2022}. These papers essentially identify a ``high counts'' regime in which \(\min_{1 \leq j \leq p} \mu_j = \omega(\log p)\) and a ``low counts'' regime in which \(\max_{1 \leq j \leq p} \mu_j = o(\log p)\). The boundary \(\log j^*\) provides a finer understanding of the asymptotic regimes since it is entirely a function of the rates and does not exhibit an explicit dimension dependence. 

    The asymptotic condition \(\frac{\log j^*}{(\log \alpha_p)(\log\log j^*)} \to \infty\) is mild and essentially amounts to a growth condition on \(\alpha_p\) ensuring it does not grow too quickly. Furthermore, the condition \(\frac{\log j^*}{(\log \alpha_p)(\log\log j^*)} \to \infty\) automatically implies \(u_{p, j^*} \to 0\) when \(j^* \to \infty\) as \(p \to \infty\).

    \section{Minimax testing rates in the multinomial model}\label{sec:main_results_multinomial}
    In this section, we study the problem (\ref{problem:multinomial0})-(\ref{problem:multinomial1}) in the model (\ref{model:multinomial}). Let \(h\) and \(h^{-1}\) denote the functions defined in Section \ref{section:poisson}. Recall we assume without loss of generality \(q_0(1) \geq q_0(2) \geq ... \geq q_0(p)\). Recall also we denote \(q_0^{\max} := \max_{1 \leq j \leq p} q_0(j) = q_0(1)\) and \(q_0^{-\max} := (q_0(2),...,q_0(p)) \in [0, 1]^{p-1}\). 


    \subsection{Upper bound}
    
    The minimax upper bound relies on a combination of two tests to detect two types of signals. 
    For \(\varepsilon > 0\), define the spaces 
    \begin{align}
        \Pi_1(q_0, \varepsilon) &:= \left\{ q \in \Delta_p : |q(1) - q_0(1)| \geq \varepsilon\right\}, \label{space:P1}\\
        \Pi_2(q_0, \varepsilon) &:= \left\{ q \in \Delta_p : \max_{2 \leq j \leq p} |q(j) - q_0(j)| \geq \varepsilon \right\}. \label{space:P2} 
    \end{align}

    \begin{lemma}\label{lemma:P1P2}
        If \(\varepsilon > 0\), then \(\Pi(q_0, \varepsilon) \subset \Pi_1(q_0, \varepsilon_1) \cup \Pi_2(q_0, \varepsilon_2)\) for any \(\varepsilon_1, \varepsilon_2 \geq 0\) such that \(\varepsilon_1 + \varepsilon_2 \leq \varepsilon\).  
    \end{lemma}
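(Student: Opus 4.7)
The plan is to prove the set inclusion by a direct elementwise argument, leveraging the definition of the sup norm. Take an arbitrary $q \in \Pi(q_0, \varepsilon)$, so that $\|q - q_0\|_\infty \geq \varepsilon$. Since the sup norm is attained at some coordinate, there exists an index $j^* \in [p]$ satisfying $|q(j^*) - q_0(j^*)| \geq \varepsilon$.

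Next, I split into two cases according to whether the extremal coordinate is the first one or not. If $j^* = 1$, then $|q(1) - q_0(1)| \geq \varepsilon \geq \varepsilon_1$ (using $\varepsilon_1 \leq \varepsilon_1 + \varepsilon_2 \leq \varepsilon$ and $\varepsilon_2 \geq 0$), so $q \in \Pi_1(q_0, \varepsilon_1)$. If instead $j^* \in \{2, \ldots, p\}$, then $\max_{2 \leq j \leq p} |q(j) - q_0(j)| \geq |q(j^*) - q_0(j^*)| \geq \varepsilon \geq \varepsilon_2$, so $q \in \Pi_2(q_0, \varepsilon_2)$. In either case $q \in \Pi_1(q_0, \varepsilon_1) \cup \Pi_2(q_0, \varepsilon_2)$, which proves the claimed inclusion.

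There is no real obstacle here; the lemma is a soft consequence of decomposing $[p] = \{1\} \cup \{2,\ldots,p\}$ inside the definition of $\|\cdot\|_\infty$. The condition $\varepsilon_1 + \varepsilon_2 \leq \varepsilon$ is in fact slightly stronger than what the argument requires (the weaker $\max(\varepsilon_1, \varepsilon_2) \leq \varepsilon$ would suffice), but it is the natural form in which the lemma will be applied downstream: one will split the overall separation budget $\varepsilon$ across a test targeting the first coordinate (via $\Pi_1$) and a test targeting the remaining coordinates (via $\Pi_2$), and then combine the two tests by a union bound.
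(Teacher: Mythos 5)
Your proof is correct, but it takes a different (and in fact slightly stronger) route than the paper. The paper does not locate the coordinate attaining the sup; instead it uses the inequality $\|q-q_0\|_\infty \leq |q(1)-q_0(1)| + \max_{2\leq j\leq p}|q(j)-q_0(j)|$ and a pigeonhole step: since the two terms sum to at least $\varepsilon \geq \varepsilon_1+\varepsilon_2$, either the first is at least $\varepsilon_1$ or the second is at least $\varepsilon_2$. That additive argument is the one that genuinely uses the budget-splitting hypothesis $\varepsilon_1+\varepsilon_2\leq\varepsilon$, and it matches how the lemma is deployed downstream (splitting the separation between the tests $\varphi_1$ and $\varphi_2$). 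Your argument instead pins down the argmax coordinate $j^*$ and case-splits on $j^*=1$ versus $j^*\geq 2$, using only $\varepsilon_1,\varepsilon_2\leq\varepsilon$; as you correctly note, this proves the stronger inclusion $\Pi(q_0,\varepsilon)\subset\Pi_1(q_0,\varepsilon)\cup\Pi_2(q_0,\varepsilon)$, of which the stated lemma is a special case. Either proof is valid and sufficient for the paper's purposes; the paper's version buys nothing extra here beyond presenting the hypothesis in the form in which it is naturally consumed, while yours is marginally more economical and makes explicit that no splitting of $\varepsilon$ is actually needed for the set inclusion itself.
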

    \begin{proof}
        Fix any \(\varepsilon_1, \varepsilon_2 \geq 0\) such that \(\varepsilon_1 + \varepsilon_2 \leq \varepsilon\). It is immediate \(\Pi(q_0, \varepsilon) \subset \Pi(q_0, \varepsilon_1 + \varepsilon_2)\). Let \(q \in \Pi(q_0, \varepsilon_1 + \varepsilon_2)\) and note \(\varepsilon_1 + \varepsilon_2 \leq ||q-q_0||_\infty \leq |q(1) - q_0(1)| + \max_{2 \leq j \leq p} |q(j) - q_0(j)|\). Therefore, we must have either \(|q(1) - q_0(1)| \geq \varepsilon_1\), in which case \(q \in \Pi_1(q_0, \varepsilon_1)\), or \(\max_{2 \leq j \leq p} |q(j) - q_0(j)| \geq \varepsilon_2\), in which case \(q \in \Pi_2(q_0, \varepsilon_2)\). Thus, \(q \in \Pi_1(q_0, \varepsilon_1) \cup \Pi_2(q_1, \varepsilon_2)\) as claimed. 
    \end{proof}

    To detect signals in \(\Pi_1\), we will use \(X_1\) as the test statistic. Define the test
    \begin{equation}\label{test:multinomial_1}
        \varphi_1 = \mathbbm{1}\left\{|X_1 - nq_0(1)| \geq K_1 \left(1 + \sqrt{nq_0(1)(1-q_0(1))}\right)\right\},
    \end{equation}
    where \(K_1 > 0\) is a constant tuned to achieve a desired error level. 
    
    \begin{proposition}\label{prop:multinomial1}
        If \(\eta \in (0, 1)\), then there exists \(C_\eta > 0\) depending only on \(\eta\) such that 
        \begin{equation*}
            P_{q_0}\left\{\varphi_1 = 1\right\} + \sup_{q \in \Pi_1(q_0, C_\eta \varepsilon_1)} P_{q}\left\{\varphi_1 = 0\right\} \leq \frac{\eta}{2},
        \end{equation*} 
        where \(\varphi_1\) is given by (\ref{test:multinomial_1}) with \(K_1 = \left(\frac{\eta}{4}\right)^{-1/2}\) and \(\varepsilon_1 = \frac{1}{n} + \sqrt{\frac{q_0^{\max}(1-q_0^{\max})}{n}}\).
    \end{proposition}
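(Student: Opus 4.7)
The plan is to handle the Type I and Type II errors separately, each using Chebyshev's inequality and exploiting the fact that $n\varepsilon_1 = 1 + \sqrt{nq_0(1)(1-q_0(1))}$ matches the critical threshold in the definition of $\varphi_1$.

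For the Type I error, since $X_1 \sim \Binomial(n, q_0(1))$ under the null, $X_1$ has mean $nq_0(1)$ and variance $nq_0(1)(1-q_0(1))$. A direct application of Chebyshev's inequality gives
\begin{equation*}
P_{q_0}\{\varphi_1 = 1\} \leq \frac{nq_0(1)(1-q_0(1))}{K_1^2(1+\sqrt{nq_0(1)(1-q_0(1))})^2} \leq \frac{1}{K_1^2},
\end{equation*}
so taking $K_1 = 2/\sqrt{\eta}$ bounds this by $\eta/4$.

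For the Type II error, fix $q \in \Pi_1(q_0, C_\eta \varepsilon_1)$ and write the decomposition
\begin{equation*}
X_1 - nq_0(1) = (X_1 - nq(1)) + n(q(1) - q_0(1)).
\end{equation*}
Under $P_q$, the first term is centered with variance $nq(1)(1-q(1))$, while $|n(q(1)-q_0(1))| \geq nC_\eta\varepsilon_1 = C_\eta(1+\sqrt{nq_0(1)(1-q_0(1))})$. Writing $M = n|q(1)-q_0(1)|$ and $V = nq_0(1)(1-q_0(1))$, the event $\{\varphi_1 = 0\}$ forces $|X_1 - nq(1)| \geq M - K_1(1+\sqrt V)$. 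If $C_\eta \geq 2K_1$, this lower bound is at least $M/2$, so Chebyshev gives
\begin{equation*}
P_q\{\varphi_1 = 0\} \leq \frac{4\,nq(1)(1-q(1))}{M^2}.
\end{equation*}

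The main technical step is to control the variance under $q$ in terms of $V$ and $M$. Using the algebraic identity
\begin{equation*}
q(1)(1-q(1)) - q_0(1)(1-q_0(1)) = (q(1)-q_0(1))(1 - q(1) - q_0(1))
\end{equation*}
together with $|1-q(1)-q_0(1)| \leq 1$, we obtain the useful inequality $nq(1)(1-q(1)) \leq V + M$. Plugging this in yields $P_q\{\varphi_1 = 0\} \leq 4(V+M)/M^2$. From $M \geq C_\eta(1+\sqrt V)$ we get both $M \geq C_\eta$ and $M^2 \geq C_\eta^2 V$, so that $4(V+M)/M^2 \leq 4/C_\eta + 4/C_\eta^2$. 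Choosing $C_\eta$ sufficiently large depending on $\eta$ (namely $C_\eta \geq \max(2K_1, 32/\eta)$) makes this at most $\eta/4$, and combining with the Type I bound gives total error at most $\eta/2$.

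The only subtle point is the variance comparison step, since $q(1)(1-q(1))$ can in principle be substantially larger than $q_0(1)(1-q_0(1))$ (e.g.\ when $q_0(1)$ is tiny but $q(1)$ is of order $1/2$); the identity above shows that any such inflation is automatically absorbed by the linear-in-$M$ term, which in turn is dominated by $M^2$ once $M$ is forced to be large by the separation condition.
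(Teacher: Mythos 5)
Your proof is correct and follows essentially the same route as the paper: Chebyshev for the Type I error, then for the Type II error a recentering via the triangle inequality plus Chebyshev, with the variance under the alternative controlled by the same inequality $|x(1-x)-y(1-y)|\leq|x-y|$ (which you derive via the factorization identity) and the resulting $4/C_\eta + 4/C_\eta^2$ bound absorbed by taking $C_\eta$ large. No gaps; the constant bookkeeping matches the paper's choice $C_\eta \geq 2K_1 \vee 32/\eta$ up to inessential differences.
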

    
    To detect signals in \(\Pi_2\), we ignore \(X_1\) and directly apply the maximum-type like that from Section \ref{section:poisson_upper_bound} to \(\left\{X_j\right\}_{j=2}^{p}\). Define the test
    \begin{equation}\label{test:multinomial_2}
        \varphi_2 = \mathbbm{1}\left\{\max_{2 \leq j \leq p}|X_j - nq_0(j)| > \max_{2 \leq j \leq p} nq_0(j)(1-q_0(j))h^{-1}\left(\frac{\log(K_2(j-1)^2)}{nq_0(j)(1-q_0(j))}\right)\right\}
    \end{equation}
    where \(K_2 \geq e\) is a constant tuned to achieve a desired error level. 

    \begin{proposition}\label{prop:multinomial2}
        If \(\eta \in (0, 1)\), then there exist \(K_2 \geq e\) and \(C_\eta > 0\) depending only on \(\eta\) such that
        \begin{equation*}
            P_{q_0}\{\varphi_2 = 1\} + \sup_{q \in \Pi_2(q_0, C_\eta \varepsilon_2)} P_{q} \left\{\varphi_2 = 0\right\} \leq \frac{\eta}{2},
        \end{equation*}
        where \(\varphi_2\) is given by (\ref{test:multinomial_2}) and \(\varepsilon_2 = \frac{1}{n} + \max_j q_0^{-\max}(j)(1-q_0^{-\max}(j))h^{-1}\left(\frac{\log(ej)}{nq_0^{-\max}(j)(1-q_0^{-\max}(j))}\right)\). 
    \end{proposition}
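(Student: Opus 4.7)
The plan is to closely mirror the proof of Theorem~\ref{thm:upper_bound} from Section~\ref{section:poisson_upper_bound}, replacing Bennett's inequality for Poissons with its binomial analog. I would bound the Type~I and Type~II errors separately, each by $\eta/4$.

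\textbf{Type~I error.} Under $H_0$, the coordinate $X_j \sim \Binomial(n, q_0(j))$ is a sum of $n$ bounded i.i.d.\ Bernoullis with variance $\sigma_j^2 := nq_0(j)(1-q_0(j))$. Bennett's inequality would give, for each $j \geq 2$,
\[
    P_{q_0}\!\left(|X_j - nq_0(j)| \geq \sigma_j^2\, h^{-1}\!\left(\frac{\log(K_2(j-1)^2)}{\sigma_j^2}\right)\right) \leq \frac{2}{K_2(j-1)^2},
\]
with the degenerate case $\sigma_j^2 = 0$ (i.e. $q_0(j) \in \{0, 1\}$) giving an event of probability zero. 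Union-bounding over $j$ and using $\sum_{k \geq 1} k^{-2} = \pi^2/6$ yields $P_{q_0}(\varphi_2 = 1) \leq \pi^2/(3K_2)$, which can be made $\leq \eta/4$ for $K_2$ sufficiently large depending on $\eta$.

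\textbf{Type~II error: reducing to a concentration bound under the alternative.} Fix $q \in \Pi_2(q_0, C_\eta \varepsilon_2)$ and let $j^* \geq 2$ be such that $|q(j^*) - q_0(j^*)| \geq C_\eta \varepsilon_2$. Denote by $\tau$ the threshold appearing inside $\varphi_2$. Since $h^{-1}$ grows at most linearly and $\log(K_2(j-1)^2)$ differs from $\log(e(j-1))$ only by constants depending on $K_2$, I expect $\tau \leq C(K_2)\, n\varepsilon_2$ for some constant $C(K_2) > 0$. Choosing $C_\eta \geq 4C(K_2)$ then guarantees $n|q(j^*) - q_0(j^*)| \geq 4\tau$. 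The triangle inequality $|X_{j^*} - nq_0(j^*)| \geq n|q(j^*) - q_0(j^*)| - |X_{j^*} - nq(j^*)|$ reduces the task to proving
\[
    P_q\!\left(|X_{j^*} - nq(j^*)| \geq n|q(j^*) - q_0(j^*)|/2\right) \leq \eta/4.
\]

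\textbf{Main obstacle.} The main difficulty will be establishing this tail bound under $q$ when the alternative variance $nq(j^*)(1-q(j^*))$ differs substantially from $\sigma_{j^*}^2$. Bennett under $q$ combined with $h(x) \geq x^2/(2(1+x))$ reduces matters to showing the Bernstein-type exponent $n|q(j^*) - q_0(j^*)|^2/(8 q(j^*)(1-q(j^*)) + 4 |q(j^*) - q_0(j^*)|) \geq \log(8/\eta)$. I would split on whether $q(j^*)(1-q(j^*))$ lies within a factor of $2$ of $q_0(j^*)(1-q_0(j^*))$. In the comparable case, the subgaussian (resp.\ subpoissonian) lower bound $\varepsilon_2 \gtrsim \sqrt{q_0(j^*)(1-q_0(j^*))\log(ej^*)/n}$ (resp.\ $\varepsilon_2 \gtrsim 1/n$) derived from the asymptotics of $h^{-1}$ forces the exponent to exceed $\log(8/\eta)$ once $C_\eta$ is large. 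When instead $q(j^*)(1-q(j^*)) \geq 2 q_0(j^*)(1-q_0(j^*))$, the algebraic identity $q(j^*)(1-q(j^*)) - q_0(j^*)(1-q_0(j^*)) = (q(j^*) - q_0(j^*))(1 - q(j^*) - q_0(j^*))$ together with $|1 - q(j^*) - q_0(j^*)| \leq 1$ implies $|q(j^*) - q_0(j^*)| \geq q_0(j^*)(1-q_0(j^*))$, whence $q(j^*)(1-q(j^*)) \lesssim |q(j^*) - q_0(j^*)|$, and the exponent becomes $\gtrsim n|q(j^*) - q_0(j^*)| \geq C_\eta n \varepsilon_2 \geq C_\eta$ using $\varepsilon_2 \geq 1/n$. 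Taking $C_\eta$ sufficiently large depending only on $\eta$ completes both cases.
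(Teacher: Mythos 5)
Your proposal is correct, and the Type~I part coincides with the paper's proof (union bound plus Bennett's inequality for binomials, with $K_2$ large). Where you diverge is the Type~II error: after the same reduction (threshold $\tau \lesssim_{K_2} n\varepsilon_2$, then triangle inequality to isolate the deviation of $X_{j^*}$ around its alternative mean $nq(j^*)$), you invoke Bennett/Bernstein under the alternative and handle the unknown alternative variance by a case split on whether $q(j^*)(1-q(j^*))$ is comparable to $q_0(j^*)(1-q_0(j^*))$, using the identity $q(1-q)-q_0(1-q_0)=(q-q_0)(1-q-q_0)$ in the incomparable case; together with $\varepsilon_2 \geq 1/n$ and $\varepsilon_2 \gtrsim \sqrt{q_0(j^*)(1-q_0(j^*))/n}$ (from $h^{-1}(x)\gtrsim\sqrt{x}$), this makes the Bernstein exponent grow like $C_\eta$, which indeed closes the argument. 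The paper instead uses only Chebyshev's inequality under the alternative: the inequality $|x(1-x)-y(1-y)|\leq|x-y|$ splits the alternative variance into a piece absorbed by the separation itself (giving a term of order $1/(C_\eta n\varepsilon_2)$) and the null-variance piece (controlled because $n^2\varepsilon_2^2 \gtrsim nq_0(j^*)(1-q_0(j^*))$), so no exponential bound and no case analysis on variances is needed. Your route is heavier but equally valid, and it would yield error probabilities decaying exponentially in $C_\eta$ rather than polynomially, which is more than the constant-level risk the proposition requires; the paper's Chebyshev argument is the more economical choice since only a fixed level $\eta/2$ is targeted. Two cosmetic points to tighten if you write this up: justify $\tau \leq C(K_2)\, n\varepsilon_2$ via concavity of $h^{-1}$ (so $h^{-1}(Ax)\leq A\,h^{-1}(x)$ for $A\geq 1$) together with $\log(K_2(j-1)^2)\leq(\log K_2+2)\log(e(j-1))$, rather than "grows at most linearly"; and keep the indexing $q_0^{-\max}(j)=q_0(j+1)$ straight when matching the maximand in $\varepsilon_2$ to the coordinate $j^*$.
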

    The tests \(\varphi_1\) and \(\varphi_2\) are aggregated to produce a test for detecting signals in \(\Pi\). Define 
    \begin{equation}\label{test:full}
        \varphi = \varphi_1 \vee \varphi_2. 
    \end{equation}
    The following theorem, which is stated without proof, is an immediate consequence of Lemma \ref{lemma:P1P2} along with Propositions \ref{prop:multinomial1} and \ref{prop:multinomial2}.
    \begin{theorem}\label{thm:multinomial_upper_bound}
        If \(\eta \in (0, 1)\), then there exist \(K_1, C_\eta > 0\) and \(K_2 \geq e\) depending only on \(\eta\) such that 
        \begin{equation*}
            P_{q_0}\{\varphi = 1\} + \sup_{q \in \Pi(q_0, C_\eta\varepsilon)} P_q\left\{ \varphi = 0\right\} \leq \eta, 
        \end{equation*}
        where \(\varphi\) is given by (\ref{test:full}) and 
        \begin{equation*}
            \varepsilon = \frac{1}{n} + \sqrt{\frac{q_0^{\max}(1-q_0^{\max})}{n}} + \max_j q_0^{-\max}(j)(1-q_0^{-\max}(j))h^{-1}\left(\frac{\log(ej)}{nq_0^{-\max}(j)(1-q_0^{-\max}(j))}\right). 
        \end{equation*} 
    \end{theorem}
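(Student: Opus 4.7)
The plan is straightforward aggregation: Theorem~\ref{thm:multinomial_upper_bound} follows from combining the decomposition of the alternative given by Lemma~\ref{lemma:P1P2} with the two subtests controlled by Propositions~\ref{prop:multinomial1} and \ref{prop:multinomial2}. First I would write
\begin{align*}
\varepsilon_1 &= \tfrac{1}{n} + \sqrt{\tfrac{q_0^{\max}(1-q_0^{\max})}{n}}, \\
\varepsilon_2 &= \tfrac{1}{n} + \max_j q_0^{-\max}(j)(1-q_0^{-\max}(j))\, h^{-1}\!\left(\tfrac{\log(ej)}{nq_0^{-\max}(j)(1-q_0^{-\max}(j))}\right),
\end{align*}
so that the $\varepsilon$ appearing in the theorem statement satisfies $\varepsilon_1 + \varepsilon_2 \leq 2\varepsilon$. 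Thus, for any constant $C_\eta>0$, Lemma~\ref{lemma:P1P2} applied with separations $\tfrac{C_\eta}{2}\varepsilon_1$ and $\tfrac{C_\eta}{2}\varepsilon_2$ yields
$$\Pi(q_0, C_\eta \varepsilon) \subset \Pi_1\!\left(q_0, \tfrac{C_\eta}{2}\varepsilon_1\right) \cup \Pi_2\!\left(q_0, \tfrac{C_\eta}{2}\varepsilon_2\right).$$

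Next, I would choose the constants $K_1, K_2$ and $C_\eta$ so that Propositions~\ref{prop:multinomial1} and \ref{prop:multinomial2} (invoked with the same $\eta$) give
$$P_{q_0}\{\varphi_i = 1\} + \sup_{q \in \Pi_i(q_0, C_\eta'\varepsilon_i / 2)} P_q\{\varphi_i = 0\} \leq \eta/2 \qquad (i = 1,2),$$
which is possible because the tests' thresholds are tunable through $K_1$, $K_2$ and since $C_\eta$ can be taken as the maximum of the two constants produced by the propositions. Then since $\varphi = \varphi_1 \vee \varphi_2$, a union bound gives
$$P_{q_0}\{\varphi = 1\} \leq P_{q_0}\{\varphi_1 = 1\} + P_{q_0}\{\varphi_2 = 1\}.$$
For the Type~II error, any $q \in \Pi(q_0, C_\eta \varepsilon)$ lies in either $\Pi_1(q_0, \tfrac{C_\eta}{2}\varepsilon_1)$ or $\Pi_2(q_0, \tfrac{C_\eta}{2}\varepsilon_2)$. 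Since $\{\varphi = 0\} \subset \{\varphi_i = 0\}$ for each $i$, splitting the supremum over the two cases and bounding each by the corresponding proposition gives
$$\sup_{q \in \Pi(q_0, C_\eta \varepsilon)} P_q\{\varphi = 0\} \leq \sup_{q \in \Pi_1(q_0, C_\eta \varepsilon_1/2)} P_q\{\varphi_1 = 0\} + \sup_{q \in \Pi_2(q_0, C_\eta \varepsilon_2/2)} P_q\{\varphi_2 = 0\}.$$
Adding the two displays and invoking the proposition bounds yields total risk at most $\eta/2 + \eta/2 = \eta$, as required.

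There is no substantive obstacle here; the content of the result is entirely carried by the two propositions and Lemma~\ref{lemma:P1P2}. The only bookkeeping subtlety is ensuring that the constant $C_\eta$ used in the theorem is large enough to absorb the factor $2$ arising from $\varepsilon_1 + \varepsilon_2 \leq 2\varepsilon$, which is immediate by choosing $C_\eta$ to be twice the maximum of the constants produced by Propositions~\ref{prop:multinomial1} and \ref{prop:multinomial2}.
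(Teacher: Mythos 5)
Your proposal is correct and is exactly the argument the paper intends: the paper states Theorem~\ref{thm:multinomial_upper_bound} without proof as an immediate consequence of Lemma~\ref{lemma:P1P2} together with Propositions~\ref{prop:multinomial1} and~\ref{prop:multinomial2}, and your aggregation of $\varphi_1 \vee \varphi_2$ with the factor-of-two bookkeeping on $C_\eta$ fills in those details correctly.
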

    Note since it has been assumed without loss of generality \(q_0(1) \geq q_0(2) \geq ... \geq q_0(p)\), that it must be the case \(\frac{1}{2} \geq \max_j q_0^{-\max}(j)\). Therefore, \(q_0^{-\max}(j)(1-q_0^{-\max}(j)) \asymp q_0^{-\max}(j)\) for all \(2 \leq j \leq p\). Hence, Theorem \ref{thm:multinomial_upper_bound} indeed asserts \(\varphi\) achieves the rate (\ref{rate:multinomial}). 
   
    \subsection{Lower bound}\label{section:multinomial_lowerbound}

    We now prove the lower bound on $\varepsilon_{\mathcal{M}}^*(q_0,n,\eta)$. 
    To do so, we will work under a Poissonized model where the data \(X \in \mathbb{Z}^p\) are given by Poisson sampling.
    For a probability distribution \(q \in \Delta_{p}\) on \(p\) categories, consider 
    \begin{align}
    \begin{split}\label{model:poissonized}
        N &\sim \Poisson(n), \\
        X \,|\, N &\sim \Multinomial(N, q). 
    \end{split}
    \end{align}
    Consequently, the marginal distribution of the data \(X\) is 
    \begin{equation}\label{eqn:poissonized_multinomial_dgp}
        X \sim \bigotimes_{j=1}^{p} \Poisson(nq(j)). 
    \end{equation}
    The probability distribution under the model~\eqref{eqn:poissonized_multinomial_dgp} will be denoted as $\mathbf P_q$. The minimax testing risk for problem (\ref{problem:multinomial0})-(\ref{problem:multinomial1}) in the model~\eqref{eqn:poissonized_multinomial_dgp} is defined as
    \begin{equation}\label{def:poissonized_multinomial_testing_risk}
        \mathcal{R}_{\mathcal{PM}}(\varepsilon,n, q_0) = \inf_{\varphi}\left\{ \mathbf{P}_{q_0}\left\{\varphi = 1\right\} + \sup_{q \in \Pi(q_0, \varepsilon)} \mathbf{P}_{q}\left\{\varphi = 0\right\} \right\},
    \end{equation}
    and the corresponding minimax separation rate is 
        \begin{align}
        \varepsilon_{\mathcal{PM}}^*(q_0,n,\eta) = \inf \left\{\varepsilon>0: \mathcal{R}_{\mathcal{PM}}(\varepsilon, n, q_0) \leq \eta\right\}.\label{def_poissonized_multinomial_rate}
    \end{align}
    The subscript $\mathcal{PM}$ stands for ``Poissonized multinomial''. The lemma below shows that the Poissonized rate~\eqref{def_poissonized_multinomial_rate} can be used to obtain a lower bound on the quantity of interest $\varepsilon_{\mathcal{M}}^*(q_0,n,\eta)$ provided $n$ is larger than a suitable constant depending on $\eta$. 

    \begin{lemma}\label{lem:relation_multinomial_poisson}
        If \(\varepsilon > 0\), then for any $c>0$, it holds that \(\mathcal{R}_{\mathcal{M}}(\varepsilon, n, q_0) \geq \mathcal{R}_{\mathcal{PM}}(\varepsilon, (1+c)n, q_0) - \frac{2(1+c)}{c^2n}\). 
    \end{lemma}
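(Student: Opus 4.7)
The plan is to prove the (essentially) reverse inequality: namely, that any test $\varphi_M$ for the multinomial problem with sample size $n$ can be converted into a test $\tilde\varphi$ for the Poissonized model with intensity $(1+c)n$ whose risk is only slightly worse. The claimed inequality then follows by taking infima.

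\medskip

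\noindent\textbf{Step 1: Reduction via sub-sampling.} Given Poissonized data $X \sim \bigotimes_{j=1}^p \Poisson((1+c)nq(j))$, view $X$ as the category counts of $N := \sum_j X_j \sim \Poisson((1+c)n)$ i.i.d.\ samples from $q$. Define $\tilde\varphi$ as follows: on the event $\{N \geq n\}$, draw a uniformly random subset of size $n$ from these $N$ samples and record the category counts $Y \in \mathbb{Z}^p$; return $\varphi_M(Y)$. On $\{N < n\}$, return $0$.

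\medskip

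\noindent\textbf{Step 2: Distributional check.} Conditional on $N \geq n$, the sub-sample $Y$ has distribution $\Multinomial(n, q)$, since uniform sub-selection from $N$ i.i.d.\ draws from $q$ is equivalent to taking $n$ fresh i.i.d.\ draws from $q$. Hence
\begin{align*}
\mathbf{P}_{q_0}\{\tilde\varphi = 1\} &\leq P_{q_0}\{\varphi_M = 1\} + P\{N < n\}, \\
\sup_{q \in \Pi(q_0,\varepsilon)} \mathbf{P}_{q}\{\tilde\varphi = 0\} &\leq \sup_{q \in \Pi(q_0,\varepsilon)} P_{q}\{\varphi_M = 0\} + P\{N < n\}.
\end{align*}

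\medskip

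\noindent\textbf{Step 3: Tail bound on $N$.} Since $N \sim \Poisson((1+c)n)$ has $\mathbb{E}[N] = \Var(N) = (1+c)n$, Chebyshev's inequality yields
\begin{equation*}
P\{N < n\} = P\{(1+c)n - N > cn\} \leq \frac{(1+c)n}{(cn)^2} = \frac{1+c}{c^2 n}.
\end{equation*}

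\medskip

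\noindent\textbf{Step 4: Conclude.} Summing the two bounds in Step 2 and applying Step 3, the Poissonized risk of $\tilde\varphi$ is at most the multinomial risk of $\varphi_M$ plus $\frac{2(1+c)}{c^2 n}$. Taking the infimum over $\varphi_M$ gives $\mathcal{R}_{\mathcal{PM}}(\varepsilon, (1+c)n, q_0) \leq \mathcal{R}_{\mathcal{M}}(\varepsilon, n, q_0) + \frac{2(1+c)}{c^2 n}$, which rearranges to the claim. The only non-trivial point is the distributional identity in Step 2, which must be stated carefully but is standard (it is the Poissonization trick run in reverse on the conditional distribution of the counts given $N$).
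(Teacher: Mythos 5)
Your proof is correct and is essentially the paper's own argument run in the opposite direction: both exploit the representation $N=\sum_j X_j \sim \Poisson((1+c)n)$ with $X\,|\,N \sim \Multinomial(N,q)$, reduce to a size-$n$ multinomial sample on the event $\{N\geq n\}$ (the paper keeps the first $n$ draws, you take a uniform size-$n$ subsample, which is the same thing up to exchangeability), and control $P\{N<n\}\leq \frac{1+c}{c^2 n}$ by Chebyshev before taking infima. The only caveat, shared with the paper's proof, is that the constructed test uses auxiliary randomness, so the reduction is cleanest if the infimum defining $\mathcal{R}_{\mathcal{PM}}$ is understood to range over (possibly randomized) tests, which changes nothing downstream since the Bayes-risk lower bounds used later hold for randomized tests as well.
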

    Indeed, applying Lemma~\ref{lem:relation_multinomial_poisson} with $c = 1$, we get that for any constant $\delta \in (0,1-\eta)$ and $ n \geq 4/\delta$, and for any $\varepsilon < \varepsilon_{\mathcal{PM}}^*(q_0,2n,\eta+\delta)$, 
    \begin{align*}
        \mathcal{R}_{\mathcal{M}}(\varepsilon, n, q_0) \geq \mathcal{R}_{\mathcal{PM}}(\varepsilon, 2n, q_0) - \frac{4}{n}> \eta,
    \end{align*}
    and so $\varepsilon < \varepsilon_{\mathcal{M}}^*(q_0,n,\eta)$. 
    Since this is true for any $\varepsilon < \varepsilon_{\mathcal{PM}}^*(q_0,2n,\eta+\delta)$, we obtain the lower bound $\varepsilon_{\mathcal{M}}^*(q_0,n,\eta) \geq \varepsilon_{\mathcal{PM}}^*(q_0,2n,\eta+\delta)$ provided $n \geq 4/\delta$. 
    Note that $(1+c)n$ need not be an integer in the Poissonized model~\eqref{model:poissonized}.
    This fact being established, we now proceed by bounding below the Poissonized testing rate $\varepsilon_{\mathcal{PM}}^*\left(q_0,n,\eta\right)$ to obtain a lower bound on $\varepsilon_{\mathcal{M}}^*(q_0,n,\eta)$. 
    We recall that the rate we are aiming for is
    \begin{align*}
        \varepsilon_{\mathcal{M}}^*(q_0, n) \gtrsim \frac{1}{n} + \sqrt{\frac{q_0^{\max}(1-q_0^{\max})}{n}} + \max_{j} q_0^{-\max}(j) \Gamma\left(\frac{\log(ej)}{nq_0^{-\max}(j)}\right). 
    \end{align*}
    This rate contains three parts that are analyzed separately. We note that the sum of the first and third parts is analogous to the Poisson rate~\eqref{rate:poisson} after rescaling by $\frac{1}{n}$.
    
    \subsubsection{\texorpdfstring{Prior construction for the $1/n$ term}{Prior construction for the 1/n rate}}
    The $\frac{1}{n}$ term in the above rate is proved by analyzing the two-point testing problem $H_0: X \sim \mathbf P_{q_0}$ versus $H_1: X \sim \mathbf P_{q_1}$ where
    \begin{align*}
        q_1(j) := \left(1 - \frac{2c_\eta}{n}\right)q_0(j) + \frac{2c_\eta}{n}\mathbbm{1}_{\{j = 2\}}, \quad \forall j \in [p].
    \end{align*}
    The probability vector $q_1$ is essentially analogous to the vector $\mu'$ used in the alternative hypothesis from the problem~\eqref{eq:poisson_trivial_H0}-\eqref{eq:poisson_trivial_H1}, rescaled by a suitable constant to lie within the simplex. 
    The proposition below provides a lower bound of order $\frac{1}{n}$ using this construction.

    \begin{proposition}\label{prop:1/n_rate}
        If \(\eta \in (0, 1)\), then there exists \(c_\eta > 0\) depending only on \(\eta\) such that \(\mathcal{R}_{\mathcal{M}}\left(\frac{c_\eta}{n}, n, q_0\right) \geq \eta\). 
    \end{proposition}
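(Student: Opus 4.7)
The plan is a two-point lower bound: I restrict the supremum in the definition of $\mathcal{R}_{\mathcal{M}}(c_\eta/n, n, q_0)$ to the single alternative $q_1$ given in the statement and then control $\dTV(P_{q_0}, P_{q_1})$, where $P_q$ denotes $\Multinomial(n, q)$, via Pinsker's inequality together with a direct Kullback--Leibler computation that exploits the convex-combination structure of $q_1$.

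First I would verify admissibility of the construction. Setting $\delta := 2 c_\eta / n$, the choice $c_\eta \leq 1/4$ ensures $\delta \in (0,1)$ for every $n \geq 1$, so $q_1 = (1-\delta) q_0 + \delta e_2$ is a valid convex combination in $\Delta_p$. For the separation, at $j = 2$ one has $q_1(2) - q_0(2) = \delta(1 - q_0(2))$, and the sorting convention $q_0(1) \geq q_0(2) \geq \ldots \geq q_0(p)$ together with $\sum_j q_0(j) = 1$ forces $q_0(2) \leq 1/2$. Hence $q_1(2) - q_0(2) \geq \delta/2 = c_\eta / n$, so $q_1 \in \Pi(q_0, c_\eta/n)$, and the standard Neyman--Pearson identity yields
\[
\mathcal{R}_{\mathcal{M}}(c_\eta/n, n, q_0) \;\geq\; 1 - \dTV(P_{q_0}, P_{q_1}).
\]

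The bulk of the argument is to bound the right-hand total variation above by $1 - \eta$. Since the multinomial counts are a sufficient statistic for $n$ i.i.d.\ categorical draws, the KL divergence tensorizes, $\mathrm{KL}(P_{q_0} \,\|\, P_{q_1}) = n \cdot \mathrm{KL}(q_0 \,\|\, q_1)$. I would then bound the single-draw KL coordinate by coordinate: for every $j \neq 2$, the identity $q_1(j) = (1-\delta) q_0(j)$ gives $\log(q_0(j)/q_1(j)) = -\log(1-\delta)$; at $j = 2$, the bound $q_1(2) \geq (1-\delta) q_0(2)$ yields the same upper bound $-\log(1-\delta)$ on the log-ratio. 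Summing against $q_0$ gives $\mathrm{KL}(q_0 \,\|\, q_1) \leq -\log(1-\delta) \leq 2\delta$ provided $\delta \leq 1/2$, so $\mathrm{KL}(P_{q_0} \,\|\, P_{q_1}) \leq 4 c_\eta$, and Pinsker's inequality delivers $\dTV(P_{q_0}, P_{q_1}) \leq \sqrt{2 c_\eta}$.

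The conclusion follows by choosing $c_\eta = \min\{1/4,\, (1-\eta)^2 / 2\}$. There is no substantive obstacle; the only technical point worth flagging is the separation check at $j = 2$, for which the sorting convention is essential---absent the bound $q_0(2) \leq 1/2$, the factor $1 - q_0(2)$ could be arbitrarily small and would need to be absorbed into $c_\eta$. A $\chi^2$-based route would also work, but the KL argument is marginally cleaner because the tensorization identity is exact.
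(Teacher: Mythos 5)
Your proposal is correct and takes essentially the same approach as the paper: the identical two-point alternative \(q_1 = (1-\tfrac{2c_\eta}{n})q_0 + \tfrac{2c_\eta}{n}e_2\), the same separation check via \(q_0(2) \le \tfrac12\), and the Neyman--Pearson reduction to bounding \(\dTV(P_{q_0}, P_{q_1})\) by a small multiple of \(c_\eta\). The only cosmetic difference is in the final distance bound---the paper uses \(\dTV(P_{q_0},P_{q_1}) \le n\,\dTV(q_0,q_1)\) with a direct single-draw TV computation, while you tensorize the KL divergence and invoke Pinsker---and both routes are valid and yield the required bound \(1-\eta\).
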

    \noindent Note that, here, we obtained the desired lower bound on the multinomial separation rate $\varepsilon_{\mathcal{M}}^*(n,q_0,\eta)$ directly.

    \subsubsection{Prior construction for the parametric rate}\label{section:prior_construction_parametric}

    Since a lower bound of order $\frac{1}{n}$ has been derived in Proposition~\ref{prop:1/n_rate}, we will assume from now on that the $\frac{1}{n}$ term does not dominate in the rate~\eqref{rate:multinomial}.
    To establish the term \(\sqrt{\frac{q_0^{\max}(1-q_0^{\max})}{n}}\) in the lower bound (\ref{rate:multinomial}), it suffices to establish a lower bound of order \(q_0^{\max} \wedge \sqrt{\frac{q_0^{\max}(1-q_0^{\max})}{n}}\) since, up to universal constants, we have 
    \begin{align*}
        q_0^{\max} \wedge \sqrt{\frac{q_0^{\max}(1-q_0^{\max})}{n}} + \frac{1}{n} \asymp \sqrt{\frac{q_0^{\max}(1-q_0^{\max})}{n}} + \frac{1}{n}.
    \end{align*}

    The parametric rate $q_0^{\max} \wedge \sqrt{\frac{q_0^{\max}(1-q_0^{\max})}{n}} $ is proved by analyzing the two-point testing problem $H_0: X \sim \mathbf P_{q_0}$ versus $H_1: X \sim \mathbf P_{q_1'}$ where
    \begin{equation*}
        q_1'(j) = 
        \begin{cases}
            q_0(1) - c_\eta\epsilon &\textit{if } j = 1, \\
            q_0(j) \left(1 + \frac{c_\eta\epsilon}{1-q_0(1)}\right) &\textit{if } j \geq 2. 
        \end{cases}
    \end{equation*}
and \(\epsilon = q_0(1) \wedge \sqrt{\frac{q_0(1)(1-q_0(1))}{n}}\) for some sufficiently small constant \(c_\eta \in [0, 1]\). 
Using this reduction, the parametric rate is obtained in the proposition below.
    
    \begin{proposition}\label{prop:multinomial_parametric}
        If \(\eta \in (0, 1)\), then there exists \(c_\eta > 0\) depending only on \(\eta\) such that 
        \begin{equation*}
            \mathcal{R}_{\mathcal{PM}}\left( c_\eta \left(q_0^{\max} \wedge \sqrt{\frac{q_0^{\max}(1-q_0^{\max})}{n}}\right), n, q_0\right) \geq \eta.
        \end{equation*}
    \end{proposition}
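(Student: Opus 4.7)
\textbf{Plan for the proof of Proposition \ref{prop:multinomial_parametric}.} The plan is to carry out a standard two-point reduction against the alternative \(q_1'\) suggested in the text, and to bound the testing risk from below by \(1 - \dTV(\mathbf{P}_{q_0}, \mathbf{P}_{q_1'})\). Under Poissonization the data are independent across coordinates, so the chi-squared divergence factorizes, and the specific form of \(q_1'\) (a decrease of \(c_\eta\varepsilon\) on the first coordinate, compensated by a proportional increase on the remaining coordinates) is chosen precisely so that the chi-squared sum telescopes into something proportional to \(\varepsilon^2/(q_0(1)(1-q_0(1)))\).

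First I would verify admissibility: the identity \(\sum_j q_1'(j) = 1\) holds by construction, nonnegativity of \(q_1'(1)\) follows from \(\varepsilon \leq q_0(1)\) and \(c_\eta \leq 1\), and \(q_1'(j) \leq 1\) for \(j \geq 2\) follows from \(q_0(j) \leq 1-q_0(1)\) (true since \(q_0(1) + q_0(j) \leq 1\)). Also \(\lVert q_1' - q_0 \rVert_\infty \geq |q_1'(1) - q_0(1)| = c_\eta\varepsilon\), so \(q_1' \in \Pi(q_0, c_\eta\varepsilon)\).

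Next I would compute the chi-squared divergence. Using the closed-form \(\chi^2(\Poisson(\lambda_1)\,\|\,\Poisson(\lambda_0)) = \exp((\lambda_1-\lambda_0)^2/\lambda_0) - 1\) and independence across coordinates in the Poissonized model, I obtain
\begin{equation*}
    1 + \chi^2(\mathbf{P}_{q_1'} \,\|\, \mathbf{P}_{q_0}) = \exp\!\left(n \sum_{j=1}^{p} \frac{(q_1'(j) - q_0(j))^2}{q_0(j)}\right).
\end{equation*}
Substituting \(q_1'(1) - q_0(1) = -c_\eta\varepsilon\) and \(q_1'(j) - q_0(j) = q_0(j) \cdot \frac{c_\eta\varepsilon}{1-q_0(1)}\) for \(j \geq 2\), and using \(\sum_{j \geq 2} q_0(j) = 1 - q_0(1)\), the sum collapses to \(\frac{c_\eta^2\varepsilon^2}{q_0(1)(1-q_0(1))}\). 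Since \(\varepsilon^2 \leq q_0(1)(1-q_0(1))/n\) by definition, the exponent is bounded by \(c_\eta^2\), giving \(\chi^2(\mathbf{P}_{q_1'} \,\|\, \mathbf{P}_{q_0}) \leq e^{c_\eta^2} - 1\).

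Finally I would conclude by the standard two-point bound: for any test \(\varphi\),
\begin{equation*}
    \mathbf{P}_{q_0}\{\varphi = 1\} + \mathbf{P}_{q_1'}\{\varphi = 0\} \geq 1 - \dTV(\mathbf{P}_{q_0}, \mathbf{P}_{q_1'}) \geq 1 - \tfrac{1}{2}\sqrt{e^{c_\eta^2} - 1},
\end{equation*}
which exceeds \(\eta\) once \(c_\eta\) is chosen small enough as a function of \(\eta\). There is no real obstacle in this argument; the only ``clever'' point is recognizing that the particular reweighting used in \(q_1'\) produces the clean telescoping in the \(\chi^2\) calculation, and that \(\varepsilon = q_0^{\max} \wedge \sqrt{q_0^{\max}(1-q_0^{\max})/n}\) is exactly calibrated so the resulting exponent is \(O(1)\).
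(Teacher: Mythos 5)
Your proposal is correct and matches the paper's own argument: the same two-point alternative \(q_1'\), the same verification that it lies in \(\Pi(q_0, c_\eta\varepsilon)\), and the same product-Poisson \(\chi^2\) computation collapsing to \(\exp\bigl(nc_\eta^2\varepsilon^2/(q_0(1)(1-q_0(1)))\bigr) \leq e^{c_\eta^2}\), followed by the standard total-variation bound. The only difference is the cosmetic choice of constant in the final step (the paper uses \(\dTV \leq \sqrt{\chi^2}\) and \(c_\eta = (1-\eta)/\sqrt{2}\), you use \(\dTV \leq \tfrac{1}{2}\sqrt{\chi^2}\)), which does not affect the result.
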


    \subsubsection{\texorpdfstring{Prior construction for the term $\max_{j} q_0^{-\max}(j) \Gamma\left(\frac{\log(ej)}{nq_0^{-\max}(j)}\right)$}{Prior construction for the local term}}\label{section:prior_construction_local}
   
    We now address the remaining term $\max_{j} q_0^{-\max}(j) \Gamma\left(\frac{\log(ej)}{nq_0^{-\max}(j)}\right)$. 
    Although this rate is comparable to the Poisson rate from Theorem~\ref{thm:lower_bound}, after rescaling by $\frac{1}{n}$, the proof is more involved because our prior must be supported in the simplex. 
    In particular, any perturbation added to a coordinate must be offset by removing a corresponding mass amount from other coordinates. 
    To better appreciate why the multinomial prior construction does not directly follow from the Poisson prior construction~\eqref{eq_def_prior_poisson}, assume $q_0$ is the uniform distribution over $p$ coordinates for conceptual clarity. 
    The claimed minimax rate is given by
    \begin{align*}
        \frac{1}{n} + \sqrt{\frac{p^{-1}(1-p^{-1})}{n}} + \frac{1}{p} h^{-1}\left(\frac{p\log(p)}{n}\right)\asymp \begin{cases}
            \sqrt{\frac{\log(p)}{np}}& \text{ if } n \geq p\log(p),\\
            \frac{\log(p)/n}{\log\left(\frac{p\log(p)}{n}\right)} & \text{ if } n < p\log(p). 
        \end{cases}
    \end{align*}
    In the subgaussian regime where $n \geq p\log(p)$, the rate satisfies $\sqrt{\frac{\log(p)}{np}} \leq \frac{1}{p}$. 
    This implies that one can select two coordinates uniformly at random and apply a perturbation of order $\sqrt{\frac{\log(p)}{np}}$ to the first one while reducing the second one by the same amount, without causing any coordinate to become negative.  
    Notably, this straightforward construction is no longer achievable in the subpoissonian regime where $n \ll p\log(p)$: to enforce the simplex constraint, a perturbation of order $\frac{\log(p)/n}{\log\left(\frac{p\log(p)}{n}\right)} \gg \frac{1}{p}$ added to a coordinate must be compensated for by reducing $m\asymp \frac{p\log(p)/n}{\log\left(\frac{p\log(p)}{n}\right)} \gg 1$ coordinates.   
    Consequently, the main technical challenge is to ensure that such a perturbation of multiple coordinates remains indistinguishable from the null hypothesis (Lemmas~\ref{lemma:multinomial_conditional} and \ref{lemma:mgf_bound}) while simultaneously achieving a rate analogous to that in Theorem~\ref{thm:lower_bound}. 

    Formally, our prior distribution \(\pi\) is defined as follows. 
    Let 
        \begin{align}
            &j^* = \argmax_{j} nq_0^{-\max}(j) h^{-1}\left(\frac{\log(ej)}{nq_0^{-\max}(j)}\right),\label{def_j^*_mult}\\
            &\psi = \max_{j} nq_0^{-\max}(j) h^{-1}\left(\frac{\log(ej)}{nq_0^{-\max}(j)}\right),\\
            &m = \left\lceil h^{-1}\left(\frac{\log(ej^*)}{nq_0^{-\max}(j^*)}\right)\right\rceil \wedge (j^* - 1). \label{def_m_mult}
        \end{align}

    The integer $m$ represents the number of coordinates to be decreased.  
    A draw \(q \sim \pi\) is obtained by first drawing \(J \sim \Uniform(\{2,...,j^*+1\})\), then drawing uniformly at random a size-\(m\) subset \(\mathcal{I} \subset \{2,...,j^*+1\} \setminus \left\{J\right\}\), and finally setting 
    \begin{equation}
        q(j) = 
        \begin{cases}
            q_0(j) + c\frac{\psi}{n}   &\textit{if } j = J,\\
            q_0(j) - c\frac{\psi}{nm} &\textit{if } j \in \mathcal{I}, \\
            q_0(j) &\textit{otherwise},
        \end{cases}
        \label{eq_non-param_prior_mult}
    \end{equation}
    for \(1 \leq j \leq p\). 
    If $m=0$, then $\mathcal{I}$ is empty and we have $q = q_0$. Note that the first coordinate is never perturbed, i.e. \(q(1) = q_0(1)\). 
    The desired lower bound based on the prior construction~\eqref{eq_non-param_prior_mult} is established by combining Theorem~\ref{thm:poissonized_multinomial_lower_bound} and Lemma~\ref{lem:m=0} below.
    
    \begin{theorem}\label{thm:poissonized_multinomial_lower_bound}
        Let $j^*$ and $m$ be defined as in~\eqref{def_j^*_mult} and \eqref{def_m_mult}, respectively. There exists a large universal constant \(C_* > 0\) such that the following holds. If 
        \begin{equation}\label{eqn:multinomial_psi_large}
            \max_{j} q_0^{-\max}(j) h^{-1}\left(\frac{\log(ej)}{nq_0^{-\max}(j)}\right) \geq \frac{C_*}{n} 
        \end{equation}
        and \(\eta \in (0, 1)\), then there exists \(\tilde{C}_\eta \geq e\) and \(c_\eta > 0\) depending only on \(\eta\) such that for \(0 < c < c_\eta\) we have 
        \begin{equation*}
            \mathcal{R}_{\mathcal{PM}}\left(\frac{c\psi}{n}, n, q_0\right) \geq \eta,
        \end{equation*}
        where  
        \begin{equation*}
            \psi = nq_0^{-\max}(j^*) h^{-1}\left(\frac{\log(\tilde{C}_\eta j^*)}{nq_0^{-\max}(j^*)}\right)\mathbbm{1}_{\{m \geq 1\}}. 
        \end{equation*}
    \end{theorem}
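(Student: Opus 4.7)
The plan is to proceed by the conditional second moment method, analogous to the approach sketched for Theorem \ref{thm:lower_bound}, but adapted to respect the simplex constraint. I would first lower bound the Poissonized testing risk $\mathcal{R}_{\mathcal{PM}}(c\psi/n, n, q_0)$ by a Bayes risk, using the prior $\pi$ defined in \eqref{eq_non-param_prior_mult}, which gives $\mathcal{R}_{\mathcal{PM}}(c\psi/n,n,q_0) \geq 1 - \dTV(\mathbf{P}_{q_0}, \mathbf{P}_\pi)$ whenever $\pi$ is supported on $\Pi(q_0, c\psi/n)$. The definition of $m$ as $\lceil h^{-1}(\log(ej^*)/(nq_0^{-\max}(j^*)))\rceil \wedge (j^*-1)$ together with hypothesis \eqref{eqn:multinomial_psi_large} ensures that the downward perturbations $-c\psi/(nm)$ do not send any coordinate below zero, so every $q \sim \pi$ is a valid probability vector; also, since only the $J$-th coordinate is increased by $c\psi/n$, the separation $\|q-q_0\|_\infty \geq c\psi/n$ holds.

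The second step is to compute the second moment of the likelihood ratio. Under the Poissonized model, the independence of coordinates and the Poisson MGF identity $\mathbb{E}_{q_0}[a^{X_j}] = e^{nq_0(j)(a-1)}$ yield
\begin{equation*}
\mathbb{E}_{q_0}\!\left[\left(\frac{d\mathbf{P}_\pi}{d\mathbf{P}_{q_0}}\right)^{\!2}\right]
= \mathbb{E}_{(J,\mathcal{I}),(J',\mathcal{I}')}\!\exp\!\left(n\sum_{j=2}^{p}\frac{(q(j)-q_0(j))(q'(j)-q_0(j))}{q_0(j)}\right),
\end{equation*}
where $(J,\mathcal{I})$ and $(J',\mathcal{I}')$ are independent copies of the random indices defining $\pi$. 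The terms in the exponent split into four categories according to whether $J=J'$, whether $J\in\mathcal{I}'$ (or symmetrically), and the size of $\mathcal{I}\cap\mathcal{I}'$. Only the last pattern survives on the ``generic'' event and its contribution is $\frac{c^2\psi^2}{nm^2}\sum_{j\in\mathcal{I}\cap\mathcal{I}'}\frac{1}{q_0(j)}$.

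Next, I would introduce the conditional second moment by restricting $\pi$ to a high-probability event $E$ on which $J\neq J'$, $J\notin\mathcal{I}'$, $J'\notin\mathcal{I}$, so that all cross terms involving $J$ or $J'$ vanish. Because the index sets are sampled uniformly from $\{2,\dots,j^*+1\}$ of cardinality $j^*$, and $m\leq j^*-1$, the complementary event has probability controlled by $m/j^*$; the exact quantification of the cost of conditioning is what Lemma \ref{lemma:multinomial_conditional} provides. On $E$ only the overlap terms remain, and since $j\in\mathcal{I}\cap\mathcal{I}'\subset\{2,\dots,j^*+1\}$ has $q_0(j)\geq q_0(j^*+1)=q_0^{-\max}(j^*)$, the exponent is bounded by $\frac{c^2\psi^2}{nm^2 q_0^{-\max}(j^*)}|\mathcal{I}\cap\mathcal{I}'|$. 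The overlap $|\mathcal{I}\cap\mathcal{I}'|$ follows a hypergeometric-type law whose moment generating function is controlled by Lemma \ref{lemma:mgf_bound}. Choosing $\tilde{C}_\eta$ large enough and $c$ small enough, and using the defining relation $\psi=nq_0^{-\max}(j^*)h^{-1}(\log(\tilde{C}_\eta j^*)/(nq_0^{-\max}(j^*)))$ together with the identity $nq_0^{-\max}(j^*)\cdot h(m/\text{const})\approx \log j^*$, the MGF bound yields a conditional second moment of at most $1+\eta^2$, whence $\dTV(\mathbf{P}_{q_0},\mathbf{P}_{\pi|E})\leq \eta/2$ by Cauchy--Schwarz.

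The main obstacle, and the reason the Poisson argument cannot be copied verbatim, lies in the subpoissonian regime where $m\gg 1$: the mass injected at coordinate $J$ must be spread across many other coordinates whose individual masses $q_0(j)$ can be much smaller than $c\psi/(nm)$, making the raw ratios $(q(j)-q_0(j))/q_0(j)$ delicate. The careful calibration of $m$ so that the per-coordinate perturbation does not exceed $q_0^{-\max}(j^*)$, coupled with the sharp control of the hypergeometric overlap $|\mathcal{I}\cap\mathcal{I}'|$ via Lemma \ref{lemma:mgf_bound}, is the technical core that makes the exponent in the second moment integrable. Once this is secured, the inequality $\mathcal{R}_{\mathcal{PM}}(c\psi/n,n,q_0)\geq 1-\dTV(\mathbf{P}_{q_0},\mathbf{P}_{\pi|E})-\pi(E^c)\geq \eta$ follows for $c<c_\eta$ sufficiently small.
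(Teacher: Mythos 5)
Your overall frame (prior $\pi$ from~\eqref{eq_non-param_prior_mult}, support check, Ingster-type second moment with two independent copies of the random indices) matches the paper's setup, but there is a genuine gap at the heart of the argument: your ``conditional second moment'' conditions on the wrong object, and as a result the decisive term is left uncontrolled. The event you propose, $\{J \neq J',\, J \notin \mathcal{I}',\, J' \notin \mathcal{I}\}$, is not an event of a single prior draw (the pair $(J,\mathcal{I}),(J',\mathcal{I}')$ only appears after squaring the mixture), so ``restricting $\pi$'' to it is not meaningful; and even granting some reformulation, the cross terms $J\in\mathcal{I}'$, $J'\in\mathcal{I}$ are harmless anyway (they enter the exponent with a negative sign and can simply be dropped, as in the paper), whereas the diagonal term $J=J'$ cannot be conditioned away: under any such prior it carries weight of order $1/j^*$ and contributes $\frac{1}{j^*}\exp\bigl(\frac{c^2\psi^2}{nq_0^{-\max}(j^*)}\bigr)$. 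In the subpoissonian regime $nq_0^{-\max}(j^*) \ll \log(ej^*)$ one has $\psi \asymp \frac{\log (ej^*)}{\log\bigl(\log(ej^*)/(nq_0^{-\max}(j^*))\bigr)}$, so $\frac{\psi^2}{nq_0^{-\max}(j^*)} \gg \log j^*$ and this diagonal term diverges no matter how small the constant $c$ is. This is exactly the failure mode the paper flags: the unconditional second moment only yields the subgaussian part of the rate.

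What the paper does instead, and what your proposal is missing, is twofold. First, it flattens the heteroskedastic null to the homoskedastic one $\Poisson(nq_0^{-\max}(j^*))^{\otimes j^*}$ via infinite divisibility and data processing (Propositions~\ref{prop:general_flattening} and~\ref{prop:flattening}), after checking $\tilde q \geq 0$ (Lemma~\ref{lemma:qtilde}). Second — and this is the essential point — it conditions on a high-probability event of the \emph{data}, $E = \{\max_{j\leq j^*} Y_j - \mu_{j^*} \leq \psi\}$, so that in the conditional $\chi^2$ the diagonal exponential is multiplied by the truncation probability $P\{\Poisson((\mu_{j^*}+c\psi)^2/\mu_{j^*}) \leq \mu_{j^*}+\psi\}$; a careful application of Bennett's inequality (Lemmas~\ref{lemma:bennett_cancellation} and~\ref{lem:control_chi2_multinomial}) produces the cancellation $\exp\bigl(\frac{c^2\psi^2}{\mu_{j^*}}\bigr)\cdot P\{\cdots\} \leq \exp\bigl(-\mu_{j^*}h(\psi/\mu_{j^*}) + 2\mu_{j^*}(1+\psi/\mu_{j^*})\log(1+c\psi/\mu_{j^*})\bigr)$, which, since $\mu_{j^*}h(\psi/\mu_{j^*}) = \log(\tilde C_\eta j^*)$, tames the diagonal for $c$ small. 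Your citation of Lemmas~\ref{lemma:multinomial_conditional} and~\ref{lemma:mgf_bound} misstates their roles: the former quantifies conditioning on the data event $E$ (not on the prior indices) and retains precisely this diagonal-times-tail-probability term, while the latter only handles the hypergeometric overlap $|\mathcal{I}\cap\mathcal{I}'|$, which by itself is nowhere near sufficient. Without the data truncation and the Bennett cancellation, your argument would only establish the subgaussian portion of the lower bound, not the theorem as stated.
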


    The following lemma shows that if \(\psi = 0\) due to \(m = 0\) (which can only happen when \(j^* = 1\)) in Theorem \ref{thm:poissonized_multinomial_lower_bound}, then we must be in the regime such that the other terms dominate in the rate, i.e. \(\varepsilon^*_{\mathcal{PM}}(q_0, n) \asymp \frac{1}{n} + \sqrt{\frac{q_0^{\max}(1-q_0^{\max})}{n}}\). 

    \begin{lemma}\label{lem:m=0}
        If \(j^* = 1\), then \(q_0^{-\max}(j^*) \Gamma\left(\frac{\log(ej^*)}{nq_0^{-\max}(j^*)}\right) \lesssim \sqrt{\frac{q_0^{\max}(1-q_0^{\max})}{n}} + \frac{1}{n}\). 
    \end{lemma}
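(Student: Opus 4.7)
My plan is to split on the two pieces of the piecewise definition of $\Gamma$. When $j^{*}=1$, one has $\log(ej^{*})=1$ and $q_{0}^{-\max}(j^{*}) = q_{0}^{-\max}(1) = q_{0}(2)$, so the quantity on the left-hand side is simply $q_{0}(2)\,\Gamma\!\left(\frac{1}{nq_{0}(2)}\right)$. I would then handle separately the subpoissonian case $nq_{0}(2)<1$ and the subgaussian case $nq_{0}(2)\ge 1$, and in each case show that the expression is absorbed by either the $1/n$ or the parametric term on the right.

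For the subpoissonian case $nq_{0}(2)<1$, by definition $\Gamma\!\left(\frac{1}{nq_{0}(2)}\right)=\frac{1}{nq_{0}(2)\,\log\!\left(\frac{e}{nq_{0}(2)}\right)}$, and since $nq_{0}(2)\le 1$ forces $\log\!\left(\frac{e}{nq_{0}(2)}\right)\ge 1$, this yields $q_{0}(2)\,\Gamma\!\left(\frac{1}{nq_{0}(2)}\right)\le \frac{1}{n}$, which is directly absorbed into the $\frac{1}{n}$ term on the right-hand side.

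For the subgaussian case $nq_{0}(2)\ge 1$, one has $q_{0}(2)\,\Gamma\!\left(\frac{1}{nq_{0}(2)}\right)=\sqrt{q_{0}(2)/n}$. The key elementary inequality I would use is $q_{0}(2)\le 2\,q_{0}^{\max}(1-q_{0}^{\max})$. This follows from two bounds: $q_{0}(2)\le q_{0}(1)=q_{0}^{\max}$ by the ordering convention, and $q_{0}(2)\le 1-q_{0}(1)=1-q_{0}^{\max}$ because $q_{0}(2)+\cdots+q_{0}(p)=1-q_{0}(1)$ with each summand nonnegative. Hence $q_{0}(2)\le \min(q_{0}^{\max},1-q_{0}^{\max})$, and since the larger of $q_{0}^{\max}$ and $1-q_{0}^{\max}$ is at least $1/2$, multiplying yields $q_{0}(2)\le 2\,q_{0}^{\max}(1-q_{0}^{\max})$. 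Therefore $\sqrt{q_{0}(2)/n}\lesssim \sqrt{q_{0}^{\max}(1-q_{0}^{\max})/n}$, matching the parametric term on the right-hand side.

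There is no real obstacle in this argument; it reduces entirely to unpacking the piecewise definition of $\Gamma$ together with the simplex constraint. The only mild subtlety worth flagging is that one must correctly identify $q_{0}^{-\max}(j^{*})$ as $q_{0}(2)$ (not $q_{0}(1)$) when $j^{*}=1$, and observe that it is precisely the simplex constraint $q_{0}(2)\le 1-q_{0}^{\max}$ — unavailable in the purely Poisson setting — that enables the crucial inequality $q_{0}(2)\le 2\,q_{0}^{\max}(1-q_{0}^{\max})$.
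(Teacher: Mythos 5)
Your proof is correct and follows essentially the same route as the paper's: unpack $\Gamma$ into its two cases to get $\sqrt{q_0(2)/n}$ or a term at most $1/n$, then use $q_0(2) \leq q_0(1) \wedge (1-q_0(1)) \asymp q_0^{\max}(1-q_0^{\max})$ via the ordering and the simplex constraint. Your explicit justification of $\min(q_0^{\max},1-q_0^{\max}) \leq 2\,q_0^{\max}(1-q_0^{\max})$ simply spells out the $\asymp$ the paper states.
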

    \begin{proof}
        If \(j^* = 1\), then 
        \begin{equation*}
            q_0^{-\max}(j^*) \Gamma\left(\frac{\log(ej^*)}{nq_0^{-\max}(j^*)}\right) = q_0(2) \Gamma\left(\frac{1}{nq_0(2)}\right) = 
            \begin{cases}
                \sqrt{\frac{q_0(2)}{n}} &\textit{if } n q_0(2) \geq 1, \\
                \frac{1}{n} \frac{1}{\log\left(\frac{e}{nq_0(2)}\right)} &\textit{if } nq_0(2) < 1.  
            \end{cases}
        \end{equation*}
        Consider \(q_0(2) \leq q_0(1)\) by our ordering assumption (which is made without loss of generality). Further consider \(q_0(2) \leq 1-q_0(1)\). Therefore, \(q_0(2) \leq q_0(1) \wedge (1-q_0(1)) \asymp q_0^{\max}(1-q_0^{\max})\). The desired result follows immediately. 
    \end{proof}

        The prior \(\pi\) involves removing probability mass from \(m\)-many coordinates of \(nq_0\) to compensate for the mass added to the coordinate \(J\). From the lower bound perspective, it must be argued that not only is the addition of mass undetectable, but also the removal of mass \(\frac{c\psi}{m}\) from \(m\)-many coordinates is undetectable. This latter point is related to lower bound arguments in sparse signal detection with sparsity level \(m = |\mathcal{I}|\). 

        To briefly review known results, consider \(Y \sim N(\theta, \sigma^2 I_d)\) and the testing problem \(H_0 : \theta = 0\) versus \(H_1 : ||\theta|| \geq \rho, ||\theta||_0 \leq m\). The minimax separation rate was shown in \cite{collier_minimax_2017} to be \((\rho^*)^2 \asymp \sigma^2 m \log\left(1 + \frac{d}{m^2}\right)\). The minimax lower bound involves the prior \(\nu\) where a draw \(\theta \sim \nu\) is obtained by drawing a uniformly at random size \(m\) subset \(S\) and setting \(\theta_j^2 \asymp \sigma^2\log\left(1 + \frac{d}{m^2}\right)\mathbbm{1}_{\{j \in S\}}\). This level of perturbation on \(m\) (uniformly) random coordinates was shown to be undetectable. 
        
         Returning to our Poissonized multinomial setting, consider that the noise level of the flattened, homoskedastic null is \(nq_0^{-\max}(j^*)\). Relying on intuition from the Gaussian model, the mass removal defined by \(\pi\) should be intuitively undetectable if \(\frac{\psi^2}{m^2} \lesssim nq_0^{-\max}(j^*) \log\left(1 + \frac{j^*}{m^2}\right)\). Our choice of \(\psi\) and \(m\) basically implies the condition essentially boils down to 
        \begin{equation*}
            nq_0^{-\max}(j^*) \lesssim \log\left(1 + \frac{j^*}{m^2}\right). 
        \end{equation*}
        It is not \textit{a priori} clear that our definition of \(j^*\) guarantees this condition is satisfied. To see why it turns out the condition is satisfied, there are essentially two regimes to understand: the subgaussian regime \(nq_0^{-\max}(j^*) \gtrsim \log(ej^*)\) and the subpoissonian regime \(nq_0^{-\max}(j^*) \lesssim \log(ej^*)\). There is nothing to argue if \(m = 0\), so suppose \(m \geq 1\). Lemma \ref{lemma:m_size} asserts \(m \lesssim (j^*)^{1/4}\), and so \(\log\left(1 + \frac{j^*}{m^2}\right) \asymp \log(ej^*)\); the undetectability condition is thus satisfied in the subpoissonian regime. In the subgaussian regime, it follows from \(h^{-1}(x) \asymp \sqrt{x}\) for \(x \lesssim 1\) that \(1 \leq m^2 \lesssim \frac{\log(ej^*)}{nq_0^{-\max}(j^*)}\). In other words, we actually have \(nq_0^{-\max}(j^*) \asymp \log(ej^*)\) and so the undetectability condition is also satisfied. 

    All of the lower bounds proved thus far can be combined directly. The following corollary formally states the desired minimax lower bound. 
    \begin{corollary}
        If \(\eta \in (0, 1)\), then there exists a constant \(c_\eta > 0\) depending only on \(\eta\) such that
        \begin{equation*}
            \mathcal{R}_{\mathcal{M}}\left(c_\eta\left( \frac{1}{n} + \sqrt{\frac{q_0^{\max}(1-q_0^{\max})}{n}} + \frac{\psi}{n}\right), n, q_0\right) \geq \eta. 
        \end{equation*}
    \end{corollary}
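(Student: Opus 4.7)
The plan is to assemble the three lower bounds already established in Section~\ref{section:multinomial_lowerbound} and transfer the Poissonized ones back to the multinomial model via Lemma~\ref{lem:relation_multinomial_poisson}. The $\tfrac{1}{n}$ term is delivered directly in the multinomial setting by Proposition~\ref{prop:1/n_rate}, producing a constant $c_1 > 0$ with $\mathcal{R}_{\mathcal{M}}(c_1/n, n, q_0) \geq \eta$. For the other two terms I would fix $\delta \in (0, 1-\eta)$ and apply Lemma~\ref{lem:relation_multinomial_poisson} with $c = 1$, which gives $\mathcal{R}_{\mathcal{M}}(\varepsilon, n, q_0) \geq \mathcal{R}_{\mathcal{PM}}(\varepsilon, 2n, q_0) - 4/n$ whenever $n \geq 4/\delta$; the small-$n$ regime $n < 4/\delta$ is handled by Proposition~\ref{prop:1/n_rate} alone, since then $1/n$ is bounded below by a positive constant of the same order as the full rate.

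For the parametric term, I would apply Proposition~\ref{prop:multinomial_parametric} with threshold $\eta + \delta$ at sample size $2n$ and invoke the transfer inequality above; because $\sqrt{q_0^{\max}(1-q_0^{\max})/(2n)} \asymp \sqrt{q_0^{\max}(1-q_0^{\max})/n}$, this yields a constant $c_2 > 0$ with $\mathcal{R}_{\mathcal{M}}(c_2(q_0^{\max} \wedge \sqrt{q_0^{\max}(1-q_0^{\max})/n}), n, q_0) \geq \eta$, which combined with the $\tfrac{1}{n}$ bound delivers the desired $\sqrt{q_0^{\max}(1-q_0^{\max})/n}$ rate per the reduction opening Section~\ref{section:prior_construction_parametric}. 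The analogous maneuver using Theorem~\ref{thm:poissonized_multinomial_lower_bound} (noting $\psi$ changes only by a universal constant factor when $n$ is replaced by $2n$, since $h^{-1}$ has polynomial growth in both regimes) produces $c_3 > 0$ with $\mathcal{R}_{\mathcal{M}}(c_3\psi/n, n, q_0) \geq \eta$ whenever condition~\eqref{eqn:multinomial_psi_large} holds and $m \geq 1$.

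Two degenerate cases require separate comment. If condition~\eqref{eqn:multinomial_psi_large} fails, then the non-parametric term is at most $C_*/n$ and is absorbed into the first bound. If instead $j^* = 1$, which forces $m=0$ and hence $\psi = 0$ in Theorem~\ref{thm:poissonized_multinomial_lower_bound}, Lemma~\ref{lem:m=0} shows that the non-parametric term is already dominated by the sum of the other two, so the first two lower bounds suffice.

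To finish, I would combine the three lower bounds by monotonicity of $\mathcal{R}_{\mathcal{M}}(\cdot, n, q_0)$ in its first argument. Writing $S$ for the sum of the three terms appearing in the statement, we have $S \leq 3 \max_i \text{term}_i$, so setting $c_\eta = \min(c_1, c_2, c_3)/3$ ensures $c_\eta S \leq c_{i^*}\text{term}_{i^*}$ for the dominant index $i^*$, whence $\mathcal{R}_{\mathcal{M}}(c_\eta S, n, q_0) \geq \mathcal{R}_{\mathcal{M}}(c_{i^*}\text{term}_{i^*}, n, q_0) \geq \eta$. The main obstacle is not any new computation but the careful bookkeeping: handling the Poissonization loss $4/n$ (absorbed by choosing the applied threshold slightly above $\eta$ when $n$ is large and by the $1/n$ bound when $n$ is small), and separating out the degenerate cases above. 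Once these are disposed of, the combination step is entirely routine.
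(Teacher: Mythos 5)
Your proposal is correct and takes essentially the same route as the paper's proof: Proposition~\ref{prop:1/n_rate} for the $1/n$ term, Lemma~\ref{lem:relation_multinomial_poisson} with $c=1$ (applied at level slightly above $\eta$ and sample size $2n$) to transfer Proposition~\ref{prop:multinomial_parametric} and Theorem~\ref{thm:poissonized_multinomial_lower_bound} back to the multinomial model, Lemma~\ref{lem:m=0} for the degenerate $j^*=1$ (i.e.\ $m=0$) case, the small-$n$ regime handled by the $1/n$ bound, and the three bounds combined via monotonicity of $\mathcal{R}_{\mathcal{M}}$ in the separation. The one step you leave implicit is why, for bounded $n$, the full rate is of the same order as $1/n$: this requires checking that $\sqrt{q_0^{\max}(1-q_0^{\max})/n}+\psi/n$ is bounded by a constant depending only on $\eta$, which the paper gets from $q_0^{-\max}(j^*)\leq 1/j^*$ together with the monotonicity of $x \mapsto x\,h^{-1}\!\left(\log(\tilde{C}_\eta j^*)/x\right)$.
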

    
    \begin{proof}
        Fix \(\eta \in (0, 1)\) and take \(n_0\) such that \(\frac{8}{n_0} \leq 1-\eta\). 
        If \(n \leq n_0\), then Proposition 5 delivers \(\varepsilon_{\mathcal{M}}^*(n,q_0,\eta) \geq \frac{c_\eta}{n} \geq c'_\eta\), where $c'_\eta$ depends only on $\eta$. 
        Moreover, it is easy to see that 
        $$\sqrt{\frac{q_0^{\max}(1-q_0^{\max})}{n}} + \frac{\psi}{n} \leq \log(\tilde C_\eta)$$
        by noting that $x \mapsto x h^{-1}(\log(\tilde C_\eta j^*)/x)$ is an increasing function for $x >0$, and that $ q_0^{-\max}(j^*)\leq \frac{1}{j^*}\sum_{j\leq j^*} q^{-\max}(j)  \leq 1/j^*$.
        Therefore, it holds that $\varepsilon \asymp \frac{1}{n}$ in this regime.
        
        Assume now that \(n \geq n_0\). 
        Then by Lemma~\ref{lem:relation_multinomial_poisson}, we have \(\mathcal{R}_{\mathcal{M}}(c_\eta\varepsilon, n, q_0) \geq \mathcal{R}_{\mathcal{PM}}(c_\eta\varepsilon, 2n, q_0) - \frac{1-\eta}{2}\). 
        By Propositions~\ref{prop:1/n_rate}, \ref{prop:multinomial_parametric}, Lemma~\ref{lem:m=0} and Theorem~\ref{thm:poissonized_multinomial_lower_bound}, we can choose the constants $C_*$ and $\tilde C_\eta$ large enough that 
        $$ \mathcal{R}_{\mathcal{PM}}\left( c_\eta\left(\frac{1}{n}+\sqrt{\frac{q_0^{\max}(1-q_0^{\max})}{n}} + \frac{\psi}{n}\right), n, q_0\right) \geq \frac{1+\eta}{2},$$
        which yields $\mathcal{R}_{\mathcal{M}}(c_\eta\varepsilon, n, q_0) \geq \eta$.
    \end{proof}

    \subsection{Asymptotic constants}\label{section:multinomial_sharp_constant}
    Similar to the Poisson setting, we are able to establish the sharp constant in a certain asymptotic setup. Throughout this subsection, we consider \(p\) as a function of \(n\), and we consider asymptotics as \(n \to \infty\). Concretely, we consider sequences of testing problems where the null \(q_0\) changes with \(n\). Furthermore, throughout this section we will assume \(q_0(1) \geq ... \geq q_0(p) \geq \frac{1}{n}\) for all \(n\). 
    
    For any sequence \(\{\alpha_p\}_{p=1}^{\infty}\) with \(\alpha_p \to \infty\) as \(n \to \infty\) and for $n' = (1+c_n)n$ where $c_n = n^{-1/3}$, denote 
    \begin{equation}
        j^* = \argmax_{j} q_0^{-\max}(j)(1-q_0^{-\max}(j)) h^{-1}\left( \frac{\log(ej\alpha_p \log^2(ej))}{n'q_0^{-\max}(j)(1-q_0^{-\max}(j))} \right). \label{eq_def_jstar_mult_sharp_constant}
    \end{equation}
    Define 
    \begin{equation}
        \epsilon = \xi \cdot \max_{j} q_0^{-\max}(j)(1-q_0^{-\max}) h^{-1}\left(\frac{\log(ej)}{n'q_0^{-\max}(j)(1-q_0^{-\max}(j))} \right),\label{eq_def_eps_mult_sharp_constant}
    \end{equation}
    where \(\xi > 0\) is a fixed constant which does not change with \(n\). 
    \begin{theorem}\label{thm:multinomial_sharp_constant}
        Suppose \(q_0(1) \geq ... \geq q_0(p) \geq \frac{1}{n}\), \(\frac{\log j^*}{(\log \alpha_p)(\log\log j^*)} \to \infty\), $n \to \infty$, and \(\frac{\epsilon}{\sqrt{\frac{q_0^{\max}(1-q_0^{\max})}{n}}} \to \infty\).
        
        \begin{enumerate}[label=(\roman*)]
            \item Suppose \(\xi > 1\). If \(\frac{\log j^*}{nq_0^{-\max}(j^*)} \to 0\) or \(\frac{\log j^*}{nq_0^{-\max}(j^*)} \to \infty\), then \(\mathcal{R}_{\mathcal{M}}(\epsilon, n, q_0) \to 0\).
            
            \item Suppose \(\xi < 1\). If \(\frac{\log j^*}{nq_0^{-\max}(j^*)} \to 0\) or \(\frac{\log j^*}{nq_0^{-\max}(j^*)} \to \infty\), then \(\mathcal{R}_{\mathcal{M}}(\epsilon, n, q_0) \to 1\).
        \end{enumerate}
    \end{theorem}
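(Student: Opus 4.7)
The strategy mirrors the proof of Theorem~\ref{thm:poisson_sharp_constant}, routed through the Poissonized multinomial model. First, I would invoke Lemma~\ref{lem:relation_multinomial_poisson} with $c = c_n = n^{-1/3}$ to pass from $\mathcal{R}_{\mathcal{M}}(\epsilon, n, q_0)$ to $\mathcal{R}_{\mathcal{PM}}(\epsilon, n', q_0)$ at an additive cost $2(1+c_n)/(c_n^2 n) = O(n^{-1/3}) \to 0$, which is absorbed into the claimed limits. The assumption $\epsilon/\sqrt{q_0^{\max}(1-q_0^{\max})/n} \to \infty$ trivializes the parametric component: any signal supported at $j=1$ is detected with vanishing error by $\varphi_1$ of Proposition~\ref{prop:multinomial1}, and in the lower bound the prior~\eqref{eq_non-param_prior_mult} never perturbs $j=1$. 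For part (i) with $\xi > 1$, I would apply a sharpened version of the maximum-type test from Section~\ref{section:poisson_upper_bound} to $(X_2,\ldots,X_p)$ with threshold
\begin{equation*}
T_j = n'q_0^{-\max}(j)(1-q_0^{-\max}(j))\,h^{-1}\!\left(\frac{\log(ej\alpha_p\log^2(ej))}{n'q_0^{-\max}(j)(1-q_0^{-\max}(j))}\right).
\end{equation*}
Bennett's inequality (Lemma~\ref{lemma:Bennett}) combined with the summability $\sum_j 1/(j\alpha_p\log^2(ej)) \lesssim 1/\alpha_p \to 0$ yields vanishing Type~I error. Under the hypothesis $\log j^*/((\log\alpha_p)(\log\log j^*)) \to \infty$, Lemma~\ref{lem:removing_logs} applied with $u_{p,j} = \log(\alpha_p\log^2(ej))/\log(ej) \to 0$ identifies $\max_j T_j$ with $(1+o(1))\epsilon/\xi$; since $\xi > 1$, the alternative's perturbation exceeds the threshold by a diverging margin in either regime $\log j^*/(nq_0^{-\max}(j^*)) \to 0$ (using $h^{-1}(x)\sim\sqrt{2x}$) or $\to\infty$ (using $h^{-1}(x)\sim x/\log x$), so a second invocation of Bennett drives Type~II error to zero.

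For part (ii) with $\xi < 1$, I would reuse the prior construction~\eqref{eq_non-param_prior_mult} of Section~\ref{section:prior_construction_local} with perturbation amplitude
\begin{equation*}
\psi = n'q_0^{-\max}(j^*)(1-q_0^{-\max}(j^*))\,h^{-1}\!\left(\frac{\log(ej^*\alpha_p\log^2(ej^*))}{n'q_0^{-\max}(j^*)(1-q_0^{-\max}(j^*))}\right),
\end{equation*}
which equals $(1+o(1))\epsilon$ by Lemma~\ref{lem:removing_logs}. Following the conditional second moment method of Theorem~\ref{thm:poissonized_multinomial_lower_bound}---conditioning on the high-probability event under $\mathbf{P}_{q_0}$ that the Poissonized counts at the indices $\{2,\ldots,j^*+1\}$ lie in a Bennett-type envelope---I would refine the MGF estimates of Lemmas~\ref{lemma:multinomial_conditional} and~\ref{lemma:mgf_bound} until the conditional $\chi^2$-divergence between the mixture and the null is $1+o(1)$ rather than merely bounded by a constant. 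In the subgaussian regime $\log j^*/(nq_0^{-\max}(j^*)) \to 0$, this reduces to an explicit Gaussian second-moment computation with sharp constant $\sqrt{2\log j^*}$; in the subpoissonian regime, one must track the Poisson MGF expansion to first-order precision, analogous to the calculation behind Theorem~\ref{thm:poisson_sharp_constant}.

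\textbf{Main obstacle.} The hardest step is the first-order $\chi^2$ control in the subpoissonian regime, where the simplex constraint forces the mass removal to be spread across $m \gg 1$ coordinates. The rate-level argument only required that the combined MGF contribution from these removals be bounded by a constant; at the sharp-constant level, one must show that their contribution is strictly $o(1)$ relative to the contribution from the single added coordinate. The bound $m \lesssim (j^*)^{1/4}$ from Lemma~\ref{lemma:m_size} keeps each removal small enough that the added-coordinate perturbation dominates the leading asymptotics of $\chi^2$ and thereby matches the Poisson sharp constant of Theorem~\ref{thm:poisson_sharp_constant}; quantifying the precise interplay between the combinatorial factor from the uniformly random size-$m$ subset selection and the per-coordinate MGF expansion is the delicate ingredient that distinguishes this theorem from its Poisson analogue.
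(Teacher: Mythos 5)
Your plan for part (ii) is essentially the paper's proof: reduce to the Poissonized multinomial at sample size \(n'=(1+n^{-1/3})n\) via Lemma~\ref{lem:relation_multinomial_poisson}, reuse the prior~\eqref{eq_non-param_prior_mult} (the paper only tweaks \(m\) as in~\eqref{def_m_sharp_constant}), flatten via Proposition~\ref{prop:general_flattening}, and run the conditional second moment method while sharpening the bounds of Lemmas~\ref{lemma:multinomial_conditional} and~\ref{lemma:mgf_bound} so that the conditional \(\chi^2\)-divergence is \(o(1)\) rather than \(O(1)\). Your ``main obstacle'' paragraph identifies exactly the right refinement: the paper's Lemma~\ref{lem:sharp_constant_prior_mult} shows the mass-removal MGF term is \(\exp(o(1))\) precisely because \(m \lesssim (j^*)^{1/4}\), in both the subgaussian and subpoissonian regimes.

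The genuine gap is in part (i). Lemma~\ref{lem:relation_multinomial_poisson} states \(\mathcal{R}_{\mathcal{M}}(\varepsilon, n, q_0) \geq \mathcal{R}_{\mathcal{PM}}(\varepsilon, (1+c)n, q_0) - \tfrac{2(1+c)}{c^2 n}\): the multinomial problem is at least as hard as the Poissonized one, so this inequality only transfers \emph{lower} bounds. Proving \(\mathcal{R}_{\mathcal{PM}}(\epsilon, n', q_0) \to 0\) gives no control on \(\mathcal{R}_{\mathcal{M}}(\epsilon, n, q_0)\), so the opening reduction cannot be ``absorbed into the claimed limits'' for the upper bound; a reverse Poissonization (subsampling to a \(\Poisson((1-\delta)n)\) number of observations) could be made to work but is neither in the paper nor in your proposal. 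The repair is what the paper does: analyze the maximum-type test directly in the multinomial model, where \(X_j \sim \Binomial(n, q_0(j))\) under the null, so the Type I error for \(j \geq 2\) is controlled by the binomial Bennett inequality (Corollary~\ref{corollary:bennett_binomial}), not Lemma~\ref{lemma:Bennett}, the first coordinate is handled by Chebyshev using the assumption \(\epsilon / \sqrt{q_0^{\max}(1-q_0^{\max})/n} \to \infty\), and the Type II error follows from Chebyshev after splitting on whether the separated coordinate is \(1\) or \(\geq 2\). Relatedly, Proposition~\ref{prop:multinomial1} as stated only gives a fixed error level \(\eta/2\); to claim ``vanishing error'' at coordinate \(1\) you must let \(K_1 \to \infty\) slowly, again using that same assumption. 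With these corrections your part (i) argument coincides with the paper's.
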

   The condition \(\frac{\epsilon}{\sqrt{\frac{q_0^{\max}(1-q_0^{\max})}{n}}} \to \infty\) is, in some sense, necessary for a phase transition phenomenon to occur. When the parametric rate \(\sqrt{\frac{q_0^{\max}(1-q_0^{\max})}{n}} + \frac{1}{n}\) dominates in the rate, the asymptotic testing risk behaves classically. To elaborate, if the alternative hypothesis has separation \(\epsilon = \xi\left(\sqrt{\frac{q_0^{\max}(1-q_0^{\max})}{n}} + \frac{1}{n}\right)\), then there does not exist a detection boundary \(\xi^*\) such that the testing risk goes to \(0\) for \(\xi > \xi^*\) and goes to \(1\) for \(\xi < \xi^*\). Rather, the testing risk tends to some nontrivial quantity \(\beta(\xi) \in (0, 1)\). The absence of a phase transition is typical for hypothesis testing in parametric models (e.g. Gaussian location model), and is thus said to be classical.

    \section{Discussion}
    
    Testing signals with \(\ell_\infty\) separation is well known to be closely connected to testing very sparse signals in other separation metrics. Consequently, the related sparse signal detection literature is worth discussing.

    \subsection{Sparse Poisson mixture detection}
    Sparse testing in a Poisson model has been studied by Arias-Castro and Wang \cite{arias-castro_sparse_2015} who consider data \(X_1,...,X_p\) and the testing problem  
    \begin{align*}
        H_0 &: X_j \overset{ind}{\sim} \Poisson(\mu_j), \\
        H_1 &: X_j \overset{ind}{\sim} (1-\epsilon) \Poisson(\mu_j) + \frac{\epsilon}{2} \Poisson(\mu_j') + \frac{\epsilon}{2} \Poisson(\mu_j''). 
    \end{align*}
    It is assumed \(\min_{1 \leq j \leq p} \mu_j \geq 1\) for all \(p\). Adopting an asymptotic setup with \(p \to \infty\), they are interested in fundamental limits in the sparse setting in which the proportion of signals asymptotically vanishes but there are a nontrivial total number of signals. With these two desiderata, they assume \(\epsilon \to 0\) and \(p\epsilon \to \infty\).

    For a variety of models, the sparse mixture detection literature has studied fundamental limits in this setting. A long line of work has delivered sharp constants and subtle phase transitions \cite{ingster_problems_1997,donoho_higher_2004,donoho_higher_2015,donoho_higher_2022,arias-castro_sparse_2015,cai_optimal_2014,cai_optimal_2011,ditzhaus_signal_2019,kotekal_statistical_2022}. Following in this tradition, Arias-Castro and Wang parametrize \(\epsilon = p^{-\beta}\) for \(\beta \in \left(\frac{1}{2}, 1\right)\). Further, they consider two separate asymptotic regimes. In the regime \(\min_{1 \leq j \leq p} \mu_j = \omega(\log p)\), they parametrize \(\mu_j', \mu_j'' = \mu_j \pm \sqrt{2r\mu_j \log p}\) with \(r \in (0, 1)\). They derive the constant-sharp detection boundary
    \begin{equation*}
        \rho(\beta) = 
        \begin{cases}
            \beta - \frac{1}{2} &\textit{if } \frac{1}{2} < \beta \leq \frac{3}{4}, \\
            (1-\sqrt{1-\beta})^2 &\textit{if } \frac{3}{4} < \beta < 1. 
        \end{cases}
    \end{equation*}
    In other words, \(H_0\) and \(H_1\) separate asymptotically if \(r > \rho(\beta)\) and merge asymptotically if \(r < \rho(\beta)\). The detection boundary \(\rho(\beta)\) is the same detection boundary appearing in the analogous Gaussian version of the sparse mixture detection problem \cite{ingster_problems_1997,donoho_higher_2004}. Indeed, in this regime the distributions \(\Poisson(\mu_j)\) can be essentially approximated by the distributions \(N(\mu_j, \mu_j)\). 
    The problem of \(\ell_\infty\) testing we consider essentially corresponds to the case \(p\epsilon = O(1)\), which is not handled in \cite{arias-castro_sparse_2015} (nor in \cite{donoho_higher_2022} which investigates a two-sample version of this model). In this regime \(\min_{1 \leq j \leq p} \mu_j = \omega(\log p)\), our result Theorem \ref{thm:poisson_sharp_constant} asserts the sharp detection boundary is given by \(\xi = 1\), which exactly agrees with \(\sqrt{\rho(1)}\). Though the formal technical conditions of \cite{arias-castro_sparse_2015} do not cover \(p\epsilon = O(1)\), the detection boundary nevertheless agrees with their theoretical prediction.
    
    Arias-Castro and Wang \cite{arias-castro_sparse_2015} also study the regime \(\max_{1 \leq j \leq p} \mu_j = o(\log p)\) and parametrize \(\mu_j' = \mu_j^{1-\gamma} (\log p)^\gamma\) where \(\gamma \in (0, 1)\) and \(\mu_j''= 0\). They establish that \(H_0\) and \(H_1\) asymptotically separate if \(\gamma > \beta\) and asymptotically merge if \(\gamma < \beta\). In this regime, a Gaussian approximation is poor and thus the testing limits are different. 
    By adopting this narrow parametrization, it is difficult to interpret their result as a constant-sharp statement about a detection boundary. Furthermore, their result is too coarse to capture the subtle logarithmic effects found in Theorem \ref{thm:poisson_sharp_constant}. Specifically, the iterated log behavior \(\frac{\log j^*}{\log\left(\log j^*/\mu_{j^*}\right)}\) of the optimal separation when \(\frac{\log j^*}{\mu_{j^*}} \to \infty\) is missed.

    Furthermore, the division into the two asymptotic regimes considered in \cite{arias-castro_sparse_2015,donoho_higher_2022} is somewhat artificial. The boundary \(\log p\) between low counts \(\max_{1 \leq j \leq p} \mu_j = o(\log p)\) and high counts \(\min_{1 \leq j \leq p} \mu_j = \omega(\log p)\) is not so appealing as it depends explicitly on the ambient dimension \(p\). This choice does not allow for wide heterogeneity across categories in the null hypothesis. It is more appealing to aim at a, so-called, dimension-free analysis in which the asymptotic regimes under study and the obtained results are determined by the null hypothesis. Moreover, we derive corresponding asymptotic constants in the multinomial model, which was not covered in~\cite{arias-castro_sparse_2015,donoho_higher_2022}. 

    \subsection{Sparse uniformity testing}
    Bhattacharya and Mukherjee \cite{bhattacharya_sparse_2024} consider the problem of testing the uniformity hypothesis against sparse alternatives separated in total variation distance. They consider data \(Y_1,...,Y_n \overset{iid}{\sim} \pi\) where \(\pi \in \Delta_p\), and study the problem 
    \begin{align*}
        H_0 &: \pi = u, \\
        H_1 &: \dTV(\pi, u)\geq \rho \text{ and } ||\pi - u||_0 \leq s, 
    \end{align*}
    where \(u = \left(p^{-1},...,p^{-1}\right) \in \Delta_p\) is the uniform distribution. Bhattacharya and Mukherjee \cite{bhattacharya_sparse_2024} consider asymptotics as \(p \to \infty\) and parametrize \(s = p^{1-\beta}\) for \(\beta \in (0, 1)\), following in a long line of work in sparse signal detection \cite{ingster_problems_1997,donoho_higher_2004,donoho_higher_2015,donoho_higher_2022,arias-castro_sparse_2015,cai_optimal_2014,cai_optimal_2011,ditzhaus_signal_2019}. Among other results, they obtain the sharp constant in the minimax separation radius for the so-called sparse regime \(\frac{1}{2} < \beta < 1\). If \(\frac{n}{p \log^3 p} \to \infty\), then the null and alternative hypotheses asymptotically separate if \(\liminf_{p \to \infty} \frac{\rho}{s \sqrt{\frac{2 \log p}{np}}} > C(\beta)\) and asymptotically merge if \(\liminf_{p \to \infty} \frac{\rho}{s \sqrt{\frac{2 \log p}{np}}} < C(\beta)\) where \(C(\beta)\) is a constant (depending only on \(\beta\)) which they explicitly obtain. Furthermore, they show that no sequence of tests can separate \(H_0\) and \(H_1\) if \(\frac{n}{p \log p} \to 0\). As the authors note, the shape constraint \(\pi \in \Delta_p\) causes the triviality in this regime. Bhattacharya and Mukherjee \cite{bhattacharya_sparse_2024} can remove the logarithmic gap and reduce \(\log^3 p\) to \(\log p\) if one asks only for rate optimality (i.e. up to constants).

    Notably, their result concerns only \(\beta < 1\), i.e. it is not applicable to the case \(\beta = 1\) which corresponds to \(s \asymp 1\). For \(s \asymp 1\) and \(\sum_{j=1}^{p} \mathbbm{1}_{\{\pi_j \neq u_j\}}\leq s\), we have \(\dTV(\pi, u) \asymp ||\pi - u||_1 \asymp \left|\left| \pi - u\right|\right|_\infty\). Though the prior we use in the lower bound construction of Section \ref{section:multinomial_lowerbound} is not supported the set of \(s\)-sparse perturbations, it is interesting to see what rate is predicted by (\ref{rate:multinomial}) though it is not formally valid. Taking \(q_0 = \pi\) in (\ref{rate:multinomial}), we have \(\rho^* \asymp \sqrt{\frac{\log p}{np}} \mathbbm{1}_{\{n \geq p \log p\}} + \left(\sqrt{\frac{1}{np}} + \frac{\log p}{n \log\left(\frac{p\log p}{n}\right)}\right) \mathbbm{1}_{\{n < p \log p\}}\). The rate \(\sqrt{\frac{\log p}{np}}\) in the regime \(n \gtrsim p \log p\) asserted by \cite{bhattacharya_sparse_2024} is recovered. However, some care is needed in further interpretation. Strictly speaking, the regime \(n \lesssim p \log p\) is not meaningful when \(s \asymp 1\). To see this, consider \(\rho^* \asymp \frac{1}{p} \cdot \frac{x}{\log x}\) where \(x = \frac{p \log p}{n}\). Since \(\frac{p\log p}{n} \gtrsim 1\) implies \(\frac{x}{\log x} \gtrsim 1\), we have \(\rho^* \gtrsim \frac{1}{p}\), and it is immediately clear that no \(\pi \in \Delta_p\) exists with \(||\pi - u||_0 \lesssim 1\) and \(||\pi - u||_\infty \gtrsim \rho^*\).  Nevertheless, it would be interesting to see whether the rate \(\sqrt{\frac{1}{np}} + \frac{\log p}{n \log\left(\frac{p\log p}{n}\right)}\) predicted by (\ref{rate:multinomial}) in the regime \(n \lesssim p \log p\) actually holds when considering notions of ``soft'', rather than ``hard'', sparsity (e.g. formulated in terms of \(\ell_q\) norms for \(0 < q < 1\) rather than \(\ell_0\)).

    \section{Proofs for results in the Poisson model}
    
    Proofs of the main results in the Poisson model (\ref{model:poisson}) are presented in this section. The minimax upper bound in the Poisson setting (Theorem \ref{thm:upper_bound}) is proved in Section \ref{section:poisson_upper_bound_proof}, and the lower bound is proved in Section \ref{section:poisson_lower_bound_proof}. The proofs of the sharp asymptotic constants stated in Section \ref{section:poisson_sharp_constant} are deferred to Appendix \ref{appendix:poisson_sharp_constant}. 

    \subsection{Upper bound}\label{section:poisson_upper_bound_proof}
    As noted in Section \ref{section:poisson_upper_bound}, the proof of Theorem \ref{thm:upper_bound} is relatively straightforward; the result essentially follows by union bound and Bennett's inequality (Lemma \ref{lemma:Bennett}).
    
    \begin{proof}[Proof of Theorem \ref{thm:upper_bound}]
        Fix \(\eta \in (0, 1)\) and let \(C_\eta > 0\) be a quantity to be set later. Examining the Type I error and letting \(u_j = \mu_j h^{-1}\left(\frac{\log(C' j^2)}{\mu_j}\right)\), consider by union bound and Lemma \ref{lemma:Bennett},
        \begin{align*}
            P_{\mu}\left\{\varphi = 1\right\} \leq P_{\mu}\left( \bigcup_{j = 1}^{p} \left\{|X_j - \mu_j| > u_j \right\}\right) \leq \sum_{j=1}^{p} 2e^{-\mu_j h\left(\frac{u_j}{\mu_j}\right)} \leq \sum_{j=1}^{p}2e^{-\log(C'j^2)} \leq \sum_{j = 1}^{p} \frac{2}{C'j^2}.
        \end{align*}
        Since the series \(\sum_{j=1}^{\infty} j^{-2}\) converges, we can take \(C'\) sufficiently large to guarantee \(P_{\mu}\left\{\varphi = 1\right\} \leq \frac{\eta}{2}\). Let us now examine the Type II error. Fix \(\lambda \in \Lambda(\mu, C_\eta \psi)\). There exists \(1 \leq j^* \leq p\) such that \(|\lambda_{j^*} - \mu_{j^*}| \geq C_\eta \psi\). Let \(u^* = \max_{1 \leq j \leq p} \mu_{j} h^{-1}\left(\frac{\log(C'j^2)}{\mu_{j}}\right)\) and note \(u^* \leq \frac{C_\eta}{2}\psi\) as we can select \(C_\eta\) sufficiently large. Therefore, by Chebyshev's inequality, we have 
        \begin{align*}
            P_{\lambda}\left\{\varphi = 0\right\} \leq P_{\lambda}\left\{ |X_{j^*} - \mu_{j^*}| \leq u^* \right\} \leq \frac{\lambda_{j^*}}{\left(|\lambda_{j^*} - \mu_{j^*}| - \frac{C_\eta}{2}\psi\right)^2} \leq \frac{|\lambda_{j^*} - \mu_{j^*}|}{\frac{1}{4}|\lambda_{j^*} - \mu_{j^*}|^2} + \frac{\mu_{j^*}}{\frac{1}{4}C_\eta^2\psi^2} \leq \frac{4}{C_\eta\psi} + \frac{4\mu_{j^*}}{C_\eta^2\psi^2}.
        \end{align*}
        Since \(\psi \geq 1\), we can select \(C_\eta\) sufficiently large so that \(\frac{4}{C_\eta \psi} \leq \frac{\eta}{4}\). Likewise, consider \(h^{-1}(x) \gtrsim \sqrt{x}\) for \(x > 0\), and so \(\psi^2 \gtrsim \mu_{j^*}\). Therefore, taking \(C_\eta\) sufficiently large yields \(\frac{4\mu_{j^*}}{C_\eta^2\psi^2} \leq \frac{\eta}{4}\), and so the Type II error is bounded by \(\frac{\eta}{2}\) uniformly over \(\lambda \in \Lambda(\mu, C_\eta\psi)\). Hence, we have shown the testing risk is bounded by \(\eta\), as desired. 
    \end{proof}
    
    \subsection{Lower bound}\label{section:poisson_lower_bound_proof}
    Proposition \ref{prop:constant_lower_bound}, which asserts the constant order term of (\ref{rate:poisson}) in the lower bound is proved a simple two-point construction as noted in Section \ref{section:poisson_lower_bound}.
    \begin{proof}[Proof of Proposition \ref{prop:constant_lower_bound}]
        Fix \(\eta \in (0, 1)\) and take \(c_\eta = (1-\eta)^2\). Define \(\mu' = (\mu_1 + c_\eta, \mu_2,...,\mu_p)\), and consider by the Neyman-Pearson lemma and Lemma \ref{lemma:poisson_TV},
        \begin{align*}
            \mathcal{R}_{\mathcal{P}}(c_\eta, \mu) \geq 1 - \dTV(P_\mu, P_{\mu'}) \geq 1 - \dTV(\Poisson(\mu_1), \Poisson(\mu_1 + c_\eta)) \geq 1 - \sqrt{c_\eta} = \eta,
        \end{align*}
        as desired.
    \end{proof}
    
    The proof of Theorem \ref{thm:lower_bound} proceeds by first establishing the prior \(\pi\) defined in Section \ref{section:poisson_lower_bound} is supported on the proper parameter space containing rates \(\lambda\) that exhibit the desired separation. 

    \begin{lemma}\label{lemma:poisson_prior}
        If \(C \geq e\) and \(c \geq 0\), then \(\pi\) is supported on \(\Lambda\left(\mu, c \max_{1 \leq j \leq p} \mu_j h^{-1}\left(\frac{\log(ej)}{\mu_j}\right)\right)\). 
    \end{lemma}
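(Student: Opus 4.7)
The plan is to show that any $\lambda$ in the support of $\pi$ is a valid rate vector (nonnegative) and exhibits the claimed $\ell_\infty$ separation from $\mu$. The argument is essentially a direct verification from the construction, so I would break it into three short steps.

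First, I would observe that by the definition of the prior in equation (\ref{eq_def_prior_poisson}), any draw $\lambda\sim\pi$ differs from $\mu$ in exactly one coordinate, namely the coordinate $J$, where the perturbation is $+c\psi$. Since $c\geq 0$, we have $\lambda_j = \mu_j \geq 0$ for $j\neq J$ and $\lambda_J = \mu_J + c\psi \geq \mu_J \geq 0$, so $\lambda\in[0,\infty)^p$, confirming $\lambda$ is a valid Poisson rate vector.

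Second, I would compute $\|\lambda - \mu\|_\infty$. Since only coordinate $J$ differs, we get
\begin{equation*}
    \|\lambda - \mu\|_\infty = c\psi = c\,\mu_{j^*}\,h^{-1}\!\left(\frac{\log(Cj^*)}{\mu_{j^*}}\right),
\end{equation*}
using the definition of $\psi$ in (\ref{def:Poisson_psi}).

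Third, I would compare this to the target separation $c\max_{1\leq j\leq p}\mu_j h^{-1}(\log(ej)/\mu_j)$. By the definition of $j^*$ in (\ref{def:jstar}), we have
\begin{equation*}
    \max_{1\leq j\leq p} \mu_j\, h^{-1}\!\left(\frac{\log(ej)}{\mu_j}\right) = \mu_{j^*}\, h^{-1}\!\left(\frac{\log(ej^*)}{\mu_{j^*}}\right).
\end{equation*}
Since $C\geq e$ implies $\log(Cj^*)\geq \log(ej^*)$, and since $h^{-1}$ is increasing on $[0,\infty)$, we conclude
\begin{equation*}
    c\psi \;=\; c\,\mu_{j^*}\,h^{-1}\!\left(\frac{\log(Cj^*)}{\mu_{j^*}}\right) \;\geq\; c\,\mu_{j^*}\,h^{-1}\!\left(\frac{\log(ej^*)}{\mu_{j^*}}\right) \;=\; c\max_{1\leq j\leq p}\mu_j\,h^{-1}\!\left(\frac{\log(ej)}{\mu_j}\right),
\end{equation*}
which is precisely the condition for $\lambda\in\Lambda(\mu, c\max_{j}\mu_j h^{-1}(\log(ej)/\mu_j))$. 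There is no real obstacle here since the monotonicity of $h^{-1}$ and the condition $C\geq e$ make the inequality immediate; the only thing worth being careful about is that the perturbation is concentrated on a single coordinate, which is why the $\ell_\infty$ norm is computed exactly (rather than bounded) in the second step.
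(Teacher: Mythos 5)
Your proof is correct and follows essentially the same route as the paper: compute $\|\lambda-\mu\|_\infty = c\psi$ exactly, then use $C\geq e$, the monotonicity of $h^{-1}$, and the definition of $j^*$ to conclude $c\psi \geq c\max_j \mu_j h^{-1}(\log(ej)/\mu_j)$. The only difference is that you make the nonnegativity check of $\lambda$ explicit, which the paper leaves implicit.
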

    \begin{proof}
        If \(\lambda \sim \pi\), it is immediate \(||\lambda - \mu||_\infty = c\psi\). Note since \(C \geq e\) and \(h^{-1}\) is an increasing function, it follows \(\psi \geq \mu_{j^*}h^{-1}\left(\frac{\log(ej^*)}{\mu_{j^*}}\right) = \max_{1 \leq j \leq p} \mu_j h^{-1}\left(\frac{\log(ej)}{\mu_j}\right)\) since \(j^*\) is given by (\ref{def:jstar}). Therefore, \(\lambda \in \Lambda\left(\mu, c \max_{1 \leq j \leq p} \mu_j h^{-1}\left(\frac{\log(ej)}{\mu_j}\right)\right)\), completing the proof. 
    \end{proof}

    We now argue that the Bayes testing problem 
    \begin{align*}
        H_0 &: \lambda = \mu, \\
        H_1 &: \lambda \sim \pi,
    \end{align*}
    is connected to a Bayes testing problem with the auxiliary homoskedastic null (\ref{problem:homo0}). Proposition \ref{prop:poisson_flattening} establishes that we can consider a related Bayes testing problem which is applicable for furnishing a lower bound for the problem (\ref{problem:homo0})-(\ref{problem:homo1}). Thus, from the perspective of the lower bound, the heteroskedastic problem has been essentially reduced to a homoskedastic problem. 

    \begin{proposition}\label{prop:poisson_flattening}
        If \(C \geq e\) and \(c \geq 0\), then 
        \begin{align*}
            &\dTV\left(P_\mu, P_\pi\right) \leq \dTV\left(\Poisson(\mu_{j^*})^{\otimes j^*}, \frac{1}{j^*}\sum_{J=1}^{j^*} \bigotimes_{j=1}^{j^*} \Poisson(\mu_{j^*} + c\psi \mathbbm{1}_{\{j = J\}}) \right),
        \end{align*}
        where \(P_\pi = \int P_\lambda \, d\pi\) is the mixture induced by \(\pi\). 
    \end{proposition}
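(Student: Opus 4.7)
The plan is to exploit the Poisson superposition property to absorb the common rate $\mu_{j^*}$ out of the first $j^*$ coordinates, thereby reducing the heteroskedastic comparison on the left-hand side to the homoskedastic one on the right. First, I would note that the prior $\pi$ leaves coordinates $j > j^*$ untouched, so under both $P_\mu$ and $P_\pi$ these coordinates are independently distributed as $\Poisson(\mu_j)$. By the standard factorization of total variation for product measures with a common factor, they contribute nothing and can be discarded, reducing the problem to bounding the TV between the $j^*$-dimensional marginals $(X_1, \ldots, X_{j^*})$.

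Next, using that $\mu_j \geq \mu_{j^*}$ for all $j \leq j^*$ (which follows from the ordering assumption on $\mu$), I would construct a coupling through auxiliary variables. Let $J \sim \Uniform(\{1,\ldots,j^*\})$ and, independently of $J$ and of each other, take $Y_j$ and $Z_j$ with $Z_j \sim \Poisson(\mu_j - \mu_{j^*})$ and either $Y_j \sim \Poisson(\mu_{j^*})$ (under the null) or $Y_j \mid J \sim \Poisson(\mu_{j^*} + c\psi\, \mathbbm{1}_{\{j=J\}})$ (under the alternative). By Poisson superposition, the variables $X_j := Y_j + Z_j$ have the correct marginal law $\Poisson(\mu_j)$ under the null and $\Poisson(\mu_j + c\psi\,\mathbbm{1}_{\{j=J\}})$ under the alternative, so this yields valid couplings to $P_\mu$ and $P_\pi$ (restricted to the first $j^*$ coordinates).

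Then I would apply the data processing inequality to the deterministic summation map $\phi: (Y,Z) \mapsto (Y_1 + Z_1, \ldots, Y_{j^*} + Z_{j^*})$, which gives
\begin{equation*}
    \dTV(P_\mu, P_\pi) \leq \dTV(\tilde P, \tilde Q),
\end{equation*}
where $\tilde P$ and $\tilde Q$ are the joint laws of $(Y_1,\ldots,Y_{j^*}, Z_1,\ldots,Z_{j^*})$ under null and alternative, respectively. Since the $Z_j$'s are independent of everything else and share the same law $\bigotimes_{j=1}^{j^*} \Poisson(\mu_j - \mu_{j^*})$ under both hypotheses, the product-measure formula for TV yields $\dTV(\tilde P, \tilde Q) = \dTV(\mathcal L_{H_0}(Y), \mathcal L_{H_1}(Y))$. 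Marginalizing out $J$, the law of $Y$ under the null is $\Poisson(\mu_{j^*})^{\otimes j^*}$ and under the alternative is $\frac{1}{j^*}\sum_{J=1}^{j^*}\bigotimes_{j=1}^{j^*} \Poisson(\mu_{j^*} + c\psi\,\mathbbm{1}_{\{j=J\}})$, which is exactly the right-hand side of the claim.

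There is no serious obstacle here: the proof is essentially a clean application of Poisson superposition together with the two standard TV facts (data processing and factorization over common product factors). The only care required is bookkeeping—tracking that the randomization $J$ acts only through $Y$ and not through $Z$, so that the mixture structure is preserved when passing to the $Y$-marginal.
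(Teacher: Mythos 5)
Your proposal is correct and is essentially the paper's own argument: the paper's general flattening result (Proposition \ref{prop:general_flattening}) factors out the untouched coordinates $j > j^*$, writes $\Poisson(\mu_j) = \Poisson(\mu_{j^*}) * \Poisson(\mu_j - \mu_{j^*})$ by infinite divisibility, and removes the common convolution factor via the data-processing inequality, which is exactly your superposition coupling $X_j = Y_j + Z_j$ followed by data processing for the summation map and cancellation of the shared $Z$-factor. The only cosmetic difference is that you obtain the reduction to the first $j^*$ coordinates as an equality for product measures with a common factor, whereas the paper uses subadditivity and notes the second term vanishes.
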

    Proposition \ref{prop:poisson_flattening} is a direct consequence of the following, general ``flattening" result which describes the relationship between a heteroskedastic null and an auxiliary homoskedastic version.
    
    \begin{proposition}[Flattening]\label{prop:general_flattening}
        Suppose \(\omega_1 \geq ... \geq \omega_p \geq 0\) and \(\gamma\) is a probability distribution on \([0, \infty)^p\). If \(k \in \{1,...,p\}, \underline{\omega} \leq \omega_k,\) and \(\gamma\) are such that for \(\xi \sim \gamma\), 
        \begin{enumerate}[label=(\roman*)]
            \item \(\min_{1 \leq j \leq k} \xi_j - \omega_j + \underline{\omega} \geq 0\), 
            \item \((\xi_1,...,\xi_k)\) and \((\xi_{k+1},...,\xi_p)\) are independent,
        \end{enumerate}
        then
        \begin{align}
        \begin{split}\label{eqn:flattening_split}
            \dTV\left(\bigotimes_{j=1}^{p} \Poisson(\omega_j), \int \bigotimes_{j=1}^{p} \Poisson(\xi_j) \, d\gamma(\xi) \right) &\leq \dTV\left(\Poisson(\underline{\omega})^{\otimes k}, \int \bigotimes_{j \leq k} \Poisson(\xi_j - \omega_j + \underline{\omega})\,d\gamma(\xi) \right) \\
            &\;\;\; + \dTV\left(\bigotimes_{j > k} \Poisson(\omega_j), \int \bigotimes_{j > k} \Poisson(\xi_j) \, d\gamma(\xi) \right). 
        \end{split}
        \end{align}
    \end{proposition}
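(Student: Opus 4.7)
The plan is to combine the triangle inequality with the data-processing inequality applied to a suitable Poisson-convolution Markov kernel. Write $P_0 = \bigotimes_{j=1}^{p}\Poisson(\omega_j)$ for the null and $P_\gamma = \int \bigotimes_{j=1}^{p}\Poisson(\xi_j)\,d\gamma(\xi)$ for the mixture. By condition (ii), the measure $\gamma$ factors as $\gamma_{\leq k}\otimes\gamma_{>k}$ into its marginals on $(\xi_1,\ldots,\xi_k)$ and $(\xi_{k+1},\ldots,\xi_p)$, so $P_\gamma = P_\gamma^{(1)}\otimes P_\gamma^{(2)}$ where $P_\gamma^{(1)} := \int\bigotimes_{j\leq k}\Poisson(\xi_j)\,d\gamma_{\leq k}$ and $P_\gamma^{(2)}$ is defined analogously. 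Writing $P_0 = P_0^{(1)}\otimes P_0^{(2)}$ and introducing the hybrid $P_0^{(1)}\otimes P_\gamma^{(2)}$, the triangle inequality together with the standard identity $\dTV(\alpha\otimes\beta,\alpha\otimes\beta') = \dTV(\beta,\beta')$ yields
\[
\dTV(P_0,P_\gamma) \leq \dTV\bigl(P_0^{(1)},P_\gamma^{(1)}\bigr) + \dTV\bigl(P_0^{(2)},P_\gamma^{(2)}\bigr),
\]
and the second summand already matches the second term on the right-hand side of (\ref{eqn:flattening_split}).

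It then remains to bound $\dTV(P_0^{(1)},P_\gamma^{(1)})$ by the first term of (\ref{eqn:flattening_split}). For $j\leq k$, the ordering $\omega_1\geq\cdots\geq\omega_p$ together with $\underline{\omega}\leq\omega_k$ ensures $\omega_j-\underline{\omega}\geq 0$, while condition (i) ensures $\xi_j-\omega_j+\underline{\omega}\geq 0$ for $\gamma$-almost every $\xi$. Define the Markov kernel $T$ on $\mathbb{Z}_+^k$ that maps $y\mapsto y+B$, where $B=(B_1,\ldots,B_k)$ is independent of $y$ and has independent coordinates with $B_j\sim\Poisson(\omega_j-\underline{\omega})$. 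By the Poisson convolution identity $\Poisson(a)\ast\Poisson(b)=\Poisson(a+b)$ applied coordinatewise, $T$ pushes $\Poisson(\underline{\omega})^{\otimes k}$ forward to $P_0^{(1)}$; integrating the same coordinatewise identity against $\gamma$, it also pushes $\int\bigotimes_{j\leq k}\Poisson(\xi_j-\omega_j+\underline{\omega})\,d\gamma$ forward to $P_\gamma^{(1)}$. Since $\dTV$ cannot increase under a Markov kernel, this delivers the required bound.

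The argument is essentially bookkeeping once the two conceptual ingredients are in place: factorization via condition (ii), and the Poisson convolution plus data-processing step enabled by condition (i) and the rate ordering. I do not anticipate a real obstacle; the only point that warrants slight care is verifying the pushforward identity $T_* \int\bigotimes_{j\leq k}\Poisson(\xi_j-\omega_j+\underline{\omega})\,d\gamma = P_\gamma^{(1)}$, which follows from Fubini applied to the coordinatewise convolution identity conditional on $\xi$, using condition (i) to ensure all Poisson rates involved are well-defined.
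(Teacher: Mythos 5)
Your proposal is correct and follows essentially the same route as the paper: factor the mixture across the two blocks using condition (ii), bound the total variation of the product by the sum of blockwise total variations, and then handle the first block by the Poisson convolution identity (infinite divisibility) combined with the data-processing inequality, with condition (i) and the ordering $\omega_j \geq \omega_k \geq \underline{\omega}$ guaranteeing all Poisson rates are nonnegative. Your explicit Markov kernel adding independent $\Poisson(\omega_j-\underline{\omega})$ noise is just a concrete phrasing of the paper's convolution-plus-data-processing step, so there is no substantive difference.
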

    \begin{proof}
        Item \((ii)\) implies \(\int \bigotimes_{j=1}^{p} \Poisson(\xi_j) \, d\gamma(\xi) = \left(\int \bigotimes_{j \leq k} \Poisson(\xi_j) \, d\gamma(\xi) \right) \otimes \left( \int \bigotimes_{j > k} \Poisson(\xi_j) \, d\gamma(\xi) \right)\). Therefore,  
        \begin{align*}
            \dTV\left(\bigotimes_{j=1}^{p} \Poisson(\omega_j), \int \bigotimes_{j=1}^{p} \Poisson(\xi_j) \, d\gamma(\xi) \right) &\leq \dTV\left(\bigotimes_{j \leq k} \Poisson(\omega_j), \int \bigotimes_{j \leq k} \Poisson(\xi_j) \, d\gamma(\xi)\right) \\
            &\;\;\; + \dTV\left(\bigotimes_{j > k} \Poisson(\omega_j), \int \bigotimes_{j > k} \Poisson(\xi_j) \, d\gamma(\xi) \right),
        \end{align*}
        and so it suffices to examine just the first term on the right hand side. By the infinite divisibility of the Poisson distribution, we have \(\bigotimes_{j \leq k} \Poisson(\omega_j) = \Poisson(\underline{\omega})^{\otimes k} * \bigotimes_{j \leq k} \Poisson(\omega_j - \underline{\omega})\) since \(\omega_j \geq \omega_k \geq \underline{\omega}\) for \(j \leq k\). Likewise, by item \((i)\) we have \(\int \bigotimes_{j \leq k} \Poisson(\xi_j) \, d\gamma(\xi) = \left(\int \bigotimes_{j \leq k} \Poisson(\xi_j - (\omega_j - \underline{\omega})) \, d\gamma(\xi)\right) * \bigotimes_{j \leq k} \Poisson(\omega_j - \underline{\omega})\) for \(j \leq k\) and \(\xi \sim \gamma\). It thus follows by the data-processing inequality (Lemma \ref{lemma:dpi}) that 
        \begin{equation*}
            \dTV\left(\bigotimes_{j \leq k} \Poisson(\omega_j), \int \bigotimes_{j \leq k} \Poisson(\xi_j) \, d\gamma(\xi)\right) \leq \dTV\left(\Poisson(\underline{\omega})^{\otimes k}, \int \bigotimes_{j \leq k} \Poisson(\xi_j - \omega_j + \underline{\omega})\,d\gamma(\xi) \right), 
        \end{equation*}
        which yields the claimed result. 
    \end{proof}

    \begin{proof}[Proof of Proposition \ref{prop:poisson_flattening}]
        The result will follow by an application of Proposition \ref{prop:general_flattening}. With the choice \(\gamma = \pi\), \(k = j^*\), \(\omega = \mu\), and \(\underline{\omega} = \mu_{j^*}\), it is clear items \((i)\) and \((ii)\) are satisfied. Since \(\lambda \sim \pi\) implies \(\lambda_j = \mu_j\) for all \(j > j^*\), the second term in (\ref{eqn:flattening_split}) is zero. 
    \end{proof}   
    
    The lower bound argument proceeds by implementing the conditional second-moment method with the auxiliary null \(P^*_\mu = \Poisson(\mu_{j^*})^{\otimes j^*}\) and auxiliary mixture \(P^*_{\pi} = \frac{1}{j^*}\sum_{J=1}^{j^*} \bigotimes_{j=1}^{j^*} \Poisson(\mu_{j^*} + c\psi\mathbbm{1}_{\{j = J\}})\). For notational clarity, denote the data coming from either \(P^*_\mu\) and \(P^*_{\pi}\) as \(V\). We will condition on the event 
    \begin{equation}\label{def:truncation_event}
        E:= \left\{\max_{1 \leq j \leq j^*} V_j - \mu_{j^*} \leq \psi\right\}. 
    \end{equation}
    Denote \(\tilde{P}_\pi\) and \(\tilde{P}_\mu\) to be the conditional distributions \(P^*_\pi (\cdot \,|\, E)\) and \(P^*_{\mu}(\cdot \,|\, E)\) respectively, that is to say, for any event \(A\) we have \(\tilde{P}_{\mu}(A) = \frac{P^*_\mu(A \cap E)}{P^*_\mu(E)}\) and \(\tilde{P}_{\pi}(A) = \frac{P^*_{\pi}(A \cap E)}{P^*_{\pi}(E)}\). The following lemma asserts that it suffices to bound the \(\chi^2\)-divergence of the conditional distributions, i.e. the conditional second moment can be carried out. 

    \begin{lemma}\label{lemma:truncation_whp}
        Suppose (\ref{eqn:psi_large}) holds. If \(\alpha > 0\), then there exist \(C^* \geq e\) sufficiently large and \(c^* > 0\) sufficiently small both depending only on \(\alpha\) such that \(\dTV\left(P^*_{\mu}, P^*_{\pi}\right) \leq \frac{1}{2}\sqrt{\chi^2(\tilde{P}_\pi\,||\, \tilde{P}_\mu)} + \frac{2\alpha}{3}\) for all \(C \geq C^*\) and \(c \leq c^*\). 
    \end{lemma}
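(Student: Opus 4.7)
The plan is to apply the triangle inequality together with the elementary identity that, for any probability measure $P$ and event $E$ with $P(E)>0$, $\dTV(P, P(\cdot\,|\,E)) = P(E^c)$. Applied to both $P^*_\mu$ and $P^*_\pi$, this yields
\begin{equation*}
\dTV(P^*_\mu, P^*_\pi) \leq \dTV(\tilde{P}_\mu, \tilde{P}_\pi) + P^*_\mu(E^c) + P^*_\pi(E^c),
\end{equation*}
and the middle term admits the standard Cauchy--Schwarz-type inequality $\dTV(\tilde{P}_\mu, \tilde{P}_\pi) \leq \frac{1}{2}\sqrt{\chi^2(\tilde{P}_\pi \,||\, \tilde{P}_\mu)}$. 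It therefore suffices to show that $P^*_\mu(E^c) + P^*_\pi(E^c) \leq \tfrac{2\alpha}{3}$ for suitably chosen $C^*$ and $c^*$ depending only on $\alpha$.

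The null term is immediate: a union bound combined with Bennett's inequality (Lemma~\ref{lemma:Bennett}) and the definition of $\psi$ gives $P^*_\mu(E^c) \leq j^* \cdot e^{-\mu_{j^*} h(\psi/\mu_{j^*})} = 1/C$. For the alternative term, conditioning on the random index $J\sim\Uniform(\{1,\ldots,j^*\})$ and applying a union bound reduces the analysis to two contributions: the $j^*-1$ unperturbed coordinates contribute at most $1/C$ exactly as in the null case, and what remains is to bound the tail of the perturbed coordinate $\Poisson(\mu_{j^*}+c\psi)$ exceeding the threshold $\mu_{j^*}+\psi$.

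To handle the perturbed coordinate I would invoke the infinite divisibility of the Poisson distribution to couple $\Poisson(\mu_{j^*}+c\psi) \stackrel{d}{=} X+Y$ with $X\sim \Poisson(\mu_{j^*})$ and $Y\sim \Poisson(c\psi)$ independent, yielding
\begin{equation*}
P(X+Y > \mu_{j^*}+\psi) \leq P(X > \mu_{j^*}+\psi/2) + P(Y > \psi/2).
\end{equation*}
The first probability is controlled by Bennett applied to $X$ together with the elementary calculus estimate $h(x/2)\geq h(x)/4$ for all $x\geq 0$ (provable from $h'(x)=\log(1+x)$ via direct computation), which gives $(Cj^*)^{-1/4}\leq \alpha/6$ once $C\geq (6/\alpha)^4$. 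The second probability is bounded by the Chernoff-type estimate $P(\Poisson(\lambda)\geq k) \leq e^{-\lambda}(e\lambda/k)^k$ applied with $\lambda = c\psi$ and $k=\psi/2$, yielding a bound of order $(2ec)^{\psi/2}$. Since assumption~(\ref{eqn:psi_large}) together with $C\geq e$ forces $\psi\geq 1$, this factor can be driven below $\alpha/6$ by taking $c\leq c^* \asymp \alpha^2$. Summing the three contributions gives $P^*_\mu(E^c) + P^*_\pi(E^c) \leq \tfrac{2\alpha}{3}$.

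The main obstacle is the subpoissonian regime, where the perturbation $c\psi$ may vastly exceed the baseline rate $\mu_{j^*}$; a direct application of Bennett to $\Poisson(\mu_{j^*}+c\psi)$ would produce constants depending on the instance-specific ratio $\log(Cj^*)/\mu_{j^*}$ rather than on $\alpha$ alone, and in the extreme case even $c\psi/\mu_{j^*}$ is unbounded. The infinite-divisibility splitting is what resolves this: the ``baseline noise'' $X$ is handled by a tail bound involving only the fixed unperturbed null, while the ``pure perturbation noise'' $Y$ has arbitrarily light tails for small $c$, using only the uniform lower bound $\psi\geq 1$ delivered by (\ref{eqn:psi_large}).
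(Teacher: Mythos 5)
Your proposal is correct, and its skeleton is the same as the paper's: triangle inequality through the conditioned measures (your exact identity $\dTV(P, P(\cdot\,|\,E)) = P(E^c)$ is in fact slightly sharper than the paper's Lemma~\ref{lemma:conditional_TV}, which only gives $2P(E^c)$), the standard bound $\dTV(\tilde{P}_\mu,\tilde{P}_\pi) \leq \frac{1}{2}\sqrt{\chi^2(\tilde{P}_\pi\,||\,\tilde{P}_\mu)}$, and then control of $P^*_\mu(E^c)$ and $P^*_\pi(E^c)$ by union bound plus Bennett, exactly as in Corollary~\ref{corollary:constant_rate_max}. The only genuine divergence is the tail of the perturbed coordinate. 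The paper handles $P\{\Poisson(\mu_{j^*}+c\psi)-\mu_{j^*}>\psi\}$ with a one-line Chebyshev bound, $\frac{\mu_{j^*}}{(1-c)^2\psi^2}+\frac{c}{(1-c)^2\psi}$, killed by $\psi\geq 1$ and $\psi^2 \gtrsim \mu_{j^*}\log(Cj^*)$ (Lemma~\ref{lemma:h_inverse}); your worry that a direct Bennett bound on the shifted Poisson would carry instance-dependent constants is legitimate, but the second-moment bound sidesteps it with less machinery. Your alternative — splitting $\Poisson(\mu_{j^*}+c\psi)\overset{d}{=}X+Y$ by infinite divisibility, Bennett on $X$ with the (correct) inequality $h(x/2)\geq h(x)/4$, and a Poisson Chernoff bound $(2ec)^{\psi/2}$ on $Y$ — is also valid and yields polynomially/exponentially small failure probabilities $(Cj^*)^{-1/4}$ and $(2ec)^{1/2}$ rather than the paper's $O(\alpha)$ Chebyshev bounds; the only housekeeping is that you should cap $c^*$ below $1/(2e)$ so that $2ec<1$ (making $(2ec)^{\psi/2}$ monotone in $\psi\geq 1$ and keeping the Chernoff exponent's optimizer nonnegative), which still leaves $C^*$ and $c^*$ depending only on $\alpha$, as required.
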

    \begin{proof}[Proof of Lemma \ref{lemma:truncation_whp}]
        By Corollary \ref{corollary:constant_rate_max}, \(C^* \geq e\) can be selected sufficiently large depending only \(\alpha\) so that \(C \geq C^*\) implies \(P^*_{\mu}(E^c) \leq \frac{\alpha}{12}\). 
        Similarly, consider \(P^*_\pi(E^c) \leq \frac{\alpha}{12} + P\left\{\Poisson(\mu_{j^*} + c\psi) - \mu_{j^*} > \psi\right\}\). 
        Observe by Chebyshev's inequality \(P\big\{\Poisson(\mu_{j^*} + c\psi) - \mu_{j^*} > \psi\big\} \leq \frac{\mu_{j^*}}{(1-c)^2\psi^2} + \frac{c}{(1-c)^2\psi}\). Since (\ref{eqn:psi_large}) holds (which is to say \(\psi \geq 1\)), we have \(\frac{c}{(1-c)^2\psi} \leq \frac{\alpha}{24}\) since \(c \leq c^*\) and we can select \(c^*\) sufficiently small. 
        Furthermore, consider \(\psi^2 \gtrsim \mu_{j^*}\log(Cj^*)\) since \(h^{-1}(x) \gtrsim \sqrt{x}\) by Lemma \ref{lemma:h_inverse}. 
        Therefore, taking \(C^*\) sufficiently large ensures \(\frac{\mu_{j^*}}{(1-c)^2\psi^2}  \leq \frac{\alpha}{24}\), and so \(P_{\pi}^*(E^c) \leq \frac{\alpha}{6}\). 
        An application of the triangle inequality and Lemma \ref{lemma:conditional_TV} then yields 
        \[\dTV(P^*_\mu, P^*_\pi) \leq \dTV(\tilde{P}_\mu, \tilde{P}_\pi) + 2\dTV(\tilde{P}_\mu, P^*_\mu) + 2\dTV(\tilde{P}_\pi, P^*_\pi) \leq \frac{1}{2} \sqrt{\chi^2(\tilde{P}_\pi\,||\,\tilde{P}_\mu)} + \frac{2\alpha}{3}.\] 
    \end{proof}

    Implementing the conditional second moment method boils down to bounding a certain moment generating function, as the following lemma establishes. 
    \begin{lemma}\label{lemma:conditional_second_moment}
        Suppose (\ref{eqn:psi_large}) holds. If \(\alpha > 0\) and \(C^*\geq e\), \(c^* > 0\) are given by Lemma \ref{lemma:truncation_whp}, then 
        \begin{equation*}
            \chi^2(\tilde{P}_\pi\,||\, \tilde{P}_\mu) + 1 \leq \frac{1}{\left(1-\frac{\alpha}{6}\right)^2} E\left(\exp\left(\frac{c^2\psi^2}{\mu_{j^*}}\mathbbm{1}_{\{J = J'\}}\right) P\left\{ \left. \max_{1 \leq j \leq j^*} W_j - \mu_{j^*} \leq \psi \,\right|\, \rho, \rho'\right\}\right)
        \end{equation*}
        for all \(C \geq C^*\) and \(c \leq c^*\). Here, \(\rho, \rho' \overset{iid}{\sim} \pi\) and \(J, J'\) are the corresponding random indices, and \(W_j \,|\, \rho, \rho' \overset{ind}{\sim} \Poisson\left(\frac{\rho_j\rho_j'}{\mu_{j^*}}\right)\). 
    \end{lemma}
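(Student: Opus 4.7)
The plan is to reduce the conditional $\chi^2$-divergence to an unconditional integral against $P^*_\mu$ and then use a standard three-density Poisson identity to recognize the resulting quantity as the expected tail probability of an auxiliary Poisson vector. First, I would write the Radon--Nikodym derivatives explicitly. Since $\tilde{P}_\mu$ and $\tilde{P}_\pi$ are conditional measures, we have $\frac{d\tilde{P}_\mu}{dP^*_\mu} = \frac{\mathbbm{1}_E}{P^*_\mu(E)}$ and $\frac{d\tilde{P}_\pi}{dP^*_\pi} = \frac{\mathbbm{1}_E}{P^*_\pi(E)}$, hence on $E$ one has $\frac{d\tilde{P}_\pi}{d\tilde{P}_\mu} = \frac{dP^*_\pi}{dP^*_\mu}\cdot \frac{P^*_\mu(E)}{P^*_\pi(E)}$. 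Squaring and integrating against $\tilde{P}_\mu$ gives
\begin{equation*}
\chi^2(\tilde{P}_\pi\,\|\,\tilde{P}_\mu) + 1 = \frac{P^*_\mu(E)}{P^*_\pi(E)^2}\int_E \left(\frac{dP^*_\pi}{dP^*_\mu}\right)^2 dP^*_\mu.
\end{equation*}
The prefactor is handled immediately using $P^*_\pi(E) \geq 1 - \alpha/6$ (established in the proof of Lemma~\ref{lemma:truncation_whp}) and $P^*_\mu(E)\leq 1$, producing the leading $(1-\alpha/6)^{-2}$ factor.

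Next, I would rewrite $\left(\frac{dP^*_\pi}{dP^*_\mu}\right)^2$ as a double expectation by introducing two independent copies $\rho, \rho'\sim \pi$ with associated indices $J, J'$. Concretely, $\frac{dP^*_\pi}{dP^*_\mu}(v) = E_\rho \bigl[\prod_{j=1}^{j^*} f_{\rho_j}(v_j)/f_{\mu_{j^*}}(v_j)\bigr]$ where $f_\lambda$ denotes the $\Poisson(\lambda)$ pmf, so squaring and applying Fubini yields
\begin{equation*}
\int_E \left(\frac{dP^*_\pi}{dP^*_\mu}\right)^2 dP^*_\mu = E_{\rho,\rho'}\left[\int_E \prod_{j=1}^{j^*} \frac{f_{\rho_j}(v_j)f_{\rho'_j}(v_j)}{f_{\mu_{j^*}}(v_j)}\,dv\right].
\end{equation*}

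The main identity to deploy is the pointwise factorization of three Poisson pmfs: a direct computation shows $\frac{f_a(v)f_b(v)}{f_c(v)} = \exp\!\bigl(\tfrac{(c-a)(c-b)}{c}\bigr)\, f_{ab/c}(v)$. Applying this coordinatewise, the exponential factor decouples from $v$ and the remaining product is exactly the pmf of $W = (W_1,\dots,W_{j^*})$ with $W_j\mid \rho,\rho'\overset{ind}{\sim}\Poisson(\rho_j\rho_j'/\mu_{j^*})$. Therefore the inner integral equals $\exp\!\bigl(\sum_j (\mu_{j^*}-\rho_j)(\mu_{j^*}-\rho'_j)/\mu_{j^*}\bigr)\cdot P\{W\in E\mid \rho,\rho'\}$. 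Since the prior $\pi$ perturbs only coordinate $J$ by $c\psi$, the sum in the exponent collapses to $\frac{c^2\psi^2}{\mu_{j^*}}\mathbbm{1}_{\{J=J'\}}$, and the event $E$ translates into $\{\max_{1\leq j\leq j^*} W_j-\mu_{j^*}\leq \psi\}$. Combining these pieces with the prefactor bound yields the stated inequality.

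I do not anticipate a real obstacle here; the argument is a careful bookkeeping exercise in which each step is mechanical. The only delicate point is keeping the roles of the three Poisson parameters ($\rho_j$, $\rho'_j$, and the base rate $\mu_{j^*}$) straight when applying the product identity, and verifying that the perturbation structure of $\pi$ gives the clean $\mathbbm{1}_{\{J=J'\}}$ collapse rather than a more complicated cross term.
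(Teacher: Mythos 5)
Your proposal is correct and follows essentially the same route as the paper: condition the likelihood ratio on $E$, bound the prefactor by $P^*_\pi(E)\geq 1-\alpha/6$ and $P^*_\mu(E)\leq 1$, introduce two independent draws $\rho,\rho'\sim\pi$ via Fubini, and use the pointwise Poisson identity (which the paper carries out in-line rather than as a named lemma) to produce $\exp\bigl(\langle \rho-\mu_{j^*}\mathbf{1},\,\rho'-\mu_{j^*}\mathbf{1}\rangle/\mu_{j^*}\bigr)$ times the conditional probability that the auxiliary vector $W$ lies in $E$, with the exponent collapsing to $\frac{c^2\psi^2}{\mu_{j^*}}\mathbbm{1}_{\{J=J'\}}$ by the one-coordinate structure of $\pi$. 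No gaps.
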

    \noindent The proof is deferred to Appendix \ref{appendix:poisson_lower_bound}.

    \begin{proposition}\label{prop:chisquare_bound}
        Suppose (\ref{eqn:psi_large}) holds. If \(\alpha > 0\) and \(C^* \geq e\), \(c^* > 0\) are given by Lemma \ref{lemma:truncation_whp}, then there exists \(c^{**} \in (0, c^*]\) sufficiently small depending only on \(\alpha\) such that if \(C = C^*\) and \(c \leq c^{**}\), then 
        \begin{equation*}
            E\left(\exp\left(\frac{c^2\psi^2}{\mu_{j^*}} \mathbbm{1}_{\{J = J'\}}\right)P\left\{ \left. \max_{1 \leq j \leq j^*} W_j - \mu_{j^*} \leq \psi \,\right|\, \rho, \rho'\right\}\right) \leq 1 + \alpha. 
        \end{equation*}
    \end{proposition}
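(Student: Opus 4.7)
The plan is to condition on the random indices $J, J'$ drawn by the independent samples $\rho, \rho' \sim \pi$. Since $J, J'$ are i.i.d.\ uniform on $\{1, \ldots, j^*\}$, the expectation decomposes as
\begin{equation*}
    \Bigl(1 - \tfrac{1}{j^*}\Bigr) A_{\ne} + \tfrac{1}{j^*}\, \exp\!\Bigl(\tfrac{c^2\psi^2}{\mu_{j^*}}\Bigr)\, A_{=},
\end{equation*}
where $A_{\ne}, A_{=}$ denote the conditional expectations of $P\{\max_j W_j - \mu_{j^*} \leq \psi \mid \rho, \rho'\}$ under $J \ne J'$ and $J = J'$, respectively. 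The $J \ne J'$ contribution is at most $1 - 1/j^*$ using only the trivial bound $A_{\ne} \leq 1$, so the problem reduces to showing $\exp(c^2\psi^2/\mu_{j^*})\, A_{=} \leq 1 + \alpha j^*$.

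Conditional on $J = J'$, the $W_j$ are independent with $W_J \sim \Poisson(\nu)$, where $\nu := (\mu_{j^*}+c\psi)^2/\mu_{j^*}$, and $W_j \sim \Poisson(\mu_{j^*})$ for $j \ne J$. Setting $m := \mu_{j^*}+\psi$ and retaining only the $W_J$ factor, my strategy is to apply the Chernoff bound $P\{\Poisson(\nu) \leq m\} \leq \exp(m\log(\nu/m) - (\nu - m))$ when $\nu > m$ and the trivial bound $\leq 1$ otherwise. The identity $\nu - m = c^2\psi^2/\mu_{j^*} - (1-2c)\psi$ collapses the combined exponent $c^2\psi^2/\mu_{j^*} + \log P\{\Poisson(\nu) \leq m\}$ in the Chernoff regime to $\mu_{j^*} G(u, c)$, where $u := \psi/\mu_{j^*}$ and
\begin{equation*}
    G(u, c) = (1 - 2c)u + (1 + u)\bigl[2\log(1 + cu) - \log(1 + u)\bigr].
\end{equation*}

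A short algebraic manipulation yields $G(u, c) - h(u) = 2H(u, c)$, where $H(u, c) := (1-c)u + (1+u)\log\tfrac{1+cu}{1+u}$. The function $H$ satisfies $H(0, c) = 0$, $\partial_u H(0, c) = 0$, $\partial_u^2 H(0, c) = -(1-c)^2 < 0$, and the asymptotic slope $[(1-c) + \log c]u$ as $u \to \infty$; in particular $H \leq 0$, with $-H(u, c) \gtrsim (1-c)^2 u^2/2$ near $u = 0$ and $-H(u, c) \gtrsim |\log c|\, u$ for large $u$. Invoking the defining identity $\mu_{j^*} h(u) = \log(Cj^*)$ recasts the target inequality as $-2\mu_{j^*} H(u, c) \geq \log(C/\alpha) + O(1)$; in the subgaussian regime one uses $\mu_{j^*} u^2 \asymp \log(Cj^*)$, and in the subpoissonian regime $\mu_{j^*} u\log u \asymp \log(Cj^*)$ together with the hypothesis $\psi \geq 1$, each closing the inequality for $c$ small enough depending only on $\alpha$ and $C^*$.

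The principal obstacle is the intermediate regime where neither the Taylor bound near $u = 0$ nor the large-$u$ asymptotic is alone sharp, and where it may happen that $\nu \leq m$ so the Chernoff step is inoperative. In that regime I would revert to the trivial bound $P \leq 1$ combined with the restriction $c^2 u \leq 1 - 2c$ arising from $\nu \leq m$, which yields $c^2\psi^2/\mu_{j^*} = c^2 u\psi \leq (1 - 2c)\psi$; one then controls $\psi$ in terms of $\log(Cj^*)$ via Lemma~\ref{lemma:h_inverse}. The most delicate technical step is to verify that a single $c^{**}$ depending only on $\alpha$ simultaneously suffices to close all regimes uniformly in $\mu$ and $j^*$.
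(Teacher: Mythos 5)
Your decomposition over $\{J=J'\}$ versus $\{J\neq J'\}$, and your Chernoff computation with the identity $G(u,c)-h(u)=2H(u,c)$, reproduce exactly the paper's argument in the genuinely subpoissonian regime: your $H$ is (up to the harmless $-2cu$ term) the paper's function $g(x)=-2h(x)+2(1+x)\log(1+cx)$ from Lemma \ref{lemma:bennett_cancellation} and Case 2.2 of the paper's proof, and closing that regime via $\psi\geq 1$ and $c$ small matches the paper. The gap is in the complementary regime $\nu\leq m$, i.e.\ $c^2u\leq 1-2c$ with $u=\psi/\mu_{j^*}$, which contains the entire subgaussian regime and the moderate band $u\lesssim c^{-2}$. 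Your fallback there bounds $e^{c^2\psi^2/\mu_{j^*}}\leq e^{(1-2c)\psi}$ and then tries to control $\psi$ by $\log(Cj^*)$. This fails twice: first, in the subgaussian regime $\psi\asymp\sqrt{\mu_{j^*}\log(Cj^*)}$, which is not $O(\log(Cj^*))$ since $\mu_{j^*}$ is unbounded; second, even where $\psi\asymp\log(Cj^*)$ (say $u\asymp 1$), the constant is $u/h(u)>1$ (e.g.\ equal to $e-1$ at $u=e-1$), so $e^{\psi}\gtrsim (Cj^*)^{e-1}\gg 1+\alpha j^*$, and having discarded the factor $c^2$ in the step $c^2u\psi\leq(1-2c)\psi$ you have no small parameter left to shrink the exponent. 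Your main-line claim for the subgaussian regime cannot rescue this, both because the Chernoff step is inoperative there (as you note) and because the recast inequality $-2\mu_{j^*}H\geq\log(C/\alpha)+O(1)$ would require $\log C^*\gtrsim\log(1/\alpha)$, a property of $C^*$ you have not established.

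The fix is what the paper does: introduce an auxiliary threshold $\tilde c=\tilde c(\alpha)$ chosen \emph{before} $c$. When $\mu_{j^*}>\tilde c\log(Cj^*)$ (so $u$ is bounded in terms of $\tilde c$), keep the $c^2$: by Lemma \ref{lemma:h_inverse}, $c^2\psi^2/\mu_{j^*}\leq c^2K(\tilde c,\alpha)\log(1+j^*)$, and then $\frac{1}{j^*}\bigl((1+j^*)^{c^2K}-1\bigr)\leq c^2K\leq\alpha$ for $c$ small; when $\mu_{j^*}\leq\tilde c\log(Cj^*)$ and $u\leq c^{-2}$, use $e^{c^2\psi^2/\mu_{j^*}}\leq e^{\psi}$ together with $\psi\lesssim\frac{\log C}{\log(1/\tilde c)}\log(1+j^*)$ and take $\tilde c$ small so the exponent's coefficient is tiny; only when $u>c^{-2}$ invoke your $H$-cancellation. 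With that repair (and the constants chosen in the order $\tilde c$ first, then $c^{**}$), your argument coincides with the paper's proof.
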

    
    The proof is deferred to Appendix \ref{appendix:poisson_lower_bound}. It is essential to use Bennett's inequality (\ref{lemma:Bennett}) in bounding the probability of the conditioned event to obtain some cancellation with the exponential term. With Proposition \ref{prop:chisquare_bound} providing a bound on the \(\chi^2\) divergence of the conditional distributions, Theorem \ref{thm:lower_bound} can now be proved. 
    \begin{proof}[Proof of Theorem \ref{thm:lower_bound}]
        Fix \(\eta \in (0, 1)\) and let \(\alpha \in (0, 1)\) be such that \(\frac{2\alpha}{3} + \frac{1}{2}\sqrt{\frac{1}{\left(1-\frac{\alpha}{6}\right)^2} \left(1 + \alpha\right) - 1} \leq 1-\eta\). 
        Let \(C^* \geq e\) and \(c^{**} > 0\) be given as in the statement of Proposition \ref{prop:chisquare_bound}. Take \(c_\eta = c^{**}\) and let \(C = C^*\). By Lemma \ref{lemma:poisson_prior}, \(\pi\) is supported on \(\Lambda\left(\mu, c \max_{1 \leq j \leq p} \mu_j h^{-1}\left(\frac{\log(ej)}{\mu_j}\right)\right)\). Hence, it follows by the Neyman-Pearson lemma that
        \begin{align*}
            \mathcal{R}_{\mathcal{P}}\left(c_\eta \max_{1 \leq j \leq p} \mu_j h^{-1}\left(\frac{\log(ej)}{\mu_j}\right), \mu\right) &\geq \inf_{\varphi}\left\{P_{\mu}\left\{ \varphi = 1 \right\} + P_{\pi}\left\{\varphi = 0 \right\}\right\} \\
            &= 1 - \dTV(P_\mu, P_{\pi}) \\
            &\geq 1 - \dTV(P_\mu^*, P_{\pi}^*) \\
            &\geq 1 - \frac{2\alpha}{3} - \frac{1}{2}\sqrt{\chi^2(\tilde{P}_\pi\,||\,\tilde{P}_\mu)}.
        \end{align*}
        The penultimate line follows from Proposition \ref{prop:poisson_flattening}, and the final line follows from Lemma \ref{lemma:truncation_whp}. Since (\ref{eqn:psi_large}) holds by hypothesis, it follows by Proposition \ref{prop:chisquare_bound}, Lemma \ref{lemma:conditional_second_moment}, and our choice of \(\alpha\) that \(1 - \frac{2\alpha}{3} - \frac{1}{2}\sqrt{\chi^2(\tilde{P}_\pi\,||\,\tilde{P}_\mu)} \geq 1 - \frac{2\alpha}{3} - \frac{1}{2}\sqrt{\frac{1}{\left(1-\frac{\alpha}{6}\right)^2}\left(1 + \alpha\right) - 1} \geq \eta\). The proof is complete. 
    \end{proof}

    \section{Proofs for results in the multinomial model}
    Proofs of the main results in the multinomial model (\ref{model:multinomial}) are presented in this section. The upper bound is addressed in Section \ref{section:multinomial_upper_bound_proofs} and the lower bound is addressed in Section \ref{section:multinomial_lower_bound_proofs}. 

    \subsection{Upper bound}\label{section:multinomial_upper_bound_proofs}
    The proof of Proposition \ref{prop:multinomial1} is straightforward. 
    \begin{proof}[Proof of Proposition \ref{prop:multinomial1}]
        Fix \(\eta \in (0, 1)\) and set \(C_\eta = 2K_1 \vee \frac{32}{\eta} \vee \frac{16}{\sqrt{\eta}}\). Examining the Type I error, consider 
        \begin{equation*}
            P_{q_0}\left\{\varphi_1 = 1\right\} \leq P_{q_0}\left\{|X_1-nq_0(1)| \geq K_1 \left(1 + \sqrt{nq_0(1)(1-q_0(1))}\right)\right\} \leq \frac{\eta}{4},
        \end{equation*}
        where we have used \(K_1 \geq (\eta/4)^{-1/2}\) and Chebyshev's inequality along with \(E_{q_0}(X_1) = nq_0(1)\) and \(\Var_{q_0}(X_1) = nq_0(1)(1-q_0(1))\). Examining the Type II error, consider that for \(q \in \Pi_1(q_0, C_\eta \varepsilon_1)\) we have \(|nq(1) - nq_0(1)| \geq C_\eta n\varepsilon_1 \geq 2K_1\left(1 + \sqrt{nq_0(1)(1-q_0(1))}\right)\) since \(C_\eta \geq 2K_1\). Therefore, by Chebyshev's inequality, we have 
        \begin{align*}
            &\sup_{q \in \Pi_1(q_0, C_\eta\varepsilon_1)} P_{q}\left\{|X_1 - nq_0(1)| < K_1\left(1 + \sqrt{nq_0(1)(1-q_0(1))}\right)\right\} \\
            &\leq \sup_{q \in \Pi_1(q_0, C_\eta\varepsilon_1)} P_q\left\{|X_1 - nq(1)| > |nq(1) - nq_0(1)| - \frac{C_\eta n \varepsilon_1}{2}\right\} \\
            &\leq \sup_{q \in \Pi_1(q_0 C_\eta\varepsilon_1)}\frac{nq(1)(1-q(1))}{\left(|nq(1) - nq_0(1)| - \frac{C_\eta n \varepsilon_1}{2}\right)^2} \\
            &\leq \sup_{q \in \Pi_1(q_0, C_\eta\varepsilon_1)} \frac{n|q(1) - q_0(1)|}{\frac{1}{4}|nq(1) - nq_0(1)|^2} + \frac{nq_0(1)(1-q_0(1))}{\frac{1}{4}C_\eta^2n^2\varepsilon_1^2} \\
            &\leq \frac{4}{C_\eta\left(1 + \sqrt{nq_0^{\max}(1-q_0^{\max})}\right)} + \frac{4}{C_\eta^2} \\
            &\leq \frac{\eta}{4}.
        \end{align*}
        To obtain the third line, we have used the inequality \(|x(1-x) - y(1-y)| \leq |x-y|\) for \(x, y \in [0, 1]\). The final line follows from \(\frac{4}{C_\eta^2} \leq \frac{\eta}{8}\) and \(\frac{4}{C_\eta} \leq \frac{\eta}{8}\). Hence, the sum of the Type I and Type II errors is bounded by \(\frac{\eta}{2}\), as desired. 
    \end{proof}

    The proof of Proposition \ref{prop:multinomial2} broadly follows the same reasoning of the proof of Theorem \ref{thm:upper_bound}. The only modification is to use the version of Bennett's inequality for binomial random variables (Corollary \ref{corollary:bennett_binomial}) rather than that for Poisson random variables (Lemma \ref{lemma:Bennett}). 

    \begin{proof}[Proof of Proposition \ref{prop:multinomial2}]
        Fix \(\eta \in (0, 1)\) and let \(C_\eta > 0\) be a quantity to be set; we will point out in the course of the proof where \(C_\eta\) is to be chosen sufficiently large. Let us examine the Type I error first. Note that under the null hypothesis, we have \(X_j \sim \Binomial(n, q_0(j))\). Consequently, by union bound and Corollary \ref{corollary:bennett_binomial} we have 
        \begin{align*}
            P_{q_0}\left\{\varphi_2 = 1\right\} \leq \sum_{j=2}^{p} P_{q_0}\left\{|X_j - nq_0(j)| > nq_0(j)(1-q_0(j))h^{-1}\left(\frac{\log(K_2(j-1)^2)}{nq_0(j)(1-q_0(j))}\right)\right\} \leq \sum_{j=2}^{p} \frac{2}{K_2(j-1)^2}.
        \end{align*}
        Since \(\sum_{j=2}^{\infty} \frac{1}{(j-1)^2} < \infty\), we can take \(K_2\) sufficiently large depending only on \(\eta\) to ensure \(P_{q_0}\left\{\varphi_2 = 1\right\} \leq \frac{\eta}{4}\). 
        
        With the Type I error handled, we turn to the Type II error. Fix \(q \in \Pi_2(q_0, C_\eta \varepsilon_2)\). Then there exists \(2 \leq j^* \leq p\) such that \(|nq(j^*) - nq_0(j^*)| \geq C_\eta \varepsilon_2\). Define \(u^* = \max_{2 \leq j \leq p} nq_0(j)(1-q_0(j))h^{-1}\left(\frac{\log(K_2(j-1)^2)}{nq_0(j)(1-q_0(j))}\right)\). Note \(u^* \leq \frac{C_\eta}{2} n\varepsilon_2\) since \(C_\eta\) can be chosen sufficiently large. Since \(|nq(j^*) - nq_0(j^*)| \geq C_\eta n\varepsilon_2\), we can apply Chebyshev's inequality as follows,  
        \begin{align*}
            P_{q}\left\{\varphi_2 = 0\right\} \leq P_{q}\left\{ |X_{j^*} - nq_0(j^*)| \leq u^* \right\} &\leq \frac{|nq(j^*) - nq_0(j^*)|}{\frac{1}{4}|nq(j^*) - nq_0(j^*)|^2} + \frac{nq_0(j^*)(1-q_0(j^*))}{\frac{1}{4}C_\eta^2n^2\varepsilon_2^2} \\
            &\leq \frac{4}{C_\eta n\varepsilon_2} + \frac{4nq_0(j^*)(1-q_0(j^*))}{C_\eta^2n^2\varepsilon_2^2}.
        \end{align*}
        To obtain the third line, we have used the inequality \(|x(1-x) - y(1-y)| \leq |x-y|\) for \(x, y \in [0, 1]\). Note \(n\varepsilon_2 \geq 1\), and so taking \(C_\eta\) sufficiently large depending only on \(\eta\) implies \(\frac{4}{C_\eta n\varepsilon_2} \leq \frac{\eta}{8}\). Furthermore, observe \(h^{-1}(x) \gtrsim \sqrt{x}\) for all \(x \geq 0\), and so \(n^2\varepsilon_2^2 \gtrsim nq_0(j^*)(1-q_0(j^*))\). Therefore, taking \(C_\eta\) sufficiently large also guarantees \(\frac{4nq_0(j^*)(1-q_0(j^*))}{C_\eta^2n^2\varepsilon_2^2} \leq \frac{\eta}{8}\). Thus, the Type II error is bounded by \(\frac{\eta}{4}\) uniformly over \(q \in \Pi_2(q_0, C_\eta\varepsilon_2)\), and so we have shown the testing risk is bounded by \(\frac{\eta}{2}\) as claimed. 
    \end{proof}

    \subsection{Lower bound}\label{section:multinomial_lower_bound_proofs}

        We present here the proof structure of the lower bound in the multinomial model. 
        We start by proving Proposition~\ref{prop:1/n_rate} which asserts the $\frac{1}{n}$ lower bound.
        \begin{proof}[Proof of Proposition~\ref{prop:1/n_rate}]
        Fix \(\eta \in (0, 1)\) and take \(c_\eta = \frac{1-\eta}{2}\). Define \(q_1\) with \(q_1(j) := \left(1 - \frac{2c_\eta}{n}\right)q_0(j) + \frac{2c_\eta}{n}\mathbbm{1}_{\{j = 2\}}\). It is immediate to verify \(q_1 \in \Delta_p\) since \(2c_\eta < 1\), and, moreover, we have \(||q - q_0||_\infty \geq |q_1(2) - q_0(2)| = \left|\frac{2c_\eta}{n} - \frac{2c_\eta}{n}q_0(2)\right| \geq \frac{c_\eta}{n}\) since \(q_0(2) \leq \frac{q_0(1)+q_0(2)}{2} \leq \frac{1}{2}\). Therefore, \(q_1 \in \Pi(q_0, \frac{c_\eta}{n})\) and so 
    $$\mathcal{R}_{\mathcal{M}}\left(\frac{c_\eta}{n}, n, q_0\right) \geq 1 - \dTV(P_{q_0},P_{q_1}) \geq 1 - n\dTV(q_0, q_1).$$ 
        Moreover, \(\dTV(q_0, q_1) = \frac{1}{2} \sum_{j=1}^{p} |q_0(j) - q_1(j)| = \frac{1}{2} \sum_{j \neq 2} \frac{2c_\eta}{n} q_0(j) + \frac{|q_1(2) - q_0(2)|}{2} \leq \frac{1}{2} \sum_{j = 1}^{p} \frac{2c_\eta}{n} q_0(j) + \frac{1}{2} \frac{2c_\eta}{n} \leq \frac{2c_\eta}{n} \leq \frac{1-\eta}{n}\), completing the proof.   
    \end{proof}

    Next, we prove Lemma~\ref{lem:relation_multinomial_poisson} which connects the minimax risks in models~\eqref{model:multinomial} and~\eqref{eqn:poissonized_multinomial_dgp}. The proof relies on the fact that a $\operatorname{Poisson}((1+c)n)$ random variable exceeds the value $n$ with probability at least $1-\frac{1+c}{c^2n}$ by Chebyshev's inequality.
    Therefore, the model $X \sim \bigotimes_{j=1}^p \operatorname{Poisson}((1+c)n q(j))$ typically contains more information than the model $X \sim \Multinomial(n,q)$, making detection easier.
    
    \begin{proof}[Proof of Lemma~\ref{lem:relation_multinomial_poisson}]
        Recall in the model (\ref{model:poissonized}), at sample size \((1+c)n\) we have \(N \sim \Poisson((1+c)n)\). Conditionally on \(N\), the random variable \(X\) follows a multinomial distribution with parameters \(N\) and \(q\). Therefore, one can construct i.i.d.~random variables \(Y_1,...,Y_N \,|\, N \overset{iid}{\sim} \Multinomial(1, q)\) such that \(X\) is the histogram of \(Y_1,...,Y_n\) conditionally on \(N\). For any test \(\tilde{\varphi}\) applicable to data from \(\Multinomial(n, q)\) (equivalently, \(Y_1,...,Y_n\)), observe 
        \begin{align*}
            &P_{q_0}\left\{\tilde{\varphi}(Y_1,...,Y_n) = 1\right\} + \sup_{q \in \Pi(q_0, \varepsilon)} P_{q}\left\{\tilde{\varphi}(Y_1,...,Y_n) = 0\right\} \\
            &= E\left(P_{q_0}\left\{\tilde{\varphi}(Y_1,...,Y_n) = 1\,|\,N\right\}\right) + \sup_{q \in \Pi(q_0, \varepsilon)} E\left(P_{q}\left\{\tilde{\varphi}(Y_1,...,Y_n) = 0 \,|\,N\right\}\right) \\
            &\geq \inf_{\varphi}\left\{ E\left(\mathbf{P}_{q_0}\left\{\varphi(X) = 1\,|\,N\right\}\mathbbm{1}_{\{N \geq n\}}\right) + \sup_{q \in \Pi(q_0, \varepsilon)} E\left(\mathbf{P}_{q}\left\{\varphi(X) = 0 \,|\,N\right\}\mathbbm{1}_{\{N \geq n\}}\right)\right\} \\
            &= \inf_{\varphi}\left\{ \mathbf{P}_{q_0}\left\{\varphi(X) = 1\right\} + \sup_{q \in \Pi(q_0, \varepsilon)} \mathbf{P}_{q}\left\{\varphi(X) = 0 \right\}\right\} - 2P\left\{N < n\right\} \\
            &\geq \mathcal{R}_{\mathcal{PM}}(\varepsilon, (1+c)n, q_0) - \frac{2(1+c)}{c^2 n}. 
        \end{align*}
        Here, we have used Chebyshev's inequality to argue \(P\{N < n\} \leq \frac{(1+c)n}{c^2 n^2} = \frac{2(1+c)}{c^2 n}\). Taking infimum over \(\tilde{\varphi}\) yields the desired result. 
    \end{proof}

    The next piece is the parametric term in the rate which is provided by Proposition~\ref{prop:multinomial_parametric}. As discussed in Section \ref{section:prior_construction_parametric}, the lower bound argument involves a two-point construction. The proof of this proposition is deferred to Appendix~\ref{section:multinomial_lower_bound_proof}.
    
    Finally, we turn to the remaining term \(\max_j q_0^{-\max}(j) \Gamma\left(\frac{\log(ej)}{nq_0^{-\max}(j)}\right)\) discussed in Section \ref{section:prior_construction_local}. It is convenient to have an estimate on the size of \(m\) for later use in proofs, which the following lemma provides. Note that the exponent \(1/4\) in Lemma \ref{lemma:m_size} is not critical and could be replaced with any constant \(\beta \in (0, 1)\), however it is useful later on to choose some \(\beta < \frac{1}{2}\). 
    \begin{lemma}\label{lemma:m_size}
        There exists a sufficiently large universal constant \(C_*\) such that if (\ref{eqn:multinomial_psi_large}) holds, then \(m \lesssim (j^*)^{1/4}\).
    \end{lemma}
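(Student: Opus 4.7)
The plan is to reduce the estimate on $m$ to a direct bound on $h^{-1}(\log(ej^*)/(nq_0^{-\max}(j^*)))$, and then to exploit the equivalence between $h^{-1}$ and $\Gamma$ recorded in Lemma~\ref{lemma:h_inverse} together with the hypothesis $\psi \geq C_*$ enforced by~(\ref{eqn:multinomial_psi_large}). Writing $\lambda := nq_0^{-\max}(j^*)$ for brevity, I would start from the inequality $m \leq h^{-1}(\log(ej^*)/\lambda) + 1$, so that it suffices to show $h^{-1}(\log(ej^*)/\lambda) \lesssim (j^*)^{1/4}$. The very small values of $j^*$ can be disposed of trivially using the bound $m \leq j^* - 1$, so I may assume $j^*$ is at least a suitable universal constant.

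Setting $y := \log(ej^*)/\lambda$, I would split into two cases corresponding to the subgaussian and subpoissonian regimes at $j^*$. If $y \leq 1$, Lemma~\ref{lemma:h_inverse} gives $h^{-1}(y) \lesssim \sqrt{y} \leq 1 \leq (j^*)^{1/4}$ and nothing more is required. If $y > 1$, the same lemma yields $h^{-1}(y) \asymp y/\log(ey)$, so that
\begin{equation*}
    \psi \; = \; \lambda\, h^{-1}(y) \; \asymp \; \frac{\log(ej^*)}{\log(ey)}.
\end{equation*}

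The key step is to combine this asymptotic identity with $\psi \geq C_*$, which rearranges into $\log(ey) \lesssim \log(ej^*)/C_*$, and then to exponentiate in order to obtain the polynomial bound $y \lesssim (ej^*)^{c/C_*}$ for some universal constant $c$. Taking $C_*$ large enough that $c/C_* \leq 1/4$---which is possible precisely because $c$ is universal---yields $y \lesssim (j^*)^{1/4}$, and hence $h^{-1}(y) \lesssim y \lesssim (j^*)^{1/4}$, completing the argument.

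The only step that requires a bit of thought is the exponentiation: the lower bound on $\psi$ must be converted from a logarithmic control on $\log(ey)$ into a polynomial-in-$j^*$ bound on $y$ with a sufficiently small exponent, and it is exactly this passage that forces the universal constant $C_*$ in~(\ref{eqn:multinomial_psi_large}) to be chosen sufficiently large. Without such a constraint on $\psi$, the exponent $c/C_*$ could be arbitrarily close to $1$ and $m$ could in principle grow almost linearly in $j^*$, so the hypothesis of the lemma is used in an essential way.
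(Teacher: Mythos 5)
Your proof is correct and follows essentially the same route as the paper's: split into the regimes $\log(ej^*)/(nq_0^{-\max}(j^*)) \leq 1$ (where $m \lesssim 1$) and $>1$ (where $\psi \geq C_*$ combined with $h^{-1}(y) \asymp y/\log(ey)$ forces $\log(ey) \lesssim \log(ej^*)/C_*$, hence $y$ is at most a small power of $j^*$ once $C_*$ is large). The only cosmetic difference is that the paper phrases the exponentiation step as a lower bound $nq_0^{-\max}(j^*) \gtrsim (j^*)^{-1/(\kappa C_*)}$ before bounding $m \lesssim y$, which is the same estimate you obtain directly.
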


        The proof is deferred to Appendix~\ref{section:multinomial_lower_bound_proof}.
            It now remains to prove a lower bound of the order of $\frac{\psi}{n}:=\max_{j} q_0^{-\max} h^{-1}\left(\frac{\log(ej)}{nq_0^{-\max}(j)}\right)$, as stated in Theorem~\ref{thm:poissonized_multinomial_lower_bound}.     
    The following proposition establishes \(\pi\) is indeed supported on \(\Pi(q_0, \frac{c\psi}{n})\).
    \begin{lemma}\label{lem:support_prior}
        There exists a sufficiently large universal constant \(C_* > 0\) and sufficiently small \(c_\eta\) depending only on \(\tilde{C}_\eta\) such that if (\ref{eqn:multinomial_psi_large}) holds and \(0 < c < c_\eta\), then \(\pi\) is supported on \(\Pi(q_0, \frac{c\psi}{n})\). 
    \end{lemma}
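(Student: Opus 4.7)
The plan is to check the two conditions defining $\Pi(q_0, c\psi/n)$: that $q \in \Delta_p$ and $||q - q_0||_\infty \geq c\psi/n$. The separation bound is essentially automatic, since whenever $m \geq 1$ the coordinate $J$ is displaced by exactly $c\psi/n$; when $m = 0$ we have $\psi = 0$ by the indicator in its definition, so the conclusion is vacuous. Mass conservation $\sum_j q(j) = 1$ is built into the construction (the $+c\psi/n$ at $J$ is offset by $m$ decrements of $c\psi/(nm)$), so the only nontrivial check is $q(j) \geq 0$ for all $j$.

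Only coordinates in $\mathcal{I} \subseteq \{2,\ldots,j^*+1\}$ can go negative, and since $q_0$ is decreasing, $q_0(j) \geq q_0(j^*+1) = q_0^{-\max}(j^*)$ for each such $j$. Nonnegativity thus reduces, after substituting the definition of $\psi$, to the single inequality
\begin{equation}
    m \;\geq\; c\, h^{-1}\!\left(\frac{\log(\tilde{C}_\eta j^*)}{nq_0^{-\max}(j^*)}\right). \label{eq:reduced_nonneg_plan}
\end{equation}

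The workhorse is a comparison between $h^{-1}$ at $\log(\tilde C_\eta j^*)/(nq_0^{-\max}(j^*))$ and at $\log(ej^*)/(nq_0^{-\max}(j^*))$: the bound $\log(\tilde C_\eta j^*) \leq (\log \tilde C_\eta)\log(ej^*)$ (valid for $j^* \geq 1$ when $\tilde C_\eta \geq e$), the elementary observation that $\Gamma(Cx) \leq C' \Gamma(x)$ with $C'$ depending only on $C$ (checked separately for $x \leq 1$ and $x > 1$), and Lemma~\ref{lemma:h_inverse} together yield
\begin{equation*}
    h^{-1}\!\left(\frac{\log(\tilde C_\eta j^*)}{nq_0^{-\max}(j^*)}\right) \;\leq\; C(\tilde C_\eta)\, h^{-1}\!\left(\frac{\log(ej^*)}{nq_0^{-\max}(j^*)}\right)
\end{equation*}
for some $C(\tilde C_\eta) > 0$ depending only on $\tilde C_\eta$. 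When the ceiling achieves the minimum in the definition of $m$, this gives $m \geq h^{-1}(\log(ej^*)/(nq_0^{-\max}(j^*))) \geq h^{-1}(\log(\tilde C_\eta j^*)/(nq_0^{-\max}(j^*)))/C(\tilde C_\eta)$, and taking $c_\eta \leq 1/C(\tilde C_\eta)$ establishes~\eqref{eq:reduced_nonneg_plan}.

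The main obstacle is the remaining case $m = j^* - 1 < \lceil h^{-1}(\log(ej^*)/(nq_0^{-\max}(j^*)))\rceil$, where $m$ may be much smaller than the right-hand side of~\eqref{eq:reduced_nonneg_plan}. I plan to invoke Lemma~\ref{lemma:m_size} (enlarging the universal constant $C_*$ as needed), which yields $m \lesssim (j^*)^{1/4}$; together with $m = j^* - 1$, this forces $j^*$ to be bounded by a universal constant. A short split into the subgaussian regime $nq_0^{-\max}(j^*) \gtrsim \log(ej^*)$ and the subpoissonian regime $nq_0^{-\max}(j^*) \lesssim \log(ej^*)$, using $h^{-1}(x) \asymp \sqrt{x} \vee x/\log(ex)$ and hypothesis~\eqref{eqn:multinomial_psi_large}, then shows $nq_0^{-\max}(j^*) \gtrsim 1$ whenever $j^*$ is bounded. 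Consequently the right-hand side of~\eqref{eq:reduced_nonneg_plan} is bounded above by a constant depending only on $\tilde C_\eta$, and shrinking $c_\eta$ once more (still only depending on $\tilde C_\eta$) completes the proof.
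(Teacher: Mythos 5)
Your proof is correct and follows essentially the same route as the paper: reduce to nonnegativity of the decremented coordinates via $q_0(j) \geq q_0^{-\max}(j^*)$ for $j \in \mathcal{I}$, then verify $c\psi/(nm) \leq q_0^{-\max}(j^*)$ by comparing $h^{-1}\bigl(\log(\tilde C_\eta j^*)/(nq_0^{-\max}(j^*))\bigr)$ with $m$ using Lemma~\ref{lemma:m_size} and absorbing the $\tilde C_\eta$-dependent ratio into $c_\eta$. The only difference is that you spell out the edge case $m = j^*-1 < \lceil h^{-1}(\cdot)\rceil$ (forcing $j^*$ bounded and then $nq_0^{-\max}(j^*) \gtrsim 1$ via~(\ref{eqn:multinomial_psi_large})), which the paper compresses into the phrase ``by definition of $m$''.
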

    \begin{proof}[Proof of Lemma~\ref    {lem:support_prior}]
        If \(m = 0\), then a draw \(q \sim \pi\) is deterministic and satisfies \(q = q_0\) and so there is nothing to prove. Suppose \(m \geq 1\). For a draw \(q \sim \pi\), it is clear \(||q - q_0||_\infty = \frac{c\psi}{n}\). To show \(q \in \Delta_p\), consider \(\sum_{j=1}^{p} q(j) = c\frac{\psi}{n} - m \cdot c\frac{\psi}{nm} + \sum_{j=1}^{p} q_0(j) = 1\). By Lemma \ref{lemma:m_size}, we have \(m \lesssim (j^*)^{1/4}\), from which it follows \(m \geq \kappa^{-1} \left\lceil h^{-1}\left(\frac{\log(ej^*)}{nq_0^{-\max}(j^*)}\right) \right\rceil\), by definition of \(m\), for some universal constant \(\kappa > 0\). Therefore, we have 
        \begin{equation*}
            c\frac{\psi}{nm} \leq c \kappa \frac{h^{-1}\left(\frac{\log(\tilde{C}_\eta j^*)}{nq_0^{-\max}(j^*)}\right)}{\left\lceil h^{-1}\left(\frac{\log(ej^*)}{nq_0^{-\max}(j^*)}\right) \right\rceil} \cdot q_0^{-\max}(j^*) \leq q_0^{-\max}(j^*) = q_0(j^*+1)
        \end{equation*}
        where the second inequality follows from taking \(c_\eta\) sufficiently small depending on \(\tilde{C}_\eta\) and using \(c < c_\eta\). Since it has been assumed without loss of generality that \(q_0(j)\) is decreasing in \(j\), it thus follows \(q_0(j) - \frac{c\psi}{nm} \geq q_0(j) - q_0(j^*+1) \geq 0\) for all \(j \in \mathcal{I}\). Hence \(q \in \Delta_p\), and thus we have shown \(q \in \Pi(q_0, \frac{c\psi}{n})\). The proof is complete.
    \end{proof}

    The following proposition (Proposition \ref{prop:flattening}) shows that the hypothesis testing problem 
    \begin{align*}
        H_0 &: X \sim \bigotimes_{j=1}^{p} \Poisson(n q_0(j)), \\
        H_1 &: q \sim \pi \text{ and } X|q \sim \bigotimes_{j=1}^{p} \Poisson(nq(j))
    \end{align*}
    is no easier than the testing problem 
    \begin{align}
        H_0 &: Y \sim \Poisson\left(n q_0^{-\max}(j^*)\right)^{\otimes j^*}, \label{problem:multinomial_flat0}\\
        H_1 &: q \sim \pi \text{ and } Y|q \sim \bigotimes_{j=1}^{j^*} \Poisson(n\tilde{q}(j)) \label{problem:multinomial_flat1}
    \end{align}
    where \(\tilde{q} \in \R^{j^*}\) is given by \(\tilde{q}(j-1) = q(j) - q_0(j) + q_0^{-\max}(j^*)\) for \(2 \leq j \leq j^*+1\). That is to say, we have
    \begin{equation*}
        \tilde{q}(j-1) = 
        \begin{cases}
            q_0^{-\max}(j^*) + c\frac{\psi}{n}   &\textit{if } j = J,\\
            q_0^{-\max}(j^*) - c\frac{\psi}{nm} &\textit{if } j \in \mathcal{I}, \\
            q_0^{-\max}(j^*) &\textit{otherwise},
        \end{cases}
    \end{equation*}
    where \(\mathcal{I}\) and \(J\) are given in the definition of \(\pi\). This auxiliary testing problem can be understood as the result of flattening the original heteroskedastic null distribution into a homoskedastic null distribution which is more amenable to analysis. 
       
    The following proposition relates the initial testing problem to the flattened one~\eqref{problem:multinomial_flat0}-\eqref{problem:multinomial_flat1}.
    
    \begin{proposition}\label{prop:flattening}
        If \(c_\eta > 0\) is sufficiently small depending only on \(\tilde{C}_\eta\), then 
        \begin{align*}
            &\dTV\left(\bigotimes_{j=1}^{p} \Poisson(nq_0(j)), \int \bigotimes_{j=1}^{p} \Poisson(nq(j)) \, \pi(dq) \right) \\
            &\leq \dTV\left(\Poisson(nq_0^{-\max}(j^*))^{\otimes j^*}, \int \bigotimes_{j=1}^{j^*} \Poisson(n\tilde{q}(j)) \, \pi(dq)\right)
        \end{align*}
        provided \(c < c_\eta\). 
    \end{proposition}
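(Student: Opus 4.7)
The plan is to realize this bound as a direct consequence of the Flattening lemma, Proposition \ref{prop:general_flattening}, already established in the Poisson section. The first step is to observe that under any draw \(q \sim \pi\) from \eqref{eq_non-param_prior_mult}, only coordinates in \(\{2, \ldots, j^*+1\}\) are perturbed, while coordinates \(1\) and \(j^*+2, \ldots, p\) always satisfy \(q(j) = q_0(j)\). Consequently, the two product measures whose TV distance we wish to bound agree on the Poisson factors indexed by \(\{1\} \cup \{j^*+2, \ldots, p\}\), and the standard product-measure identity \(\dTV(A \otimes B,\, A \otimes B') = \dTV(B, B')\) collapses the TV distance of interest to
\[
\dTV\!\left( \bigotimes_{j=2}^{j^*+1} \Poisson(nq_0(j)),\; \int \bigotimes_{j=2}^{j^*+1} \Poisson(nq(j)) \, \pi(dq) \right).
\]

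Within this restricted block of size \(j^*\), I would apply the flattening argument used to prove Proposition \ref{prop:general_flattening}, with ``floor'' level \(\underline{\omega} = nq_0^{-\max}(j^*) = nq_0(j^*+1)\). By infinite divisibility of the Poisson distribution, both \(\bigotimes_{j=2}^{j^*+1} \Poisson(nq_0(j))\) and \(\int \bigotimes_{j=2}^{j^*+1} \Poisson(nq(j)) \, \pi(dq)\) can be written as convolutions of the common factor \(\bigotimes_{j=2}^{j^*+1}\Poisson(nq_0(j) - nq_0(j^*+1))\) with, respectively, \(\Poisson(nq_0^{-\max}(j^*))^{\otimes j^*}\) and \(\int \bigotimes_{j=1}^{j^*} \Poisson(n\tilde q(j)) \, \pi(dq)\). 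Stripping the common convolution factor via the data-processing inequality (Lemma \ref{lemma:dpi}) delivers exactly the right-hand side of the proposition.

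The only nontrivial hypothesis to check before invoking the convolution identity is condition (i) of Proposition \ref{prop:general_flattening}, namely nonnegativity of each \(\tilde q(j) = q(j+1) - q_0(j+1) + q_0^{-\max}(j^*)\) for \(q\) in the support of \(\pi\). The only case at issue is \(j+1 \in \mathcal{I}\), for which nonnegativity reduces to \(c\psi/(nm) \leq q_0^{-\max}(j^*)\). This inequality has already been established in the proof of Lemma \ref{lem:support_prior} via the estimate \(m \gtrsim \lceil h^{-1}(\log(ej^*)/(nq_0^{-\max}(j^*)))\rceil\) from Lemma \ref{lemma:m_size}, provided \(c_\eta\) is chosen sufficiently small depending only on \(\tilde{C}_\eta\). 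This verification is the only (minor) obstacle; once it is in hand, the result drops out of the flattening identity with no further computation.
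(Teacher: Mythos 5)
Your proposal is correct and takes essentially the same route as the paper: the paper also deduces the bound from Proposition \ref{prop:general_flattening} (with \(k = j^*+1\), \(\omega = nq_0\), \(\underline{\omega} = nq_0^{-\max}(j^*)\), so the unperturbed coordinates contribute nothing to (\ref{eqn:flattening_split})) and verifies the nonnegativity condition (i) exactly as you do, via \(c\psi/(nm) \leq q_0^{-\max}(j^*)\) for \(c_\eta\) sufficiently small (Lemma \ref{lemma:qtilde}). Peeling off coordinate \(1\) and the tail coordinates by the product identity before flattening the block \(\{2,\dots,j^*+1\}\) is only a cosmetic rearrangement of the same argument.
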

    \begin{proof}
        
        The result will follow from an application of Proposition \ref{prop:general_flattening}. Let \(\gamma = \pi\), \(k = j^*+1\), \(\omega = nq_0\), and \(\underline{\omega} = nq_0^{-\max}(j^*)\). It is clear item \((ii)\) of the statement of Proposition \ref{prop:general_flattening} is satisfied. Note in the notation of Proposition \ref{prop:general_flattening}, we have \(\xi = nq\) and \(\xi_j - \omega_j + \underline{\omega} = nq_0(j)\mathbbm{1}_{\{j = 1\} \cup \{j > j^*\}} + n\tilde{q}(j-1)\mathbbm{1}_{\{2 \leq j \leq j^*+1\}}\). Since \(c_\eta\) is sufficiently small, it follows by Lemma \ref{lemma:qtilde} that item \((i)\) is satisfied. The result then follows from Proposition \ref{prop:general_flattening} since the second term in (\ref{eqn:flattening_split}) is zero.
    \end{proof}

        To move forward with the proof of Theorem~\ref{thm:poissonized_multinomial_lower_bound}, it must first be verified \(\tilde{q}(j) \geq 0\) for all \(j \leq j^*\) so that the definition of the alternative hypothesis (\ref{problem:multinomial_flat1}) is coherent.
    \begin{lemma}\label{lemma:qtilde}
        If \(c_\eta > 0\) is sufficiently small depending only on \(\tilde{C}_\eta\), then \(\tilde{q}(j) \geq 0\) for all \(j \leq j^*\). 
    \end{lemma}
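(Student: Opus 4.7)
The plan is to split the verification into the three cases arising from the definition of a draw $q \sim \pi$ and the flattening map $\tilde q(j-1) = q(j) - q_0(j) + q_0^{-\max}(j^*)$ for $2 \leq j \leq j^*+1$. Two of the three cases are immediate: if $j = J$, then $\tilde q(j-1) = q_0^{-\max}(j^*) + c\psi/n \geq 0$, and if $j \notin \mathcal I \cup \{J\}$, then $\tilde q(j-1) = q_0^{-\max}(j^*) \geq 0$. So the only substantive case is $j \in \mathcal I$, which reduces to proving
\[
\frac{c\psi}{nm} \leq q_0^{-\max}(j^*).
\]
If $m = 0$, then $\mathcal I = \emptyset$ and there is nothing to check, so I would assume $m \geq 1$ from here on.

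Next I would simply invoke the bound on $m$ already established in the proof of Lemma \ref{lem:support_prior}: under \eqref{eqn:multinomial_psi_large} with a sufficiently large universal constant $C_*$, Lemma \ref{lemma:m_size} gives $m \lesssim (j^*)^{1/4}$, which forces $m = \lceil h^{-1}(\log(ej^*)/(nq_0^{-\max}(j^*)))\rceil$ (the ceiling does not hit the $(j^*-1)$-cap), hence $m \geq \kappa^{-1}\lceil h^{-1}(\log(ej^*)/(nq_0^{-\max}(j^*)))\rceil$ for a universal constant $\kappa > 0$. Combining this with the comparison $h^{-1}(\log(\tilde C_\eta j^*)/x) \leq C(\tilde C_\eta)\,h^{-1}(\log(ej^*)/x)$, which follows from the near-linear control on $h^{-1}$ in Lemma \ref{lemma:h_inverse} together with $\log(\tilde C_\eta j^*)/\log(ej^*) \leq \log(\tilde C_\eta)$, yields
\[
\frac{c\psi}{nm} \;=\; \frac{c\, q_0^{-\max}(j^*)\, h^{-1}\!\left(\tfrac{\log(\tilde C_\eta j^*)}{nq_0^{-\max}(j^*)}\right)}{m} \;\leq\; c\,\kappa\, C(\tilde C_\eta)\, q_0^{-\max}(j^*).
\]
Choosing $c_\eta$ small enough (depending only on $\tilde C_\eta$) so that $c_\eta \kappa C(\tilde C_\eta) \leq 1$ gives the desired inequality, and this is the same choice of $c_\eta$ used in Lemma \ref{lem:support_prior}.

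There is no real obstacle here beyond the bookkeeping already carried out for Lemma \ref{lem:support_prior}; the argument is essentially a restatement of the bound $c\psi/(nm) \leq q_0(j^*+1) = q_0^{-\max}(j^*)$ derived there, now used to conclude nonnegativity of the flattened vector rather than of the original perturbed simplex vector. The only mildly delicate point to flag is that the appearance of $\tilde C_\eta$ (rather than $e$) in $\psi$ forces the log-factor comparison step, which is precisely where $c_\eta$ acquires its dependence on $\tilde C_\eta$.
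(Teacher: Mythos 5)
Your proof is correct and follows essentially the same route as the paper: reduce to $\frac{c\psi}{nm}\leq q_0^{-\max}(j^*)$ for $j\in\mathcal I$, control the ratio $h^{-1}\!\left(\tfrac{\log(\tilde C_\eta j^*)}{nq_0^{-\max}(j^*)}\right)\big/\big\lceil h^{-1}\!\left(\tfrac{\log(ej^*)}{nq_0^{-\max}(j^*)}\right)\big\rceil$ by a constant depending only on $\tilde C_\eta$, and take $c_\eta$ small, which is exactly the computation in the paper's proof (and in Lemma~\ref{lem:support_prior}). If anything you are slightly more explicit than the paper about why the cap $m=(j^*-1)$ causes no trouble (via Lemma~\ref{lemma:m_size} and the resulting $m\geq\kappa^{-1}\lceil h^{-1}(\cdot)\rceil$), but the substance is the same.
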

    \begin{proof}
        If \(m = 0\), then \(\tilde{q} = q_0\) and so there is nothing to prove. If \(m \geq 1\), then
        \begin{align*}
            \tilde{q}(j) &\geq q_0^{-\max}(j^*) - \frac{c\psi}{nm} \\
            &= q_0^{-\max}(j^*)\left(1 - c \cdot \frac{h^{-1}\left(\frac{\log(\tilde{C}_\eta j^*)}{nq_0^{-\max}(j^*)} \right)}{\left\lceil h^{-1}\left(\frac{\log(ej^*)}{nq_0^{-\max}(j^*)} \right)\right\rceil }\right) \\
            &\geq 0
        \end{align*}
        since \(c < c_\eta\) and \(c_\eta\) is chosen sufficiently small depending on \(\tilde{C}_\eta\).
    \end{proof}

    The following lemma bounds the total variation distance associated to the flattened problem in Proposition \ref{prop:flattening} via the conditional second-moment method. For notational ease, let \(P_0 = \bigotimes_{j=1}^{j^*} \Poisson(n q_0^{-\max}(j^*))\) denote the null hypothesis (\ref{problem:multinomial_flat0}) and denote the alternative hypothesis (\ref{problem:multinomial_flat1}) by \(P_\pi = \int \bigotimes_{j=1}^{j^*} \Poisson(n\tilde{q}(j)) \, \pi(dq)\). 
    Denote \(\mu_j = nq_0^{-\max}(j)\). We will condition on the event
    \begin{equation}\label{def:conditional_event}
        E := \left\{ \max_{1 \leq j \leq j^*} Y_j - \mu_{j^*} \leq \psi \right\}.
    \end{equation}
    Let \(\tilde{P}_0\) and \(\tilde{P}_{\pi}\) denote the measures \(P_0\) and \(P_\pi\) conditioned on the event \(E\), that is to say, for any event \(A\) we have \(\tilde{P}_0(A) = \frac{P_0(A \cap E)}{P_0(E)}\) and \(\tilde{P}_\pi(A) = \frac{P_\pi(A \cap E)}{P_\pi(E)}\).

    \begin{lemma}\label{lem:TV_chi2}
        If \(\alpha > 0\), then there exists \(\tilde{C}_\eta\) sufficiently large and \(c_\eta\) sufficiently small depending only on \(\alpha\) such that 
        \begin{equation}\label{eqn:multinomial_master_lower_bound}
            \dTV\left(P_0, P_{\pi}\right) \leq \frac{1}{2}\sqrt{\chi^2\left(\left.\left.\tilde{P}_\pi \,\right|\right|\, \tilde{P}_0\right)} + \frac{2\alpha}{3}
        \end{equation}
        provided \(0 < c < c_\eta\). 
    \end{lemma}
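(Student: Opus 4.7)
The plan is to mirror the conditional second moment argument of Lemma \ref{lemma:truncation_whp}, now applied to the flattened homoskedastic problem. Starting from the triangle inequality and the elementary bound $\dTV(P, P(\cdot \mid E)) \leq P(E^c)$ (which is the content of Lemma \ref{lemma:conditional_TV}), one obtains
\[
\dTV(P_0, P_\pi) \leq \dTV(\tilde{P}_0, \tilde{P}_\pi) + 2P_0(E^c) + 2 P_\pi(E^c) \leq \tfrac{1}{2}\sqrt{\chi^2(\tilde{P}_\pi \,||\, \tilde{P}_0)} + 2P_0(E^c) + 2P_\pi(E^c),
\]
where the final step uses the Cauchy--Schwarz bound of total variation by the $\chi^2$ divergence. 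The proof then reduces to showing that both $P_0(E^c)$ and $P_\pi(E^c)$ are at most $\alpha/6$ for appropriate choices of $\tilde{C}_\eta$ and $c_\eta$.

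For $P_0(E^c)$, the coordinates $Y_1, \ldots, Y_{j^*}$ are i.i.d.\ $\Poisson(\mu_{j^*})$, and on the nontrivial event $\{m \geq 1\}$ the threshold $\psi = \mu_{j^*} h^{-1}(\log(\tilde{C}_\eta j^*)/\mu_{j^*})$ is calibrated exactly to Bennett's inequality. An application of Corollary \ref{corollary:constant_rate_max}, with $\tilde{C}_\eta$ playing the role of the slack constant $C$, makes $P_0(E^c) \leq \alpha/12$ provided $\tilde{C}_\eta$ is large enough depending only on $\alpha$.

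For $P_\pi(E^c)$, decompose the event $\{\max_{j \leq j^*} Y_j - \mu_{j^*} > \psi\}$ by which coordinate attains the maximum. Coordinates outside $\{J\} \cup \mathcal{I}$ have the same distribution as under $P_0$, so they contribute at most $P_0(E^c)$. Coordinates in $\mathcal{I}$ have mean $\mu_{j^*} - c\psi/m \leq \mu_{j^*}$, hence are stochastically dominated by $\Poisson(\mu_{j^*})$ and similarly contribute at most $P_0(E^c)$. The only genuinely new term is the tail of $Y_J \sim \Poisson(\mu_{j^*} + c\psi)$, bounded by Chebyshev as
\[
P\{Y_J - \mu_{j^*} > \psi\} \leq \frac{\mu_{j^*} + c\psi}{(1-c)^2\psi^2} = \frac{\mu_{j^*}}{(1-c)^2\psi^2} + \frac{c}{(1-c)^2\psi}.
\]
Lemma \ref{lemma:h_inverse} gives $h^{-1}(x) \gtrsim \sqrt{x}$, so $\psi^2 \gtrsim \mu_{j^*} \log(\tilde{C}_\eta j^*)$, and the first term is controlled by taking $\tilde{C}_\eta$ large. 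The hypothesis (\ref{eqn:multinomial_psi_large}) together with $\tilde{C}_\eta \geq e$ ensures $\psi \geq C_*$ for a large universal constant, so the second term is controlled by taking $c_\eta$ sufficiently small.

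The main obstacle is the careful coordination of the two constants. The parameter $\tilde{C}_\eta$ must be chosen large enough to control both the null-side deviation $P_0(E^c)$ and the variance-to-threshold ratio $\mu_{j^*}/\psi^2$, while $c_\eta$ must simultaneously remain small enough to control the Chebyshev term $c/\psi$ and to preserve the simplex support for the prior established in Lemma \ref{lem:support_prior}. These constraints are compatible: one fixes $\tilde{C}_\eta$ first depending on $\alpha$, then shrinks $c_\eta$ accordingly.
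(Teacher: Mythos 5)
Your proposal is correct and follows essentially the same route as the paper: triangle inequality through the conditioned measures (Lemma \ref{lemma:conditional_TV}) plus the $\chi^2$ bound on total variation, Corollary \ref{corollary:constant_rate_max} for $P_0(E^c)$, and a Chebyshev bound on the $\Poisson(\mu_{j^*}+c\psi)$ coordinate using $\psi^2 \gtrsim \mu_{j^*}\log(\tilde{C}_\eta j^*)$ and $\psi$ bounded below via (\ref{eqn:multinomial_psi_large}). Your explicit stochastic-domination treatment of the decreased coordinates in $\mathcal{I}$ and the bookkeeping of constants are details the paper leaves implicit, but the argument is the same.
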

    \begin{proof}
        By Corollary \ref{corollary:constant_rate_max}, \(\tilde{C}_\eta\) can be selected large enough to ensure \(P_{0}(E^c) \leq \frac{\alpha}{6}\). Furthermore, consider that \(P_{\pi}(E^c) \leq \frac{\alpha}{6} + P\left\{ \Poisson(\mu_{j^*} + c\psi) - \mu_{j^*} > \psi \right\} \leq \frac{\alpha}{3}\) since \(c < c_\eta\) and \(c_\eta\) can be taken sufficiently small. Triangle inequality delivers
        \begin{equation*}
            \dTV(P_{0}, P_{\pi}) \leq \dTV(\tilde{P}_0, \tilde{P}_{\pi}) + \frac{2\alpha}{3} \leq \frac{1}{2} \sqrt{\chi^2\left(\left.\left.\tilde{P}_\pi \,\right|\right|\, \tilde{P}_0\right)} + \frac{2\alpha}{3}
        \end{equation*}
        as desired.
    \end{proof}

    It remains to bound the \(\chi^2\) divergence in (\ref{eqn:multinomial_master_lower_bound}). The following lemma reduces the task to bounding two specific probabilistic quantities. 

    \begin{lemma}\label{lemma:multinomial_conditional}
        If \(\alpha > 0\), then there exists \(\tilde{C}_\eta\) sufficiently large and \(c_\eta\) sufficiently small depending only on \(\alpha\) such that
        \begin{align}
            \begin{split}\label{eqn:multinomial_chisquare_bound}
            &\chi^2\left(\left.\left.\tilde{P}_\pi \,\right|\right|\, \tilde{P}_0\right) + 1 \\
            &\leq \frac{1}{1-\frac{\alpha}{3}} E\left(e^{\frac{c^2\psi^2}{m^2\mu_{j^*}}|\mathcal{I} \cap \mathcal{I}'|} \right) \left[1 + \frac{1}{j^*\!-\!m} \bigg( e^{\frac{c^2\psi^2}{\mu_{j^*}}}P\left\{ \Poisson\!\left(\frac{(\mu_{j^*} \! + c\psi)^2}{\mu_{j^*}}\right) \leq \mu_{j^*}\! + \psi \right\} - 1 \bigg)_{\!\!+} \right]
            \end{split}
        \end{align}
        provided \(0 < c < c_\eta\). Here, \(\mathcal{I}\) and \(\mathcal{I}'\) are i.i.d. copies and we adopt the convention $\frac{c^2\psi^2}{m^2\mu_{j^*}}|\mathcal{I} \cap \mathcal{I}'| = 0$ if $\psi = 0$ and $\mathcal{I} = \mathcal{I}' = \emptyset$ due to $m = 0$.  
    \end{lemma}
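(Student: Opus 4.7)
The plan is to run the conditional second moment method and reduce the conditional $\chi^2$ to an explicit expectation via the Poisson likelihood ratio identity. First, I would write $\frac{d\tilde P_\pi}{d\tilde P_0} = \frac{P_0(E)}{P_\pi(E)} L_\pi \mathbbm{1}_E$, where $L_\pi = dP_\pi/dP_0$, to obtain
\[
1 + \chi^2(\tilde P_\pi \,\|\, \tilde P_0) \;=\; \frac{P_0(E)}{P_\pi(E)^2}\int_E L_\pi^2\, dP_0.
\]
The event probabilities $P_\pi(E) \ge 1 - \alpha/3$ and $P_0(E)\le 1$ are already in hand from the proof of Lemma~\ref{lem:TV_chi2} (after taking $\tilde C_\eta$ large and $c_\eta$ small), so the prefactor contributes the $(1-\alpha/3)^{-1}$ factor up to a routine re-parametrization of $\alpha$.

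Next, introducing two independent draws $\tilde q, \tilde q'\sim \pi$ and applying the standard per-coordinate Poisson identity $L_{\lambda_j}(y)L_{\lambda'_j}(y)p_\mu(y)=\exp((\lambda_j-\mu)(\lambda'_j-\mu)/\mu)\cdot p_{\lambda_j\lambda'_j/\mu}(y)$ gives
\[
\int_E L_{\tilde q} L_{\tilde q'}\, dP_0 \;=\; e^{S(\tilde q,\tilde q')}\cdot P\!\left\{\max_{1\le j\le j^*} W_j - \mu_{j^*} \le \psi \,\Big|\, \tilde q,\tilde q'\right\},
\]
with $W_j\sim\Poisson(\lambda_j\lambda'_j/\mu_{j^*})$ and $S=\sum_j(\lambda_j-\mu_{j^*})(\lambda'_j-\mu_{j^*})/\mu_{j^*}$. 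Using that $\lambda_j-\mu_{j^*}\in\{c\psi,-c\psi/m,0\}$ according to whether $j=J$, $j\in\mathcal I$, or otherwise,
\[
S \;=\; \frac{c^2\psi^2}{\mu_{j^*}}\!\left[\mathbbm{1}_{J=J'}\;-\;\frac{\mathbbm{1}_{J\in\mathcal I'}+\mathbbm{1}_{J'\in\mathcal I}}{m}\;+\;\frac{|\mathcal I\cap\mathcal I'|}{m^2}\right].
\]
Dropping the two nonpositive terms and splitting on $\{J=J'\}$ vs.\ $\{J\ne J'\}$, I bound $P(E\mid \tilde q,\tilde q')$ trivially by $1$ when $J\ne J'$, and when $J=J'$ the coordinate rate becomes $(\mu_{j^*}+c\psi)^2/\mu_{j^*}$, so $P(E\mid\tilde q,\tilde q')\le P\{\Poisson((\mu_{j^*}+c\psi)^2/\mu_{j^*})\le \mu_{j^*}+\psi\}$. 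Packaging both cases yields
\[
e^{S}\,P(E\mid\tilde q,\tilde q') \;\le\; e^{\frac{c^2\psi^2}{m^2\mu_{j^*}}|\mathcal I\cap\mathcal I'|}\!\left[1 + \mathbbm{1}_{J=J'}\Big(e^{\frac{c^2\psi^2}{\mu_{j^*}}}P\{\Poisson\tfrac{(\mu_{j^*}+c\psi)^2}{\mu_{j^*}}\le \mu_{j^*}+\psi\} - 1\Big)_{\!+}\right].
\]
To finish, I take expectation and condition on $(\mathcal I,\mathcal I',J')$: under $\pi$, $J\mid \mathcal I$ is uniform on $\{1,\dots,j^*\}\setminus\mathcal I$, which has size $j^*-m$, so $P(J=J'\mid\mathcal I,\mathcal I',J')\le 1/(j^*-m)$. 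Since $|\mathcal I\cap\mathcal I'|$ is a function of $(\mathcal I,\mathcal I')$ alone, the indicator integrates to at most $1/(j^*-m)$ times $E[e^{c^2\psi^2|\mathcal I\cap\mathcal I'|/(m^2\mu_{j^*})}]$, and the claimed bound follows.

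The main obstacle is the joint handling of $e^S$ with the truncation event $E$. A crude union bound for $P(E\mid\tilde q,\tilde q')$ would miss the cancellation between the $e^{c^2\psi^2/\mu_{j^*}}$ blowup and the Poisson lower-tail probability (which is what Proposition~\ref{prop:chisquare_bound} exploits via Bennett's inequality), so one must retain $P(E\mid\tilde q,\tilde q')$ precisely in the $J=J'$ case and trivially bound it off the diagonal. The algebra for $S$ is mechanical once the prior's hierarchical structure is correctly unfolded, and the conditional uniformity of $J$ given $\mathcal I$ is what generates the $1/(j^*-m)$ factor in the statement.
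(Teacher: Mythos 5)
Your proposal is correct and follows essentially the same route as the paper's proof: the same conditional likelihood-ratio identity with the $(1-\alpha/3)^{-1}$ prefactor, the same Poisson tilting identity producing $e^{\langle\delta,\delta'\rangle/\mu_{j^*}}$ times the truncated-max probability, the same dropping of the nonpositive cross terms in $S$, the same diagonal reduction to $\Poisson((\mu_{j^*}+c\psi)^2/\mu_{j^*})$, and the same conditioning on $(\mathcal{I},\mathcal{I}')$ to extract the $1/(j^*-m)$ factor (the paper phrases this via the $\Bernoulli\bigl(|\mathcal{I}^c\cap\mathcal{I}'^c|/(|\mathcal{I}^c||\mathcal{I}'^c|)\bigr)$ law of $\mathbbm{1}_{\{J=J'\}}$, which is equivalent to your bound). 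The only differences are cosmetic bookkeeping of the event-probability prefactor, which your re-parametrization remark already covers.
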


    The proof is deferred to Appendix~\ref{section:multinomial_lower_bound_proof}. The following lemma furnishes a bound for the moment generating function appearing in (\ref{eqn:multinomial_chisquare_bound}). 

    \begin{lemma}\label{lemma:mgf_bound}
        There exists a sufficiently large universal constant \(C_* > 0\) such that the following holds. If (\ref{eqn:multinomial_psi_large}) holds, then there exists \(C_\eta^\dagger\) and \(c_\eta\) depending only on \(\tilde{C}_\eta\) such that 
        \begin{equation*}
            E\left(e^{\frac{c^2\psi^2}{m^2\mu_{j^*}}|\mathcal{I} \cap \mathcal{I}'|}\right) \leq e^{c^2 C_\eta^\dagger}
        \end{equation*}
        provided \(c < c_\eta\). 
    \end{lemma}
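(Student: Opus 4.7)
The plan is to exploit the hypergeometric-like structure of $|\mathcal{I} \cap \mathcal{I}'|$ and then apply a case analysis driven by the subgaussian/subpoissonian dichotomy together with the size bound from Lemma~\ref{lemma:m_size}.

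First I would reduce the computation of the MGF to a Binomial MGF. Conditionally on $J,J'$, the sets $\mathcal{I},\mathcal{I}'$ are independent uniform size-$m$ subsets of the universes $U := \{2,\ldots,j^*+1\}\setminus\{J\}$ and $U' := \{2,\ldots,j^*+1\}\setminus\{J'\}$, each of cardinality $j^*-1$. Conditioning further on $\mathcal{I}'$, the count $|\mathcal{I}\cap\mathcal{I}'|$ equals $|\mathcal{I}\cap S|$ where $S = \mathcal{I}'\cap U$ has size at most $m$, and this is a Hypergeometric$(j^*-1,|S|,m)$ variable. By Hoeffding's classical MGF inequality for sampling without replacement, it is dominated in MGF by the Binomial$(m,m/(j^*-1))$ distribution, so
\begin{equation*}
    E\!\left(e^{t|\mathcal{I}\cap \mathcal{I}'|}\right) \leq \left(1+\tfrac{m}{j^*-1}(e^t-1)\right)^m \leq \exp\!\left(\tfrac{m^2(e^t-1)}{j^*-1}\right),
\end{equation*}
with $t = c^2\psi^2/(m^2\mu_{j^*})$. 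The task reduces to showing $\frac{m^2(e^t-1)}{j^*-1} \leq c^2 C_\eta^\dagger$ for a constant $C_\eta^\dagger$ depending only on $\tilde{C}_\eta$, provided $c<c_\eta$.

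Next I would analyze $t$ and the ratio $m^2/(j^*-1)$ regime by regime, exploiting that $m \geq h^{-1}(\log(ej^*)/\mu_{j^*})$ (when $m=\lceil\cdot\rceil$ rather than capped), together with Lemma~\ref{lemma:h_inverse} giving $h^{-1}\asymp\Gamma$ and the doubling-type bound $h^{-1}(\log(\tilde{C}_\eta j^*)/\mu_{j^*})\leq C(\tilde{C}_\eta)h^{-1}(\log(ej^*)/\mu_{j^*})$. In the \emph{subgaussian regime} $\mu_{j^*}\gtrsim \log(ej^*)$, one has $h^{-1}(x)\asymp\sqrt{x}$, so $\psi^2\asymp\mu_{j^*}\log(\tilde{C}_\eta j^*)$ and either $m=1$ (very large $\mu_{j^*}$) or $m^2\asymp \log(ej^*)/\mu_{j^*}$; in both sub-cases $t \lesssim_{\tilde{C}_\eta} 1$, and the factor $m^2/(j^*-1)$ compensates to make the ratio $O_{\tilde{C}_\eta}(c^2)$. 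In the \emph{subpoissonian regime} $\mu_{j^*}\lesssim\log(ej^*)$, one has $h^{-1}(x)\asymp x/\log(ex)$ for $x\geq 1$, so $t \lesssim_{\tilde{C}_\eta}\mu_{j^*}$ while $m^2\asymp \log^2(ej^*)/(\mu_{j^*}\log(\log(ej^*)/\mu_{j^*}))^2$. Here I would invoke Lemma~\ref{lemma:m_size} ($m\lesssim (j^*)^{1/4}$) to upgrade $m^2/(j^*-1) \lesssim (j^*)^{-1/2}$ and absorb the exponential $e^t \leq (ej^*)^{c^2 C(\tilde{C}_\eta)}$ by taking $c_\eta$ small enough that $c^2 C(\tilde{C}_\eta) < 1/4$, yielding $m^2(e^t-1)/(j^*-1) \lesssim_{\tilde{C}_\eta} c^2$. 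The capped case $m = j^*-1$ cannot occur for large $j^*$ because of Lemma~\ref{lemma:m_size}, so one handles the residual finitely many small-$j^*$ scenarios by absorbing them into the constant $C_\eta^\dagger$.

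The main obstacle will be the subpoissonian regime bookkeeping: showing that the exponent $t$, which can grow with $\mu_{j^*}$, is nonetheless tamed by the smallness of $m^2/j^*$, with the balance quantified by Lemma~\ref{lemma:m_size} and the defining relation between $m$, $\psi$, $\mu_{j^*}$, and $j^*$. Once these pieces are in place, the bound $\exp(m^2(e^t-1)/(j^*-1)) \leq \exp(c^2 C_\eta^\dagger)$ follows by choosing $C_\eta^\dagger$ as the maximum of the two regime-specific constants and $c_\eta$ small enough to validate the Taylor/exponential estimates used above.
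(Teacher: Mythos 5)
Your overall route is the paper's: you dominate the hypergeometric variable $|\mathcal{I}\cap\mathcal{I}'|$ by a $\Binomial(m,\tfrac{m}{j^*-1})$ in the MGF sense (Hoeffding's sampling-without-replacement inequality, where the paper uses Aldous's representation $|\mathcal{I}\cap\mathcal{I}'|=E(B\,|\,\mathcal{F})$ plus Jensen — the same bound), arrive at $\exp\bigl(\tfrac{m^2}{j^*-1}(e^t-1)\bigr)$ with $t=c^2\psi^2/(m^2\mu_{j^*})$, and then split into the subgaussian and subpoissonian regimes using Lemma~\ref{lemma:m_size} and smallness of $c$. The subpoissonian case is essentially the paper's Case 2, provided you keep a factor of $t$ (via $e^t-1\le te^t$, or the paper's $(1+x)^\delta\le 1+\delta x$ applied to $e^t\le(1+j^*/m^2)^{c^2\tilde{C}_\eta''}$) so that the exponent comes out as $c^2\cdot\mathrm{const}$ rather than merely $O((j^*)^{-1/4})$; your phrase ``absorb the exponential'' suggests you intend this, but it needs to be said.

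The genuine gap is in your subgaussian case: the claim ``$t\lesssim_{\tilde{C}_\eta}1$'' is false. There $h^{-1}(x)\asymp\sqrt{x}$ gives $\psi^2/\mu_{j^*}\asymp\log(\tilde{C}_\eta j^*)$ while $m\le\lceil h^{-1}(1)\rceil=2$, so $t\asymp c^2\log(\tilde{C}_\eta j^*)$, which diverges with $j^*$ for fixed $c$; the alternative sub-case $m^2\asymp\log(ej^*)/\mu_{j^*}$ likewise yields $t\asymp_{\tilde{C}_\eta}c^2\mu_{j^*}\asymp c^2\log(ej^*)$. Consequently ``$e^t-1\asymp t$ and $m^2/(j^*-1)$ compensates'' does not stand as written; you must run the same absorption you use in the other regime (as the paper does via $e^x-1\le xe^x$ and $e^t\le(\tilde{C}_\eta j^*)^{Lc^2}$ with $Lc^2$ small, so that $\tfrac{m^2}{j^*-1}(e^t-1)\lesssim c^2\log(\tilde{C}_\eta j^*)(\tilde{C}_\eta j^*)^{Lc^2}/j^*\lesssim_{\tilde{C}_\eta}c^2$). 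A second, smaller imprecision: dismissing the capped case $m=j^*-1$ by ``absorbing finitely many small-$j^*$ scenarios into the constant'' is not legitimate, since the required bound is $e^{c^2C^\dagger_\eta}$ and any contribution not carrying a factor $c^2$ cannot be hidden in $C^\dagger_\eta$ (take $c\to 0$); moreover for fixed small $j^*$ the parameters $(n,q_0)$ still range over a continuum. The correct disposal is that condition~(\ref{eqn:multinomial_psi_large}) with $C_*$ large forces $\lceil h^{-1}(\log(ej^*)/\mu_{j^*})\rceil\lesssim(j^*)^{1/4}$, so either the cap never binds or $j^*$ is bounded, in which case $t\lesssim_{\tilde{C}_\eta}c^2$ and $m^2/(j^*-1)\lesssim 1$ directly give an exponent of order $c^2$ — this is exactly the role of the paper's step ``$m\asymp\lceil h^{-1}(\cdot)\rceil$'' in its Case 2.
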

    The proof is deferred to Appendix~\ref{section:multinomial_lower_bound_proof}.         
    Having bounded the moment generating function, we now turn to bounding the other term in (\ref{eqn:multinomial_chisquare_bound}). We carefully employ Bennett's inequality (\ref{lemma:Bennett}) to obtain some cancellation between the exponential term and the lower tail probability. This cancellation is crucial to obtaining the sharp rate, otherwise only the subgaussian portion of the rate would be established. 
    \begin{lemma}\label{lem:control_chi2_multinomial}
        There exists a sufficiently large universal constant \(C_* \geq 1\) such that the following holds. If the condition (\ref{eqn:multinomial_psi_large}) is satisfied, then there exists \(C_{\eta}^{\dagger\dagger}\) and \(c_\eta\) depending only on \(\tilde{C}_\eta\) such that   
        \begin{equation*}
            1 + \frac{1}{j^*-m} \left( e^{\frac{c^2\psi^2}{\mu_{j^*}}}P\left\{ \Poisson\left(\frac{(\mu_{j^*} + c\psi)^2}{\mu_{j^*}}\right) \leq \mu_{j^*} + \psi \right\} - 1 \right)_{\!+}  \leq 1 + \frac{1}{\tilde{C}_\eta} + e^{c^2 C_{\eta}^{\dagger\dagger}}
        \end{equation*}
        provided \(c < c_\eta\). 
    \end{lemma}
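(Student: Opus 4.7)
The strategy is to first dispatch the trivial case \(m = 0\) (for which \(\psi = 0\) by convention, the exponential factor is \(1\), and the \((\cdot)_+\) truncation kills everything), and then handle \(m \geq 1\) by splitting into two cases based on the sign of \(\lambda - y\), where \(\lambda := (\mu_{j^*}+c\psi)^2/\mu_{j^*}\) and \(y := \mu_{j^*}+\psi\). Writing \(a = \psi/\mu_{j^*}\), one checks that \(\lambda \leq y\) is equivalent to \(c^2 a \leq 1-2c\).

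In the first case (\(\lambda \leq y\)), I would use the trivial bound \(P \leq 1\). The condition yields \(c^2\psi^2/\mu_{j^*} = c^2 a \psi \leq (1-2c)\psi\), and the defining identity \(\mu_{j^*} h(\psi/\mu_{j^*}) = \log(\tilde{C}_\eta j^*)\) combined with the asymptotics of \(h^{-1}\) from Lemma~\ref{lemma:h_inverse} controls \(c^2\psi^2/\mu_{j^*}\) by a manageable multiple of \(\log(\tilde{C}_\eta j^*)\). Lemma~\ref{lemma:m_size} gives \(m \lesssim (j^*)^{1/4}\), so \(j^*-m \gtrsim j^*\), and the ratio \(e^{c^2\psi^2/\mu_{j^*}}/(j^*-m)\) is absorbed into \(1/\tilde{C}_\eta + e^{c^2 C_\eta^{\dagger\dagger}}\) once \(c\) is small and \(C_\eta^{\dagger\dagger}\) is chosen depending on \(\tilde{C}_\eta\). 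In the second case (\(\lambda > y\), only reachable in a deep subpoissonian regime), I would apply Bennett's inequality (Lemma~\ref{lemma:Bennett}) to the lower tail, giving \(P\{\Poisson(\lambda) \leq y\} \leq \exp(y\log(\lambda/y) + y - \lambda)\). The key algebraic cancellation
\begin{equation*}
c^2\psi^2/\mu_{j^*} + (y-\lambda) = \psi(1-2c)
\end{equation*}
collapses the combined exponent to \(\psi(1-2c) + (\mu_{j^*}+\psi)\log\bigl((1+ca)^2/(1+a)\bigr)\), which is precisely the cancellation announced in the paragraph before the lemma and is what distinguishes the subpoissonian from the subgaussian portion of the rate.

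The main obstacle is the uniform control of this combined exponent in the second case across the full range of admissible \(a\); the factor \((1+a)\log\bigl((1+a)/(1+ca)^2\bigr)\) changes sign near \(a \asymp 1/c^2\) and can become negative for much larger \(a\), so a naive Taylor expansion in \(c\) does not suffice. One must choose \(c_\eta\) as a sufficiently small function of \(\tilde{C}_\eta\) so that the \(c^2\)-corrections are absorbed into \(e^{c^2 C_\eta^{\dagger\dagger}}\) uniformly in \(a\), and then match the remaining dominant term against \(\log(\tilde{C}_\eta j^*)\) before dividing by \(j^*-m\) to obtain the stated bound.
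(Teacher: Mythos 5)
Your decomposition is essentially the paper's: splitting at whether the Poisson mean \((\mu_{j^*}+c\psi)^2/\mu_{j^*}\) exceeds the threshold \(\mu_{j^*}+\psi\) reproduces the paper's split between its trivial-bound cases and its Bennett case, and your cancellation identity \(c^2\psi^2/\mu_{j^*}+(y-\lambda)=\psi(1-2c)\) is exactly the algebra the paper performs. But as written there are two genuine gaps. In your Case A, the chain ``\(c^2\psi^2/\mu_{j^*}\le(1-2c)\psi\), then bound \(\psi\) by a multiple of \(\log(\tilde C_\eta j^*)\)'' fails in the subgaussian regime: there \(\psi/\log(\tilde C_\eta j^*)=a/h(a)\to\infty\) as \(a=\psi/\mu_{j^*}\to0\), so \(\psi\) is not a bounded multiple of \(\log(\tilde C_\eta j^*)\). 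The correct estimate is the direct one, \(c^2\psi^2/\mu_{j^*}=c^2\mu_{j^*}h^{-1}(u)^2\lesssim c^2\log(\tilde C_\eta j^*)\) via \(h^{-1}(u)^2\asymp u\) for \(u\le1\) (Lemma~\ref{lemma:h_inverse}), and the retained factor \(c^2\) is precisely what produces the \(e^{c^2C_\eta^{\dagger\dagger}}\) term. In the subpoissonian part of Case A the multiple carries no \(c^2\) at all, so an extra smallness mechanism is required (the paper introduces a tunable threshold \(\tilde c_\eta\) taken small depending on \(\tilde C_\eta\); alternatively one can exploit \(a\le c^{-2}\) to gain a factor of order \(1/\log(ec^{-2})\)); without it the quantity \(e^{c^2\psi^2/\mu_{j^*}}/(j^*-m)\) is not controlled uniformly, in particular when \(j^*\) is of constant order.

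In Case B you correctly identify the obstacle but do not resolve it, and the resolution is the substantive content of the lemma. The paper rewrites the combined exponent as \(\mu_{j^*}g(\psi/\mu_{j^*})+\log(\tilde C_\eta j^*)\) with \(g(x)=-2h(x)+2(1+x)\log(1+cx)\), proves the elementary bound \(g(x)\le x\bigl(-2\log\tilde C_\eta-\log\tfrac{j^*}{j^*-m}\bigr)\) for all \(x\ge c^{-2}\) once \(c<c_\eta(\tilde C_\eta)\), and then needs three inputs your sketch never invokes: \(\psi\ge1\), guaranteed by hypothesis \eqref{eqn:multinomial_psi_large} with \(C_*\) large, which converts \(\mu_{j^*}g(\psi/\mu_{j^*})\le\psi\cdot(\text{negative})\) into a bound free of \(\psi\); the identity \(\mu_{j^*}h(\psi/\mu_{j^*})=\log(\tilde C_\eta j^*)\), which supplies the \(1/j^*\) factor needed to beat the prefactor \(1/(j^*-m)\) and leaves exactly the \(1/\tilde C_\eta\) in the statement; and \(m\lesssim(j^*)^{1/4}\) from Lemma~\ref{lemma:m_size}, which keeps \(j^*/(j^*-m)\) bounded. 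These are not cosmetic: near your case boundary \(a\approx(1-2c)/c^2\) the combined exponent is close to \(+\psi(1-2c)>0\), so the bound is won only through this bookkeeping against \(\log(\tilde C_\eta j^*)\). Also, within Case B the factor \((1+a)\log\bigl((1+a)/(1+ca)^2\bigr)\) is nonpositive throughout (its sign change occurs exactly at the case boundary), so the difficulty is not a sign change inside Case B but the quantitative tuning of \(c_\eta\) against \(\tilde C_\eta\) just described; the proposal defers precisely this step and is therefore incomplete.
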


    The proof is deferred to Appendix~\ref{section:multinomial_lower_bound_proof}.       
        To conclude the proof of Theorem~\ref{thm:poissonized_multinomial_lower_bound}, it remains to combine Lemmas~\ref{lem:TV_chi2}, \ref{lemma:multinomial_conditional}, \ref{lemma:mgf_bound} and \ref{lem:control_chi2_multinomial} to obtain
    \begin{align*}
        \dTV\left(P_0, P_{\pi}\right) \leq 1-\eta
    \end{align*}
    provided $\alpha,\tilde{c}_\eta,c$ are chosen sufficiently small and $\tilde{C}_\eta, C_*$ are chosen sufficiently large.
            
    \bibliographystyle{skotekal.bst}
    \bibliography{sup_test.bib}     

\begin{thebibliography}{10}

\bibitem{aldous_exchangeability_1985}
Aldous, D.~J. (1985).
\newblock Exchangeability and related topics.
\newblock In \emph{École d'été de probabilités de {Saint}-{Flour},
  {XIII}—1983}, volume 1117 of \emph{Lecture {Notes} in {Math}.}, pp. 1--198.
  Springer, Berlin.

\bibitem{arias-castro_sparse_2015}
Arias-Castro, E. and Wang, M. (2015).
\newblock The sparse {Poisson} means model.
\newblock \emph{Electron. J. Stat.} 9(2):2170--2201.

\bibitem{balakrishnan_hypothesis_2018}
Balakrishnan, S. and Wasserman, L. (2018).
\newblock Hypothesis testing for high-dimensional multinomials: a selective
  review.
\newblock \emph{Ann. Appl. Stat.} 12(2):727--749.

\bibitem{balakrishnan_hypothesis_2019}
Balakrishnan, S. and Wasserman, L. (2019).
\newblock Hypothesis testing for densities and high-dimensional multinomials:
  sharp local minimax rates.
\newblock \emph{Ann. Statist.} 47(4):1893--1927.

\bibitem{batu2000testing}
Batu, T., Fortnow, L., Rubinfeld, R., Smith, W.~D., and White, P. (2000).
\newblock Testing that distributions are close.
\newblock In \emph{Proceedings 41st Annual Symposium on Foundations of Computer
  Science (FOCS)}, pp. 259--269.

\bibitem{berrett2020locally}
Berrett, T. and Butucea, C. (2020).
\newblock Locally private non-asymptotic testing of discrete distributions is
  faster using interactive mechanisms.
\newblock In \emph{Advances in Neural Information Processing Systems}, pp.
  3164--3173.

\bibitem{bhattacharya_sparse_2024}
Bhattacharya, B.~B. and Mukherjee, R. (2024).
\newblock Sparse {U}niformity {T}esting.
\newblock \emph{IEEE Trans. Inform. Theory} 70(9):6371--6390.

\bibitem{blais2019distribution}
Blais, E., Canonne, C.~L., and Gur, T. (2019).
\newblock Distribution testing lower bounds via reductions from communication
  complexity.
\newblock \emph{ACM Transactions on Computation Theory (TOCT)} 11(2):1--37.

\bibitem{blanchard2024correlated}
Blanchard, M., Cohen, D., and Kontorovich, A. (2024).
\newblock Correlated {B}inomial process.
\newblock In \emph{Proceedings of Thirty Seventh Conference on Learning
  Theory}, pp. 551--595.

\bibitem{blanchard2024tight}
Blanchard, M. and Voracek, V. (2024).
\newblock Tight bounds for local {G}livenko-{C}antelli.
\newblock In \emph{Proceedings of the 35th International Conference on
  Algorithmic Learning Theory}, pp. 179--220.

\bibitem{boucheron_concentration_2013}
Boucheron, S., Lugosi, G., and Massart, P. (2013).
\newblock \emph{Concentration inequalities}.
\newblock Oxford University Press, Oxford.

\bibitem{cai_optimal_2011}
Cai, T.~T., Jeng, X.~J., and Jin, J. (2011).
\newblock Optimal detection of heterogeneous and heteroscedastic mixtures.
\newblock \emph{J. R. Stat. Soc. Ser. B. Stat. Methodol.} 73(5):629--662.

\bibitem{cai_optimal_2014}
Cai, T.~T. and Wu, Y. (2014).
\newblock Optimal detection of sparse mixtures against a given null
  distribution.
\newblock \emph{IEEE Trans. Inform. Theory} 60(4):2217--2232.

\bibitem{canonne_topics_2022}
Canonne, C.~L. (2022).
\newblock Topics and {Techniques} in {Distribution} {Testing}: {A} {Biased} but
  {Representative} {Sample}.
\newblock \emph{CIT} 19(6):1032--1198.
\newblock Publisher: Now Publishers, Inc.

\bibitem{chan2014optimal}
Chan, S.-O., Diakonikolas, I., Valiant, P., and Valiant, G. (2014).
\newblock Optimal algorithms for testing closeness of discrete distributions.
\newblock In \emph{Proceedings of the 2014 Annual ACM-SIAM Symposium on
  Discrete Algorithms (SODA)}, pp. 1193--1203.

\bibitem{chhor_sharp_2022}
Chhor, J. and Carpentier, A. (2022).
\newblock Sharp local minimax rates for goodness-of-fit testing in multivariate
  binomial and {Poisson} families and in multinomials.
\newblock \emph{Math. Stat. Learn.} 5(1-2):1--54.

\bibitem{chhor_goodness--fit_2023}
Chhor, J. and Carpentier, A. (2023).
\newblock Goodness-of-{Fit} {Testing} for {H}\"older-{Continuous} {Densities}:
  {Sharp} {Local} {Minimax} {Rates}.
\newblock \emph{arXiv preprint arXiv:2109.04346} .

\bibitem{chhor2024sparse}
Chhor, J., Mukherjee, R., and Sen, S. (2024).
\newblock Sparse signal detection in heteroscedastic gaussian sequence models:
  sharp minimax rates.
\newblock \emph{Bernoulli} 30(3):2127--2153.

\bibitem{chow_probability_1997}
Chow, Y.~S. and Teicher, H. (1997).
\newblock \emph{Probability theory}.
\newblock Springer {Texts} in {Statistics}. Springer-Verlag, New York, third
  edition.

\bibitem{collier_minimax_2017}
Collier, O., Comminges, L., and Tsybakov, A.~B. (2017).
\newblock Minimax estimation of linear and quadratic functionals on sparsity
  classes.
\newblock \emph{Ann. Statist.} 45(3):923--958.

\bibitem{collier_estimating_2018}
Collier, O. and Dalalyan, A.~S. (2018).
\newblock Estimating linear functionals of a sparse family of poisson means.
\newblock \emph{Stat. Inference Stoch. Process.} 21(2):331--344.

\bibitem{diakonikolas_new_2016}
Diakonikolas, I. and Kane, D.~M. (2016).
\newblock A new approach for testing properties of discrete distributions.
\newblock In \emph{2016 {IEEE} 57th {Annual} {Symposium} on {Foundations} of
  {Computer} {Science} ({FOCS})}, pp. 685--694.

\bibitem{diakonikolas2017near}
Diakonikolas, I., Kane, D.~M., and Nikishkin, V. (2017).
\newblock Near-optimal closeness testing of discrete histogram distributions.
\newblock \emph{arXiv preprint arXiv:1703.01913} .

\bibitem{ditzhaus_signal_2019}
Ditzhaus, M. (2019).
\newblock Signal detection via phi-divergences for general mixtures.
\newblock \emph{Bernoulli} 25(4A):3041--3068.

\bibitem{donoho_higher_2004}
Donoho, D. and Jin, J. (2004).
\newblock Higher criticism for detecting sparse heterogeneous mixtures.
\newblock \emph{Ann. Statist.} 32(3):962--994.

\bibitem{donoho_higher_2015}
Donoho, D. and Jin, J. (2015).
\newblock Higher criticism for large-scale inference, especially for rare and
  weak effects.
\newblock \emph{Statist. Sci.} 30(1):1--25.

\bibitem{donoho_higher_2022}
Donoho, D.~L. and Kipnis, A. (2022).
\newblock Higher criticism to compare two large frequency tables, with
  sensitivity to possible rare and weak differences.
\newblock \emph{Ann. Statist.} 50(3):1447--1472.

\bibitem{dubois2021goodness}
Dubois, A., Berrett, T., and Butucea, C. (2023).
\newblock Goodness-of-fit testing for {H}\"older continuous densities under
  local differential privacy.
\newblock In \emph{Foundations of Modern Statistics}, pp. 53--119.

\bibitem{fienberg1979use}
Fienberg, S.~E. (1979).
\newblock The use of chi-squared statistics for categorical data problems.
\newblock \emph{J. R. Stat. Soc. Ser. B Stat. Methodol.} 41(1):54--64.

\bibitem{hall_innovated_2010}
Hall, P. and Jin, J. (2010).
\newblock Innovated higher criticism for detecting sparse signals in correlated
  noise.
\newblock \emph{Ann. Statist.} 38(3):1686--1732.

\bibitem{hoeffding1965asymptotically}
Hoeffding, W. (1965).
\newblock Asymptotically optimal tests for multinomial distributions.
\newblock \emph{Ann. Math. Statist.} 36:369--408.

\bibitem{ingster_multichannel_2003}
Ingster, Y. and Lepski, O. (2003).
\newblock Multichannel nonparametric signal detection.
\newblock \emph{Math. Methods Statist.} 12(3):247--275.

\bibitem{ingster_problems_1997}
Ingster, Y.~I. (1997).
\newblock Some problems of hypothesis testing leading to infinitely divisible
  distri butions.
\newblock \emph{Math. Methods Statist.} 6(1):47--69.

\bibitem{ingster_nonparametric_2007}
Ingster, Y.~I. and Kutoyants, Y.~A. (2007).
\newblock Nonparametric hypothesis testing for intensity of the poisson
  process.
\newblock \emph{Math. Meth. Stat.} 16(3):217--245.

\bibitem{ingster_nonparametric_2003}
Ingster, Y.~I. and Suslina, I.~A. (2003).
\newblock \emph{Nonparametric {Goodness}-of-{Fit} {Testing} {Under} {Gaussian}
  {Models}}, volume 169 of \emph{Lecture {Notes} in {Statistics}}.
\newblock Springer-Verlag, New York.

\bibitem{ingster_detection_2010}
Ingster, Y.~I., Tsybakov, A.~B., and Verzelen, N. (2010).
\newblock Detection boundary in sparse regression.
\newblock \emph{Electron. J. Stat.} 4:1476--1526.

\bibitem{kipnis2023minimax}
Kipnis, A. (2023).
\newblock The minimax risk in testing the histogram of discrete distributions
  for uniformity under missing ball alternatives.
\newblock In \emph{2023 59th Annual Allerton Conference on Communication,
  Control, and Computing (Allerton)}, pp. 1--8.

\bibitem{kotekal_statistical_2022}
Kotekal, S. (2022).
\newblock Statistical limits of sparse mixture detection.
\newblock \emph{Electron. J. Stat.} 16(2):4982--5018.

\bibitem{lam2022local}
Lam-Weil, J., Carpentier, A., and Sriperumbudur, B.~K. (2022).
\newblock Local minimax rates for closeness testing of discrete distributions.
\newblock \emph{Bernoulli} 28(2):1179--1197.

\bibitem{laurent_non_2012}
Laurent, B., Loubes, J.-M., and Marteau, C. (2012).
\newblock Non asymptotic minimax rates of testing in signal detection with
  heterogeneous variances.
\newblock \emph{Electron. J. Stat.} 6:91--122.

\bibitem{paninski_coincidence-based_2008}
Paninski, L. (2008).
\newblock A coincidence-based test for uniformity given very sparsely sampled
  discrete data.
\newblock \emph{IEEE Trans. Inform. Theory} 54(10):4750--4755.

\bibitem{polyanskiy_information_2024}
Polyanskiy, Y. and Wu, Y. (2024).
\newblock \emph{Information {Theory}: {From} {Coding} to {Learning}}.
\newblock Cambridge University Press, Cambridge.

\bibitem{rosalsky_remark_1983}
Rosalsky, A. (1983).
\newblock A remark on the fluctuation behavior of i.i.d. {Poisson} random
  variables.
\newblock \emph{Statist. Probab. Lett.} 1(4):181--182.

\bibitem{rosalsky_acknowledgement_1984}
Rosalsky, A. (1984).
\newblock Acknowledgement of priority to: “{A} remark on the fluctuation
  behavior of i.i.d. {Poisson} random variables”.
\newblock \emph{Statist. Probab. Lett.} 2(2):117.

\bibitem{spokoiny_adaptive_1996}
Spokoiny, V.~G. (1996).
\newblock Adaptive hypothesis testing using wavelets.
\newblock \emph{Ann. Statist.} 24(6):2477--2498.

\bibitem{talagrand_upper_2021}
Talagrand, M. (2021).
\newblock \emph{Upper and {Lower} {Bounds} for {Stochastic} {Processes}:
  {Decomposition} {Theorems}}, volume~60 of \emph{Ergebnisse der {Mathematik}
  und ihrer {Grenzgebiete}. 3. {Folge} / {A} {Series} of {Modern} {Surveys} in
  {Mathematics}}.
\newblock Springer International Publishing, Cham.

\bibitem{tsybakov_introduction_2009}
Tsybakov, A.~B. (2009).
\newblock \emph{Introduction to {Nonparametric} {Estimation}}.
\newblock Springer {Series} in {Statistics}. Springer, New York.

\bibitem{valiant_automatic_2017}
Valiant, G. and Valiant, P. (2017).
\newblock An automatic inequality prover and instance optimal identity testing.
\newblock \emph{SIAM J. Comput.} 46(1):429--455.

\bibitem{van_handel_spectral_2017}
van Handel, R. (2017).
\newblock On the spectral norm of {Gaussian} random matrices.
\newblock \emph{Trans. Amer. Math. Soc.} 369(11):8161--8178.

\bibitem{waggoner2015lp}
Waggoner, B. (2015).
\newblock {\(L_p\)} testing and learning of discrete distributions.
\newblock In \emph{Proceedings of the 2015 Conference on Innovations in
  Theoretical Computer Science}, pp. 347--356.

\end{thebibliography}

    \appendix

    \section{Deferred proofs in the Poisson model }\label{appendix:poisson_lower_bound}

    \subsection{Deferred proofs for the lower bound}
    In this section, we present the proofs of Lemma \ref{lemma:conditional_second_moment} and Proposition \ref{prop:chisquare_bound}, both of which were used to prove Theorem \ref{thm:lower_bound}.

    \begin{proof}[Proof of Lemma \ref{lemma:conditional_second_moment}]
        Note from the proof of Lemma \ref{lemma:truncation_whp} that \(P^*_\pi(E^c) \vee P^*_\mu(E^c) \leq \frac{\alpha}{6}\). The likelihood ratio is \(\frac{d\tilde{P}_\pi}{d\tilde{P}_\mu} = \frac{P_\mu^*(E)}{P_\pi^*(E)} \cdot \frac{dP_\pi}{dP_\mu} \mathbbm{1}_E\). To bound the \(\chi^2\)-divergence, let us write \(\delta = \rho - \mu_{j^*}\mathbf{1}_{j^*}\) and \(\delta' = \rho' - \mu_{j^*}\mathbf{1}_{j^*}\) for \(\rho, \rho' \overset{iid}{\sim} \pi\). Here, \(\mathbf{1}_{j^*} \in \R^{j^*}\) denotes the vector with all entries equal to one. Let us also write \(J, J'\) to denote the associated random indices. Consider 
        \begin{align*}
            &\chi^2\left(\tilde{P}_\pi \,||\, \tilde{P}_\mu \right) + 1 \\
            &= \frac{P_\mu(E)}{P_\pi(E)^2}\sum_{x \in (\mathbb{N}\cup\{0\})^p} \frac{dP_\pi^2(x)}{dP_\mu(x)} \cdot \mathbbm{1}_{\left\{x \in E\right\}} \\
            &\leq \frac{1}{\left(1-\frac{\alpha}{6}\right)^2}\iint \sum_{x \in E} \prod_{j=1}^{p} \frac{1}{x_{j}!} \frac{e^{-\rho_{j}}\rho_{j}^{x_{j}} e^{-\rho_{j}'}(\rho_{j}')^{x_{j}}}{e^{-\mu_{j^*}}\mu_{j^*}^{x_{j}}} \, \pi(d\rho)\pi(d\rho') \\
            &= \frac{1}{\left(1-\frac{\alpha}{6}\right)^2}\iint \exp\left( \sum_{j=1}^{p} -\rho_j - \rho_j' + \mu_{j^*} + \frac{\rho_{j}\rho'_{j}}{\mu_{j^*}}\right) P\left\{ \left. \max_{1 \leq j \leq j^*} W_j - \mu_{j^*} \leq \psi \right| \rho, \rho' \right\} \, \pi(d\rho)\pi(d\rho') \\
            &= \frac{1}{\left(1-\frac{\alpha}{6}\right)^2}\iint \exp\left(\frac{\langle \delta, \delta'\rangle}{\mu_{j^*}}\right) P\left\{ \left. \max_{1 \leq j \leq j^*} W_j - \mu_{j^*} \leq \psi \right| \rho, \rho' \right\} \, \pi(d\rho)\pi(d\rho')
        \end{align*}
        where \(W_j \overset{ind}{\sim} \Poisson\left(\frac{\rho_j \rho_j'}{\mu_{j^*}}\right)\). Since \(\frac{\langle \delta, \delta'\rangle}{\mu_{j^*}} = \frac{c^2\psi^2}{\mu_{j^*}} \mathbbm{1}_{\left\{ J = J' \right\}} \), the claimed result follows. 
    \end{proof}
    
    \begin{proof}[Proof of Proposition \ref{prop:chisquare_bound}]
        First, note 
        \begin{align}
            \begin{split}
            &E\left(\exp\left(\frac{c^2\psi^2}{\mu_{j^*}}\mathbbm{1}_{\{J = J'\}}\right) P\left\{ \left. \max_{1 \leq j \leq j^*} W_j - \mu_{j^*} \leq \psi \,\right|\, \rho, \rho'\right\}\right)\\
            &\leq \left(1 - \frac{1}{j^*}\right) + \iint_{\{J = J'\}} \exp\left(\frac{c^2\psi^2}{\mu_{j^*}}\right) P\left\{ \left. \max_{1 \leq j \leq j^*} W_j - \mu_{j^*} \leq \psi \right| \rho, \rho' \right\} \, \pi(d\rho)\pi(d\rho'). \label{eqn:chisquare_bound}
            \end{split}
        \end{align}
        Let \(c \leq c^{**}\) where \(c^{**}\) will be selected later. There are two cases to consider. Let \(\tilde{c}\) denote a sufficiently small constant depending only on \(\alpha\). \newline 

        \noindent \textbf{Case 1:} Suppose \(\mu_{j^*} > \tilde{c} \log\left(Cj^*\right)\). It follows by Lemma \ref{lemma:h_inverse} that \(e^{\frac{c^2\psi^2}{\mu_{j^*}}} \leq \exp\left(\frac{c^2}{\mu_{j^*}} \cdot C'\mu_{j^*}^2 \frac{\log\left(Cj^*\right)}{\mu_{j^*}}\right) \leq \exp\left( c^2 C'' \log(1+j^*)\right)\) where \(C', C''\) are constants depending on \(\alpha\). Thus, from (\ref{eqn:chisquare_bound}) we have the bound 
        \begin{align*}
            &E\left(\exp\left(\frac{c^2\psi^2}{\mu_{j^*}}\mathbbm{1}_{\{J = J'\}}\right) P\left\{ \left. \max_{1 \leq j \leq j^*} W_j - \mu_{j^*} \leq \psi \,\right|\, \rho, \rho'\right\}\right)\nonumber \\
            &\leq 1 + \frac{1}{j^*} \left(\exp\left(c^2 C'' \log(1 + j^*)\right) - 1\right)\nonumber \\
            &\leq 1 + c^2C'' \nonumber \\
            &\leq 1 + \alpha.
        \end{align*}
        Here, we have taken \(c\) sufficiently small and have used the bound \((1+y)^\delta \leq 1+\delta y\) for \(y \geq 0\) and \(\delta \in (0, 1)\). The analysis for this case is complete. \newline 

        \noindent \textbf{Case 2:} Suppose \(\mu_{j^*} \leq \tilde{c} \log\left(Cj^*\right)\). We now consider two further subcases. \newline 
        
        \textbf{Case 2.1:} Suppose \(\psi \leq c^{-2}\mu_{j^*}\). Then by Lemma \ref{lemma:h_inverse}
        \begin{align*}
            e^{\frac{c^2\psi^2}{\mu_{j^*}}} &\leq e^\psi \\
            &= \exp\left(\mu_{j^*} h^{-1}\left(\frac{\log(C j^*)}{\mu_{j^*}}\right)\right) \\
            &\leq \exp\left( C_1 \frac{\log(Cj^* )}{\log\left(\frac{\log(Cj^*)}{\mu_{j^*}}\right)} \right) \\
            &\leq \exp\left(\frac{C_1 \log(C)}{\log(\tilde{c}^{-1})} \cdot \log\left(1+j^*\right) \right).
        \end{align*}
        where \(C_1 > 0\) is a universal constant whose value can change from instance to instance. We have used \(\mu_{j^*} \leq \tilde{c}\log(Cj^*)\) to obtain the final line. By taking \(\tilde{c}\) sufficiently small and arguing as in Case 1, we have 
        \begin{equation*}
            E\left(\exp\left(\frac{c^2\psi^2}{\mu_{j^*}}\mathbbm{1}_{\{J = J'\}}\right) P\left\{ \left. \max_{1 \leq j \leq j^*} W_j - \mu_{j^*} \leq \psi \,\right|\, \rho, \rho'\right\}\right) \leq 1 + \alpha.
        \end{equation*}
        The analysis for this case is complete. \newline 

        \textbf{Case 2.2:} Suppose \(\psi > c^{-2} \mu_{j^*}\). By Lemma \ref{lemma:bennett_cancellation}, we have 
        \begin{align*}
            e^{\frac{c^2\psi^2}{\mu_{j^*}}} P\left\{ \left. \max_{1 \leq j \leq j^*} W_j - \mu_{j^*} \leq \psi \right| \rho, \rho' \right\} &\leq e^{\frac{c^2\psi^2}{\mu_{j^*}}} P\left\{W_J \leq \mu_{j^*} + \psi \,|\, J\right\} \\
            &= e^{\frac{c^2\psi^2}{\mu_{j^*}}} P\left\{\Poisson\left(\frac{(\mu_{j^*} + c\psi)^2}{\mu_{j^*}}\right) \leq \psi + \mu_{j^*} \right\} \\
            &\leq \exp\left(-\mu_{j^*}h\left(\frac{\psi}{\mu_{j^*}}\right) + 2\mu_{j^*}\left(1 + \frac{\psi}{\mu_{j^*}}\right)\log\left(1 + \frac{c\psi}{\mu_{j^*}}\right)\right).
        \end{align*}
        Examining (\ref{eqn:chisquare_bound}) and noting \(\mu_{j^*}h\left(\frac{\psi}{\mu_{j^*}}\right) = \log(Cj^*)\), we thus have  
        \begin{align*}
            &1 - \frac{1}{j^*} + \iint_{\{J = J'\}} \exp\left(\frac{c^2\psi^2}{\mu_{j^*}}\right) P\left\{ \left. \max_{1 \leq j \leq j^*} W_j - \mu_{j^*} \leq \psi \right| \rho, \rho' \right\} \, \pi(d\rho)\pi(d\rho') \\
            &\leq 1 + \frac{1}{j^*}  \exp\left(-\mu_{j^*}h\left(\frac{\psi}{\mu_{j^*}}\right) + 2\mu_{j^*}\left(1 + \frac{\psi}{\mu_{j^*}}\right)\log\left(1 + \frac{c\psi}{\mu_{j^*}}\right)\right) \\
            &= 1 + \exp\left( -2\mu_{j^*} h\left(\frac{\psi}{\mu_{j^*}}\right) + 2\mu_{j^*} \left(1 + \frac{\psi}{\mu_{j^*}}\right) \log\left(1 + \frac{c\psi}{\mu_{j^*}}\right) + \log(C)\right) \\
            &\leq 1 + \exp\left( \mu_{j^*} g\left(\frac{\psi}{\mu_{j^*}}\right) + \log(C)\right) 
        \end{align*}
        where \(g : [0, \infty) \to \R\) is the function \(g(x) = -2h(x) + 2(1+x)\log(1+c^{**}x)\). Note we have used \(c \leq c^{**}\) here. Since \(C = C^*\), consider we can take \(c^{**}\) sufficiently small such that for all \(x \geq \frac{1}{(c^{**})^2}\) we have \(g(x) \leq x\left(-\log(C) + \log\left(\alpha\right)\right)\). This is immediately seen by noting 
        \begin{align*}
            g(x) = -2(1+x)\log\left(\frac{1+x}{1+c^{**}x}\right) + 2x \leq x \left( -2 \log\left(\frac{1+x}{1+c^{**}x}\right) + 2\right),
        \end{align*}
        and so taking \(c^{**}\) sufficiently small clearly yields the desired property. Since \(\frac{\psi}{\mu_{j^*}} \geq \frac{1}{(c^{**})^2}\) and \(\psi \geq 1\) (due to the fact \(C \geq e\), \(h^{-1}\) is an increasing function, and (\ref{eqn:psi_large}) holds), it immediately follows that 
        \begin{align*}
            \exp\left( \mu_{j^*} g\left(\frac{\psi}{\mu_{j^*}}\right) + \log(C)\right) &\leq \exp\left(\mu_{j^*} \cdot \frac{\psi}{\mu_{j^*}}\left(-\log\left(C\right) + \log\left(\alpha\right)\right) + \log(C)\right) \leq \alpha. 
        \end{align*}
        Therefore, we have shown 
        \begin{equation*}
            E\left(\exp\left(\frac{c^2\psi^2}{\mu_{j^*}}\mathbbm{1}_{\{J = J'\}}\right) P\left\{ \left. \max_{1 \leq j \leq j^*} W_j - \mu_{j^*} \leq \psi \,\right|\, \rho, \rho'\right\}\right) \leq 1 + \alpha.   
        \end{equation*}
        The analysis for this case is complete. With all of the cases analyzed, the proof is complete. 
    \end{proof}

    \subsection{Interpretation of the subpoissonian regime: Proof of Lemma~\ref{lemma:intuition_subpoissonian_regime}}\label{subsec:intuition_subpoissonian_regime}

        \begin{proof}[Proof of Lemma~\ref{lemma:intuition_subpoissonian_regime}]
    We write $\psi = \mu_{j^*} h^{-1}\left(\frac{\log(ej^*)}{\mu_{j^*}}\right)$ and first note that
    \begin{align*}
        \log(j^*) \geq \log\left(\frac{e\log(j^*)}{\mu_{j^*}}\right) \geq \log(e/c) , 
    \end{align*}
    which ensures $j^*$ can be made arbitrarily large by taking $c$ small enough.
    Now, we have
    \begin{align}
        P_{\mu}\left\{\forall j \in [j^*]: X_j \geq 1\right\} = \prod_{j=1}^{j^*} (1-e^{-\mu_j}) \leq \exp\bigg(- \sum_{j=1}^{j^*} e^{-\mu_j}\bigg).\label{eq_proba_observed_once}
    \end{align}
    Let $c_0 = \frac{1}{2} h^{-1}(2)$. 
    We justify that the condition $\mu_{j^*} < c\log(ej^*)$ implies $\psi \leq \frac{c_0}{2} \log(ej^*)$ provided $c$ is small enough. 
    Writing $x = \frac{\mu_{j^*}}{\log(ej^*)} \leq c$, and noting the function $t \mapsto t\,h^{-1}(1/t)$ is increasing over $(0,\infty)$, we have
    \begin{align*}
        \psi = \log(ej^*) \frac{\mu_{j^*}}{\log(ej^*)} h^{-1}\left(\frac{\log(ej^*)}{\mu_{j^*}}\right) = \log(ej^*) \,x h^{-1}\!\left(\frac{1}{x}\right) \leq ch^{-1}\!\left(\frac{1}{c}\right) \log(ej^*) \leq \frac{c_0}{2} \log(ej^*)
    \end{align*}
    provided $c$ is small enough. 
    Moreover, for any $j \in [j^*]$ such that $c_0\log(ej) \geq \psi$, we have
    \begin{align*}
        c_0 \geq \frac{\psi}{\log(ej)} \geq \frac{\mu_j}{\log(ej)} h^{-1}\left(\frac{\log(ej)}{\mu_j}\right),
    \end{align*}
    which implies $\frac{\mu_j}{\log(ej)}\leq \frac{1}{2}$ by definition of $c_0$, or equivalently $\mu_{j} \leq \log\big((ej)^{1/2}\big)$. 
    Therefore
    \begin{align*}
        \sum_{j=1}^{j^*} e^{-\mu_j} \geq \sum_{\substack{j \leq j^*\\c_0\log(j)\geq \psi}} e^{-\log((ej)^{1/2})} \geq \sum_{\substack{j \leq j^*\\\log(j)\geq \frac{\log(ej^*)}{2}}} \frac{1}{(ej)^{1/2}} = \sum_{j= \left\lceil\sqrt{ej^*}\right\rceil}^{j^*} \frac{1}{(ej)^{1/2}},
    \end{align*}
    where we have used $\psi \leq \frac{c_0}{2} \log(ej^*)$ in the second inequality. 
    The right-hand side can now be made arbitrarily large by taking $j^*$ large enough, which can be achieved by taking $c$ small enough. 
    Combining with~\eqref{eq_proba_observed_once} yields the result.
    \end{proof}

    \section{Multinomial model}
    
    \subsection{Multinomial lower bound}\label{section:multinomial_lower_bound_proof}

    \begin{proof}[Proof of Proposition \ref{prop:multinomial_parametric}]

    Fix \(\eta \in (0, 1)\) and write, for ease of notation, \(\epsilon = q_0(1) \wedge \sqrt{\frac{q_0(1)(1-q_0(1))}{n}}\) where \(c_\eta \in [0, 1]\) is to be set. Define
    \begin{equation*}
        q_1(j) = 
        \begin{cases}
            q_0(1) - c_\eta\epsilon &\textit{if } j = 1, \\
            q_0(j) \left(1 + \frac{c_\eta\epsilon}{1-q_0(1)}\right) &\textit{if } j \geq 2. 
        \end{cases}
    \end{equation*}
    We will prove the lower bound of Proposition \ref{prop:multinomial_parametric} by considering the testing problem 
    \begin{align*}
        &H_0: q = q_0, \\
        &H_1: q = q_1. 
    \end{align*}
    In order for this construction to furnish a valid lower bound, it must be verified \(q_1\) is separated from \(q_0\) and is a probability vector. 

    \begin{lemma}\label{lem:prior_well_separated}
        If \(c_\eta \in [0, 1]\), then \(q_1 \in \Pi(q_0, c_\eta\epsilon)\).  
    \end{lemma}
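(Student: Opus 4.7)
The plan is to verify the two defining conditions of $\Pi(q_0,c_\eta\epsilon)$ separately: that $q_1\in\Delta_p$ (nonnegativity and mass one), and that $\|q_1-q_0\|_\infty\geq c_\eta\epsilon$. The whole argument will be elementary algebra, relying only on $c_\eta\in[0,1]$ and $\epsilon\leq q_0(1)$.

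First I would handle the degenerate case $q_0(1)=1$, in which $\epsilon=q_0(1)\wedge\sqrt{q_0(1)(1-q_0(1))/n}=0$, so $q_1=q_0$ and the separation bound $\|q_1-q_0\|_\infty\geq 0$ holds trivially (with the convention $q_0(j)\cdot\frac{c_\eta\epsilon}{1-q_0(1)}=0$ for $j\geq 2$, since all such $q_0(j)$ vanish). Assume now $q_0(1)<1$, so $1-q_0(1)>0$ and the factor $1+\frac{c_\eta\epsilon}{1-q_0(1)}$ is well defined.

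Next I would check nonnegativity. For $j\geq 2$, clearly $q_1(j)\geq q_0(j)\geq 0$ since the multiplicative factor is at least one. For $j=1$, using $\epsilon\leq q_0(1)$ and $c_\eta\leq 1$, we get $q_1(1)=q_0(1)-c_\eta\epsilon\geq (1-c_\eta)q_0(1)\geq 0$. Then I would verify the mass is one by a direct computation:
\begin{align*}
    \sum_{j=1}^{p} q_1(j) = q_0(1)-c_\eta\epsilon + \left(1+\frac{c_\eta\epsilon}{1-q_0(1)}\right)\sum_{j=2}^{p} q_0(j) = q_0(1)-c_\eta\epsilon + (1-q_0(1))+c_\eta\epsilon =1,
\end{align*}
where I used $\sum_{j\geq 2}q_0(j)=1-q_0(1)$. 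Hence $q_1\in\Delta_p$.

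Finally, the separation bound is obtained from the first coordinate: $\|q_1-q_0\|_\infty\geq |q_1(1)-q_0(1)|=c_\eta\epsilon$, so $q_1\in\Pi(q_0,c_\eta\epsilon)$. There is no real obstacle in this argument; the only subtlety is remembering to treat $q_0(1)=1$ as a trivial boundary case so that $(1-q_0(1))^{-1}$ never arises without being multiplied by a vanishing $\epsilon$.
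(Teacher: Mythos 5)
Your proof is correct and follows essentially the same route as the paper: verify nonnegativity and unit mass to get $q_1 \in \Delta_p$, then read off the separation from the perturbed coordinate (the paper computes $\|q_1-q_0\|_\infty = c_\eta\epsilon$ exactly, while you only use the lower bound from the first coordinate, which suffices). Your explicit treatment of the boundary case $q_0(1)=1$ is a minor extra care the paper leaves implicit, but it does not change the argument.
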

    \begin{proof}
        It is clear \(q_1(j) \geq 0\) for all \(j \geq 1\) since \(\epsilon \leq q_0(1)\) and \(c_\eta \leq 1\). Furthermore, consider \(\sum_{j=1}^{p} q_1(j) = 1 - c_\eta\epsilon + \frac{c_\eta\epsilon}{1-q_0(1)}\sum_{j=2}^{p} q_0(j) = 1\). Hence, \(q_1 \in \Delta_p\). Furthermore, consider that \(||q_1 - q_0||_\infty = c_\eta\epsilon \left(1 \vee \left(\max_{j \geq 2} \frac{q_0(j)}{1-q_0(1)}\right)\right) = c_\eta\epsilon\) since \(1 \geq \frac{q_0(j)}{\sum_{i=2}^{p} q_0(i)} = \frac{q_0(j)}{1-q_0(1)}\) for all \(j \geq 2\). Thus, we have shown \(q_1 \in \Pi(q_0, c_\eta\epsilon)\) as desired. 
    \end{proof}

    \begin{proposition}\label{prop:chi2_small_parametric_rate}
        If \(c_\eta \in [0, 1]\), then
        \begin{equation*}
            \chi^2(\mathbf{P}_{q_1}|| \mathbf{P}_{q_0}) = \chi^2\left(\left.\left.\bigotimes_{j=1}^{p} \Poisson(nq_1(j)) \, \right|\right|\, \bigotimes_{j=1}^{p} \Poisson(nq_0(j)) \right) \leq e^{c_\eta^2} - 1. 
        \end{equation*}
    \end{proposition}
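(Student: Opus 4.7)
The plan is to exploit two standard facts about $\chi^2$ divergence and Poisson distributions, then perform a direct calculation that takes advantage of the specific form of $q_1$.

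First, I would use the tensorization identity for $\chi^2$ divergence between product measures, namely $\chi^2\!\left(\bigotimes_j P_j \,\|\, \bigotimes_j Q_j\right) + 1 = \prod_j \big(\chi^2(P_j \,\|\, Q_j) + 1\big)$, together with the closed-form expression $\chi^2(\Poisson(\lambda_1) \,\|\, \Poisson(\lambda_0)) + 1 = \exp\!\left(\frac{(\lambda_1-\lambda_0)^2}{\lambda_0}\right)$, which one obtains by direct summation of the Poisson PMF ratio. Applied with $\lambda_j = nq_0(j)$ and $\lambda_j' = nq_1(j)$, this gives
\begin{equation*}
\chi^2(\mathbf{P}_{q_1} \,\|\, \mathbf{P}_{q_0}) + 1 = \exp\!\left(n \sum_{j=1}^{p} \frac{(q_1(j) - q_0(j))^2}{q_0(j)}\right).
\end{equation*}

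Second, I would compute the inner sum by splitting on $j=1$ and $j \geq 2$. For $j=1$, the contribution is $c_\eta^2 \epsilon^2 / q_0(1)$. For $j \geq 2$, the perturbation is multiplicative, $q_1(j) - q_0(j) = q_0(j) \cdot \frac{c_\eta \epsilon}{1 - q_0(1)}$, so each term equals $q_0(j) \cdot \frac{c_\eta^2 \epsilon^2}{(1-q_0(1))^2}$. Summing over $j \geq 2$ and using $\sum_{j \geq 2} q_0(j) = 1 - q_0(1)$ yields $\frac{c_\eta^2 \epsilon^2}{1 - q_0(1)}$. Adding the two pieces gives
\begin{equation*}
\sum_{j=1}^{p} \frac{(q_1(j) - q_0(j))^2}{q_0(j)} = c_\eta^2 \epsilon^2 \left(\frac{1}{q_0(1)} + \frac{1}{1-q_0(1)}\right) = \frac{c_\eta^2 \epsilon^2}{q_0(1)(1-q_0(1))}.
\end{equation*}

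Finally, I would invoke the definition $\epsilon = q_0(1) \wedge \sqrt{q_0(1)(1-q_0(1))/n}$, which in particular implies $\epsilon^2 \leq q_0(1)(1-q_0(1))/n$. Multiplying by $n$ then yields $n \sum_j (q_1(j)-q_0(j))^2/q_0(j) \leq c_\eta^2$, and exponentiating gives the bound $\chi^2 \leq e^{c_\eta^2} - 1$. There is no real obstacle here; the proof is essentially a bookkeeping exercise once the tensorization and Poisson $\chi^2$ formula are in hand, and the choice of $q_1$ was designed precisely so that the two contributions combine into the harmonic-type factor $1/(q_0(1)(1-q_0(1)))$ which is exactly cancelled by $\epsilon^2$.
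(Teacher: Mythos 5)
Your proposal is correct and follows essentially the same route as the paper: the paper obtains the identity $\chi^2(\mathbf{P}_{q_1}\|\mathbf{P}_{q_0})+1=\exp\bigl(n\sum_{j}(q_1(j)-q_0(j))^2/q_0(j)\bigr)$ by summing the product Poisson likelihood ratio directly, which is exactly your tensorization plus per-coordinate Poisson $\chi^2$ formula, and then performs the same algebra showing the sum equals $\frac{c_\eta^2\epsilon^2}{q_0(1)(1-q_0(1))}$ before invoking $\epsilon^2\leq q_0(1)(1-q_0(1))/n$. The only difference is that you cite the two standard facts rather than rederive them inline, which is immaterial.
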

    \begin{proof}
        By direct calculation, 
        \begin{align*}
            &\chi^2\left(\left.\left. \bigotimes_{j=1}^{p} \Poisson(nq_1(j))\,\right|\right|\, \bigotimes_{j=1}^{p} \Poisson(nq_0(j)) \right) + 1 \\
            &= \sum_{x \in (\mathbb{N} \cup \{0\})^p} \prod_{j=1}^{p} \frac{1}{x_j!} \cdot \frac{e^{-2nq_1(j)}(nq_1(j))^{2x_j}}{e^{-nq_0(j)}(nq_0(j))^{x_j}} \\
            &= \exp\left(n\sum_{j=1}^{p} -2q_1(j) + q_0(j) + \frac{q_1^2(j)}{q_0(j)}\right) \sum_{x \in (\mathbb{N} \cup \{0\})^p} \prod_{j=1}^{p} \frac{e^{-n\frac{q_1(j)^2}{q_0(j)}}}{x_j!} \cdot \left(n\frac{q_1(j)^2}{q_0(j)}\right)^{x_j} \\
            &= \exp\left(n\sum_{j=1}^{p} -2q_1(j) + q_0(j) + \frac{q_1^2(j)}{q_0(j)}\right) \\
            &= \exp\left(n\left(-1 + \frac{(q_0(1) - c_\eta\epsilon)^2}{q_0(1)} + \sum_{j=2}^{p} q_0(j) \left(1 + \frac{c_\eta\epsilon}{1-q_0(1)}\right)^2 \right)\right) \\
            &= \exp\left(n\left(-1 + q_0(1) - 2c_\eta\epsilon + \frac{c_\eta^2\epsilon^2}{q_0(1)} + 1-q_0(1) + 2c_\eta\epsilon + \frac{c_\eta^2\epsilon^2}{1-q_0(1)}\right)\right) \\
            &\leq \exp\left(\frac{nc_\eta^2\epsilon^2}{q_0(1)(1-q_0(1))}\right) \\
            &\leq e^{c_\eta^2}
        \end{align*}
        since \(\epsilon^2 \leq \frac{q_0(1)(1-q_0(1))}{n}\). The proof is complete. 
    \end{proof}

    Proposition~\ref{prop:multinomial_parametric} follows by combining Lemma~\ref{lem:prior_well_separated} and Proposition~\ref{prop:chi2_small_parametric_rate} as follows
    \begin{align*}
        \mathcal{R}_{\mathcal{PM}}\left( c_\eta\left(q_0^{\max} \wedge \sqrt{\frac{q_0^{\max}(1-q_0^{\max})}{n}}\right), n, q_0\right) &\geq 1-\dTV(\mathbf P_{q_0}, \mathbf P_{q_1}) \\
        &\geq 1-\sqrt{\chi^2\bigg( \left.\left. \bigotimes_{j=1}^{p} \Poisson(nq_1(j)) \,\right|\right|\, \bigotimes_{j=1}^{p} \Poisson(nq_0(j)) \bigg)}\\
        &\geq 1-\sqrt{2c_\eta^2}\\
        &= \eta,
    \end{align*}
    where we have taken \(c_\eta = \frac{1-\eta}{\sqrt{2}}\).
\end{proof}

    \subsubsection{Proof of Theorem \ref{thm:poissonized_multinomial_lower_bound}}\label{subsubsec:Proof_thm_PM_LB}
    We now present the construction of the lower bound in Theorem \ref{thm:poissonized_multinomial_lower_bound}. Fix \(\eta \in (0, 1)\) and recall
    \begin{equation*}
        j^* = \argmax_{j} \, nq_0^{-\max}(j) h^{-1}\left(\frac{\log(ej)}{nq_0^{-\max}(j)}\right).
    \end{equation*}
    Let \(0 < c < c_\eta\) where \(c_\eta\) is a small constant depending only on \(\eta\) to be set. Recall
    \begin{equation*}
        m = \left\lceil h^{-1}\left(\frac{\log(ej^*)}{nq_0^{-\max}(j^*)}\right) \right\rceil \wedge (j^* - 1)
    \end{equation*}
    and
    \begin{equation*}
        \psi = nq_0^{-\max}(j^*) h^{-1}\left(\frac{\log(\tilde{C}_\eta j^*)}{nq_0^{-\max}(j^*)}\right) \mathbbm{1}_{\{m \geq 1\}},
    \end{equation*}
    where \(\tilde{C}_\eta \geq e\) is a large constant depending only on \(\eta\) to be set. 

        \begin{proof}[Proof of Lemma~\ref{lemma:m_size}]
        If \(m = 0\) the claim is trivially true, so suppose \(m \geq 1\). Suppose \(nq_0^{-\max}(j^*) \leq \log\left(ej^*\right)\). Recall (\ref{eqn:multinomial_psi_large}) gives \(nq_0^{-\max}(j^*) h^{-1}\left(\frac{\log(ej^*)}{nq_0^{-\max}(j^*)}\right) \geq C_*\). Consider that \(nq_0^{-\max}(j^*) \leq \log\left(ej^*\right)\) implies an application of Lemma \ref{lemma:h_inverse} yields \(nq_0^{-\max}(j^*) h^{-1}\left(\frac{\log(ej^*)}{nq_0^{-\max}(j^*)}\right) \asymp \frac{\log(ej^*)}{\log\left(\log(ej^*)/(nq_0^{-\max}(j^*))\right)}\), it then follows that   
        \begin{equation*}
            C_*^{-1} \log(ej^*) \geq \kappa \log\left(\frac{\log(ej^*)}{nq_0^{-\max}(j^*)}\right)
        \end{equation*}
        for some universal constant \(\kappa > 0\). Rearranging yields \(nq_0^{-\max}(j^*) \gtrsim \left(\frac{1}{j^*}\right)^{\frac{1}{\kappa C_*}}\). Now, consider \(m \lesssim h^{-1}\left(\frac{\log\left(ej^*\right)}{nq_0^{-\max}(j^*)}\right)\) since \(nq_0^{-\max}(j^*) \leq \log(ej^*)\) implies \(h^{-1}\left(\frac{\log\left(ej^*\right)}{nq_0^{-\max}(j^*)}\right) \gtrsim 1\). 
        Therefore, it follows \(m \lesssim \log(ej^*)/(nq_0^{-\max}(j^*)) \lesssim (j^*)^{\frac{1}{\kappa C_*}} \log(ej^*)\). In other words, we can take \(C_*\) sufficiently large such that to ensure \(m \lesssim (j^*)^{1/4}\). Now suppose \(nq_0^{-\max}(j^*) > \log(ej^*)\). It follows from Lemma \ref{lemma:h_inverse} that \(m \leq \left\lceil h^{-1}\left(\frac{\log(ej^*)}{nq_0^{-\max}(j^*)}\right) \right\rceil \lesssim 1\). Since \(m \geq 1\) implies \(j^* \geq 2\), it follows \(1 \lesssim (j^*)^{1/4}\), and so we have the desired result. 
    \end{proof}

We recall that the following prior distribution \(\pi\) for the alternative hypothesis will be used in the proof of Theorem \ref{thm:poissonized_multinomial_lower_bound}. A draw \(q \sim \pi\) is obtained by first drawing \(J \sim \Uniform(\{2,...,j^*+1\})\), then drawing uniformly at random a size \(m\) subset \(\mathcal{I} \subset \{2,...,j^*+1\} \setminus \left\{J\right\}\), and finally setting 
    \begin{equation*}
        q(j) = 
        \begin{cases}
            q_0(j) + c\frac{\psi}{n}   &\textit{if } j = J,\\
            q_0(j) - c\frac{\psi}{nm} &\textit{if } j \in \mathcal{I}, \\
            q_0(j) &\textit{otherwise},
        \end{cases}
    \end{equation*}
    for \(1 \leq j \leq p\). 
    If $m=0$, then $\mathcal{I}$ is empty and we have $q = q_0$. Note that the first coordinate is never perturbed, i.e. \(q(1) = q_0(1)\).

    \begin{proof}[Proof of Lemma~\ref{lemma:multinomial_conditional}]
        Consider that the likelihood ratio is 
        \begin{equation*}
            \frac{d\tilde{P}_{\pi}}{d\tilde{P}_0} = \frac{P_0(E)}{P_{\pi}(E)} \cdot \frac{dP_{\pi}}{dP_0} \mathbbm{1}_E.
        \end{equation*}
        To bound the \(\chi^2\)-divergence, let \(q, q' \overset{iid}{\sim} \pi\) and write \(\rho = n\tilde{q}\) and \(\rho' = n \tilde{q}'\). Let us also write \(\delta = \rho - \mu_{j^*}\mathbf{1}_{j^*}\) and \(\delta' = \rho' - \mu_{j^*}\mathbf{1}_{j^*}\) where $\mu_{j^*} = nq_0^{-\max}(j^*)$. Here, \(\mathbf{1}_{j^*} \in \R^{j^*}\) denotes the vector with all entries equal to one. By Corollary \ref{corollary:constant_rate_max}, we can take \(\tilde{C}_\eta\) sufficiently large depending on \(\alpha\) to ensure \(P_\pi(E^c) \leq \frac{\alpha}{6}\), so that $P_\pi(E)^2 \geq 1-\alpha/3$. With this in hand, consider 
        \begin{align}
            &\chi^2\left(\left.\left.\tilde{P}_\pi \,\right|\right|\, \tilde{P}_0\right) + 1 \nonumber\\
            &= \frac{P_0(E)}{P_{\pi}(E)^2}\sum_{x \in (\mathbb{N}\cup\{0\})^p} \frac{dP_{\pi}^2(x)}{dP_0(x)} \cdot \mathbbm{1}_{\left\{x \in E\right\}} \nonumber\\
            &\leq \frac{1}{1-\frac{\alpha}{3}}\iint \sum_{x \in E} \prod_{j=1}^{p} \frac{1}{x_{j}!} \frac{e^{-\rho_{j}}\rho_{j}^{x_{j}} e^{-\rho_{j}'}(\rho_{j}')^{x_{j}}}{e^{-\mu_{j^*}}\mu_{j^*}^{x_{j}}} \, {\pi}(dq){\pi}(dq') \nonumber\\
            &= \frac{1}{1-\frac{\alpha}{3}}\iint \exp\left( \sum_{j=1}^{p} -\rho_j - \rho_j' + \mu_{j^*} + \frac{\rho_{j}\rho'_{j}}{\mu_{j^*}}\right) P\left\{ \left. \max_{1 \leq j \leq j^*} W_j - \mu_{j^*} \leq \psi \right| \rho, \rho' \right\} \, {\pi}(dq){\pi}(dq') \nonumber\\
            &= \frac{1}{1-\frac{\alpha}{3}}\iint \exp\left(\frac{\langle \delta, \delta'\rangle}{\mu_{j^*}}\right) P\left\{ \left. \max_{1 \leq j \leq j^*} W_j - \mu_{j^*} \leq \psi \right| \rho, \rho' \right\} \, {\pi}(dq){\pi}(dq') \label{eq_chi2_multinomial}
        \end{align}
        where \(W_j \overset{ind}{\sim} \Poisson(\frac{\rho_j \rho_{j}'}{\mu_{j^*}})\). Let \(J, \mathcal{I}\) be the random objects associated with \(q\) and \(J', \mathcal{I}'\) be those associated with \(q'\). Consider we can write 
        \begin{align}
            \frac{\langle \delta, \delta'\rangle}{\mu_{j^*}} &= \frac{c^2\psi^2}{\mu_{j^*}} \mathbbm{1}_{\{J = J'\}}  - \frac{c^2\psi^2}{m\mu_{j^*}} \left(\mathbbm{1}_{\{J \in \mathcal{I}'\}} + \mathbbm{1}_{\{J' \in \mathcal{I}\}}\right) + \frac{c^2\psi^2}{m^2 \mu_{j^*}}|\mathcal{I} \cap \mathcal{I}'| \\
            &\leq \frac{c^2\psi^2}{\mu_{j^*}} \mathbbm{1}_{\{J = J'\}} + \frac{c^2\psi^2}{m^2\mu_{j^*}}|\mathcal{I} \cap \mathcal{I}'|.\label{eq_scalar_product}
        \end{align}
        Therefore, 
        \begin{align*}
            &\iint \exp\left(\frac{\langle \delta, \delta'\rangle}{\mu_{j^*}}\right) P\left\{ \left. \max_{1 \leq j \leq j^*} W_j - \mu_{j^*} \leq \psi \right| \rho, \rho' \right\} \, {\pi}(dq){\pi}(dq') \\
            &\leq E\left(\exp\left(\frac{c^2\psi^2}{\mu_{j^*}} \mathbbm{1}_{\{J = J'\}} + \frac{c^2\psi^2}{m^2\mu_{j^*}}|\mathcal{I} \cap \mathcal{I}'|\right) P\left\{ \left. \max_{1 \leq j \leq j^*} W_j - \mu_{j^*} \leq \psi \right| \rho, \rho' \right\} \right) \\
            &= E\left( \exp\left(\frac{c^2\psi^2}{m^2\mu_{j^*}}|\mathcal{I} \cap \mathcal{I}'|\right) E\left( \left. \exp\left(\frac{c^2\psi^2}{\mu_{j^*}} \mathbbm{1}_{\{J = J'\}}\right)P\left\{ \left. \max_{1 \leq j \leq j^*} W_j - \mu_{j^*} \leq \psi \right| \rho, \rho' \right\}\right| \mathcal{I}, \mathcal{I}'\right)\right). 
        \end{align*}
        It is clear \(J \,|\, \mathcal{I} \sim \Uniform(\mathcal{I}^c)\) and \(J'\,|\,\mathcal{I}' \sim \Uniform(\mathcal{I}'^c)\). Therefore, \(\mathbbm{1}_{\{J = J'\}} \,|\,\mathcal{I}, \mathcal{I}' \sim \Bernoulli\left(\frac{|\mathcal{I}^c \cap \mathcal{I}'^c|}{|\mathcal{I}^c| \cdot |\mathcal{I}'^c|}\right)\). With this in hand, consider 
        \begin{align*}
            &E\left( \left. \exp\left(\frac{c^2\psi^2}{\mu_{j^*}} \mathbbm{1}_{\{J = J'\}}\right)P\left\{ \left. \max_{1 \leq j \leq j^*} W_j - \mu_{j^*} \leq \psi \right| \rho, \rho' \right\}\right| \mathcal{I}, \mathcal{I}'\right) \\[5pt]
            &\leq \left(1 - \frac{|\mathcal{I}^c \cap \mathcal{I}'^c|}{|\mathcal{I}^c| \cdot |\mathcal{I}'^c|}\right) + E\left( \left. \exp\left(\frac{c^2\psi^2}{\mu_{j^*}} \right)P\left\{ \left. W_J - \mu_{j^*} \leq \psi \right| \rho, \rho' \right\} \cdot \mathbbm{1}_{\{J = J'\}}\right| \mathcal{I}, \mathcal{I}'\right) \\[5pt]
            &\leq \left(1 - \frac{|\mathcal{I}^c \cap \mathcal{I}'^c|}{|\mathcal{I}^c| \cdot |\mathcal{I}'^c|}\right) + e^{\frac{c^2\psi^2}{\mu_{j^*}}} E\left( \left. P\left\{ \Poisson\left(\frac{(\mu_{j^*} + c\psi)^2}{\mu_{j^*}}\right) \leq \mu_{j^*} + \psi \right\} \cdot \mathbbm{1}_{\{J = J'\}}\right| \mathcal{I}, \mathcal{I}'\right) \\[5pt]
            &\leq \left(1 - \frac{|\mathcal{I}^c \cap \mathcal{I}'^c|}{|\mathcal{I}^c| \cdot |\mathcal{I}'^c|}\right) + \frac{|\mathcal{I}^c \cap \mathcal{I}'^c|}{|\mathcal{I}^c| \cdot |\mathcal{I}'^c|} e^{\frac{c^2\psi^2}{\mu_{j^*}}} P\left\{ \Poisson\left(\frac{(\mu_{j^*} + c\psi)^2}{\mu_{j^*}}\right) \leq \mu_{j^*} + \psi \right\}  \\
            &\leq 1 + \frac{|\mathcal{I}^c \cap \mathcal{I}'^c|}{|\mathcal{I}^c| \cdot |\mathcal{I}'^c|} \left[ e^{\frac{c^2\psi^2}{\mu_{j^*}}}P\left\{ \Poisson\left(\frac{(\mu_{j^*} + c\psi)^2}{\mu_{j^*}}\right) \leq \mu_{j^*} + \psi \right\} - 1 \right]\\[5pt]
            &\leq 1 + \frac{1}{j^*-m} \left( e^{\frac{c^2\psi^2}{\mu_{j^*}}}P\left\{ \Poisson\left(\frac{(\mu_{j^*} + c\psi)^2}{\mu_{j^*}}\right) \leq \mu_{j^*} + \psi \right\} - 1 \right)_+. 
        \end{align*}
        Therefore, 
        \begin{align*}
            &\chi^2\left(\left.\left.\tilde{P}_\pi \,\right|\right|\, \tilde{P}_0\right) + 1 \\
            &\leq \frac{1}{1-\frac{\alpha}{3}} E\left(e^{\frac{c^2\psi^2}{m^2\mu_{j^*}}|\mathcal{I} \cap \mathcal{I}'|} \right) \left[1 + \frac{1}{j^*-m} \left( e^{\frac{c^2\psi^2}{\mu_{j^*}}}P\left\{ \Poisson\left(\frac{(\mu_{j^*} + c\psi)^2}{\mu_{j^*}}\right) \leq \mu_{j^*} + \psi \right\} - 1 \right)_+ \right]
        \end{align*}
        as claimed. 
    \end{proof}

    \begin{proof}[Proof of Lemma~\ref{lemma:mgf_bound}]
        If \(m = 0\), then the conclusion trivially holds for any \(C^\dagger_\eta > 0\) since we use the convention $\frac{c^2\psi^2}{m^2\mu_{j^*}}|\mathcal{I} \cap \mathcal{I}'| = 0$. Suppose \(m \geq 1\). Consider \(|\mathcal{I} \cap \mathcal{I}'|\) is a hypergeometric random variable. Consequently, by Proposition (20.6) from \cite{aldous_exchangeability_1985}, page 173, there exists a random variable \(B \sim \Binomial(\frac{m}{j^*-1}, m)\) and a \(\sigma\)-algebra \(\mathcal{F}\) such that \(|\mathcal{I} \cap \mathcal{I}'| = E(B \,|\, \mathcal{F})\). Therefore, we have by Jensen's inequality 
        \begin{equation}
            E\left( e^{\frac{c^2\psi^2}{m^2\mu_{j^*}}|\mathcal{I} \cap \mathcal{I}'|}\right) \leq \left(1 - \frac{m}{j^*-1} + \frac{m}{j^*-1} e^{\frac{c^2\psi^2}{m^2\mu_{j^*}}}\right)^{m} \leq \exp\left(\frac{m^2}{j^*-1}\left(e^{\frac{c^2\psi^2}{m^2\mu_{j^*}}} - 1\right)\right) \label{eqn:multinomial_chisquare_II}. 
        \end{equation}

        We now split the analysis into two cases. \newline 

        \noindent \textbf{Case 1:} Suppose \(nq_0^{-\max}(j^*) > \log\left(\tilde{C}_\eta j^*\right)\). It follows from Lemma \ref{lemma:h_inverse} that \(m \lesssim 1\). It further follows from Lemma \ref{lemma:h_inverse} that for some universal constant \(L > 0\) whose value can change from instance to instance, 
        \begin{align*}
            (\ref{eqn:multinomial_chisquare_II}) &\leq \exp\left(\frac{L}{j^*}\left(e^{\frac{c^2\psi^2}{m^2\mu_{j^*}}}-1\right)\right) \\
            &\leq \exp\left(\frac{L}{j^*}\left(e^{Lc^2 \log(\tilde{C}_\eta j^*)} - 1\right)\right) \\
            &\leq \exp\left(L^2c^2 \frac{\log(\tilde{C}_\eta j^*)}{j^*} \exp\left(Lc^2 \log(\tilde{C}_\eta j^*)\right)\right) \\
            &\leq \exp\left(c^2\tilde{C}_{\eta}'\right)
        \end{align*}
        for some \(\tilde{C}_\eta'\) depending only on \(\tilde{C}_\eta\). Here, we have used \(m \geq 1\) to obtain the first inequality and \(e^x \leq 1 + xe^x\) for \(x > 0\) to obtain the third inequality. Furthermore, we have taken \(c_\eta\) sufficiently small to ensure \(L c^2 \leq 1\) to obtain the final line. \newline

        \noindent \textbf{Case 2:} Suppose \(nq_0^{-\max}(j^*) \leq \log\left(\tilde{C}_\eta j^*\right)\). By Lemma \ref{lemma:m_size}, we can take \(C_*\) sufficiently large so that \(m \lesssim (j^*)^{1/4}\). Furthermore, since \(m \geq 1\), we can conclude 
        \begin{equation*}
            m \asymp \left\lceil h^{-1}\left(\frac{\log(ej^*)}{nq_0^{-\max}(j^*)}\right) \right\rceil.
        \end{equation*}
        With this in hand, consider
        \begin{align*}
            \frac{c^2\psi^2}{m^2 \mu_{j^*}} &\lesssim c^2 \cdot nq_0^{-\max}(j^*) \cdot \left(\frac{h^{-1}\left(\frac{\log(\tilde{C}_\eta j^*)}{nq_0^{-\max}(j^*)} \right)}{\left\lceil h^{-1}\left(\frac{\log(ej^*)}{nq_0^{-\max}(j^*)}\right) \right\rceil}\right)^2 \\
            &\leq \left(\frac{h^{-1}\left(\frac{\log(\tilde{C}_\eta j^*)}{nq_0^{-\max}(j^*)} \right)}{\left\lceil h^{-1}\left(\frac{\log(ej^*)}{nq_0^{-\max}(j^*)}\right) \right\rceil}\right)^2 \cdot c^2 \log\left(\tilde{C}_\eta j^*\right). 
        \end{align*}
        Since \(m \lesssim (j^*)^{1/4}\) implies \(\log j^* \lesssim \log\left(1 + \frac{j^*}{m^2}\right)\), we can conclude 
        \begin{equation*}
            \frac{c^2\psi^2}{m^2 \mu_{j^*}} \leq c^2 \tilde{C}_\eta''\log\left(1 + \frac{j^*}{m^2}\right)
        \end{equation*}
        for some \(\tilde{C}_\eta'' > 0\) depending only on \(\tilde{C}_\eta\). It follows by taking \(c_\eta\) sufficiently small depending only on \(\tilde{C}_\eta\) that 
        \begin{align*}
            (\ref{eqn:multinomial_chisquare_II}) &\leq \exp\left(\frac{m^2}{j^* - 1} \left(e^{c^2 \tilde{C}_\eta''\log\left(1 + \frac{j^*}{m^2}\right)} - 1\right)\right) \\
            &\leq \exp\left(\frac{m^2}{j^*-1} \left(\left(1 + \frac{c^2\tilde{C}_\eta'' j^*}{m^2}\right) - 1\right)\right) \\
            &\leq \exp\left(c^2\tilde{C}_\eta''\right).
        \end{align*} 
        Here, we have taking \(c_\eta\) sufficiently small to ensure \(c^2\tilde{C}_{\eta}'' < 1\) and we have used the inequality \((1+x)^\delta \leq 1+\delta x\) for \(\delta \in [0, 1]\) and \(x \geq 0\). The analysis for this case is complete. \newline

        \noindent The claimed result follows from putting the cases together. 
    \end{proof}

    \begin{proof}[Proof of Lemma~\ref{lem:control_chi2_multinomial}]
    The claim is clear if $ \exp\left(\frac{c^2\psi^2}{\mu_{j^*}}\right)P\left\{ \Poisson\left(\frac{(\mu_{j^*} + c\psi)^2}{\mu_{j^*}}\right) \leq \mu_{j^*} + \psi \right\} \leq 1$. 
    We therefore assume $\exp\left(\frac{c^2\psi^2}{\mu_{j^*}}\right) P\left\{ \Poisson\left(\frac{(\mu_{j^*} + c\psi)^2}{\mu_{j^*}}\right) \leq \mu_{j^*} + \psi \right\} > 1$. 
        The analysis is split into two cases. Let \(0 < \tilde{c}_\eta < 1\) denote a sufficiently small constant depending only on \(\tilde{C}_\eta\). Further, let us take \(c < c_\eta\) with \(c_\eta < \frac{1}{2}\) to be set. \newline 

        \noindent \textbf{Case 1:} Suppose \(nq_0^{-\max}(j^*) > \tilde{c}_\eta \log\left(\tilde{C}_\eta j^*\right)\). Consider by Lemma \ref{lemma:h_inverse} that $m \leq C(\eta)$ for some constant $C(\eta)>0$ and that 
        \begin{align*}
            \frac{c^2\psi^2}{\mu_{j^*}} = c^2 nq_0^{-\max}(j^*) h^{-1}\left(\frac{\log(\tilde{C}_\eta j^*)}{nq_0^{-\max}(j^*)}\right)^2 \leq c^2 \tilde{C}_\eta'\log(ej^*)     
        \end{align*}
        where \(\tilde{C}_\eta'\) depends only on \(\tilde{C}_\eta\). Therefore,  
        \begin{align*}
            &1 + \frac{1}{j^*-m} \left( e^{\frac{c^2\psi^2}{\mu_{j^*}}}P\left\{ \Poisson\left(\frac{(\mu_{j^*} + c\psi)^2}{\mu_{j^*}}\right) \leq \mu_{j^*} + \psi \right\} - 1 \right)  \\
            &\leq 1 + \frac{j^*}{j^*-m} \cdot \frac{1}{j^*} \left(\exp\left( c^2\tilde{C}_\eta' \log(ej^*)\right) -1\right)\nonumber \\
            & \leq 1+ \left(1+\frac{m}{j^* - m}\right) \cdot \frac{1}{j^*} \left(\exp\left( c^2\tilde{C}_\eta' \log(1+ej^*)\right) -1\right)\nonumber\\
            &\leq 1 + (1+C(\eta)) c^2 \tilde{C}_\eta''\nonumber \\
            &\leq e^{c^2\tilde{C}_\eta''}
        \end{align*}
        for some \(\tilde{C}_\eta'' > 0\) depending only on \(\tilde{C}_\eta\). We have also taken \(c_\eta\) sufficiently small such that \(c^2\tilde{C}_\eta' < 1\) and used the bound \((1+y)^\delta \leq 1+\delta y\) for \(y \geq 0\) and \(\delta \in (0, 1)\). \newline 

        \noindent \textbf{Case 2:} Suppose \(nq_0(j^*) \leq \tilde{c}_\eta \log\left(\tilde{C}_\eta j^*\right)\). Let us split into two further subcases. \newline 

        \textbf{Case 2.1:} Suppose \(\psi \leq c^{-2}\mu_{j^*}\). Then by Lemma \ref{lemma:h_inverse} and \(m \geq 1\), we have 
        \begin{align*}
            e^{\frac{c^2\psi^2}{\mu_{j^*}}} &\leq e^\psi \\
            &\leq \exp\left( L \frac{\log(\tilde{C}_\eta j^* )}{\log\left(\frac{e\log(\tilde{C}_\eta j^*)}{nq_0^{-\max}(j^*)}\right)} \right) \\
            &\leq \exp\left(\frac{L  \log(\tilde{C_\eta})}{\log(\tilde{c}_\eta^{-1})} \cdot \log\left(ej^*\right) \right)
        \end{align*}
        where \(L > 0\) is a universal constant whose value can change from line to line. By taking \(\tilde{c}_\eta\) sufficiently small depending only on \(\tilde{C}_\eta\) and arguing as in Case 1, we have 
        \begin{equation*}
            1 + \frac{1}{j^*-m} e^{\frac{c^2\psi^2}{\mu_{j^*}}}P\left\{ \Poisson\left(\frac{(\mu_{j^*} + c\psi)^2}{\mu_{j^*}}\right) \leq \mu_{j^*} + \psi \right\}  \leq \exp\left(c^2 \tilde{C}_{\eta}'''\right)
        \end{equation*}
        for some \(\tilde{C}_{\eta}'''\) depending only on \(\tilde{C}_\eta\). The analysis for this subcase is complete. \newline 

        \textbf{Case 2.2:} Suppose \(\psi > c^{-2} \mu_{j^*}\). Consider \(W_J \,|\, J \sim \Poisson\left(\frac{(\mu_{j^*} + c\psi)^2}{\mu_{j^*}}\right)\). Note that \(\frac{(\mu_{j^*} + c\psi)^2}{\mu_{j^*}} = \mu_{j^*} + 2c\psi + \frac{c^2\psi^2}{\mu_{j^*}} > \mu_{j^*} + \psi\) since \(\psi > c^{-2}\mu_{j^*}\). This is important as we are now able to apply Lemma~\ref{lemma:Bennett}. Doing so yields
        \begin{align*}
            e^{\frac{c^2\psi^2}{\mu_{j^*}}} P\left\{ \left. \max_{1 \leq j \leq j^*} W_j - \mu_{j^*} \leq \psi \right| \rho, \rho' \right\} &\leq e^{\frac{c^2\psi^2}{\mu_{j^*}}} P\left\{W_J \leq \mu_{j^*} + \psi \,|\, J\right\} \\
            &= e^{\frac{c^2\psi^2}{\mu_{j^*}}} P\left\{\Poisson\left(\frac{(\mu_{j^*} + c\psi)^2}{\mu_{j^*}}\right) \leq \frac{(\mu_{j^*} + c\psi)^2}{\mu_{j^*}}\left(\frac{\mu_{j^*}\psi + \mu_{j^*}^2}{(\mu_{j^*}+c\psi)^2} \right)\right\} \\
            &\leq \exp\left( \frac{c^2\psi^2}{\mu_{j^*}} - \frac{(\mu_{j^*} + c\psi)^2}{\mu_{j^*}} h\left(-1 + \frac{\mu_{j^*}\psi + \mu_{j^*}^2}{(\mu_{j^*} + c\psi)^2}  \right)\right).
        \end{align*}
        Consider that 
        \begin{align*}
            &\frac{(\mu_{j^*} + c\psi)^2}{\mu_{j^*}} h\left(-1 + \frac{\mu_{j^*}\psi + \mu_{j^*}^2}{(\mu_{j^*} + c\psi)^2}  \right) \\
            &= \frac{(\mu_{j^*} + c\psi)^2}{\mu_{j^*}} \left( \frac{\mu_{j^*}\psi + \mu_{j^*}^2}{(\mu_{j^*} + c\psi)^2} \log\left(\frac{\mu_{j^*}\psi + \mu_{j^*}^2}{(\mu_{j^*} + c\psi)^2}\right) + 1 - \frac{\mu_{j^*} \psi + \mu_{j^*}^2}{(\mu_{j^*} + c\psi)^2}  \right) \\
            &= \mu_{j^*}  \left(1 + \frac{\psi}{\mu_{j^*}}\right) \log\left(1 + \frac{\psi}{\mu_{j^*}}\right) - 2 \mu_{j^*}\left(1 + \frac{\psi}{\mu_{j^*}}\right)\log\left(1 + \frac{c\psi}{\mu_{j^*}}\right) + \frac{(\mu_{j^*} + c\psi)^2}{\mu_{j^*}} - \mu_{j^*}\left(1 + \frac{\psi}{\mu_{j^*}}\right) \\
            &= \mu_{j^*} h\left(\frac{\psi}{\mu_{j^*}}\right) - 2\mu_{j^*}\left(1 + \frac{\psi}{\mu_{j^*}}\right) \log\left(1 + \frac{c\psi}{\mu_{j^*}}\right) + 2c\psi + \frac{c^2\psi^2}{\mu_{j^*}}.
        \end{align*}
        Therefore, we have
        \begin{align*}
            &1 + \frac{1}{j^*-m} e^{\frac{c^2\psi^2}{\mu_{j^*}}}P\left\{ \Poisson\left(\frac{(\mu_{j^*} + c\psi)^2}{\mu_{j^*}}\right) \leq \mu_{j^*} + \psi \right\} \\
            &\leq 1 + \frac{1}{j^*-m} \exp\left(-\mu_{j^*} h\left(\frac{\psi}{\mu_{j^*}}\right) + 2\mu_{j^*} \left(1 + \frac{\psi}{\mu_{j^*}}\right) \log\left(1 + \frac{c\psi}{\mu_{j^*}}\right) - 2c\psi\right) \\
            &\leq 1 + \frac{1}{j^*-m} \exp\left(-\mu_{j^*} h\left(\frac{\psi}{\mu_{j^*}}\right) + 2\mu_{j^*} \left(1 + \frac{\psi}{\mu_{j^*}}\right) \log\left(1 + \frac{c\psi}{\mu_{j^*}} \right)\right) \\
            &\leq 1 + \frac{j^*}{j^*-m}\exp\left( \mu_{j^*} g\left(\frac{\psi}{\mu_{j^*}}\right) + \mu_{j^*} h\left(\frac{\psi}{\mu_{j^*}}\right) - \log j^*\right) \\
            &= 1 + \frac{j^*}{j^*-m} \exp\left(\mu_{j^*}  g\left(\frac{\psi}{\mu_{j^*}}\right) + \log(\tilde{C}_\eta)\right)
        \end{align*}
        where \(g : [0, \infty) \to \R\) is the function \(g(x) = -2h(x) + 2(1+x)\log(1+c x)\). We have used that \(\mu_{j^*} h\left(\frac{\psi}{\mu_{j^*}}\right) = \mu_{j^*} h\left(h^{-1}\left(\frac{\log(\tilde{C}_\eta j^*)}{nq_0^{-\max}(j^*)}\right)\right) = \log(\tilde{C}_\eta j^*)\) to obtain the final line. Consider there exists \(c_\eta\) sufficiently small depending only on \(\tilde{C}_\eta\) such that for all \(x \geq \frac{1}{c^2}\), we have \(g(x) \leq x\left(-2\log(\tilde{C}_\eta) - \log\left(\frac{j^*}{j^*-m}\right)\right)\). This is immediately seen by noting 
        \begin{align*}
            g(x) = -2(1+x)\log\left(\frac{1+x}{1+cx}\right) + 2x \leq x \left( -2 \log\left(\frac{1+x}{1+cx}\right) + 2\right).
        \end{align*}
        Taking \(c_\eta\) sufficiently small clearly yields the desired property since \(c < c_\eta\) and \(c_\eta\) need only depend on \(\tilde{C}_\eta\) since \(m \lesssim (j^*)^{1/4}\) by Lemma \ref{lemma:m_size}. Since \(\frac{\psi}{\mu_{j^*}} \geq \frac{1}{c^2}\) and \(\psi \geq 1\) (since \(C_*\) and \(\tilde{C}_\eta\) can be taken sufficiently large to ensure it by way of condition (\ref{eqn:multinomial_psi_large})), it immediately follows that 
        \begin{align*}
            &1 + \frac{j^*}{j^*-m}\exp\left( \mu_{j^*} g\left(\frac{\psi}{\mu_{j^*}}\right) + \log(\tilde{C}_\eta)\right) \\
            &\leq 1 + \frac{j^*}{j^*-m}\exp\left(\mu_{j^*} \cdot \frac{\psi}{\mu_{j^*}}\left(-2\log\left(\tilde{C}_\eta\right) -\log\left(\frac{j^*}{j^*-m}\right)\right) + \log(\tilde{C}_\eta)\right) \\
            &\leq 1 + \frac{1}{\tilde{C}_\eta}. 
        \end{align*}
        The analysis for this case is complete, and so the proof is complete.         
    \end{proof}

    \section{Asymptotic constant: Poisson}\label{appendix:poisson_sharp_constant}
    In this section, we prove Theorem \ref{thm:poisson_sharp_constant}. Section \ref{section:poisson_sharp_constant_upper_bound} proves the upper bound and Section \ref{section:poisson_sharp_constant_lower_bound} proves the lower bound. 

    \subsection{Proof of (i) in Theorem \ref{thm:poisson_sharp_constant}}\label{section:poisson_sharp_constant_upper_bound}
    \begin{lemma}\label{lemma:epsilon_diverges}
        Suppose \(\mu_1 \geq ... \geq \mu_p \geq 1\), and let $j^*$ and $\epsilon$ be defined as in~\eqref{eq_def_jstar_poisson_sharp_constant} and~\eqref{eq_def_eps_poisson_sharp_constant}, respectively. 
        Assume also \(\frac{\log j^*}{(\log \alpha_p)(\log\log j^*)} \to \infty\). If \(\frac{\log j^*}{\mu_{j^*}} \to 0\) or \(\frac{\log j^*}{\mu_{j^*}} \to \infty\), then \(\epsilon \to \infty\). 
    \end{lemma}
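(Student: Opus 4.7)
The plan is to first argue $j^* \to \infty$ and then handle the two regimes separately via the asymptotic behavior of $h^{-1}$ (Lemma~\ref{lemma:h_inverse}). Since $\alpha_p \to \infty$ we have $\log \alpha_p \to \infty$, so the hypothesis $\frac{\log j^*}{(\log \alpha_p)(\log\log j^*)} \to \infty$ forces $\log j^* \to \infty$, hence $j^* \to \infty$ and $\log\log j^* \to \infty$. The same hypothesis, rewritten as $\frac{(\log \alpha_p) (\log\log j^*)}{\log j^*} \to 0$, implies
\begin{equation*}
    \log\big(ej^* \alpha_p \log^2(ej^*)\big) = \log(ej^*) + \log(\alpha_p) + 2\log\log(ej^*) = (1+o(1))\log(ej^*),
\end{equation*}
so in both regimes the driving logarithm is the same up to a $(1+o(1))$ factor. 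Call this common quantity $L := \log(ej^*\alpha_p\log^2(ej^*))$.

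In the subgaussian regime $\frac{\log j^*}{\mu_{j^*}} \to 0$, we have $\frac{L}{\mu_{j^*}} \to 0$, so by Lemma~\ref{lemma:h_inverse}, $h^{-1}(L/\mu_{j^*}) \asymp \sqrt{L/\mu_{j^*}}$, giving
\begin{equation*}
    \epsilon \asymp \sqrt{\mu_{j^*} L} \geq \sqrt{L} \to \infty,
\end{equation*}
since $\mu_{j^*} \geq 1$ and $L \to \infty$ from $\log j^* \to \infty$.

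In the subpoissonian regime $\frac{\log j^*}{\mu_{j^*}} \to \infty$, we have $\frac{L}{\mu_{j^*}} \to \infty$, so by Lemma~\ref{lemma:h_inverse} we can write $h^{-1}(L/\mu_{j^*}) \asymp \frac{L/\mu_{j^*}}{\log(eL/\mu_{j^*})}$, and therefore
\begin{equation*}
    \epsilon \asymp \frac{L}{\log(eL/\mu_{j^*})}.
\end{equation*}
Since $L = (1+o(1))\log(ej^*)$ and $\mu_{j^*} \geq 1$, we have $\log(eL/\mu_{j^*}) \leq \log(eL) \lesssim \log\log(ej^*)$ eventually, so $\epsilon \gtrsim \frac{\log j^*}{\log\log j^*} \to \infty$. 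Combining the two regimes completes the proof.

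The only non-routine step is the careful absorption of the extra factor $\alpha_p \log^2(ej^*)$ into a $(1+o(1))$ correction of $\log(ej^*)$, which is precisely what the growth condition $\frac{\log j^*}{(\log \alpha_p)(\log\log j^*)} \to \infty$ is engineered to provide; everything else reduces to direct substitution into the two branches of $\Gamma$ (equivalently $h^{-1}$).
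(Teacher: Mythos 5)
Your proof is correct and follows essentially the same route as the paper: split into the two regimes, use \(\mu_{j^*} \geq 1\) and \(\log j^* \to \infty\) (forced by the growth condition on \(\alpha_p\)), and invoke the small/large-argument behavior of \(h^{-1}\). The only difference is cosmetic: in the subpoissonian regime the paper simply observes that \(h^{-1}\) evaluated at a diverging argument diverges, whereas you derive the sharper (and unnecessary for this lemma) bound \(\epsilon \gtrsim \frac{\log j^*}{\log\log j^*}\) after explicitly absorbing the \(\alpha_p \log^2(ej^*)\) factor into a \((1+o(1))\) correction.
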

    \begin{proof}
        If \(\frac{\log j^*}{\mu_{j^*}} \to 0\), then consider by Lemma \ref{lemma:h_asymptotics} we have \(\epsilon \sim \xi \sqrt{2 \mu_{j^*} \log j^*}\). Since \(\mu_{j^*} \geq 1\) and \(\log j^* \to \infty\) (since \(\frac{\log j^*}{\log \alpha_p} \to \infty\) and \(\alpha_p \to \infty\)), it follows \(\epsilon \to \infty\). On the other hand, if \(\frac{\log j^*}{\mu_{j^*}} \to \infty\), it is clear that \(\epsilon \to \infty\) since \(\mu_{j^*} \geq 1\) and \(\lim_{x \to \infty} h^{-1}(x) = \infty\).
    \end{proof}
    
    \begin{proof}[Proof of (i) in Theorem \ref{thm:poisson_sharp_constant}]
        Fix \(\xi > 1\). For ease of notation, let \(\psi = \frac{\epsilon}{\xi}\), and consider the test 
        \begin{equation*}
            \varphi = \mathbbm{1}\left\{||X - \mu||_\infty \geq \psi \right\}.
        \end{equation*}
        To see that the Type I error vanishes, observe by union bound and Lemma \ref{lemma:Bennett}, 
        \begin{align*}
            P_\mu\left\{\varphi = 1\right\} \leq \sum_{j=1}^{p} 2e^{-\log(ej\alpha_p\log^2(ej))} = \frac{1}{\alpha_p} \sum_{j=1}^{p} \frac{2}{ej\log^2(ej)} \to 0 
        \end{align*}
        since \(\sum_{j=1}^{\infty} \frac{1}{j\log^2(ej)} < \infty\) and \(\alpha_p \to \infty\). Let us now turn to the Type II error. Suppose \(\lambda \in \Lambda(\mu, \epsilon)\). Then there exists \(j'\) such that \(|\lambda_{j'} - \mu_{j'}| \geq \epsilon\). Observe we can apply Chebyshev's inequality since \(\xi > 1\) to obtain, 
        \begin{equation*}
            P_\lambda\left\{\varphi = 0\right\} \leq P_\lambda\left\{|\lambda_{j'} - \mu_{j'}| - \psi \leq |X_{j'} - \lambda_{j'}| \right\} \leq \frac{\lambda_{j'}}{\left(|\lambda_{j'} - \mu_{j'}| - \psi\right)^2} \leq \frac{1}{|\lambda_{j'} - \mu_{j'}| \left(1 - \frac{1}{\xi}\right)^2} + \frac{\mu_{j'}}{\psi^2 (\xi - 1)^2}.
        \end{equation*}
        The first term vanishes uniformly over \(\lambda \in \Lambda(\mu, \epsilon)\) since \(\epsilon \to \infty\) by Lemma \ref{lemma:epsilon_diverges}. To show the second term vanishes, consider \(\psi^2 \geq \left(\mu_{j'} h^{-1}\left( \frac{\log(ej'\alpha_p\log^2(ej'))}{\mu_{j'}} \right)\right)^2 \gtrsim \mu_{j'} \log(ej'\alpha_p \log^2(ej'))\) since \(h(x) \gtrsim \sqrt{x}\) by Lemma \ref{lemma:h_inverse}. Since \(\alpha_p \to \infty\), it follows \(\frac{\mu_{j'}}{\psi^2 (\xi - 1)^2} \to 0\) uniformly over \(\lambda \in \Lambda(\mu, \epsilon)\). The proof is complete. 
    \end{proof}

    \subsection{Proof of (ii) in Theorem \ref{thm:poisson_sharp_constant}}\label{section:poisson_sharp_constant_lower_bound}
    In this section, we present the proof of the lower bound for the sharp asymptotic constant, namely item \textit{(ii)} in Theorem \ref{thm:poisson_sharp_constant}. Essentially the same argument of Theorem \ref{thm:lower_bound} can be used, with more care to track constants. We will use the prior \(\pi\) in which a draw \(\lambda \sim \pi\) is obtained by drawing \(J \sim \Uniform(\{1,...,j^*\})\) and setting \(\lambda_j = \mu_j + \epsilon \mathbbm{1}_{\{j = J\}}\). It is clear \(\pi\) is supported on \(\Lambda(\mu, \epsilon)\).

    As asserted by the following proposition, we reduce to the auxiliary homoskedastic version of the testing problem by applying Proposition \ref{prop:general_flattening}. The proof is omitted as the result can be established by the same proof of Proposition \ref{prop:poisson_flattening}. 
    \begin{proposition}\label{prop:asymptotic_flattening}
        We have 
        \begin{equation*}
            \dTV\left(P_\mu, P_\pi\right) \leq \dTV\left(\Poisson(\mu_{j^*})^{\otimes j^*}, \frac{1}{j^*}\sum_{J=1}^{j^*} \bigotimes_{j = 1}^{j^*} \Poisson(\mu_{j^*} + \epsilon \mathbbm{1}_{\{j=J\}}) \right),
        \end{equation*}
        where \(P_\pi = \int P_\lambda \, d\pi\) is the mixture induced by \(\pi\). 
    \end{proposition}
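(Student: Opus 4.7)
The plan is to derive Proposition~\ref{prop:asymptotic_flattening} as a direct application of the general flattening result, Proposition~\ref{prop:general_flattening}, with $\gamma = \pi$, $k = j^*$, $\omega = \mu$, and $\underline{\omega} = \mu_{j^*}$. This matches the strategy used in Proposition~\ref{prop:poisson_flattening}; the only difference is that the perturbation magnitude is now the deterministic value $\epsilon$ defined in~\eqref{eq_def_eps_poisson_sharp_constant} rather than $c\psi$.

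First I would check that the two hypotheses of Proposition~\ref{prop:general_flattening} are satisfied under the prior $\pi$ described just before the proposition. For condition (ii), the independence of $(\lambda_1, \dots, \lambda_{j^*})$ and $(\lambda_{j^*+1}, \dots, \lambda_p)$ is immediate because $\pi$ leaves the last $p - j^*$ coordinates deterministically equal to $\mu_{j^*+1}, \dots, \mu_p$. For condition (i), a draw $\lambda \sim \pi$ satisfies $\lambda_j - \mu_j + \mu_{j^*} = \mu_{j^*} + \epsilon \mathbbm{1}_{\{j = J\}}$, which is bounded below by $\mu_{j^*} \geq 1 > 0$ by the standing asymptotic assumption $\mu_1 \geq \dots \geq \mu_p \geq 1$.

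Next I would examine each term on the right-hand side of~\eqref{eqn:flattening_split}. The second term vanishes: both the product measure $\bigotimes_{j > k} \Poisson(\omega_j)$ and the mixture $\int \bigotimes_{j > k} \Poisson(\xi_j) \, d\gamma(\xi)$ reduce to the same distribution $\bigotimes_{j > j^*} \Poisson(\mu_j)$, since $\pi$ does not perturb these coordinates. The first term is computed by substituting $\xi_j - \omega_j + \underline{\omega} = \mu_{j^*} + \epsilon \mathbbm{1}_{\{j = J\}}$, yielding
\begin{equation*}
    \int \bigotimes_{j \leq j^*} \Poisson(\xi_j - \omega_j + \underline{\omega}) \, d\gamma(\xi) = \frac{1}{j^*} \sum_{J=1}^{j^*} \bigotimes_{j=1}^{j^*} \Poisson\bigl(\mu_{j^*} + \epsilon \mathbbm{1}_{\{j = J\}}\bigr),
\end{equation*}
which is exactly the mixture appearing on the right-hand side of the claimed bound.

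There is essentially no obstacle in this proof; it is pure bookkeeping once Proposition~\ref{prop:general_flattening} is invoked. The substantive content (the convolution argument and the data-processing inequality) lives inside Proposition~\ref{prop:general_flattening}, and the present proposition only has to verify that the prior $\pi$ of the asymptotic-constant section fits its hypotheses, which it does for the same structural reasons as the finite-sample prior of Proposition~\ref{prop:poisson_flattening}.
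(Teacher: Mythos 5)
Your proposal is correct and follows essentially the same route as the paper, which likewise derives this proposition by applying Proposition~\ref{prop:general_flattening} with $\gamma = \pi$, $k = j^*$, $\omega = \mu$, $\underline{\omega} = \mu_{j^*}$ and noting that the second term in~\eqref{eqn:flattening_split} vanishes because the prior leaves coordinates $j > j^*$ unperturbed. The verification of conditions (i) and (ii) and the identification of the resulting mixture match the paper's (omitted, by reference to Proposition~\ref{prop:poisson_flattening}) argument.
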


    As in the proof of Theorem \ref{thm:lower_bound}, we proceed by the conditional second moment method. For notational ease, define \(P_\mu^* = \Poisson(\mu_{j^*})^{\otimes j^*}\) and \(P_\pi^* = \int \otimes_{j=1}^{j^*} \Poisson(\mu_{j^*} + \epsilon \mathbbm{1}_{\{j = J\}}) \, d\pi\). For notational clarity, denote the data coming from either distribution as \(V\), and we will condition on the event 
    \begin{equation*}
        E := \left\{ \max_{1 \leq j \leq j^*} V_j - \mu_{j^*} \leq \psi\right\},
    \end{equation*}
    where \(\psi = \frac{\epsilon}{\xi}\). Denote \(\tilde{P}_\pi\) and \(\tilde{P}_\mu\) to be the conditional distributions \(P_{\pi}^*(\cdot\,|\, E)\) and \(P_\mu^*(\cdot\,|\, E)\) respectively. 

    \begin{lemma}\label{lemma:asymptotic_poisson_second_moment}
        If \(\xi < 1\) and either \(\frac{\log j^*}{\mu_j^*} \to 0\) or \(\frac{\log j^*}{\mu_{j^*}} \to \infty\), then 
        \begin{equation*}
            \dTV\left(P_\mu^*, P_\pi^*\right) \leq \frac{1}{2}\sqrt{\chi^2(\tilde{P}_\pi\,||\,\tilde{P}_\mu)} + o(1)
        \end{equation*}
        where \(o(1) \to 0\) as \(p \to \infty\). 
    \end{lemma}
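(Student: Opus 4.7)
The plan is to mimic the proof of Lemma~\ref{lemma:truncation_whp}, but with the fixed constant $C^*$ replaced by the slowly divergent factor $\alpha_p \log^2(ej)$ absorbed into the definition of $\psi$. The structure has three stages: (a) show $P_\mu^*(E^c) \to 0$, (b) show $P_\pi^*(E^c) \to 0$, and (c) combine via the triangle inequality and Lemma~\ref{lemma:conditional_TV}.

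For stage (a), I would apply union bound and Bennett's inequality (Lemma~\ref{lemma:Bennett}). Because $j^*$ attains the maximum in~\eqref{eq_def_jstar_poisson_sharp_constant}, we have $\psi = \epsilon/\xi = \mu_{j^*} h^{-1}\!\left(\log(ej^* \alpha_p \log^2(ej^*))/\mu_{j^*}\right)$, so $\mu_{j^*} h(\psi/\mu_{j^*}) = \log(ej^*\alpha_p\log^2(ej^*))$. Hence
\[
P_\mu^*(E^c) \;\leq\; j^* \exp\!\left(-\mu_{j^*} h(\psi/\mu_{j^*})\right) \;=\; \frac{1}{e\alpha_p \log^2(ej^*)} \;\to\; 0
\]
since $\alpha_p \to \infty$.

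For stage (b), I condition on the random index $J \sim \Uniform\{1,\dots,j^*\}$ and split the max into the $j^*-1$ null coordinates and the single perturbed coordinate. The unperturbed coordinates contribute $(j^*-1)P\{\Poisson(\mu_{j^*}) - \mu_{j^*} > \psi\} \to 0$ exactly as in stage (a). For the perturbed coordinate, the strict inequality $\xi < 1$ gives slack $\psi - \epsilon = (1-\xi)\psi > 0$. Chebyshev yields
\[
P\!\left\{\Poisson(\mu_{j^*} + \epsilon) - (\mu_{j^*} + \epsilon) > (1-\xi)\psi\right\} \;\leq\; \frac{\mu_{j^*} + \epsilon}{(1-\xi)^2 \psi^2} \;\leq\; \frac{\xi^2(\mu_{j^*} + \epsilon)}{(1-\xi)^2 \epsilon^2}.
\]
Lemma~\ref{lemma:epsilon_diverges} gives $\epsilon \to \infty$, so $1/\epsilon \to 0$. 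It remains to show $\mu_{j^*}/\epsilon^2 \to 0$. In the regime $\log j^*/\mu_{j^*} \to 0$ I would apply Lemma~\ref{lemma:h_asymptotics} to obtain $\epsilon^2 \asymp \mu_{j^*} \log(ej^* \alpha_p \log^2(ej^*)) \gg \mu_{j^*}$ since $\log j^* \to \infty$ (forced by the assumption $\log j^*/((\log\alpha_p)(\log\log j^*)) \to \infty$). In the regime $\log j^*/\mu_{j^*} \to \infty$, Lemma~\ref{lemma:h_inverse} gives $\epsilon$ of order $\log j^*/\log(\log j^*/\mu_{j^*})$ up to slowly varying factors, and since $\mu_{j^*} \ll \log j^*$ in this regime, the ratio $\mu_{j^*}/\epsilon^2$ is $o(1)$.

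For stage (c), the triangle inequality together with Lemma~\ref{lemma:conditional_TV} gives
\[
\dTV(P_\mu^*, P_\pi^*) \;\leq\; \dTV(\tilde P_\mu, \tilde P_\pi) + 2P_\mu^*(E^c) + 2P_\pi^*(E^c) \;\leq\; \tfrac{1}{2}\sqrt{\chi^2(\tilde P_\pi \,||\, \tilde P_\mu)} + o(1),
\]
where the second inequality uses the standard $\dTV \leq \tfrac{1}{2}\sqrt{\chi^2}$ bound. The main obstacle is the asymptotic analysis of $\mu_{j^*}/\epsilon^2$ in stage (b): one must carefully leverage both asymptotic regimes using Lemma~\ref{lemma:h_inverse}/Lemma~\ref{lemma:h_asymptotics}, and verify that the hypothesis $\log j^*/((\log\alpha_p)(\log\log j^*)) \to \infty$ suffices to force $\log j^* \to \infty$ and to control the slowly varying terms arising from $\alpha_p \log^2(ej^*)$.
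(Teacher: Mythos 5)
Your proposal is correct, and it follows the paper's proof in its overall architecture: the same truncation event \(E\), the same union bound plus Bennett argument giving \(P_\mu^*(E^c)=o(1)\), and the same final combination via the triangle inequality, Lemma~\ref{lemma:conditional_TV}, and \(\dTV \leq \tfrac12\sqrt{\chi^2}\). The one genuine divergence is in bounding \(P_\pi^*(E^c)\): the paper applies Bennett's inequality to the perturbed coordinate, writing the event as \(\Poisson(\mu_{j^*}+\epsilon) > (\mu_{j^*}+\epsilon)\bigl(1+\tfrac{(1-\xi)\psi}{\mu_{j^*}+\epsilon}\bigr)\), and then runs a case analysis (whether \(\tfrac{(1-\xi)\psi}{\mu_{j^*}+\epsilon}\) stays bounded away from zero or tends to zero, and then the subgaussian versus subpoissonian regimes, using \(h(x)\sim x^2/2\)); you instead use Chebyshev, which gives \(\tfrac{\xi^2(\mu_{j^*}+\epsilon)}{(1-\xi)^2\epsilon^2}\) and reduces matters to \(\epsilon\to\infty\) (Lemma~\ref{lemma:epsilon_diverges}) and \(\mu_{j^*}/\epsilon^2\to 0\). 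Since only an \(o(1)\) bound on \(P_\pi^*(E^c)\) is needed (not an exponentially small one), Chebyshev suffices, and your route is a bit more elementary, sidestepping the paper's case distinction on the argument of \(h\); the regime check you still need is fine — in the subgaussian regime \(\mu_{j^*}/\epsilon^2\asymp 1/\log(ej^*\alpha_p\log^2(ej^*))\to 0\), and in the subpoissonian regime \(\epsilon^2/\mu_{j^*}\gtrsim \mu_{j^*}h^{-1}\bigl(\log(ej^*)/\mu_{j^*}\bigr)^2\to\infty\) using \(\mu_{j^*}\geq 1\). Both approaches rely on \(\xi<1\) in exactly the same way, to create the slack \((1-\xi)\psi>0\) between the perturbation size and the truncation level.
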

    \begin{proof}
        By union bound and Lemma \ref{lemma:Bennett}, 
        \begin{equation*}
            P_\mu^*(E^c) \leq j^* \exp\left(-\mu_{j^*}h\left(\frac{\psi}{\mu_{j^*}}\right)\right) = \exp\left(-\log(ej^*\alpha_p\log^2(ej^*))+\log j^*\right) = o(1)
        \end{equation*} 
        since \(\alpha_p \to \infty\). Likewise, we can apply Lemma \ref{lemma:Bennett} since \(\xi < 1\), we have 
        \begin{align*}
            P_\pi^*(E^c) &\leq o(1) + P\left\{\Poisson\left(\mu_{j^*} + \epsilon \right) - \mu_{j^*} > \psi \right\} \\
            &= o(1) + P\left\{\Poisson\left(\mu_{j^*} + \epsilon \right) > \left(\mu_{j^*} + \epsilon \right) \left(1 + \frac{(1 - \xi)\psi}{\mu_{j^*} + \epsilon}\right) \right\} \\
            &\leq o(1) + \exp\left( - \left(\mu_{j^*} + \epsilon \right)h\left(\frac{(1-\xi)\psi}{\mu_{j^*} + \epsilon}\right) \right).
        \end{align*}
        If \(\liminf_{p \to \infty} \frac{(1-\xi)\psi}{\mu_{j^*} + \epsilon} > 0\), then it follows from \(\epsilon \to \infty\) given by Lemma \ref{lemma:epsilon_diverges} that \(\left(\mu_{j^*} + \epsilon \right)h\left(\frac{(1-\xi)\psi}{\mu_{j^*} + \epsilon}\right) \to \infty\). On the other hand, if \(\frac{(1-\xi)\psi}{\mu_{j^*} + \epsilon} \to 0\), then it follows from Lemma \ref{lemma:h_asymptotics} that \(h(x) \sim \frac{x^2}{2}\) as \(x \to 0\), and so \(\left(\mu_{j^*} + \epsilon \right)h\left(\frac{(1-\xi)\psi}{\mu_{j^*} + \epsilon}\right) \sim \frac{(1-\xi)^2\psi^2}{\mu_{j^*} + \xi \psi}\). If \(\frac{\log j^*}{\mu_{j^*}} \to 0\), then \(\psi^2 \sim 2\xi^2 \mu_{j^*}\log j^*\) by Lemma \ref{lemma:h_asymptotics}, and so \(\frac{(1-\xi)^2\psi^2}{\mu_{j^*} + \xi \psi} \to \infty\). If \(\frac{\log j^*}{\mu_{j^*}} \to \infty\), then \(\psi^2 \sim \left(\frac{\log j^*}{\log\left(\frac{\log j^*}{\mu_{j^*}}\right)}\right)^2\). Since \(\frac{\log j^*}{\mu_{j^*}} \to \infty\), it follows \(\frac{(1-\xi)^2\psi^2}{\mu_{j^*} + \xi \psi} \to \infty\). Hence, we have shown \(P_\pi(E^c) = o(1)\). Thus by triangle inequality and Lemma \ref{lemma:conditional_TV}, we have 
        \begin{equation*}
            \dTV\left(P_\mu^*, P_\pi^*\right) \leq \dTV\left(\tilde{P}_\mu, \tilde{P}_\pi\right) + 2\dTV\left(\tilde{P}_\mu, P^*_\mu\right) + 2\dTV\left(\tilde{P}_\pi, P_\pi^*\right) \leq \frac{1}{2}\sqrt{\chi^2(\tilde{P}_\pi \,||\, \tilde{P}_\mu)} + o(1). 
        \end{equation*}
    \end{proof}
    
    \begin{lemma}\label{lemma:asymptotic_poisson_conditional}
        If \(\xi < 1\) and either \(\frac{\log j^*}{\mu_j^*} \to 0\) or \(\frac{\log j^*}{\mu_{j^*}} \to \infty\), then 
        \begin{equation*}
            \chi^2(\tilde{P}_\pi \,||\, \tilde{P}_\mu) + 1\leq (1+o(1)) E\left(\exp\left(\frac{\epsilon^2}{\mu_{j^*}} \mathbbm{1}_{\{J = J'\}} P\left\{\left.\max_{1 \leq j \leq j^*} W_j - \mu_{j^*} \leq \psi \,\right|\, \rho, \rho'\right\}\right) \right)
        \end{equation*}
        where \(\rho, \rho' \overset{iid}{\sim} \pi\) and \(J, J'\) are corresponding random indices, and \(W_j \,|\, \rho, \rho' \overset{ind}{\sim} \Poisson\left(\frac{\rho_j \rho_j'}{\mu_{j^*}}\right)\).
    \end{lemma}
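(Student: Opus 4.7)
The plan is to port over the argument from the proof of Lemma \ref{lemma:conditional_second_moment} essentially verbatim, replacing the perturbation magnitude $c\psi$ with $\epsilon$ and carefully tracking that the multiplicative prefactor $(1-\alpha/6)^{-2}$ in the non-asymptotic version becomes $1+o(1)$ in this asymptotic setup. The only nontrivial inputs come from the preceding lemma.

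First, I would express the conditional likelihood ratio as $\frac{d\tilde{P}_\pi}{d\tilde{P}_\mu} = \frac{P_\mu^*(E)}{P_\pi^*(E)} \cdot \frac{dP_\pi^*}{dP_\mu^*} \mathbbm{1}_E$. This yields
\begin{equation*}
\chi^2(\tilde{P}_\pi \,||\, \tilde{P}_\mu) + 1 = \frac{P_\mu^*(E)}{P_\pi^*(E)^2} \int \frac{(dP_\pi^*)^2}{dP_\mu^*}\mathbbm{1}_E.
\end{equation*}
From the proof of Lemma \ref{lemma:asymptotic_poisson_second_moment}, we have already established that $P_\mu^*(E^c) = o(1)$ and $P_\pi^*(E^c) = o(1)$ under the hypothesis $\xi < 1$ together with either $\frac{\log j^*}{\mu_{j^*}} \to 0$ or $\frac{\log j^*}{\mu_{j^*}} \to \infty$, so the prefactor $\frac{P_\mu^*(E)}{P_\pi^*(E)^2}$ is bounded by $1+o(1)$.

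Next, I would expand the integral by drawing $\rho,\rho' \overset{iid}{\sim}\pi$ and writing out the Poisson product densities:
\begin{equation*}
\int \frac{(dP_\pi^*)^2}{dP_\mu^*}\mathbbm{1}_E = \iint \sum_{x \in E} \prod_{j=1}^{j^*} \frac{1}{x_j!} \cdot \frac{e^{-\rho_j}\rho_j^{x_j}\, e^{-\rho_j'}(\rho_j')^{x_j}}{e^{-\mu_{j^*}}\mu_{j^*}^{x_j}} \,\pi(d\rho)\pi(d\rho').
\end{equation*}
Factoring out the normalization constants $\exp\bigl(\sum_j (-\rho_j - \rho_j' + \mu_{j^*} + \rho_j\rho_j'/\mu_{j^*})\bigr)$ and recognizing the residual sum as the Poisson probability $P\{\max_j W_j - \mu_{j^*} \leq \psi \mid \rho,\rho'\}$ for $W_j \sim \Poisson(\rho_j\rho_j'/\mu_{j^*})$, I recover the standard identity used in Lemma \ref{lemma:conditional_second_moment}.

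Finally, setting $\delta = \rho - \mu_{j^*}\mathbf{1}_{j^*}$ and $\delta' = \rho' - \mu_{j^*}\mathbf{1}_{j^*}$, the exponent reduces to $\langle \delta,\delta'\rangle/\mu_{j^*}$. Because the prior $\pi$ places perturbation $\epsilon$ at a single coordinate $J$ (respectively $J'$), we have $\langle\delta,\delta'\rangle/\mu_{j^*} = (\epsilon^2/\mu_{j^*})\mathbbm{1}_{\{J = J'\}}$, which plugs in directly to give the claimed upper bound. There is no substantive obstacle here; the only subtlety is confirming that the tail bounds from Lemma \ref{lemma:asymptotic_poisson_second_moment} suffice to absorb the deviation of the prefactor from $1$ into the $o(1)$ term, and this is exactly what is needed under the assumed asymptotic regime.
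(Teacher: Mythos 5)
Your proposal is correct and matches the paper's own proof, which likewise notes that $P_\mu^*(E^c)$ and $P_\pi^*(E^c)$ are $o(1)$ (from Lemma \ref{lemma:asymptotic_poisson_second_moment}) and then repeats the likelihood-ratio expansion of Lemma \ref{lemma:conditional_second_moment} with $c\psi$ replaced by $\epsilon$, so the prefactor $\frac{P_\mu^*(E)}{P_\pi^*(E)^2}$ becomes $1+o(1)$. Your derivation of the product form $\exp\left(\frac{\epsilon^2}{\mu_{j^*}}\mathbbm{1}_{\{J=J'\}}\right)P\left\{\left.\max_{1\leq j\leq j^*}W_j-\mu_{j^*}\leq\psi\,\right|\,\rho,\rho'\right\}$ is exactly the intended content of the bound.
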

    \begin{proof}
        The result can be obtained by noting \(P_\mu^*(E^c), P_\pi^*(E^c) = o(1)\) and following the calculations in the proof of Lemma \ref{lemma:conditional_second_moment}.
    \end{proof}

    \begin{proposition}\label{prop:asymptotic_poisson_mgf}
        If \(\xi < 1\) and either \(\frac{\log j^*}{\mu_j^*} \to 0\) or \(\frac{\log j^*}{\mu_{j^*}} \to \infty\), then 
        \begin{equation*}
             E\left(\exp\left(\frac{\epsilon^2}{\mu_{j^*}} \mathbbm{1}_{\{J = J'\}}\right) P\left\{\left.\max_{1 \leq j \leq j^*} W_j - \mu_{j^*} \leq \psi \,\right|\, \rho, \rho'\right\}\right) = 1 + o(1). 
        \end{equation*}
    \end{proposition}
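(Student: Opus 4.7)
My plan is to split the expectation based on the event $\{J=J'\}$. Since $J, J' \overset{iid}{\sim} \Uniform(\{1,\dots,j^*\})$, this event has probability $1/j^*$. On $\{J \neq J'\}$ the exponential factor collapses to $1$ and the conditional probability is trivially $\leq 1$. On $\{J = J'\}$, the definition of $\pi$ gives $\rho_J = \rho'_J = \mu_{j^*}+\epsilon$ and $\rho_j = \rho'_j = \mu_{j^*}$ for $j \neq J$, so $W_J \mid J \sim \Poisson((\mu_{j^*}+\epsilon)^2/\mu_{j^*})$. Bounding the max-probability by the single-coordinate lower-tail probability at index $J$ reduces the claim to showing
\[
\frac{1}{j^*}\exp\!\left(\frac{\epsilon^2}{\mu_{j^*}}\right) P\!\left\{\Poisson\!\left(\frac{(\mu_{j^*}+\epsilon)^2}{\mu_{j^*}}\right) \leq \mu_{j^*}+\psi\right\} = o(1),
\]
where I write $\lambda = (\mu_{j^*}+\epsilon)^2/\mu_{j^*}$ and $K = \mu_{j^*}+\psi$ for brevity.

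When $\lambda > K$, which holds iff $2\xi + \xi^2\psi/\mu_{j^*} > 1$, I would invoke Bennett's inequality (Lemma \ref{lemma:Bennett}) and use the identity $(\mu_{j^*}+\psi)\log(1+\psi/\mu_{j^*}) = \mu_{j^*}h(\psi/\mu_{j^*}) + \psi$ to get
\[
\exp\!\left(\frac{\epsilon^2}{\mu_{j^*}}\right)P\{\Poisson(\lambda)\leq K\} \leq \exp\!\Big(-\mu_{j^*}h(\psi/\mu_{j^*}) + 2(\mu_{j^*}+\psi)\log(1+\epsilon/\mu_{j^*}) - 2\epsilon\Big).
\]
By construction $\mu_{j^*}h(\psi/\mu_{j^*}) = \log(ej^*\alpha_p\log^2(ej^*)) = (1+o(1))\log j^*$ thanks to the growth condition $\log j^*/[(\log\alpha_p)(\log\log j^*)] \to \infty$. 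The $1/j^*$ prefactor contributes another $-\log j^*$, so the task reduces to analyzing the residual $R := 2(\mu_{j^*}+\psi)\log(1+\epsilon/\mu_{j^*}) - 2\epsilon$ versus $2\log j^*$.

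In the subgaussian regime $\log j^*/\mu_{j^*} \to 0$, Lemma \ref{lemma:h_asymptotics} gives $\psi \sim \sqrt{2\mu_{j^*}\log j^*}$ and $\epsilon/\mu_{j^*}, \psi/\mu_{j^*} \to 0$; a second-order Taylor expansion of $\log(1+\epsilon/\mu_{j^*})$ yields $R = (4\xi - 2\xi^2)\log j^*(1+o(1))$, so the total exponent is $(4\xi - 2\xi^2 - 2)\log j^*(1+o(1)) = -2(1-\xi)^2\log j^*(1+o(1)) \to -\infty$ for any $\xi<1$. If instead $\lambda \leq K$, which in this regime forces $\xi \leq 1/2 + o(1)$, Bennett is unavailable; but then the trivial bound $P\{\Poisson \leq K\}\leq 1$ combined with $\exp(\epsilon^2/\mu_{j^*})/j^* \sim (j^*)^{2\xi^2-1}$ suffices since $2\xi^2 \leq 1/2 < 1$.

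In the subpoissonian regime $\log j^*/\mu_{j^*} \to \infty$, one has $\psi/\mu_{j^*}\to\infty$ and $\lambda > K$ automatically for large $j^*$; the large-argument asymptotics $h^{-1}(y)\sim y/\log y$ give $\psi\log(\psi/\mu_{j^*}) \sim \log j^*$ with $\psi \to \infty$ but $\psi = o(\log j^*)$. The cleanest route is to split $\log(1+\epsilon/\mu_{j^*}) = \log(1+\psi/\mu_{j^*}) + \log((\mu_{j^*}+\xi\psi)/(\mu_{j^*}+\psi))$, which using the earlier identity turns $R$ into $2\mu_{j^*}h(\psi/\mu_{j^*}) + 2\psi + 2(\mu_{j^*}+\psi)\log((\mu_{j^*}+\xi\psi)/(\mu_{j^*}+\psi)) - 2\xi\psi$; since $b/a := (\mu_{j^*}+\xi\psi)/(\mu_{j^*}+\psi) \to \xi$ with a first-order correction of size $O(\mu_{j^*}/\psi)$, one gets $(\mu_{j^*}+\psi)\log(b/a) = \psi\log\xi + o(\psi)$, so $R = 2\log j^* + 2\psi(1+\log\xi-\xi) + o(\psi)$. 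The total exponent is then $2\psi(1+\log\xi-\xi) + o(\psi) + o(\log j^*)$, which tends to $-\infty$ because $g(\xi) := 1+\log\xi-\xi$ is strictly negative on $(0,1)$ and $\psi\to\infty$. The main obstacle lies exactly here: three quantities of leading order $\log j^*$ cancel and the sign is controlled only by an $O(\psi)$ residual with coefficient $g(\xi)$, so tracking this cancellation precisely — via the identity $(\mu_{j^*}+\psi)\log(1+\psi/\mu_{j^*}) = \mu_{j^*}h(\psi/\mu_{j^*}) + \psi$ and the asymptotics of $h^{-1}$ — is what turns the hypothesis $\xi<1$ into the $o(1)$ bound.
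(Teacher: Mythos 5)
Your overall route is the paper's: you reduce the expectation to the single diagonal term \(\frac{1}{j^*}e^{\epsilon^2/\mu_{j^*}}P\{\Poisson((\mu_{j^*}+\epsilon)^2/\mu_{j^*})\leq \mu_{j^*}+\psi\}\), apply Bennett through the exact identity \((\mu_{j^*}+\psi)\log(1+\psi/\mu_{j^*})=\mu_{j^*}h(\psi/\mu_{j^*})+\psi\) (this is precisely Lemma \ref{lemma:bennett_cancellation}), and split into the two regimes. The subgaussian analysis is sound: the Taylor expansion gives total exponent \(-2(1-\xi)^2(1+o(1))\log j^*\), and when \((\mu_{j^*}+\epsilon)^2/\mu_{j^*}\leq\mu_{j^*}+\psi\) one indeed has \(\xi\leq 1/2\), so the trivial bound \((j^*)^{2\xi^2-1+o(1)}\) suffices; this matches the paper's Case 1 (which uses the threshold \(\xi\gtrless 1/\sqrt 2\) and stochastic domination by \(\Poisson(\mu_{j^*}+2\epsilon)\), but yields the same exponent).

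The gap is in the last step of the subpoissonian regime. Your cancellation correctly isolates the drift \(2\psi(1+\log\xi-\xi)\) with \(1+\log\xi-\xi<0\), but you record the leftover as ``\(o(\psi)+o(\log j^*)\)'' and conclude divergence to \(-\infty\) from \(\psi\to\infty\). Since, as you note yourself, \(\psi=o(\log j^*)\) in this regime, an additive \(o(\log j^*)\) error is not dominated by a \(\Theta(\psi)\) negative drift, so the conclusion does not follow from the decomposition as stated. Tracked exactly, the uncancelled part is \(\log(e\alpha_p\log^2(ej^*))\), coming from \(\mu_{j^*}h(\psi/\mu_{j^*})=\log(ej^*\alpha_p\log^2(ej^*))\) rather than \((1+o(1))\log j^*\); this quantity grows because \(\alpha_p\to\infty\), and the proof is complete only once one shows it is \(o(\psi)\). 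That comparison is exactly where the hypotheses \(\mu_{j^*}\geq 1\) and \(\frac{\log j^*}{(\log\alpha_p)(\log\log j^*)}\to\infty\) must be invoked: since \(\mu_{j^*}h(\psi/\mu_{j^*})\sim\psi\log(\psi/\mu_{j^*})\) and \(\psi/\mu_{j^*}\leq\psi\), one gets \(\psi\geq(1+o(1))\log j^*/\log\log j^*\), while \(\log(e\alpha_p\log^2(ej^*))=O(\log\alpha_p+\log\log j^*)=o(\log j^*/\log\log j^*)\) by the growth condition. This is the closing estimate in the paper's proof; adding it makes your argument complete and essentially identical to the paper's.
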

    \begin{proof}
        We break up the analysis into two cases. \newline 

        \noindent \textbf{Case 1:} Suppose \(\frac{\log j^*}{\mu_{j^*}} \to 0\). It follows by Lemma \ref{lemma:h_asymptotics} that \(\frac{\epsilon^2}{\mu_{j^*}} \sim 2\xi^2\log j^*\). If \(\xi < \frac{1}{\sqrt{2}}\), then we directly have 
        \begin{align*}
            E\left(\exp\left(\frac{\epsilon^2}{\mu_{j^*}} \mathbbm{1}_{\{J = J'\}}\right) P\left\{\left.\max_{1 \leq j \leq j^*} W_j - \mu_{j^*} \leq \psi \,\right|\, \rho, \rho'\right\}\right) &\leq 1 - \frac{1}{j^*} + \frac{1}{j^*} e^{2\xi^2(1+o(1))\log j^*} \\
            &\leq 1 + \exp\left((2\xi^2 - 1)(1+o(1))\log j^*\right) \\
            & = 1+o(1).
        \end{align*}
        Suppose \(\xi \geq \frac{1}{\sqrt{2}}\). Then \(\frac{(\mu_{j^*} + \epsilon)^2}{\mu_{j^*}} = \mu_{j^*} + 2\epsilon + \frac{\epsilon^2}{\mu_{j^*}} > \mu_{j^*} + \psi\), and so we can apply Lemma \ref{lemma:Bennett} to obtain 
        \begin{align*}
            &E\left(\exp\left(\frac{\epsilon^2}{\mu_{j^*}} \mathbbm{1}_{\{J = J'\}}\right) P\left\{\left.\max_{1 \leq j \leq j^*} W_j - \mu_{j^*} \leq \psi \,\right|\, \rho, \rho'\right\}\right) \\
            &\leq 1 - \frac{1}{j^*} + \frac{1}{j^*} e^{\frac{\epsilon^2}{\mu_{j^*}}} P\left\{\Poisson\left(\frac{(\mu_{j^*} + \epsilon)^2}{\mu_{j^*}}\right) \leq \mu_{j^*} + \psi \right\} \\
            &\leq 1 - \frac{1}{j^*} + \frac{1}{j^*} e^{\frac{\epsilon^2}{\mu_{j^*}}} P\left\{\Poisson\left(\mu_{j^*} + 2\epsilon\right) \leq \mu_{j^*} + \psi \right\} \\
            &\leq 1 - \frac{1}{j^*} + \frac{1}{j^*} e^{\frac{\epsilon^2}{\mu_{j^*}}} P\left\{\Poisson\left(\mu_{j^*} + 2\epsilon\right) \leq \left(\mu_{j^*} + 2\epsilon\right)\left(1 + \frac{(1-2\xi)\psi}{\mu_{j^*} + 2\epsilon}\right) \right\} \\
            &\leq 1 - \frac{1}{j^*} + \frac{1}{j^*} \exp\left(\frac{\epsilon^2}{\mu_{j^*}}\right) \exp\left(-(\mu_{j^*} + 2\epsilon)h\left(\frac{(1-2\xi)\psi}{\mu_{j^*} + 2\epsilon}\right)\right).
        \end{align*}
        Since \(\psi \sim \sqrt{2\mu_{j^*}\log j^*}\) and \(\frac{\log j^*}{\mu_{j^*}} \to 0\), it follows from \(h(x) \sim \frac{x^2}{2}\) as \(x \to 0\) that 
        \begin{equation*}
            \left(\mu_{j^*} + 2\epsilon\right)h\left(\frac{(1-2\xi)\psi}{\mu_{j^*} + 2\epsilon}\right) \sim \frac{\left(1 - 2\xi\right)^2}{2} \frac{\psi^2}{\mu_{j^*} + 2\epsilon} \sim \left(1 - 2\xi\right)^2 \log j^*. 
        \end{equation*}
        Since \(\frac{\epsilon^2}{\mu_{j^*}} \sim 2\xi^2 \log j^*\), we thus have 
        \begin{align*}
            &\frac{1}{j^*} \exp\left(\frac{\epsilon^2}{\mu_{j^*}}\right) \exp\left(-(\mu_{j^*} + 2\epsilon)h\left(\frac{(1-2\xi)\psi}{\mu_{j^*} + 2\epsilon}\right)\right) \\
            &= \exp\left((1+o(1))\left(-1 + 2\xi^2 - \left(1 - 2\xi\right)^2 \right)\log j^* \right) \\
            &\to 0
        \end{align*}
        as \(p \to \infty\) since \(\xi < 1\). Hence, \(E\left(\exp\left(\frac{\epsilon^2}{\mu_{j^*}} \mathbbm{1}_{\{J = J'\}}\right) P\left\{\left.\max_{1 \leq j \leq j^*} W_j - \mu_{j^*} \leq \psi \,\right|\, \rho, \rho'\right\}\right) = o(1)\). 
        \newline
        
        \noindent \textbf{Case 2:} Suppose \(\frac{\log j^*}{\mu_{j^*}} \to \infty\). Consider \(\frac{(\mu_{j^*} + \epsilon)^2}{\mu_{j^*}} > \mu_{j^*} + \psi\) for all \(p\) sufficiently large since \(\frac{\psi}{\mu_{j^*}} \sim \frac{(\log j^*)/\mu_{j^*}}{\log((\log j^*)/\mu_{j^*})} \to \infty\) since \(\frac{\log j^*}{\mu_{j^*}} \to \infty\). Therefore, we can apply Lemma \ref{lemma:Bennett}. Following the calculations of Lemma \ref{lemma:bennett_cancellation}, we obtain for all \(p\) sufficiently large, 
        \begin{align*}
            &E\left(\exp\left(\frac{\epsilon^2}{\mu_{j^*}} \mathbbm{1}_{\{J = J'\}} P\left\{\left.\max_{1 \leq j \leq j^*} W_j - \mu_{j^*} \leq \psi \,\right|\, \rho, \rho'\right\}\right) \right) \\
            &\leq 1 + \frac{1}{j^*} e^{\frac{\epsilon^2}{\mu_{j^*}}} P\left\{\Poisson\left(\frac{(\mu_{j^*} + \epsilon)^2}{\mu_{j^*}}\right)\leq \mu_{j^*} + \psi\right\} \\
            &\leq 1 + \frac{1}{j^*}\exp\left(-\mu_{j^*}h\left(\frac{\psi}{\mu_{j^*}}\right) + 2\mu_{j^*}\left(1 + \frac{\psi}{\mu_{j^*}}\right)\log\left(1 + \frac{\xi \psi}{\mu_{j^*}}\right) \right) \\
            &= 1 + \exp\left(\mu_{j^*}g\left(\frac{\psi}{\mu_{j^*}}\right) + \log(e\alpha_p\log^2(ej^*))\right)
        \end{align*}
        where \(g(x) := -2h(x) + 2(1+x)\log(1 + \xi x) - 2\xi x\). We have used \(\mu_{j^*}h\left(\frac{\psi}{\mu_{j^*}}\right) = \log(ej^*\alpha_p\log^2(ej^*)) = \log j^* + \log(e\alpha_p\log^2(ej^*))\). Consider 
        \begin{align*}
            g(x) = -2(1+x)\log\left(\frac{1+x}{1+\xi x}\right) + 2x - 2\xi x \sim 2x\left(-\log\left(\frac{1}{\xi}\right) - \xi + 1\right)
        \end{align*}
        as \(x \to \infty\). Consider that \(\log(1/t) + t > 1\) for all \(t \in (0, 1)\). Since \(\xi < 1\), we have \(-\log\left(\frac{1}{\xi}\right) - \xi + 1 < 0\), and so \(\lim_{x \to \infty} g(x) = -\infty\). Since \(\frac{\psi}{\mu_{j^*}} \to \infty\), it immediately follows that 
        \begin{align*}
            &\exp\left(\mu_{j^*} g\left(\frac{\psi}{\mu_{j^*}}\right) + \log(e\alpha_p\log^2(ej^*))\right) \\
            &= \exp\left(2\left(-\log\left(\frac{1}{\xi}\right) - \xi + 1\right)(1+o(1))\psi + \log(e\alpha_p\log^2(ej^*)) \right) \\
            &= o(1).
        \end{align*}
        Here, we have used \(\psi \sim \log j^* / \log\left(\frac{\log j^*}{\mu_{j^*}}\right)\) grows faster than \(\log(e\alpha_p \log^2(ej^*))\) because we have assumed \(\mu_{j^*} \geq 1\) and \(\log j^* /((\log \alpha_p)(\log\log j^*)) \to \infty\). The proof is complete. 
    \end{proof}

    \begin{proof}[Proof of (ii) in Theorem \ref{thm:poisson_sharp_constant}]
        Fix \(\xi < 1\). Then by Proposition \ref{prop:asymptotic_flattening}, Lemma \ref{lemma:asymptotic_poisson_second_moment}, Lemma \ref{lemma:asymptotic_poisson_conditional}, and Proposition \ref{prop:asymptotic_poisson_mgf},
        \begin{align*}
            \lim_{p \to \infty} \mathcal{R}_{\mathcal{P}}(\epsilon, \mu) &\geq \lim_{p \to \infty} \left(1 - \dTV\left(P_\mu, P_\pi\right)\right) \geq \lim_{p \to \infty}\left(1 - \frac{1}{2}\sqrt{\chi^2\left(\tilde{P}_{\pi} \,||\, \tilde{P}_\mu\right)}\right) = 1,
        \end{align*}
        as claimed. 
    \end{proof}

    \section{Asymptotic constant: multinomial}\label{appendix:multinomial_sharp_constant}
    In this section, we prove Theorem \ref{thm:multinomial_sharp_constant}. Section \ref{section:multinomial_sharp_constant_upper_bound} proves the upper bound and Section \ref{section:multinomial_sharp_constant_lower_bound} proves the lower bound. 

    \subsection{Proof of (i) in Theorem \ref{thm:multinomial_sharp_constant}}\label{section:multinomial_sharp_constant_upper_bound}

    \begin{lemma}\label{lemma:multinomial_epsilon_diverges}
        Suppose \(q_0(1) \geq ... \geq q_0(p) \geq \frac{1}{n}\) and \(\frac{\log j^*}{(\log \alpha_p)(\log\log j^*)} \to \infty\), and let $j^*$ and $\epsilon$ be defined as in~\eqref{eq_def_jstar_mult_sharp_constant} and~\eqref{eq_def_eps_mult_sharp_constant}, respectively. If \(\frac{\log j^*}{\mu_{j^*}} \to 0\) or \(\frac{\log j^*}{\mu_{j^*}} \to \infty\), then \(\epsilon \to \infty\). 
    \end{lemma}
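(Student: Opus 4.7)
My plan is to mirror the Poisson argument of Lemma~\ref{lemma:epsilon_diverges}, with minor bookkeeping adjustments to account for the multinomial variance factor $q(1-q)$ and the sample size $n' = (1+c_n)n$. First I would introduce the shorthand $\tilde{\mu}_j := n'q_0^{-\max}(j)(1-q_0^{-\max}(j))$ and record two elementary facts. Since $q_0(1) \geq q_0(2)$ and $q_0(1)+q_0(2) \leq 1$, one has $q_0^{-\max}(j) \leq q_0(2) \leq 1/2$ for every $j$, hence $1-q_0^{-\max}(j) \in [1/2,1]$; and since $c_n = n^{-1/3} \to 0$, $n' \sim n$. Combined with the hypothesis $q_0(p) \geq 1/n$, these give $\tilde{\mu}_{j^*} \asymp \mu_{j^*} = nq_0^{-\max}(j^*)$ together with the absolute lower bound $\tilde{\mu}_{j^*} \gtrsim 1$.

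Next, since $\epsilon/\xi$ is a maximum over $j$, I would bound it below by its value at $j^*$, obtaining
\begin{align*}
\frac{\epsilon}{\xi} \;\geq\; \frac{\tilde{\mu}_{j^*}}{n'}\, h^{-1}\!\left(\frac{\log(ej^*)}{\tilde{\mu}_{j^*}}\right).
\end{align*}
It then suffices to show $\tilde{\mu}_{j^*}\, h^{-1}(\log(ej^*)/\tilde{\mu}_{j^*}) \to \infty$, which is the direct multinomial analogue of the Poisson conclusion (after the natural $n$-rescaling implicit when translating between the rate parameter and the probability parameter). I would also record that the hypothesis $\frac{\log j^*}{(\log \alpha_p)(\log\log j^*)} \to \infty$ together with $\alpha_p \to \infty$ forces $\log j^* \to \infty$.

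The two cases then split cleanly via the asymptotic behavior of $h^{-1}$ from Lemma~\ref{lemma:h_asymptotics}. If $\log j^*/\mu_{j^*} \to 0$, then since $\tilde{\mu}_{j^*} \asymp \mu_{j^*}$ we have $\log(ej^*)/\tilde{\mu}_{j^*} \to 0$, so $h^{-1}(x) \sim \sqrt{2x}$ yields $\tilde{\mu}_{j^*}\, h^{-1}(\log(ej^*)/\tilde{\mu}_{j^*}) \sim \sqrt{2\tilde{\mu}_{j^*}\log(ej^*)} \to \infty$, using $\tilde{\mu}_{j^*} \gtrsim 1$ and $\log j^* \to \infty$. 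If instead $\log j^*/\mu_{j^*} \to \infty$, then $\log(ej^*)/\tilde{\mu}_{j^*}\to\infty$, so $h^{-1}(\log(ej^*)/\tilde{\mu}_{j^*}) \to \infty$, and combined with $\tilde{\mu}_{j^*} \gtrsim 1$ the product again diverges. I do not anticipate any real obstacle: the only care beyond the Poisson proof is tracking the harmless factors $1-q_0^{-\max}(j^*)\in[1/2,1]$ and $n/n'\to 1$, and invoking $q_0(p) \geq 1/n$ to keep $\tilde{\mu}_{j^*}$ bounded below by a positive constant.
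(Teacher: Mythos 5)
Your proposal is correct and is essentially the paper's argument: the paper proves this lemma in one line by invoking the Poisson Lemma~\ref{lemma:epsilon_diverges} with the substitution \(\mu_j = nq_0^{-\max}(j)(1-q_0^{-\max}(j))\), which is precisely your reduction to showing \(\tilde{\mu}_{j^*}\,h^{-1}\!\left(\log(ej^*)/\tilde{\mu}_{j^*}\right)\to\infty\) using \(q_0(p)\geq 1/n\), \(1-q_0^{-\max}(j)\in[1/2,1]\), \(n'\sim n\), and \(\log j^*\to\infty\), except that you write out the two cases of Lemma~\ref{lemma:h_asymptotics} rather than citing the Poisson lemma. The \(1/n'\) rescaling you flag parenthetically is handled in exactly the same implicit way by the paper (the divergence is really established for the counts-scale quantity, which is what the Type II error analysis downstream uses), so your treatment does not diverge from the paper's own proof.
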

    \begin{proof}
        The result follows from Lemma \ref{lemma:epsilon_diverges} by taking \(\mu_j = nq_0^{-\max}(j)(1-q_0^{-\max}(j))\). 
    \end{proof}
    
    \begin{proof}[Proof of (i) in Theorem \ref{thm:multinomial_sharp_constant}]
        Fix \(\xi > 1\). For ease of notation, let \(\psi = \frac{\epsilon}{\xi}\), and consider the test \(\varphi = \mathbbm{1}\{||X - nq_0||_\infty \geq n'\psi\}\). By union bound and Corollary \ref{corollary:bennett_binomial}, the Type I error can be shown to vanish as follows, 
        \begin{align*}
            P_{q_0}\left\{\varphi = 1\right\} &\leq P_{q_0}\left\{|X_1 - nq_0(1)| \geq n'\psi\right\} + \sum_{j=2}^{p} P\left\{|X_j - nq_0(j)| \geq n'\psi\right\} \\
            &\leq \frac{q_0(1)(1-q_0(1))}{n'\psi^2} + \sum_{j=2}^{p} 2e^{-\log\left(ej\alpha_p\log^2(ej)\right)} \\
            &= \xi^2\frac{q_0(1)(1-q_0(1))}{n' \epsilon^2} + \frac{1}{\alpha_p} \sum_{j=2}^{p} \frac{2}{ej \log^2(ej)} \\
            &= o(1)
        \end{align*}
        since \(\frac{\epsilon}{\sqrt{\frac{q_0^{\max}(1-q_0^{\max})}{n}}} \to \infty\), \(\alpha_p \to \infty\), and \(\sum_{j=1}^{\infty} \frac{1}{j\log^2(ej)} < \infty\). To show the Type II error vanishes, fix \(q \in \Pi(q_0, \epsilon)\). Then there exists \(j'\) such that \(|q(j') - q_0(j')| \geq \epsilon\). If \(j' \geq 2\), then the proof of (i) in Theorem \ref{thm:poisson_sharp_constant} can be essentially repeated to show \(P_{q}\{\varphi = 1\} = o(1)\) uniformly over all such \(q\) (i.e. \(q \in \Pi(q_0, \epsilon)\) such that \(\max_{j \geq 2} |q_0(j) - q(j)| \geq \epsilon\)). If \(j' = 1\), then observe by Chebyshev's inequality (which can be applied for sufficiently large \(n\) in what follows since \(\xi > 1\)), 
        \begin{align*}
            P_{q}\left\{ \varphi = 1 \right\} &\leq P_{q}\left\{|X_1 - nq_0(1)| \leq n'\psi \right\} \\
            &\leq P_q\left\{ |nq(1) - nq_0(1)| - n'\psi \leq |X_1 - nq(1)| \right\} \\
            &\leq \frac{nq(1)(1-q(1))}{\left(|nq(1)-nq_0(1)| - n'\psi\right)^2} \\
            &\leq \frac{n\left|q(1)(1-q(1)) - q_0(1)(1-q_0(1))\right|}{\left(|nq(1)-nq_0(1)| - n'\psi\right)^2} + \frac{nq_0(1)(1-q_0(1))}{\left(|nq(1)-nq_0(1)| - n'\psi\right)^2} \\
            &\leq \frac{n|q(1) - q_0(1)|}{n^2 |q(1) - q_0(1)|^2 \left(1 - \frac{1}{\xi}\right)^2(1+o(1))} + \frac{nq_0(1)(1-q_0(1))}{n^2\epsilon^2\left(1 - \frac{1}{\xi}\right)^2  (1+o(1))} \\
            &= o(1)
        \end{align*}
        uniformly over all such \(q\) (i.e. \(q \in \Pi(q_0, \epsilon)\) such that \(|q_0(1) - q(1)| \geq \epsilon\)). Here, we have used \(n' = n(1+o(1))\) and the inequality \(|x(1-x) - y(1-y)| \leq |x-y|\) for \(x, y \in [0, 1]\) and \(\epsilon \to \infty\) by Lemma \ref{lemma:multinomial_epsilon_diverges}. The proof is complete.
    \end{proof}

    \subsection{Proof of (ii) in Theorem \ref{thm:multinomial_sharp_constant}}\label{section:multinomial_sharp_constant_lower_bound}

In this section, we present the proof of the lower bound for the sharp asymptotic constant, namely item \textit{(ii)} in Theorem \ref{thm:multinomial_sharp_constant}. The same argument of Theorem \ref{thm:poissonized_multinomial_lower_bound} can be used, with more care to track constants and a slight adjustment for $m$.

    Since $n\to \infty$, we can choose the constant $c$ appearing in Lemma~\ref{lem:relation_multinomial_poisson} as $c = c_n = n^{-1/3}$ so that $c = o(1)$ and $c^2 n \to \infty$. 
    Therefore, we obtain $\mathcal{R}_{\mathcal{M}}(\varepsilon, n, q_0) \geq \mathcal{R}_{\mathcal{PM}}\big(\varepsilon, (1+c_n)n, q_0\big) + o(1)$. 
    We now need to analyze the testing risk $\mathcal{R}_{\mathcal{PM}}\big(\varepsilon, n', q_0\big)$ where $n' = (1+c_n)n$. 
    We recall the definition of $j^*$ and $\epsilon$ from~\eqref{eq_def_jstar_mult_sharp_constant} and~\eqref{eq_def_eps_mult_sharp_constant}, respectively, and define $\psi = n' \epsilon$ as well as
    \begin{align}
            &m = \left(2\lor\left\lceil h^{-1}\left(\frac{\log(ej^*)}{n'q_0^{-\max}(j^*)}\right)\right\rceil \right) \wedge (j^* - 1).\label{def_m_sharp_constant}
        \end{align}         
        
    A draw \(q \sim \pi\) is obtained by first drawing \(J \sim \Uniform(\{2,...,j^*+1\})\), then drawing uniformly at random a size-\(m\) subset \(\mathcal{I} \subset \{2,...,j^*+1\} \setminus \left\{J\right\}\), and finally setting 
    \begin{equation}
        q(j) = 
        \begin{cases}
            q_0(j) + \frac{\psi}{n'}   &\textit{if } j = J,\\
            q_0(j) - \frac{\psi}{n'm} &\textit{if } j \in \mathcal{I}, \\
            q_0(j) &\textit{otherwise},
        \end{cases}\label{eq_prior_sharp_constant_mult}
    \end{equation}
    for \(1 \leq j \leq p\). 
    We recall that $m \lesssim {j^*}^{1/4}$ by proceeding as in Lemma~\ref{lemma:m_size} and that the prior is indeed supported on \(\Pi(q_0, \frac{\psi}{n'})\) by proceeding as in Lemma~\ref{lem:support_prior}.
    The testing risk $\mathcal{R}_{\mathcal{PM}}\big(\varepsilon, n', q_0\big)$ is associated with the testing problem
    \begin{align}
        &H_0: Y \sim \bigotimes_{j=1}^p \Poisson(n'q_0(j)) \label{H0_at_n'}\\
        &H_1: q \sim \pi \text{ and } Y|q \sim \bigotimes_{j=1}^p \Poisson(n'q(j)).\label{H1_at_n'}
    \end{align}
    We aim to prove a lower bound equal to $\xi \cdot q_0^{-\max}(j^*) \,h^{-1}\!\left(\frac{\log(ej^*)}{n'q_0^{-\max}(j^*)}\right)$, which will yield a lower bound of the desired order on the initial problem since we have $n = (1+o(1))n'$, so by Lemma~\ref{lem:removing_logs}
    $$\xi \max_{1\leq j \leq p} q_0^{-\max}(j) \,h^{-1}\!\left(\frac{\log(ej)}{n'q_0^{-\max}(j)}\right) = (1+o(1)) \cdot \xi \max_{1\leq j \leq p} q_0^{-\max}(j) \,h^{-1}\!\left(\frac{\log(ej)}{nq_0^{-\max}(j)}\right).$$
    Similarly as in the proof of Theorem~\ref{thm:poissonized_multinomial_lower_bound}, we will consider the flattened version at sample size $n'$

    \begin{align}
        H_0 &: Y \sim \Poisson\left(n' q_0^{-\max}(j^*)\right)^{\otimes j^*}, \label{H0_flattened_sharp_cst}\\
        H_1 &: q \sim \pi \text{ and } Y|q \sim \bigotimes_{j=1}^{j^*} \Poisson(n'\tilde{q}(j)) \label{H1_flattened_sharp_cst}
    \end{align}
    where \(\tilde{q} \in \R^{j^*}\) is given by \(\tilde{q}(j-1) = q(j) - q_0(j) + q_0^{-\max}(j^*)\) for \(2 \leq j \leq j^*+1\), that is to say, 
    \begin{equation*}
        \tilde{q}(j-1) = 
        \begin{cases}
            q_0^{-\max}(j^*) + c\frac{\psi}{n'}   &\textit{if } j = J,\\
            q_0^{-\max}(j^*) - c\frac{\psi}{n'm} &\textit{if } j \in \mathcal{I}, \\
            q_0^{-\max}(j^*) &\textit{otherwise}.
        \end{cases}
    \end{equation*}

    The following proposition relates the initial testing problem to the flattened one~\eqref{H0_flattened_sharp_cst}-\eqref{H1_flattened_sharp_cst}.

    \begin{proposition}\label{prop:flattening_sharp_constant}
        We have
        \begin{align*}
            &\dTV\left(\bigotimes_{j=1}^{p} \Poisson(n'q_0(j)), \int \bigotimes_{j=1}^{p} \Poisson(n'q(j)) \, \pi(dq) \right) \\
            &\leq \dTV\left(\Poisson(n' q_0^{-\max}(j^*))^{\otimes j^*}, \int \bigotimes_{j=1}^{j^*} \Poisson(n'\tilde q(j)) \, \pi(dq)\right)
        \end{align*}
        provided \(c < c_\eta\). 
    \end{proposition}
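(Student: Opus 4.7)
The plan is to mimic the proof of Proposition~\ref{prop:flattening} essentially verbatim, instantiating Proposition~\ref{prop:general_flattening} at the new sample size $n'$ with the prior defined in \eqref{eq_prior_sharp_constant_mult}. Concretely, I would take $\gamma = \pi$, $k = j^* + 1$, $\omega = n' q_0$, and $\underline{\omega} = n' q_0^{-\max}(j^*)$. With these choices, condition (ii) of Proposition~\ref{prop:general_flattening} is immediate: any draw $q \sim \pi$ satisfies $q(j) = q_0(j)$ deterministically for every $j > j^* + 1$, so the first $k$ coordinates of $n' q$ are independent of the remaining ones. The second term on the right of \eqref{eqn:flattening_split} is then identically zero since $\pi$ leaves all coordinates $j > j^* + 1$ equal to $q_0(j)$, which matches the unperturbed product measure.

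The bulk of the work is checking condition (i), namely that $\xi_j - \omega_j + \underline{\omega} \geq 0$ for $j \leq k$. In our notation this reduces to $n' \tilde{q}(j - 1) \geq 0$ for $j \in \{2, \ldots, j^* + 1\}$, and the only nontrivial case is $j \in \mathcal{I}$, where $\tilde{q}(j-1) = q_0^{-\max}(j^*) - \psi/(n' m)$. Thus the condition becomes
\begin{equation*}
\frac{\psi}{n' m} \;=\; \frac{\epsilon}{m} \;\leq\; q_0^{-\max}(j^*).
\end{equation*}
Plugging in the definition of $\epsilon$, this amounts to
\begin{equation*}
\xi \, (1 - q_0^{-\max}(j^*)) \, h^{-1}\!\left(\frac{\log(ej^*)}{n' q_0^{-\max}(j^*)(1 - q_0^{-\max}(j^*))}\right) \leq m.
\end{equation*}

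To close this, I would exploit three facts. First, the ordering $q_0(1) \geq q_0(2) \geq \ldots$ forces $q_0^{-\max}(j^*) \leq q_0(2) \leq 1/2$, so $(1 - q_0^{-\max}(j^*)) \in [1/2, 1]$ and the argument of $h^{-1}$ above is at most $2 \log(ej^*)/(n' q_0^{-\max}(j^*))$. Second, the definition \eqref{def_m_sharp_constant} yields $m \geq 2 \vee \lceil h^{-1}(\log(ej^*)/(n' q_0^{-\max}(j^*))) \rceil$. Third, by the structural bounds in Lemma~\ref{lemma:h_inverse}, one has $h^{-1}(2x) \leq K h^{-1}(x)$ for some universal $K$ when $h^{-1}(x) \geq 1$, and $h^{-1}(2x) \lesssim 1$ when $h^{-1}(x) \leq 1$. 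Combining these three observations with $\xi < 1$ (the regime relevant to part (ii) of Theorem~\ref{thm:multinomial_sharp_constant}), the inequality holds for all sufficiently large $n$; in the small-$h^{-1}$ regime the bound $m \geq 2$ absorbs the factor coming from $(1 - q_0^{-\max}(j^*))^{-1} \leq 2$, and in the large-$h^{-1}$ regime the bound $m \geq \lceil h^{-1}(\log(ej^*)/(n'q_0^{-\max}(j^*))) \rceil$ together with $\xi < 1$ gives the required comparison.

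The main obstacle is precisely this verification of condition (i), because the $h^{-1}$ argument appearing in $\epsilon$ carries the extra $(1 - q_0^{-\max}(j^*))$ factor that is absent from the definition of $m$ in \eqref{def_m_sharp_constant}. The $2 \vee$ enlargement of $m$ is what provides the cushion: without it one would be stuck comparing $\xi h^{-1}(2x)$ to $h^{-1}(x)$ at small $x$ where the ratio $h^{-1}(2x)/h^{-1}(x)$ is $\sqrt{2}$ rather than $1$. Once this verification is complete, the conclusion follows directly from Proposition~\ref{prop:general_flattening}.
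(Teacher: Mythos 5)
Your skeleton is the same as the paper's: instantiate Proposition~\ref{prop:general_flattening} with \(\gamma=\pi\), \(k=j^*+1\), \(\omega=n'q_0\), \(\underline{\omega}=n'q_0^{-\max}(j^*)\), note that item (ii) and the vanishing of the second term in (\ref{eqn:flattening_split}) are immediate, and reduce everything to the nonnegativity \(\tilde q(j)\geq 0\), i.e.\ \(\epsilon/m\leq q_0^{-\max}(j^*)\). The paper disposes of that last step in one line, writing \(\tilde q(j)\geq q_0^{-\max}(j^*)\bigl(1-c\,h^{-1}(\cdot)/\lceil h^{-1}(\cdot)\rceil\bigr)\geq 0\) without tracking the \((1-q_0^{-\max})\) factors, so you are right that this is the delicate point; but your justification of it has a quantitative hole in the regime \(h^{-1}(x)\geq 1\), where \(x=\log(ej^*)/(n'q_0^{-\max}(j^*))\) and \(b=1-q_0^{-\max}(j^*)\in[1/2,1]\). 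You bound \(b\,h^{-1}(x/b)\leq h^{-1}(2x)\leq K\,h^{-1}(x)\) and then want \(\xi K\,h^{-1}(x)\leq m\) with \(m\approx\lceil h^{-1}(x)\rceil\). The best universal \(K\) here is essentially \(2\) (by concavity of \(h^{-1}\); and indeed \(h^{-1}(2x)/h^{-1}(x)\to 2\) in the subpoissonian regime since \(h^{-1}(y)\sim y/\log y\)), so ``\(\xi<1\)'' alone does not close the comparison: for \(\xi\in(1/2,1)\) and \(h^{-1}(x)\) large, \(\xi K h^{-1}(x)\approx 2\xi h^{-1}(x)>\lceil h^{-1}(x)\rceil\). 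The inequality you need is nevertheless true, but requires a different argument: either use that \(t\mapsto t\,h^{-1}(s/t)\) is increasing (the very fact invoked in the proof of Lemma~\ref{lemma:intuition_subpoissonian_regime}), which with \(s=\log(ej^*)/n'\) gives directly \(q_0^{-\max}(j^*)b\,h^{-1}(x/b)\leq q_0^{-\max}(j^*)h^{-1}(x)\) and hence \(\epsilon/m\leq\xi\,q_0^{-\max}(j^*)\,h^{-1}(x)/\lceil h^{-1}(x)\rceil\leq q_0^{-\max}(j^*)\); or observe that \(h^{-1}(x)\geq 1\) forces \(n'q_0^{-\max}(j^*)\leq\log(ej^*)/h(1)\leq\log(en)/h(1)\) (using \(q_0(j)\geq 1/n\), so \(p\leq n\)), hence \(q_0^{-\max}(j^*)=o(1)\), the factor \((1-q_0^{-\max}(j^*))^{-1}=1+o(1)\), and concavity of \(h^{-1}\) suffices.

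Two smaller points. Your claim \(m\geq 2\vee\lceil h^{-1}(x)\rceil\) ignores the cap \(\wedge\,(j^*-1)\) in (\ref{def_m_sharp_constant}); this is harmless because under the hypotheses of Theorem~\ref{thm:multinomial_sharp_constant} one has \(j^*\to\infty\) while \(m\lesssim (j^*)^{1/4}\) (as in Lemma~\ref{lemma:m_size}), but it should be stated. Also, \(\epsilon\) in (\ref{eq_def_eps_mult_sharp_constant}) is a maximum over all \(j\), while \(j^*\) in (\ref{eq_def_jstar_mult_sharp_constant}) maximizes the \(\alpha_p\)-inflated criterion, so identifying \(\epsilon\) with its \(j^*\)-term when you ``plug in the definition of \(\epsilon\)'' deserves a word (e.g.\ via Lemma~\ref{lem:removing_logs}); the paper glosses over this too. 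Your small-\(x\) case, using \(m\geq 2\), \(\xi<1\), and \(h^{-1}(2x)\leq h^{-1}(2h(1))\leq 2\), is fine.
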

    \begin{proof}
        The result will follow from an application of Proposition \ref{prop:general_flattening}. Let \(\gamma = \pi\), \(k = j^*+1\), \(\omega = n'q_0\), and \(\underline{\omega} = n'q_0^{-\max}(j^*)\). It is clear item \((ii)\) of the statement of Proposition \ref{prop:general_flattening} is satisfied. Note in the notation of Proposition \ref{prop:general_flattening}, we have \(\xi = n'q\) and \(\xi_j - \omega_j + \underline{\omega} = n'q_0(j)\mathbbm{1}_{\{j = 1\} \cup \{j > j^*\}} + n'\tilde{q}(j-1)\mathbbm{1}_{\{2 \leq j \leq j^*+1\}}\). 
                If \(m = 0\), then \(\tilde{q} = q_0\) and so there is nothing to prove. If \(m \geq 1\), then
        \begin{align*}
            \tilde{q}(j) &\geq q_0^{-\max}(j^*) - \frac{c\psi}{n'm} \\
            &= q_0^{-\max}(j^*)\left(1 - c \cdot \frac{h^{-1}\left(\frac{\log(e j^*)}{n'q_0^{-\max}(j^*)} \right)}{\left\lceil h^{-1}\left(\frac{\log(ej^*)}{n'q_0^{-\max}(j^*)} \right)\right\rceil }\right) \\
            &\geq 0.
        \end{align*}
        The result then follows from Proposition \ref{prop:general_flattening} since the second term in (\ref{eqn:flattening_split}) is zero.
    \end{proof}

    We now proceed with the conditional second-moment method. For notational ease, let us denote \(P_0 = \Poisson(n' q_0^{-\max}(j^*))^{\otimes j^*}\) and \(P_\pi = \int \bigotimes_{j=1}^{j^*} \Poisson(n'\tilde{q}(j)) \, \pi(dq)\). Denote \(\mu_j = n'q_0^{-\max}(j)\). We will condition on the event
    \begin{equation}\label{def:conditional_event_sharp_cst}
        E := \left\{ \max_{1 \leq j \leq j^*} Y_j - \mu_{j^*} \leq \psi \right\}.
    \end{equation}
    Let \(\tilde{P}_0\) and \(\tilde{P}_{\pi}\) denote the measures \(P_0\) and \(P_\pi\) conditioned on the event \(E\), that is to say, for any event \(A\) we have \(\tilde{P}_0(A) = \frac{P_0(A \cap E)}{P_0(E)}\) and \(\tilde{P}_\pi(A) = \frac{P_\pi(A \cap E)}{P_\pi(E)}\).
    Proceeding as in Lemma~\ref{lemma:asymptotic_poisson_second_moment}, we obtain
    \begin{align}
        \dTV\left(P_0, P_\pi\right) \leq \frac{1}{2}\sqrt{\chi^2(\tilde{P}_\pi\,||\,\tilde{P}_\mu)} + o(1).\label{eq_TV_chi2_mult_sharp_constant}
    \end{align}

        It remains to bound the \(\chi^2\) divergence in the display above. 
        The following lemma reduces the task to bounding two specific probabilistic quantities. 

    \begin{lemma}\label{lemma:multinomial_conditional_sharp_cst}
        If $\xi < 1$ and either \(\frac{\log j^*}{\mu_j^*} \to 0\) or \(\frac{\log j^*}{\mu_{j^*}} \to \infty\), then
        \begin{align}
            \begin{split}\label{eqn:multinomial_chisquare_bound_sharp_cst}
            &\chi^2\left(\left.\left.\tilde{P}_\pi \,\right|\right|\, \tilde{P}_0\right) + 1 \\
            &\leq (1+o(1)) E\left(e^{\frac{\psi^2}{m^2\mu_{j^*}}|\mathcal{I} \cap \mathcal{I}'|} \right) \left[1 + \frac{1}{j^*\!-\!m} \bigg( e^{\frac{\psi^2}{\mu_{j^*}}}P\left\{ \Poisson\!\left(\frac{(\mu_{j^*} \! + \psi)^2}{\mu_{j^*}}\right) \leq \mu_{j^*}\! + \psi \right\} - 1 \bigg)_{\!\!+} \right]
            \end{split}
        \end{align}
        provided \(0 < c < c_\eta\). Here, \(\mathcal{I}\) and \(\mathcal{I}'\) are i.i.d. copies and we adopt the convention $\frac{\psi^2}{m^2\mu_{j^*}}|\mathcal{I} \cap \mathcal{I}'| = 0$ if $\psi = 0$ and $\mathcal{I} = \mathcal{I}' = \emptyset$ due to $m = 0$.  
    \end{lemma}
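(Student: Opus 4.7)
The plan is to follow the structure of the proof of Lemma \ref{lemma:multinomial_conditional} essentially verbatim, with two modifications that reflect the move to sharp asymptotic constants. First, the slack prefactor $1/(1-\alpha/3)$ that appeared in the earlier proof must be sharpened to $1+o(1)$, which requires upgrading $P_\pi(E^c) \leq \alpha/6$ to $P_\pi(E^c) = o(1)$. Second, the prior \eqref{eq_prior_sharp_constant_mult} uses perturbations $\pm \psi/n'$ and $-\psi/(n'm)$ without an extra multiplicative constant $c$, so the scalar-product decomposition \eqref{eq_scalar_product} reads
\begin{equation*}
\tfrac{\langle \delta, \delta'\rangle}{\mu_{j^*}} \leq \tfrac{\psi^2}{\mu_{j^*}}\,\mathbbm{1}_{\{J=J'\}} + \tfrac{\psi^2}{m^2 \mu_{j^*}}\,|\mathcal{I} \cap \mathcal{I}'|,
\end{equation*}
directly producing the two exponential factors present in~\eqref{eqn:multinomial_chisquare_bound_sharp_cst}.

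The preliminary bound $P_\pi(E^c) = o(1)$—implicitly used already in~\eqref{eq_TV_chi2_mult_sharp_constant}—will be established exactly as in Lemma \ref{lemma:asymptotic_poisson_second_moment}: Bennett's inequality (Lemma \ref{lemma:Bennett}) combined with a union bound over $j^*$ coordinates gives $P_0(E^c) \lesssim \exp(-\mu_{j^*} h(\psi/\mu_{j^*}) + \log j^*)$, which is $o(1)$ because $\mu_{j^*} h(\psi/\mu_{j^*})$ dominates $\log j^*$ thanks to the diverging factor $\alpha_p \log^2(ej^*)$ built into $\psi$ (the cosmetic $\xi$ is absorbed by Lemma \ref{lem:removing_logs}). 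Under $P_\pi$, only the single index $J$ is boosted to mean $\mu_{j^*} + \psi$, and the strict inequality $\xi < 1$ guarantees a gap $(1-\xi)\psi$ between this mean and the threshold $\mu_{j^*} + \psi/\xi$; a second Bennett application then yields the same $o(1)$ decay in both regimes $\log j^*/\mu_{j^*} \to 0$ and $\log j^*/\mu_{j^*} \to \infty$.

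With the $(1+o(1))$ prefactor in hand, the remaining computation is an algebraic transcription of the proof of Lemma \ref{lemma:multinomial_conditional}. I would expand
\begin{equation*}
\chi^2\!\left(\tilde P_\pi \,\|\, \tilde P_0\right) + 1 \leq (1+o(1)) \iint \exp\!\left(\tfrac{\langle \delta, \delta'\rangle}{\mu_{j^*}}\right) P\!\left(\left. \max_{1 \leq j \leq j^*} W_j - \mu_{j^*} \leq \psi \,\right|\, \rho, \rho'\right) \pi(dq)\,\pi(dq'),
\end{equation*}
with $\rho = n'\tilde q$, $\delta = \rho - \mu_{j^*}\mathbf{1}_{j^*}$, and $W_j \,|\, \rho, \rho' \sim \Poisson(\rho_j \rho_j'/\mu_{j^*})$ independently. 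Applying the scalar-product decomposition and conditioning on $(\mathcal{I}, \mathcal{I}')$, I use $J \,|\, \mathcal{I} \sim \Uniform(\mathcal{I}^c)$ independently of $J' \,|\, \mathcal{I}' \sim \Uniform(\mathcal{I}'^c)$, so $\mathbbm{1}_{\{J=J'\}} \,|\, \mathcal{I}, \mathcal{I}' \sim \Bernoulli(|\mathcal{I}^c \cap \mathcal{I}'^c|/(|\mathcal{I}^c|\cdot|\mathcal{I}'^c|))$. Bounding this Bernoulli probability by $1/(j^*-m)$ and identifying $W_J \,|\, \rho, \rho' \sim \Poisson((\mu_{j^*}+\psi)^2/\mu_{j^*})$ on the event $\{J=J'\}$ produces the stated bound.

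The main obstacle I foresee is obtaining the $o(1)$ control on $P_\pi(E^c)$ in the subpoissonian regime $\log j^*/\mu_{j^*} \to \infty$, where $\psi/\mu_{j^*} \to \infty$ and Bennett's inequality is genuinely non-Gaussian. The decay must exploit the strict inequality $\xi < 1$ to generate a logarithmic gap inside the $h$-function in the exponent, paralleling the case analysis in Proposition \ref{prop:asymptotic_poisson_mgf}. The subgaussian regime $\log j^*/\mu_{j^*} \to 0$ is comparatively routine since $\psi \sim \xi\sqrt{2\mu_{j^*}\log j^*}$ and Bennett's bound reduces to a standard Gaussian union bound; both regimes have direct Poisson analogues in Appendix \ref{appendix:poisson_sharp_constant}, which I would largely import.
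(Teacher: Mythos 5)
Your proposal is correct and follows essentially the same route as the paper: the paper's proof likewise consists of noting \(P_0(E^c), P_\pi(E^c) = o(1)\) exactly as in Lemma~\ref{lemma:asymptotic_poisson_second_moment} and then repeating the calculations of Lemma~\ref{lemma:multinomial_conditional} with the \(\frac{1}{1-\alpha/3}\) prefactor replaced by \(1+o(1)\) and the factor \(c\) absent from the scalar-product decomposition. Your additional details (the Bernoulli conditioning of \(\mathbbm{1}_{\{J=J'\}}\) given \(\mathcal{I},\mathcal{I}'\), the bound by \(\frac{1}{j^*-m}\), and the identification of \(W_J\) on \(\{J=J'\}\)) are precisely those calculations.
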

    \begin{proof}[Proof of Lemma~\ref{lemma:multinomial_conditional_sharp_cst}]
    The result can be obtained by noting \(P_0(E^c), P_\pi(E^c) = o(1)\) as in the proof of Lemma~\ref{lemma:asymptotic_poisson_second_moment} and following the calculations in the proof of Lemma~\ref{lemma:multinomial_conditional}.
        
    \end{proof}

\begin{lemma}\label{lem:sharp_constant_prior_mult}
    Assume \eqref{eqn:multinomial_psi_large} holds for some sufficiently large $C_*$ for the index $j^*$ defined in~\eqref{eq_def_jstar_mult_sharp_constant}.
    If $\xi<1$ and either \(\frac{\log j^*}{\mu_j^*} \to 0\) or \(\frac{\log j^*}{\mu_{j^*}} \to \infty\), then
    \begin{align*}
        E\bigg( e^{\frac{\psi^2}{m^2\mu_{j^*}}|\mathcal{I} \cap \mathcal{I}'|}\bigg) 
        & = \exp(o(1)).
    \end{align*}
\end{lemma}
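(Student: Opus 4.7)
The plan is to follow the same reduction as in Lemma~\ref{lemma:mgf_bound}. Since $|\mathcal{I} \cap \mathcal{I}'|$ is hypergeometric, Proposition 20.6 of \cite{aldous_exchangeability_1985} together with Jensen's inequality will yield
\begin{equation*}
    E\Bigl(e^{\frac{\psi^2}{m^2\mu_{j^*}}|\mathcal{I}\cap\mathcal{I}'|}\Bigr) \leq \exp\!\left(\frac{m^2}{j^*-1}\Bigl(e^{\frac{\psi^2}{m^2\mu_{j^*}}}-1\Bigr)\right).
\end{equation*}
The task then reduces to showing that the exponent on the right-hand side is $o(1)$. Before splitting into regimes, I will invoke Lemma~\ref{lem:removing_logs} to replace $\log(ej^*\alpha_p\log^2(ej^*))$ with $(1+o(1))\log(ej^*)$ in the definitions of $\psi$ and the bulk of $m$, and use $1-q_0^{-\max}(j^*)\asymp 1$ to simplify.

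In the subgaussian regime $\log j^*/\mu_{j^*}\to 0$, Lemma~\ref{lemma:h_asymptotics} gives $h^{-1}(\log(ej^*)/\mu_{j^*}) = o(1)$, so the ceiling in~\eqref{def_m_sharp_constant} is $1$ eventually and $m = 2$. Using $h^{-1}(x)\sim\sqrt{2x}$ as $x\to 0$, I will compute $\psi \sim \xi\sqrt{2\mu_{j^*}\log j^*}$ and hence $\psi^2/(m^2\mu_{j^*})\sim (\xi^2/2)\log j^*$. The exponent becomes $(1+o(1))\cdot\frac{4}{j^*-1}\cdot (j^*)^{\xi^2/2}$, which tends to $0$ since $\xi<1$ forces $\xi^2/2<1$.

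In the subpoissonian regime $\log j^*/\mu_{j^*}\to\infty$, I will use $h^{-1}(x)\sim x/\log x$ as $x\to\infty$ to obtain $m\asymp\frac{\log j^*}{\mu_{j^*}L}$ and $\psi\sim \xi\frac{\log j^*}{L}$, where $L=\log(\log j^*/\mu_{j^*})$. A direct computation then gives the clean asymptotic identity $\psi^2/(m^2\mu_{j^*})\sim \xi^2\mu_{j^*}$, so the exponent is bounded by a constant multiple of $\frac{(\log j^*)^2}{\mu_{j^*}^2L^2(j^*-1)}\,e^{\xi^2\mu_{j^*}}$. Since we are in the regime $\mu_{j^*}=o(\log j^*)$, we have $e^{\xi^2\mu_{j^*}}=(j^*)^{o(1)}$, so the bound is $(j^*)^{-1+o(1)}\to 0$.

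The main obstacle will be the subpoissonian calculation: the bound $\psi^2/(m^2\mu_{j^*})\sim \xi^2\mu_{j^*}$ requires that the $1/L$ factors in $\psi$ and $m$ cancel exactly to leading order, which is a nontrivial alignment reflecting the fact that $m$ was precisely calibrated to render the compensating mass removal undetectable. Once this identity is pinned down, the fact that $\xi<1$ combined with $\mu_{j^*}=o(\log j^*)$ gives the required sub-polynomial growth of $e^{\xi^2\mu_{j^*}}$ and the conclusion is immediate; the rest of the argument is bookkeeping. Combining the two cases closes the proof.
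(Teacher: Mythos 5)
Your proposal is correct and follows essentially the same route as the paper's proof: the same hypergeometric/Jensen bound $\exp\bigl(\tfrac{m^2}{j^*-1}(e^{\psi^2/(m^2\mu_{j^*})}-1)\bigr)$, the same split into the two regimes, and the same key asymptotics $\psi^2/(m^2\mu_{j^*})\sim(\xi^2/2)\log j^*$ and $\sim\xi^2\mu_{j^*}$, respectively. The only (harmless) cosmetic difference is that you identify $m=2$ exactly in the subgaussian case and bound $m^2$ via its explicit asymptotics together with $\mu_{j^*}\geq 1$ in the subpoissonian case, whereas the paper simply invokes $m\lesssim (j^*)^{1/4}$.
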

    
    \begin{proof}[Proof of Lemma~\ref{lem:sharp_constant_prior_mult}]

    \textbf{Case 1.} Assume that $\frac{\log(j^*)}{\mu_{j^*}}\to 0$, which implies $\psi \sim \xi\sqrt{2\mu_{j^*} \log(ej^*)}$ by Lemma \ref{lemma:h_asymptotics}. 
    Proceeding as in~\eqref{eqn:multinomial_chisquare_II}, we obtain, for some constant $C$ whose value can change from line to line 
    \begin{align*}
        E\left( e^{\frac{\psi^2}{m^2\mu_{j^*}}|\mathcal{I} \cap \mathcal{I}'|}\right) &\leq  \exp\left(\frac{m^2}{(j^*-1)}\left(e^{\frac{\psi^2}{m^2\mu_{j^*}}} - 1\right)\right)\\
        & \leq \exp\left(\frac{C {j^*}^{1/2}}{(j^*-1)}\left(\exp\left({\frac{2\xi^2\log(j^*)(1+o(1))}{4}}\right) - 1\right)\right)\\
        & \leq \exp\left(\frac{C }{\sqrt{j^*}}\left({(j^*)}^{\frac{\xi^2}{2}(1+o(1))} - 1\right)\right)\\
        & = \exp(o(1)) \qquad \text{ since } j^* \to \infty \text{ and } \xi<1.
    \end{align*}
    
    \textbf{Case 2.} Assume now that $\frac{\log(j^*)}{\mu_{j^*}}\to \infty$, which implies $\psi \sim \xi\frac{\log(j^*)}{\log(\log(j^*)/\mu_{j^*})}$. 
    We have 
    \begin{align*}
        m \sim h^{-1} \left(\frac{\log(j^*)}{\mu_{j^*}}\right) \sim \frac{\log(j^*)/\mu_{j^*}}{\log(\log(j^*)/\mu_{j^*})}.
    \end{align*}
We obtain
\begin{align*}
        E\left( e^{\frac{\psi^2}{m^2\mu_{j^*}}|\mathcal{I} \cap \mathcal{I}'|}\right) &\leq  \exp\left(\frac{m^2}{(j^*-1)}\bigg(e^{\frac{\psi^2}{m^2\mu_{j^*}}} - 1\bigg)\right)\\
        & \leq \exp\left(\frac{C {j^*}^{1/2}}{(j^*-1)}\left(e^{\xi^2 \mu_{j^*}(1+o(1))} - 1\right)\right)\\
        & \leq \exp\left(\frac{C {j^*}^{1/2}}{(j^*-1)}\left(e^{o(\log(j^*))} - 1\right)\right)\\
        & = \exp(o(1))
    \end{align*}
    for $c$ small enough and $j^* \to \infty$, since the assumption $\frac{\log(j^*)}{\mu_{j^*}}\to \infty$ implies $\xi^2\mu_{j^*} \leq \frac{1+o(1)}{4} \log (j^*)$.
    \end{proof}

    \begin{lemma}\label{lem:control_chi2_multinomial_sharp_cst}
        There exists a sufficiently large universal constant \(C_* \geq 1\) such that the following holds. If the condition (\ref{eqn:multinomial_psi_large}) is satisfied for the index $j^*$ defined in~\eqref{eq_def_jstar_mult_sharp_constant} and either \(\frac{\log j^*}{\mu_j^*} \to 0\) or \(\frac{\log j^*}{\mu_{j^*}} \to \infty\), then   
        \begin{equation*}
            1 + \frac{1}{j^*-m} \left( e^{\frac{\psi^2}{\mu_{j^*}}}P\left\{ \Poisson\left(\frac{(\mu_{j^*} + \psi)^2}{\mu_{j^*}}\right) \leq \mu_{j^*} + \psi \right\} - 1 \right)_{\!+}  \leq 1 + o(1).
        \end{equation*}
    \end{lemma}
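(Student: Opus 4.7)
The plan is to invoke the one-sided Bennett bound on the Poisson lower tail, exploit an exact cancellation with the $e^{\psi^2/\mu_{j^*}}$ prefactor, and then use the defining property of $j^*$ to read off the leading asymptotic exponent. If $e^{\psi^2/\mu_{j^*}} P\{\,\cdot\,\} \leq 1$ the positive part vanishes and the bound is trivially $1$, so I assume otherwise. Since $\psi>0$, one has $\rho := (\mu_{j^*}+\psi)^2/\mu_{j^*} = \mu_{j^*} + 2\psi + \psi^2/\mu_{j^*} > \mu_{j^*}+\psi$, so the one-sided Bennett bound (used identically in the proof of Lemma~\ref{lem:control_chi2_multinomial}) gives
\begin{align*}
P\{\Poisson(\rho) \leq \mu_{j^*}+\psi\} \leq \exp\!\left(-\rho\, h\!\left(\frac{\mu_{j^*}+\psi}{\rho} - 1\right)\right).
\end{align*}
Substituting $(\mu_{j^*}+\psi)/\rho - 1 = -\psi/(\mu_{j^*}+\psi)$ and applying the identity $(1+x)\log(1+x) = h(x)+x$ with $x=\psi/\mu_{j^*}$ telescopes the exponent, yielding the clean inequality
\begin{align*}
e^{\psi^2/\mu_{j^*}}\, P\{\Poisson(\rho) \leq \mu_{j^*}+\psi\} \leq \exp\!\left(\mu_{j^*}\, h(\psi/\mu_{j^*})\right).
\end{align*}
This exact cancellation between the inflating exponential and the corresponding term in the Bennett exponent is what makes the sharp constant attainable.

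Next, I identify the asymptotic value of this exponent. The hypothesis $\log(j^*)/((\log\alpha_p)(\log\log j^*))\to\infty$ forces $j^*\to\infty$, whence $q_0^{-\max}(j^*) \leq 1/(j^*+1) \to 0$ and $1-q_0^{-\max}(j^*) = 1-o(1)$. Writing $\mu_{j^*} = n'q_0^{-\max}(j^*)$ and combining Lemma~\ref{lem:removing_logs} (applied with $u_{p,j^*} = \log(\alpha_p\log^2(ej^*))/\log(ej^*) = o(1)$) with the polynomial growth of $h^{-1}$, I obtain $\psi/\mu_{j^*} = \xi(1+o(1))\, y^*$, where $y^* := h^{-1}(\log(ej^*)/\mu_{j^*})$. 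The two regimes are then handled separately. In the subgaussian case $\log j^*/\mu_{j^*}\to 0$, Lemma~\ref{lemma:h_asymptotics} gives $y^*\to 0$ with $h(\xi(1+o(1))y^*) \sim \xi^2 (y^*)^2/2 \sim \xi^2 h(y^*)$, so $\mu_{j^*} h(\psi/\mu_{j^*}) = \xi^2(1+o(1))\log(ej^*)$. In the subpoissonian case $\log j^*/\mu_{j^*}\to\infty$, $y^*\to\infty$ and $h(\xi y^*) \sim \xi y^*\log y^* \sim \xi h(y^*)$, yielding $\mu_{j^*} h(\psi/\mu_{j^*}) = \xi(1+o(1))\log(ej^*)$.

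In either regime $\exp(\mu_{j^*}h(\psi/\mu_{j^*})) \leq (ej^*)^{\gamma}$ for some $\gamma = \gamma(p)$ with $\limsup_{p\to\infty}\gamma \leq \max(\xi,\xi^2) < 1$. Since $m \lesssim (j^*)^{1/4}$ by the argument of Lemma~\ref{lemma:m_size}, one has $j^*-m \sim j^*$ and therefore
\begin{align*}
\frac{1}{j^*-m}\!\left(e^{\psi^2/\mu_{j^*}} P\{\Poisson(\rho) \leq \mu_{j^*}+\psi\} - 1\right)_+ \lesssim (j^*)^{\gamma-1+o(1)} \to 0,
\end{align*}
which gives the desired $1+o(1)$ bound. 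The main obstacle is the algebraic cancellation in the first display: any constant loss would inflate $\gamma$ past $1$ and invalidate the argument, and unlike in Lemma~\ref{lem:control_chi2_multinomial}---where the small factor $c$ in the prior provided the margin---here the margin must come entirely from $\xi<1$. A secondary technical check is ensuring that the $(1+o(1))$ factors from the $\alpha_p\log^2(ej^*)$ correction and from $1-q_0^{-\max}(j^*)$ really do collapse to give $\psi/\mu_{j^*} \sim \xi y^*$, which is precisely the role of Lemma~\ref{lem:removing_logs}.
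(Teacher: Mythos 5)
Your proof is correct, and it takes a cleaner, more direct route than the paper. The paper proves this lemma by noting $\frac{1}{j^*-m}\sim\frac{1}{j^*}$ and then pointing back to the case analysis of Proposition~\ref{prop:asymptotic_poisson_mgf} (a sub-split on $\xi \lessgtr 1/\sqrt{2}$ with a Bennett bound on $\Poisson(\mu_{j^*}+2\epsilon)$ in the subgaussian regime, and Lemma~\ref{lemma:bennett_cancellation} with the auxiliary function $g$ in the subpoissonian regime); there the margin comes from the mean shift being $\xi$ times the truncation level. In the present lemma the mean shift and the truncation level are both $\psi$, with the factor $\xi$ already baked into $\psi=n'\epsilon$, so the transplant of those steps is not literal. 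Your argument handles the statement as written: applying the lower-tail Bennett bound once (valid unconditionally since $(\mu_{j^*}+\psi)^2/\mu_{j^*}>\mu_{j^*}+\psi$) and keeping the exact identity — which is precisely the $c=1$ specialization of the computation inside Lemma~\ref{lemma:bennett_cancellation}, including the $-2c\xi$ term the paper drops — collapses the prefactor to give $e^{\psi^2/\mu_{j^*}}P\{\cdot\}\le\exp\bigl(\mu_{j^*}h(\psi/\mu_{j^*})\bigr)$, and then Lemma~\ref{lem:removing_logs}, $1-q_0^{-\max}(j^*)=1+o(1)$, and Lemma~\ref{lemma:h_asymptotics} give $\mu_{j^*}h(\psi/\mu_{j^*})=(\xi^2+o(1))\log(ej^*)$ or $(\xi+o(1))\log(ej^*)$ in the two regimes, so division by $j^*-m\sim j^*$ kills the term because $\xi<1$. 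What your route buys is uniformity (no sub-cases on the size of $\xi$ or of $\psi/\mu_{j^*}$, no need to verify the mean-exceeds-threshold condition asymptotically) and a correct accounting of where the $\xi<1$ margin comes from in this multinomial sharp-constant setting; what the paper's route buys is economy, reusing Proposition~\ref{prop:asymptotic_poisson_mgf} verbatim at the cost of some looseness in matching its setup to the quantities appearing here. The only places where your write-up leans on the same level of informality as the paper are the steps $\psi/\mu_{j^*}=\xi(1+o(1))y^*$ (which needs the value-at-$j^*$ form of Lemma~\ref{lem:removing_logs}'s proof together with concavity of $h^{-1}$ to absorb the $(1+o(1))$ inside) and $m\lesssim (j^*)^{1/4}$ via the argument of Lemma~\ref{lemma:m_size}; both are fine.
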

    \begin{proof}
        The proof proceeds by noting that, since $m \lesssim {j^*}^{1/4}$ and $j^* \to \infty$, we have 
        $\frac{1}{j^*-m} \sim \frac{1}{j^*}$, so
        \begin{align*}
            1 + \frac{1}{j^*-m} \left( e^{\frac{\psi^2}{\mu_{j^*}}}P\left\{ \Poisson\left(\frac{(\mu_{j^*} + \psi)^2}{\mu_{j^*}}\right) \leq \mu_{j^*} + \psi \right\} - 1 \right)_{\!+} \\\leq (1+o(1))E\left(\exp\left(\frac{\psi^2}{\mu_{j^*}} \mathbbm{1}_{\{J = J'\}}\right) P\left\{\left.\max_{1 \leq j \leq j^*} W_j - \mu_{j^*} \leq \psi \,\right|\, \rho, \rho'\right\}\right) 
        \end{align*}
        where \(\rho, \rho' \overset{iid}{\sim} \pi\) and \(J, J'\) are corresponding random indices, and \(W_j \,|\, \rho, \rho' \overset{ind}{\sim} \Poisson\left(\frac{\rho_j \rho_j'}{\mu_{j^*}}\right)\).
        We can now conclude by repeating the same steps as in the proof of Proposition~\ref{prop:asymptotic_poisson_mgf}.
    \end{proof}
        \begin{proof}[Proof of (ii) in Theorem \ref{thm:multinomial_sharp_constant}]
        Fix \(\xi < 1\). 
        Note since \(n' = n(1+o(1))\) that \(\frac{\log j^*}{nq_0^{-\max}(j^*)} \to 0\) implies \(\frac{\log j^*}{\mu_{j^*}} \to 0\) and \(\frac{\log j^*}{nq_0^{-\max}(j^*)} \to \infty\) implies \(\frac{\log j^*}{\mu_{j^*}} \to \infty\).
        Then by Proposition \ref{prop:flattening_sharp_constant}, equation \ref{eq_TV_chi2_mult_sharp_constant}, and Lemmas \ref{lemma:multinomial_conditional_sharp_cst},~\ref{lem:sharp_constant_prior_mult},~\ref{lem:control_chi2_multinomial_sharp_cst} 
        
        \begin{align*}
            \lim_{p \to \infty} \mathcal{R}_{\mathcal{M}}(\epsilon, q_0) &\geq \lim_{p \to \infty} \left(1 - \dTV\left(P_0, P_\pi\right)\right) \geq \lim_{p \to \infty}\left(1 - \frac{1}{2}\sqrt{\chi^2\left(\tilde{P}_{\pi} \,||\, \tilde{P}_0\right)}\right) = 1,
        \end{align*}
        
        as claimed. 
    \end{proof}
    
\section{Auxiliary, technical tools}
    Throughout this section, let \(h : [-1, \infty) \to \R\) denote the function with \(h(u) = (1+u)\log(1+u) - u\) for \(u > -1\) and \(h(-1) = 1\).

    \begin{lemma}[Bennett's inequality for Poisson random variables]\label{lemma:Bennett}
        Suppose \(Y \sim \Poisson(\rho)\) where \(\rho > 0\). If \(u \geq 0\), then \(P\left\{Y \geq \rho(1+u)\right\} \leq \exp\left(-\rho h(u)\right)\). 
        If \(0 \leq u < 1\), then \(P\left\{Y \leq \rho(1-u)\right\} \leq \exp\left(-\rho h(-u)\right)\). 
        In particular, for any \(u \geq 0\) we have \(P\left\{|Y - \rho| \geq \rho u\right\} \leq 2\exp\left(-\rho h(u)\right)\). 
    \end{lemma}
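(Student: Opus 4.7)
The plan is to prove all three statements by a standard Chernoff-bound argument, leveraging the explicit Poisson moment generating function $E(e^{tY}) = \exp(\rho(e^t - 1))$ valid for every $t \in \R$. Each bound reduces to a single one-dimensional convex optimization that can be solved in closed form.

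For the upper tail with $u \geq 0$, Markov's inequality applied to $e^{tY}$ for arbitrary $t \geq 0$ gives
\[
P\{Y \geq \rho(1+u)\} \leq e^{-t\rho(1+u)}\, E(e^{tY}) = \exp\!\left(\rho(e^t - 1) - t\rho(1+u)\right).
\]
The exponent is convex in $t$, and its minimum over $t \geq 0$ is attained at $t = \log(1+u) \geq 0$, where the exponent evaluates to $\rho u - \rho(1+u)\log(1+u) = -\rho h(u)$. The lower tail with $0 \leq u < 1$ is handled identically, but optimizing over $t \leq 0$: the minimizer is $t = \log(1-u) \leq 0$, and the exponent there equals $-\rho u - \rho(1-u)\log(1-u) = -\rho h(-u)$.

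For the two-sided bound, union-bound the two tails. When $0 \leq u < 1$, the quick auxiliary fact $h(-u) \geq h(u)$ — verified by noting that $g(u) := h(-u) - h(u)$ satisfies $g(0) = 0$ and $g'(u) = -\log(1-u^2) \geq 0$ — implies both summands are at most $\exp(-\rho h(u))$. When $u > 1$, the lower-tail event $\{Y \leq \rho(1-u)\}$ is empty since $Y \geq 0$, so only the upper-tail bound contributes; at $u = 1$, the lower-tail probability equals $P\{Y = 0\} = e^{-\rho} = \exp(-\rho h(-1))$, and since $h(-1) = 1 \geq 2\log 2 - 1 = h(1)$, the estimate $2\exp(-\rho h(u))$ still holds.

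There is no genuine obstacle: the Poisson MGF is elementary, the convex minimization is closed-form, and the monotonicity property of $h$ used to match the two tails is a one-line derivative computation. The only small care is in the boundary case $u \geq 1$, where one has to handle the degeneracy of the lower tail separately rather than appeal directly to the lower-tail Chernoff bound.
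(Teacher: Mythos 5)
Your proof is correct and follows essentially the same route as the paper: Chernoff bounding with the Poisson moment generating function, the optimal choices $t = \log(1+u)$ and $t = \log(1-u)$, and a union bound combined with $h(-u) \geq h(u)$ for the two-sided statement. The only difference is that you verify $h(-u) \geq h(u)$ by a short derivative computation and treat the boundary cases $u \geq 1$ explicitly, where the paper simply asserts the inequality and inserts an indicator; this is a minor refinement, not a different argument.
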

    \begin{proof}
        The results will follow from Chernoff's method. To show the first claim, let \(u \geq 0\). For any \(t \geq 0\), observe 
        \begin{align*}
            P\left\{Y \geq \rho(1 + u)\right\} = P\left\{e^{tY} \geq e^{t\rho(1+u)}\right\}
            \leq e^{-t\rho(1+u)} E(e^{tY})
            = \exp\left(-t\rho(1+u) + \rho(e^{t}-1)\right). 
        \end{align*}
        Selecting \(t = \log(1+u)\), we obtain the bound \(P\left\{Y \geq \rho(1 + u)\right\} \leq e^{-\rho h(u)}\). The second claim is obtained similarly. Let \(0 \leq u < 1\). Now note for any \(t \leq 0\), 
        \begin{equation*}
            P\left\{Y \leq \rho(1-u)\right\} = P\left\{tY \geq t\rho(1-u)\right\} = P\left\{e^{tY} \geq e^{t\rho(1-u)}\right\} \leq \exp\left(-t\rho(1-u) + \rho(e^t-1)\right). 
        \end{equation*}
        Selecting \(t = \log(1-u)\) yields \(P\left\{Y \leq \rho(1-u)\right\} \leq e^{-\rho h(-u)}\), as desired. To show the final claim, observe for \(u \geq 0\), we have 
        \begin{equation*}
            P\left\{|Y-\rho| \geq \rho u\right\} \leq P\left\{Y \geq \rho(1+u)\right\} + P\left\{Y \leq \rho(1-u)\right\} \leq e^{-\rho h(u)} + e^{-\rho h(-u)}\mathbbm{1}_{\{0\leq u < 1\}} \leq 2e^{-\rho h(u)}
        \end{equation*}
        since \(0 \leq h(u) \leq h(-u)\) for \(u \geq 0\). The proof is complete. 
    \end{proof}
    \begin{corollary}\label{corollary:constant_rate_max}
        Suppose \(Y_1,...,Y_d \overset{iid}{\sim} \Poisson(\rho)\) where \(\rho > 0\). If \(\eta \in (0, 1)\), then
        \begin{equation*}
            P\left\{\max_{1 \leq j \leq d} Y_j - \rho > \rho h^{-1}\left(\frac{\log\left(\frac{d}{\eta}\right)}{\rho}\right)\right\} \leq \eta. 
        \end{equation*}
    \end{corollary}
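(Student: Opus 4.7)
The plan is to combine a union bound with the upper-tail half of Bennett's inequality (Lemma~\ref{lemma:Bennett}) that has just been established. Concretely, writing $u = h^{-1}\!\left(\frac{\log(d/\eta)}{\rho}\right)$, I would first observe that by construction $h(u) = \frac{\log(d/\eta)}{\rho}$, so $\rho h(u) = \log(d/\eta)$. The event $\{\max_j Y_j - \rho > \rho u\}$ is contained in $\bigcup_{j=1}^d \{Y_j - \rho > \rho u\}$, hence a single application of the union bound converts the maximum into a sum of $d$ Poisson tail probabilities.

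Next I would bound each summand using the upper-tail form of Bennett's inequality, $P\{Y_j \geq \rho(1+u)\} \leq \exp(-\rho h(u))$. Substituting the identity $\rho h(u) = \log(d/\eta)$ gives the per-coordinate bound $\eta/d$, and summing $d$ such terms yields the desired $\eta$. There is essentially no obstacle here: the corollary is a direct packaging of Bennett's inequality tailored to the specific threshold that makes the union-bound constant come out to exactly $\eta$, and the only thing to verify is that $h^{-1}$ is well-defined at $\log(d/\eta)/\rho \geq 0$, which follows from the fact that $h$ is a continuous strictly increasing bijection from $[0,\infty)$ onto $[0,\infty)$ (with $h(0)=0$). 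No truncation, second-moment, or conditioning argument is needed.
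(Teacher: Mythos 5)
Your proof is correct and is exactly the argument the paper intends (the corollary is stated as an immediate consequence of Lemma~\ref{lemma:Bennett}): a union bound over the $d$ coordinates combined with the one-sided upper-tail Bennett bound, with the threshold $u = h^{-1}(\log(d/\eta)/\rho)$ chosen so that each term equals $\eta/d$. The check that $h^{-1}$ is well-defined at the nonnegative argument is the right (and only) side condition to note.
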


    \begin{lemma}\label{lemma:bennett_cancellation}
    Suppose \(\nu, c, \xi> 0\). If \(c^2 \xi > \nu\), then 
    \begin{equation*}
        e^{\frac{c^2\xi^2}{\nu}} P\left\{ \Poisson\left(\frac{(\nu + c\xi)^2}{\nu}\right) \leq \nu + \xi\right\} \leq \exp\left(-\nu h\left(\frac{\xi}{\nu}\right) + 2\nu\left(1 + \frac{\xi}{\nu}\right)\log\left(1 + \frac{c\xi}{\nu}\right)\right).
    \end{equation*}
\end{lemma}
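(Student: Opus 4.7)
The plan is to set $\lambda = \frac{(\nu+c\xi)^2}{\nu}$ and apply Bennett's lower-tail inequality (Lemma~\ref{lemma:Bennett}) to the event $\{\Poisson(\lambda) \leq \nu + \xi\}$. First I would write $\nu + \xi = \lambda(1-u)$, where $u = \frac{\lambda - \nu - \xi}{\lambda}$, and verify $u \in (0,1)$ using the hypothesis $c^2\xi > \nu$. Indeed, $\lambda - \nu - \xi = 2c\xi + \frac{c^2\xi^2}{\nu} - \xi = \xi\bigl(2c - 1 + \frac{c^2\xi}{\nu}\bigr)$, and since $\frac{c^2\xi}{\nu} > 1$ by assumption, this is strictly positive; $u < 1$ is immediate since $\nu + \xi > 0$.

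Next I would apply Bennett's inequality to obtain $P\{\Poisson(\lambda) \leq \lambda(1-u)\} \leq \exp(-\lambda h(-u))$, and carry out the algebraic simplification
\begin{align*}
\lambda h(-u) &= (\nu+\xi)\log\!\left(\tfrac{\nu+\xi}{\lambda}\right) + (\lambda - \nu - \xi) \\
&= \nu\bigl(1+\tfrac{\xi}{\nu}\bigr)\log\bigl(1+\tfrac{\xi}{\nu}\bigr) - 2\nu\bigl(1+\tfrac{\xi}{\nu}\bigr)\log\bigl(1+\tfrac{c\xi}{\nu}\bigr) + 2c\xi + \tfrac{c^2\xi^2}{\nu} - \xi \\
&= \nu h\bigl(\tfrac{\xi}{\nu}\bigr) - 2\nu\bigl(1+\tfrac{\xi}{\nu}\bigr)\log\bigl(1+\tfrac{c\xi}{\nu}\bigr) + 2c\xi + \tfrac{c^2\xi^2}{\nu},
\end{align*}
where the middle equality uses $\log\bigl(\tfrac{\nu+\xi}{\lambda}\bigr) = \log\bigl(1+\tfrac{\xi}{\nu}\bigr) - 2\log\bigl(1+\tfrac{c\xi}{\nu}\bigr)$, and the final equality uses the identity $\nu h(\xi/\nu) = \nu(1+\xi/\nu)\log(1+\xi/\nu) - \xi$.

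Finally I would multiply the resulting bound by $e^{c^2\xi^2/\nu}$, cancelling the $\tfrac{c^2\xi^2}{\nu}$ term in the exponent, and discard the remaining $-2c\xi \leq 0$ term in the exponent to obtain the claimed bound. The only real obstacle is the bookkeeping in the expansion of $\lambda h(-u)$; there is no probabilistic subtlety beyond the one application of Bennett's inequality, and the cancellation of $\frac{c^2\xi^2}{\nu}$ against the exponential prefactor is precisely the mechanism that makes this cancellation-type estimate useful for the conditional second-moment computations in Lemma~\ref{lem:control_chi2_multinomial} and Proposition~\ref{prop:chisquare_bound}.
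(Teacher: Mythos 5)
Your proposal is correct and follows essentially the same route as the paper: verify that $\nu+\xi$ lies below the mean $\frac{(\nu+c\xi)^2}{\nu}$ using $c^2\xi > \nu$, apply the lower-tail Bennett bound of Lemma~\ref{lemma:Bennett}, expand $\lambda h(-u)$ into $\nu h(\xi/\nu) - 2\nu(1+\xi/\nu)\log(1+c\xi/\nu) + 2c\xi + \frac{c^2\xi^2}{\nu}$, and cancel the $\frac{c^2\xi^2}{\nu}$ term while discarding $-2c\xi \leq 0$. The only difference from the paper is the cosmetic parametrization via $u$ instead of writing the argument of $h$ as $-1+\frac{\nu\xi+\nu^2}{(\nu+c\xi)^2}$.
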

\begin{proof}
    To bound the Poisson probability, we would like to use Lemma \ref{lemma:Bennett}. Note \(\frac{(\nu + c\xi)^2}{\nu} = \nu + 2c\xi + \frac{c^2\xi^2}{\nu} > \nu + \xi\) since \(\xi > c^{-2}\nu\), and so we can indeed apply Lemma \ref{lemma:Bennett} to obtain 
    \begin{align*}
        e^{\frac{c^2\xi^2}{\nu}} P\left\{ \Poisson\left(\frac{(\nu + c\xi)^2}{\nu}\right) \leq \nu + \xi\right\} &= e^{\frac{c^2\xi^2}{\nu}} P\left\{\Poisson\left(\frac{(\nu + c\xi)^2}{\nu}\right) \leq \frac{(\nu + c\xi)^2}{\nu} \left(\frac{\nu \xi + \nu^2}{(\nu + c\xi)^2}\right) \right\} \\
        &\leq \exp\left(\frac{c^2\xi^2}{\nu} - \frac{(\nu + c\xi)^2}{\nu} h\left(-1+\frac{\nu\xi + \nu^2}{(\nu + c\xi)^2}\right)\right). 
    \end{align*}
    By direct calculation, observe 
    \begin{align*}
    &\frac{(\nu + c\xi)^2}{\nu} h\left(-1+\frac{\nu\xi + \nu^2}{(\nu + c\xi)^2}\right) \\
    &= \frac{(\nu + c\xi)^2}{\nu} \left(\frac{\nu\xi+\nu^2}{(\nu + c\xi)^2} \log\left(\frac{\nu\xi+\nu^2}{(\nu + c\xi)^2} \right) + 1 - \frac{\nu\xi+\nu^2}{(\nu+c\xi)^2}\right) \\
    &= \nu\left(1 + \frac{\xi}{\nu}\right) \log\left(1 + \frac{\xi}{\nu}\right) - 2\nu \left(1 + \frac{\xi}{\nu}\right) \log\left(1 + \frac{c\xi}{\nu}\right) + \frac{(\nu + c\xi)^2}{\nu} - \nu\left(1 + \frac{\xi}{\nu}\right) \\
    &= \nu h\left(\frac{\xi}{\nu}\right) - 2\nu \left(1 + \frac{\xi}{\nu}\right)\log\left(1 + \frac{c\xi}{\nu}\right) + 2c\xi + \frac{c^2\xi^2}{\nu}. 
    \end{align*}
    Therefore, 
    \begin{align*}
        \exp\left(\frac{c^2\xi^2}{\nu} - \frac{(\nu + c\xi)^2}{\nu} h\left(-1+\frac{\nu\xi + \nu^2}{(\nu + c\xi)^2}\right)\right) &= \exp\left(-\nu h\left(\frac{\xi}{\nu}\right) + 2\nu \left(1 + \frac{\xi}{\nu}\right)\log\left(1 + \frac{c\xi}{\nu}\right) - 2c\xi\right),
    \end{align*}
    and so the claimed result follows because \(-2c\xi \leq 0\).
\end{proof}

    \begin{lemma}[Bennett's inequality for bounded random variables - Theorem 2.9 \cite{boucheron_concentration_2013}]\label{lemma:bennett_bounded}
        Let \(Z_1,...,Z_n\) be independent random variables with finite variance such that \(|Z_i - E(Z_i)| \leq b\) for some \(b > 0\) almost surely for all \(i \leq n\). Let \(v = \sum_{i=1}^{n} \Var(Z_i)\). If \(t > 0\), then
        \begin{equation*}
            P\left\{\left|\sum_{i=1}^{n} (Z_i - E(Z_i))\right| \geq t\right\} \leq 2\exp\left(-\frac{v}{b^2}h\left(\frac{bt}{v}\right)\right).
        \end{equation*}
    \end{lemma}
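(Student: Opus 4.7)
The plan is to prove Lemma \ref{lemma:bennett_bounded} by the standard Chernoff-method derivation of Bennett's inequality for bounded, independent variables. Since this is a two-sided statement, I will first prove the one-sided version $P\{\sum_i (Z_i - E(Z_i)) \geq t\} \leq \exp\left(-\frac{v}{b^2} h\left(\frac{bt}{v}\right)\right)$, and then conclude by applying the same argument to $-Z_1, \ldots, -Z_n$ (which satisfy the same boundedness and variance hypotheses) and taking a union bound to get the factor of $2$.

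The first step is Chernoff's inequality: for any $\lambda > 0$, Markov yields
\begin{equation*}
P\left\{\sum_{i=1}^n (Z_i - E(Z_i)) \geq t\right\} \leq e^{-\lambda t} \prod_{i=1}^n E\left(e^{\lambda (Z_i - E(Z_i))}\right).
\end{equation*}
The second step is to bound each MGF. Writing $X_i = Z_i - E(Z_i)$, which satisfies $E(X_i) = 0$, $\Var(X_i) = \sigma_i^2$, and $X_i \leq b$ almost surely, I will use the elementary inequality $e^u \leq 1 + u + u^2 \phi(b')$ for appropriate choices, or more cleanly, the pointwise bound $e^{\lambda x} - 1 - \lambda x \leq \frac{x^2}{b^2}(e^{\lambda b} - 1 - \lambda b)$ valid for all $x \leq b$ (this follows because the function $x \mapsto (e^{\lambda x} - 1 - \lambda x)/x^2$ is nondecreasing). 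Taking expectations yields
\begin{equation*}
E\left(e^{\lambda X_i}\right) \leq 1 + \frac{\sigma_i^2}{b^2}(e^{\lambda b} - 1 - \lambda b) \leq \exp\left(\frac{\sigma_i^2}{b^2}(e^{\lambda b} - 1 - \lambda b)\right),
\end{equation*}
using $1 + y \leq e^y$. Multiplying across $i$, summing variances to $v$, and combining with the Chernoff step gives
\begin{equation*}
P\left\{\sum_{i=1}^n (Z_i - E(Z_i)) \geq t\right\} \leq \exp\left(-\lambda t + \frac{v}{b^2}(e^{\lambda b} - 1 - \lambda b)\right).
\end{equation*}

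The third step is to optimize over $\lambda > 0$. Differentiating the exponent with respect to $\lambda$ and setting the derivative to zero gives $\lambda b = \log(1 + bt/v)$, which is nonnegative since $t, b, v > 0$. Plugging this choice back in and simplifying, the exponent becomes exactly $-\frac{v}{b^2} h\left(\frac{bt}{v}\right)$, where $h(u) = (1+u)\log(1+u) - u$. This establishes the upper-tail bound without the factor of $2$. Applying the identical argument to the variables $-Z_i$ (which are independent, have the same variances, and satisfy $|-Z_i - E(-Z_i)| \leq b$) handles the lower tail, and a union bound produces the two-sided inequality with the factor of $2$. No serious obstacle is expected here; the only mildly delicate point is the pointwise MGF bound $e^{\lambda x} - 1 - \lambda x \leq (x/b)^2 (e^{\lambda b} - 1 - \lambda b)$ for $x \leq b$, which I would justify by checking the monotonicity of $(e^{\lambda x} - 1 - \lambda x)/x^2$ via a short calculus argument.
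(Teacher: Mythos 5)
Your proof is correct: the Chernoff bound, the pointwise inequality \(e^{\lambda x}-1-\lambda x \le (x/b)^2(e^{\lambda b}-1-\lambda b)\) for \(x\le b\), and the choice \(\lambda b=\log(1+bt/v)\) do give the one-sided exponent \(-\tfrac{v}{b^2}h(bt/v)\), and since the hypothesis \(|Z_i-E(Z_i)|\le b\) is symmetric, applying the same bound to \(-Z_i\) and union bounding legitimately yields the factor \(2\). The paper does not prove this lemma but simply cites Theorem 2.9 of Boucheron--Lugosi--Massart, and your argument is exactly that standard derivation (and mirrors the paper's own Chernoff proof of the Poisson version in Lemma \ref{lemma:Bennett}), so there is nothing further to reconcile.
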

    \begin{corollary}\label{corollary:bennett_binomial}
        If \(Y \sim \Binomial(n, \pi)\) where \(n \geq 1\) and \(\pi \in [0, 1]\), then 
        \begin{equation*}
            P\left\{|Y - n\pi| \geq u \cdot n\pi(1-\pi)\right\} \leq 2 e^{-n\pi(1-\pi) h(u)}
        \end{equation*}
        for \(u \geq 0\). 
    \end{corollary}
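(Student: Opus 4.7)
The plan is to reduce the corollary to a direct invocation of Lemma \ref{lemma:bennett_bounded} by writing $Y$ as a sum of independent Bernoulli random variables. Specifically, I would represent $Y = \sum_{i=1}^{n} Z_i$ where $Z_1,\dots,Z_n \overset{iid}{\sim} \Bernoulli(\pi)$. Each $Z_i$ satisfies $E(Z_i) = \pi$ and $\Var(Z_i) = \pi(1-\pi)$, and most importantly, since $Z_i \in \{0,1\}$ and $\pi \in [0,1]$, we have $|Z_i - \pi| \leq \max(\pi, 1-\pi) \leq 1$ almost surely.

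With this setup, I would apply Lemma \ref{lemma:bennett_bounded} using the parameters $b = 1$ and $v = \sum_{i=1}^{n} \Var(Z_i) = n\pi(1-\pi)$. The lemma then gives, for any $t > 0$,
\begin{equation*}
    P\left\{\left|Y - n\pi\right| \geq t\right\} \leq 2\exp\left(-\frac{v}{b^2} h\!\left(\frac{bt}{v}\right)\right) = 2\exp\left(-n\pi(1-\pi)\, h\!\left(\frac{t}{n\pi(1-\pi)}\right)\right).
\end{equation*}
Substituting $t = u \cdot n\pi(1-\pi)$ for $u \geq 0$ yields precisely the target bound $2e^{-n\pi(1-\pi) h(u)}$.

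The only delicate choice is picking $b = 1$ rather than the tighter bound $b = \max(\pi, 1-\pi)$. The tighter choice would introduce a prefactor $b^{-2}$ in front of $h$ and rescale the argument by $b$, leaving an awkward inequality of the form $h(bu) \geq b^2 h(u)$ to verify; this is not an obvious convexity fact and would require its own lemma. By taking the loose (but still valid) bound $b = 1$, the scaling aligns perfectly with the form of the inequality we want, and no further work is needed. Consequently, I expect no real obstacle in this proof—the corollary follows in a single line once the Bernoulli representation and the choice $b = 1$ are in place.
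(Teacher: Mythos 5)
Your proof is correct and is exactly the intended derivation: the paper states this corollary without proof immediately after Lemma \ref{lemma:bennett_bounded}, and the implicit argument is precisely your application of that lemma to i.i.d.\ $\Bernoulli(\pi)$ summands with $b = 1$, $v = n\pi(1-\pi)$, and $t = u\,n\pi(1-\pi)$. (The degenerate cases $\pi \in \{0,1\}$ or $u = 0$, where the lemma's hypotheses fail, are trivial since the right-hand side is then at least $1$.)
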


    \begin{definition}[Lambert function]\label{def:Lambert}
        The Lambert function \(\W : [0, \infty) \to \R\) is defined to be \(\W(x) = y\) where \(y\) is the solution to the equation \(ye^y = x\). 
    \end{definition}
    
    \begin{lemma}\label{lemma:Lambert_order}
        For \(x \geq 1\), we have \(\W(x) \asymp \log(ex)\). 
    \end{lemma}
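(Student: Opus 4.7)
The plan is to work directly from the defining relation $y = \W(x)$ with $y e^y = x$, taking logarithms to obtain $y + \log y = \log x$ for $y > 0$, and then bound $y$ from above and below in terms of $\log(ex) = 1 + \log x$.

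For the upper bound $\W(x) \lesssim \log(ex)$, I will split into two cases. If $y \leq 1$, then trivially $y \leq 1 \leq \log(ex)$ since $x \geq 1$. If $y > 1$, then $\log y > 0$, so the identity $y = \log x - \log y$ yields $y \leq \log x \leq \log(ex)$. In either case $\W(x) \leq \log(ex)$.

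For the lower bound $\W(x) \gtrsim \log(ex)$, I will handle the bounded range and the unbounded range separately. Since $\W$ is increasing on $[0,\infty)$ and $\W(1) > 0$, for $x \in [1, e]$ we have $\W(x) \geq \W(1) \gtrsim 1 \gtrsim \log(ex)$ because $\log(ex) \leq 2$ on this interval. For $x \geq e$, the identity $y + \log y = \log x$ combined with $\log y \leq y$ (valid for $y > 0$) gives $2y \geq \log x$, hence $\W(x) \geq (\log x)/2 \geq \log(ex)/4$, using $\log(ex) = 1 + \log x \leq 2 \log x$ when $x \geq e$. Combining the two ranges yields $\W(x) \gtrsim \log(ex)$ for all $x \geq 1$.

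There is no real obstacle here; the proof is an elementary case analysis on the implicit equation. The only mild subtlety is ensuring the constants match at the threshold $x = e$, which is handled by the explicit comparison $\log(ex) \leq 2\log x$ for $x \geq e$ in the lower bound step.
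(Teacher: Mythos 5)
Your proof is correct. The paper states Lemma \ref{lemma:Lambert_order} without proof (it is treated as a standard fact about the Lambert function), so there is no argument of the paper's to compare against; your elementary derivation from the implicit equation is exactly the kind of argument being suppressed. The key steps all check out: for $x \geq 1$ one has $y = \W(x) \geq \W(1) > 0$, so taking logarithms in $y e^y = x$ to get $y + \log y = \log x$ is legitimate; the upper bound follows by discarding $\log y$ when $y > 1$ (and trivially when $y \leq 1$, since $\log(ex) \geq 1$); and the lower bound follows from $\log y \leq y$, which gives $2y \geq \log x$, together with $\log(ex) \leq 2\log x$ for $x \geq e$ and monotonicity of $\W$ on the compact range $[1, e]$. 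The constants are explicit and the case split at $x = e$ is handled cleanly, so nothing is missing.
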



    \begin{lemma}\label{lemma:Lambert_identities}
        Let \(\W\) denote the Lambert function given by Definition \ref{def:Lambert}. Then \(e^{\W(x\log x)} = x\) for \(x \geq e^{-1}\) and \(\lim_{x \to \infty} \frac{\W(x)}{\log x} = 1\). 
    \end{lemma}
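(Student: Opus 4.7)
The proof plan is to verify both identities directly from the defining equation $ye^y = x$ of the Lambert $\W$ function.

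For the first identity $e^{\W(x\log x)} = x$, I would set $y = \log x$ and simply verify that this $y$ satisfies the defining equation for the argument $x \log x$. Indeed, $y e^y = (\log x) e^{\log x} = x \log x$. Provided $y$ lies in the appropriate branch of the inverse (so that $\W(x \log x) = \log x$ is the value returned by the function), one concludes $e^{\W(x \log x)} = e^{\log x} = x$. The mild caveat is that for the domain stated ($x \geq e^{-1}$), when $x \in [e^{-1}, 1)$ the argument $x \log x$ is negative, so one must invoke the usual extension of the principal branch of $\W$ to $[-1/e, 0)$ where $\W$ still takes values in $[-1, 0]$; for $x \geq 1$ everything lies in $[0,\infty)$ and no extension is needed.

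For the asymptotic $\W(x)/\log x \to 1$, I would first observe that $\W$ is increasing and unbounded (since $y \mapsto ye^y$ is a bijection $[0,\infty) \to [0,\infty)$), so $\W(x) \to \infty$ as $x \to \infty$. Taking logarithms of the defining identity $\W(x) e^{\W(x)} = x$ gives
\begin{equation*}
\W(x) + \log \W(x) = \log x,
\end{equation*}
hence
\begin{equation*}
\frac{\W(x)}{\log x} = 1 - \frac{\log \W(x)}{\log x}.
\end{equation*}
It remains to show the second term vanishes. For $x$ large enough that $\W(x) \geq 1$, the relation $\W(x) e^{\W(x)} = x$ forces $e^{\W(x)} \leq x$, i.e.\ $\W(x) \leq \log x$, and therefore $\log \W(x) \leq \log \log x$. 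Consequently
\begin{equation*}
0 \leq \frac{\log \W(x)}{\log x} \leq \frac{\log \log x}{\log x} \to 0,
\end{equation*}
which yields the limit.

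The proof is entirely routine and there is no real obstacle; the only subtlety worth flagging is the branch/domain issue in the first part, which is handled by noting that the identity $y e^y = x\log x$ with $y = \log x$ always produces a valid value of $\W$ on whichever branch the paper implicitly uses. Both identities will be applied elsewhere in the paper (e.g., in Lemma~\ref{lemma:h_inverse} governing the behavior of $h^{-1}$) to convert between the implicit $\W$ representation and the more explicit logarithmic form.
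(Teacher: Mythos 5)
Your proof is correct. The paper actually states Lemma~\ref{lemma:Lambert_identities} without proof, treating it as a standard fact about the Lambert function, so there is no argument of the authors to compare against; your verification (substituting $y=\log x$ into $ye^y = x\log x$ for the first identity, and taking logarithms of $\W(x)e^{\W(x)}=x$ together with the bound $\W(x)\le \log x$ for the second) is exactly the routine computation one would supply. The branch caveat you flag is a genuine, if minor, point: Definition~\ref{def:Lambert} defines $\W$ only on $[0,\infty)$, so for $x\in[e^{-1},1)$ the argument $x\log x$ is negative and the stated identity only makes sense after extending to the principal branch on $[-1/e,0)$, where your choice $y=\log x\in[-1,0)$ is indeed the value returned; in the paper's actual uses (Lemmas~\ref{lemma:h_inverse} and~\ref{lemma:h_asymptotics}) only large arguments occur, so this has no downstream effect.
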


    \begin{lemma}\label{lemma:h_inverse}
        The function \(h\) restricted to the domain \([0, \infty)\) admits an inverse \(h^{-1}\) which satisfies 
        \begin{equation*}
            h^{-1}(y) \asymp 
            \begin{cases}
                \sqrt{y} &\textit{if } y \leq 1, \\
                \frac{y}{\log(ey)} &\textit{if } y > 1.  
            \end{cases}
        \end{equation*}
        \begin{proof}
            It follows immediately from the fact \(h\) is a strictly increasing function that it admits an inverse \(h^{-1}\). Consider that \(h(e-1) = 1\), and so \(h(x) \leq 1\) for all \(x \leq e-1\) and \(h(x) > 1\) for all \(x > e-1\). Since we have the Taylor expansion \(\log(1+x) = x + \frac{x^2}{2} + O(x^3)\), it follows that \(h(x) \asymp x^2\) for \(x \leq e-1\). Therefore, \(h^{-1}(y) \asymp \sqrt{y}\) for \(y \leq 1\). For \(x \geq e-1\), we have \(x \leq \frac{e-1}{e} (1+x)\log(1+x)\) and so \(h(x) \geq e^{-1}(1+x)\log(1+x)\). Trivially \(h(x) \leq (1+x)\log(1+x)\), so for \(x \geq e-1\) we have 
            \begin{equation*}
                e^{-1}(1+x) \log(e^{-1}(1+x)) \leq h(x) \leq (1+x)\log(1+x).    
            \end{equation*}
            Let \(g_1(x) = e^{-1}(1+x)\log(e^{-1}(1+x))\) and \(g_2(x) = (1+x)\log(1+x)\). These functions are also strictly increasing and thus admit inverses. Since \(g_1 \leq h \leq g_2\), we have \(g_1^{-1} \geq h^{-1} \geq g_2^{-1}\). Consider that \(1 + g_2^{-1}(y) = e^{\W(y)}\) where \(\W\) is the Lambert function (see Definition \ref{def:Lambert}). By definition and Lemma \ref{lemma:Lambert_order}, we have \(1+g_2^{-1}(y) = e^{\W(y)} = \frac{y}{\W(y)}\). Consider that \(\frac{y}{\W(y)}\) is strictly increasing in \(y\). Since \(\W(1) \leq \frac{2}{3}\), at \(y = 1\) we have \(1 + g_2^{-1}(1) = \frac{1}{\W(1)} \geq \frac{3}{2}\). Therefore, it follows that \(g_2^{-1}(y) = \frac{y}{\W(y)} - 1 \asymp \frac{y}{\W(y)} \asymp \frac{y}{\log(ey)}\) for \(y > 1\). By a similar argument, \(g_1^{-1}(y) \asymp \frac{y}{\log(ey)}\) for \(y > 1\). Therefore, \(h^{-1}(y) \asymp \frac{y}{\log(ey)}\) for \(y > 1\) as claimed. 
        \end{proof}
    \end{lemma}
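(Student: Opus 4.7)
The plan is to exploit the two-term Taylor expansion of $h$ at the origin to handle $y \le 1$, and to sandwich $h$ by the simpler function $(1+x)\log(1+x)$ for $y > 1$, then invert using the Lambert $\W$ asymptotics in Lemma \ref{lemma:Lambert_order}. Monotonicity and well-definedness come first: $h'(x) = \log(1+x) > 0$ for $x > 0$, so $h$ is strictly increasing on $[0, \infty)$ and $h^{-1}$ exists there. A direct computation gives $h(e-1) = 1$, so the breakpoint $y = 1$ in the stated rate is precisely the image of $x = e - 1$ under $h$.

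For the regime $y \le 1$, equivalently $x \le e-1$, I would use $(1+x)\log(1+x) = x + x^2/2 + O(x^3)$ near $0$ to obtain $h(x) = x^2/2 + O(x^3)$. Hence $h(x)/x^2$ extends to a continuous strictly positive function on the compact interval $[0, e-1]$ and is thus bounded above and below there, yielding $h(x) \asymp x^2$ on $[0, e-1]$; inverting gives $h^{-1}(y) \asymp \sqrt{y}$ for $y \in [0, 1]$.

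For $y > 1$, equivalently $x \ge e-1$, the first step is to show $h(x) \asymp (1+x)\log(1+x)$. The upper bound $h(x) \le (1+x)\log(1+x)$ is immediate. For the matching lower bound, I would check that $\varphi(x) := x/[(1+x)\log(1+x)]$ is strictly decreasing on $(0, \infty)$ (computing $\varphi'$ and using $\log(1+x) < x$); then $\varphi(e-1) = (e-1)/e$ gives $x \le (1 - e^{-1})(1+x)\log(1+x)$ for all $x \ge e-1$, and hence $h(x) \ge e^{-1}(1+x)\log(1+x)$ on this range. Having established $h(x) \asymp (1+x)\log(1+x)$, the substitution $u = 1+x$ reduces the equation $h(x) = y$ to $u \log u = \Theta(y)$, and the equation $u \log u = z$ is solved by $u = e^{\W(z)} = z/\W(z)$ via Lemma \ref{lemma:Lambert_identities}. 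Lemma \ref{lemma:Lambert_order} then gives $\W(z) \asymp \log(ez)$ for $z \ge 1$, so $u \asymp y/\log(ey)$ and thus $h^{-1}(y) \asymp y/\log(ey)$ for $y > 1$.

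The only delicate point is keeping the multiplicative constants consistent when passing from $h(x) = y$ to $u \log u = Cy$ with $C$ a fixed positive constant depending on which side of the sandwich one inverts: one must verify that $\W(Cy) \asymp \W(y) \asymp \log(ey)$ uniformly in $y \ge 1$ for any fixed $C > 0$, which follows directly from Lemma \ref{lemma:Lambert_order}. The remaining bookkeeping, in particular checking that $y/\log(ey) \ge 1$ for $y \ge 1$ so that the image inversion lands back in the regime $x \ge e-1$ where our sandwich is valid, is routine.
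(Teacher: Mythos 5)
Your proposal is correct and follows essentially the same route as the paper's proof: split at $y=1$ via $h(e-1)=1$, use the Taylor expansion to get $h(x)\asymp x^2$ on $[0,e-1]$, sandwich $h(x)$ between $e^{-1}(1+x)\log(1+x)$ and $(1+x)\log(1+x)$ for $x\geq e-1$, and invert through the Lambert function using Lemmas \ref{lemma:Lambert_order} and \ref{lemma:Lambert_identities}. The only difference is cosmetic: you justify the key inequality $x \leq (1-e^{-1})(1+x)\log(1+x)$ on $x\geq e-1$ by monotonicity of $x/[(1+x)\log(1+x)]$ (a detail the paper asserts without proof), and you phrase the inversion via $u\log u=\Theta(y)$ rather than introducing the explicit bounding functions $g_1,g_2$.
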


    \begin{lemma}\label{lemma:h_asymptotics}
        Let \(h^{-1}\) denote the inverse of the function \(h\) restricted to \([0, \infty)\). Then \(\lim_{x \to 0} \frac{h^{-1}(x)}{\sqrt{2x}} = 1\) and \(\lim_{x \to \infty} \frac{h^{-1}(x)\log x }{x} = 1\). 
    \end{lemma}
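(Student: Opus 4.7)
The plan is to handle the two asymptotic statements separately, in each case inverting a local/asymptotic expansion of $h$ itself.

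For the small-$x$ limit, I will compute the Taylor expansion of $h$ near $u = 0$. Using $\log(1+u) = u - u^2/2 + u^3/3 - \cdots$, a direct computation gives
\[
h(u) = (1+u)\log(1+u) - u = \tfrac{u^2}{2} - \tfrac{u^3}{6} + O(u^4) = \tfrac{u^2}{2}\bigl(1 + O(u)\bigr)
\]
as $u \to 0$. Setting $u = h^{-1}(x)$ and noting $h^{-1}$ is continuous with $h^{-1}(0) = 0$, so $u \to 0$ as $x \to 0$, the equation $x = \tfrac{u^2}{2}(1+O(u))$ rearranges to $u = \sqrt{2x}\,(1+o(1))$, yielding the claim.

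For the large-$x$ limit, I will expand $h(u)$ as $u \to \infty$. Writing $(1+u)\log(1+u) = u\log u + \log u + u\log(1+1/u) + \log(1+1/u)$ and using $\log(1+1/u) = O(1/u)$, I obtain
\[
h(u) = u\log u + \log u + O(1) = u\log u\,\bigl(1+o(1)\bigr).
\]
Letting $u = h^{-1}(x) \to \infty$ as $x \to \infty$, I get $x = u\log u\,(1+o(1))$. Taking logarithms gives $\log x = \log u + \log\log u + o(1)$, so $\log x \sim \log u$. Substituting back, $u = x/\log u\,(1+o(1)) = x/\log x\,(1+o(1))$, which is the desired $h^{-1}(x)\log x / x \to 1$.

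There is no real obstacle here: both claims reduce to inverting a first-order asymptotic expansion of $h$, one at the origin and one at infinity. The only care needed is in the second limit, where one must justify that the $\log\log u$ correction is subleading so that $\log u$ and $\log x$ are asymptotically equivalent; this is handled by the elementary two-step inversion above.
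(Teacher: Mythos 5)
Your proof is correct, and the first limit is handled exactly as in the paper: Taylor-expand $h$ at $0$, substitute $u=h^{-1}(x)$, and invert. For the second limit your route differs slightly from the paper's: the paper also starts from $h(u)=(u\log u)\bigl(1+O(1/\log u)\bigr)$ but then inverts via the Lambert function, using the identities $e^{\W(x\log x)}=x$ and $\W(x)\sim\log x$ (its Lemma on Lambert identities), whereas you perform an elementary two-step bootstrap: take logarithms of $x=u\log u\,(1+o(1))$ to get $\log x\sim\log u$, then substitute back. Your version is more self-contained (no Lambert machinery), while the paper's leverages auxiliary lemmas it already needs elsewhere (e.g.\ in the proof of its bound on $h^{-1}$ up to constants); both are equally rigorous. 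One small slip to fix: in your expansion you write $h(u)=u\log u+\log u+O(1)$, but $h(u)=(1+u)\log(1+u)-u$, so the $-u$ term must be kept and it is not $O(1)$; the correct statement is $h(u)=u\log u+\log u-u+O(1)$, which still gives $h(u)=u\log u\,(1+o(1))$ since $u/(u\log u)=1/\log u\to 0$, so the rest of your argument goes through unchanged.
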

    \begin{proof}
        To prove the first claim, first note \(h^{-1}\) is continuous, strictly increasing, and \(h^{-1}(0) = 0\). Consider by Taylor expansion \(h(y) = \frac{y^2}{2} + o(y^3) = \frac{y^2}{2}\left(1 + o(y)\right)\) as \(y \to 0\). Letting \(y = h^{-1}(x)\) note that \(y \to 0\) as \(x \to 0\). Since \(x = h(y) = \frac{y^2}{2}\left(1 + o(y)\right)\), it follows 
        \begin{align*}
            \lim_{x \to 0} \frac{h^{-1}(x)}{\sqrt{2x}} = \lim_{x \to 0} \frac{y}{\sqrt{2\cdot \frac{y^2}{2}(1+o(y))}} = 1. 
        \end{align*}
        The proof of the second claim is similar. Consider \(h(y) = (y \log y) \left(1 + O\left(\frac{1}{\log y}\right)\right)\) as \(y \to \infty\). Letting \(y = h^{-1}(x)\) note that \(y \to \infty\) as \(x \to \infty\). By Lemma \ref{lemma:Lambert_identities}, we have 
        \begin{align*}
            \lim_{x \to \infty} \frac{h^{-1}(x) \log x}{x} = \lim_{x \to \infty} \frac{h^{-1}(x)}{e^{\W(x)}} = \lim_{x \to \infty} \frac{y}{e^{\W(y \log y)}} = 1
        \end{align*}
        as desired. 
    \end{proof}

    \begin{lemma}[Data-processing inequality - Theorem 7.4 \cite{polyanskiy_information_2024}]\label{lemma:dpi}
        Consider a channel that produces \(Y\) given \(X\) based on the conditional law \(P_{Y|X}\). Let \(P_Y\) (resp. \(Q_Y\)) denote the distribution of \(Y\) when \(X\) is distributed as \(P_X\) (resp. \(Q_X\)). For any \(f\)-divergence \(D_f(\cdot\,||\, \cdot)\), we have \(D_f(P_Y\,||\,Q_Y) \leq D_f(P_X\,||\,Q_X)\). 
    \end{lemma}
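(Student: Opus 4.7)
The plan is to derive the data-processing inequality as a direct consequence of the (conditional) Jensen inequality applied to the convex function $f$ defining the divergence. First I would reduce to the case in which $Q_X$ dominates $P_X$: otherwise $D_f(P_X\,\|\,Q_X) = +\infty$ by convention and there is nothing to show. Since $P_Y$ and $Q_Y$ are both obtained by pushing $P_X$ and $Q_X$ through the same channel $P_{Y|X}$, absolute continuity is preserved, so $Q_Y$ dominates $P_Y$ and the divergence on the output side is well defined. Set $r(x) = \frac{dP_X}{dQ_X}(x)$ and consider the joint law on $(X,Y)$ with marginal $Q_X$ on $X$ and conditional $P_{Y|X}$ on $Y\mid X$; under this joint law, the $Y$-marginal is $Q_Y$.

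The key algebraic step is the identification
\begin{equation*}
    \frac{dP_Y}{dQ_Y}(y) = E_{Q_X}\!\left[r(X)\,\middle|\, Y=y\right],
\end{equation*}
which I would justify by a Bayes-rule computation: for every bounded measurable $\varphi(y)$,
\begin{equation*}
    \int \varphi(y)\,dP_Y(y) = \int \varphi(y)\,r(x)\,dQ_X(x)\,P_{Y|X}(dy\mid x) = \int \varphi(y)\,E_{Q_X}[r(X)\mid Y=y]\,dQ_Y(y),
\end{equation*}
where the first equality uses that $P_Y$ is the $Y$-marginal under the joint law $(P_X, P_{Y|X})$, and the second reorganizes the double integral via the tower property. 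Uniqueness of the Radon-Nikodym derivative then yields the claimed identity.

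With this identity in hand, the conclusion follows in two lines. Since $f$ is convex, the conditional Jensen inequality gives, pointwise in $y$,
\begin{equation*}
    f\!\left(\frac{dP_Y}{dQ_Y}(y)\right) = f\bigl(E_{Q_X}[r(X)\mid Y=y]\bigr) \leq E_{Q_X}\!\left[f(r(X))\,\middle|\, Y=y\right].
\end{equation*}
Integrating against $Q_Y$ and invoking the tower property produces
\begin{equation*}
    D_f(P_Y\,\|\,Q_Y) = E_{Q_Y}\!\left[f\!\left(\tfrac{dP_Y}{dQ_Y}\right)\right] \leq E_{Q_Y}\!\bigl[E_{Q_X}[f(r(X))\mid Y]\bigr] = E_{Q_X}\bigl[f(r(X))\bigr] = D_f(P_X\,\|\,Q_X).
\end{equation*}

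The main obstacle, such as it is, is technical rather than conceptual: one must justify the existence of a regular conditional distribution for $X$ given $Y$ under the joint law $(Q_X, P_{Y|X})$ in order to write $E_{Q_X}[r(X)\mid Y=y]$ as an actual function of $y$. On standard Borel spaces this is automatic, and in the discrete setting we are working with throughout the paper (counts in $\mathbb{Z}_{\geq 0}^p$) the formula $\frac{dP_Y}{dQ_Y}(y) = \frac{\sum_x P_{Y|X}(y\mid x) P_X(x)}{\sum_x P_{Y|X}(y\mid x) Q_X(x)}$ makes the identification elementary, so no delicate measure theory is needed for the applications made in this paper.
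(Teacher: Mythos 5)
Your conditional-Jensen argument is the standard proof of this fact and is essentially the argument behind the cited Theorem 7.4 of Polyanskiy--Wu; the paper itself offers no proof (the lemma is quoted from the literature), so there is nothing internal to compare against. The core of your write-up is correct: the identity \(\frac{dP_Y}{dQ_Y}(Y) = E_{Q}\left[\frac{dP_X}{dQ_X}(X)\,\middle|\,Y\right]\) under the joint law \(Q_X \otimes P_{Y|X}\) follows exactly from the Fubini/tower computation you give (and it simultaneously establishes \(P_Y \ll Q_Y\)), and conditional Jensen plus integration against \(Q_Y\) then yields the inequality.

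The one imprecise step is the reduction at the start: it is not true for a general \(f\)-divergence that \(P_X \not\ll Q_X\) forces \(D_f(P_X\,\|\,Q_X) = +\infty\). That convention is valid only when \(f'(\infty) = \infty\) (e.g.\ KL or \(\chi^2\)); for \(f\) with linear growth the divergence is defined via the extra singular term \(f'(\infty)\,P_X^{\perp}(\mathcal{X})\) and can be finite without domination. Total variation, with \(f(x) = \tfrac{1}{2}|x-1|\) and \(f'(\infty) = \tfrac{1}{2}\), is exactly such a case --- and it is the divergence this lemma is actually invoked for in Proposition \ref{prop:general_flattening}. To prove the statement ``for any \(f\)-divergence'' one must carry the singular part through the argument (the pushforward of the singular part of \(P_X\) contributes at most \(f'(\infty)\) times its mass on the output side, by convexity of \(f\)), rather than dismiss it. In the paper's applications this does not matter: the measures involved are products and mixtures of Poisson laws with strictly positive rates on a countable space, so mutual absolute continuity holds and your simpler reduction, together with the elementary discrete formula you note at the end, covers every use made of the lemma here.
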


    Let \((\mathcal{X}, \mathcal{A})\) be a measurable space on which \(P\) and \(Q\) are two probability measures. Suppose \(\nu\) is a \(\sigma\)-finite measure on \((\mathcal{X}, \mathcal{A})\) such that \(P\ll \nu\) and \(Q \ll \nu\). Define \(p = dP/d\nu\) and \(q = dQ/d\nu\). 

    \begin{definition}[Hellinger distance - Definition 2.3 \cite{tsybakov_introduction_2009}]\label{def:hellinger}
        The Hellinger distance between \(P\) and \(Q\) is defined as \(H(P, Q) = \left(\int (\sqrt{p} - \sqrt{q})^2 \, d\nu \right)^{1/2}\). 
    \end{definition}
   
    \begin{lemma}\label{lemma:poisson_TV}
        Suppose \(\mu, \delta \geq 0\). The total variation distance between \(\Poisson(\mu)\) and \(\Poisson(\mu + \delta)\) satisfies \(\dTV(\Poisson(\mu), \Poisson(\mu+\delta)) \leq \sqrt{\delta}\).
    \end{lemma}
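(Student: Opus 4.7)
My plan is to reduce the total variation bound to a Hellinger distance computation, since the Hellinger distance between two Poisson distributions admits a clean closed form via the Bhattacharyya coefficient. The standard inequality $\dTV(P,Q) \leq H(P,Q)$, which follows from Cauchy--Schwarz applied to $|p-q| = |\sqrt{p}-\sqrt{q}|(\sqrt{p}+\sqrt{q})$, will be the first step. Thus it suffices to show $H(\Poisson(\mu), \Poisson(\mu+\delta)) \leq \sqrt{\delta}$.

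For the key computation, I would expand
\begin{equation*}
    \sum_{k=0}^{\infty} \sqrt{e^{-\mu}\frac{\mu^k}{k!} \cdot e^{-(\mu+\delta)} \frac{(\mu+\delta)^k}{k!}} = e^{-\mu - \delta/2} \sum_{k=0}^{\infty} \frac{(\mu(\mu+\delta))^{k/2}}{k!} = e^{-\mu - \delta/2 + \sqrt{\mu(\mu+\delta)}},
\end{equation*}
so that $H^2 = 2(1 - e^{-(\mu + \delta/2 - \sqrt{\mu(\mu+\delta)})})$. Recognizing the exponent as $\tfrac{1}{2}(\sqrt{\mu+\delta} - \sqrt{\mu})^2$ via the identity $(a-b)^2/2 = (a^2+b^2)/2 - ab$ with $a = \sqrt{\mu}$, $b = \sqrt{\mu+\delta}$, I get
\begin{equation*}
    H^2 = 2\left(1 - \exp\left(-\tfrac{1}{2}(\sqrt{\mu+\delta}-\sqrt{\mu})^2\right)\right) \leq (\sqrt{\mu+\delta}-\sqrt{\mu})^2
\end{equation*}
by $1 - e^{-x} \leq x$.

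Finally, the bound $\sqrt{\mu+\delta} - \sqrt{\mu} \leq \sqrt{\delta}$ follows by squaring: the inequality is equivalent to $2\mu \leq 2\sqrt{\mu(\mu+\delta)}$, which holds since $\delta \geq 0$. Combining yields $\dTV \leq H \leq \sqrt{\delta}$. There is no real obstacle here---the only care needed is handling the edge case $\mu = 0$, where the formula still gives $H^2 = 2(1 - e^{-\delta/2}) \leq \delta$, so the same bound applies.
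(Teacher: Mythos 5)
Your proof is correct and follows essentially the same route as the paper: bound \(\dTV\) by the Hellinger distance, compute the Bhattacharyya coefficient for the two Poissons in closed form, and finish with \(1-e^{-x}\leq x\). The only cosmetic difference is that you keep the exact exponent \(\tfrac12(\sqrt{\mu+\delta}-\sqrt{\mu})^2\) where the paper uses the cruder bound \(\sqrt{\mu^2+\mu\delta}\geq\mu\); both yield \(\dTV\leq\sqrt{\delta}\).
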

    \begin{proof}
        The probability mass function of \(\Poisson(\mu)\) is given by \(p(k) := P\left\{\Poisson(\mu) = k\right\} = \frac{e^{-\mu}\mu^k}{k!}\) for \(k = 0,1,2,...\). Let \(q\) denote the probability mass function of \(\Poisson(\mu+\delta)\). Then
        \begin{equation*}
            \dTV(p, q) \leq H(p, q) = \sqrt{2}\left(1 - \sum_{k=0}^{\infty} \frac{e^{-\mu - \frac{\delta}{2}}}{k!} (\mu^2 + \mu\delta)^{k/2}  \right)^{1/2} = \sqrt{2}\left(1 - \exp\left(\sqrt{\mu^2 + \mu \delta} - \mu - \frac{\delta}{2}\right)\right)^{1/2}.
        \end{equation*}
        The first inequality is standard \cite{tsybakov_introduction_2009}. It is immediate that we have \(\exp\left(\sqrt{\mu^2 + \mu \delta} - \mu - \frac{\delta}{2}\right) \geq e^{-\frac{\delta}{2}}\), and so \(H(\Poisson(\mu), \Poisson(\mu+\delta)) \leq \sqrt{2}\left(1 - e^{-\frac{\delta}{2}}\right)^{1/2} \leq \sqrt{\delta}\) using the inequality \(1-e^{-x} \leq x\).
    \end{proof}

    \begin{lemma}\label{lemma:conditional_TV}
        Suppose \(P\) is a probability measure and \(E\) is an event. If \(\tilde{P}\) is the measure conditional on \(E\), that is, \(\tilde{P}(A) = \frac{P(A \cap E)}{P(E)}\), then \(\dTV(P, \tilde{P}) \leq 2 P(E^c)\). 
    \end{lemma}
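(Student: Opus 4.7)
The plan is to bound $|\tilde{P}(A) - P(A)|$ uniformly over events $A$, since by definition $\dTV(P, \tilde{P}) = \sup_A |\tilde{P}(A) - P(A)|$. The first step is to decompose $P(A) = P(A \cap E) + P(A \cap E^c)$ by additivity on disjoint sets. Substituting the definition $\tilde{P}(A) = P(A \cap E)/P(E)$ then lets me rewrite the difference as
\[
\tilde{P}(A) - P(A) = P(A \cap E)\,\frac{P(E^c)}{P(E)} - P(A \cap E^c).
\]

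Next I would apply the triangle inequality and control each term separately. The first term is at most $P(E^c)$ because $P(A \cap E)/P(E) \leq 1$. The second term is trivially at most $P(E^c)$ since $P(A \cap E^c) \leq P(E^c)$. Summing the two contributions gives $|\tilde{P}(A) - P(A)| \leq 2 P(E^c)$. Since this bound is uniform in $A$, taking the supremum over $A$ yields the statement.

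There is no real obstacle: the lemma reduces to a one-line computation once the decomposition of $P(A)$ is in place. (In fact, a slightly sharper bound of $P(E^c)$ is available via the identity $\tilde{P}(A) - P(A) = P(E^c)\bigl(P(A\mid E) - P(A\mid E^c)\bigr)$ obtained from the law of total probability, but the factor of $2$ is already sufficient for every invocation of the lemma in the paper.)
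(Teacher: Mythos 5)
Your proof is correct and follows essentially the same route as the paper's: both decompose $P(A)$ over $E$ and $E^c$ and bound the two resulting terms by $P(E^c)$ each. (Your parenthetical sharper bound of $P(E^c)$ via $\tilde{P}(A) - P(A) = P(E^c)\bigl(P(A\mid E) - P(A\mid E^c)\bigr)$ is also valid, though unnecessary here.)
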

    \begin{proof}
        It follows directly from the definition of total variation that \(\dTV(P, \tilde{P}) = \sup_{A} |P(A) - \tilde{P}(A)| \leq P(E^c) + \sup_A \left|P(A \cap E) - \frac{P(A \cap E)}{P(E)}\right| \leq P(E^c) + P(E) \cdot \frac{1-P(E)}{P(E)} \leq 2P(E^c)\). 
    \end{proof}

\end{document}